\documentclass[11pt]{amsart}

\usepackage{pinlabel,latexsym,amssymb,titletoc}

\usepackage[all]{xy}
\SelectTips{cm}{}

\setlength{\voffset}{-1cm}
\setlength{\hoffset}{-1cm}
\addtolength{\textwidth}{2cm}
\addtolength{\textheight}{2cm}

%
%

\theoremstyle{plain}
\newtheorem{theorem}{Theorem}[section]
\newtheorem{lemma}[theorem]{Lemma}
\newtheorem{proposition}[theorem]{Proposition}
\newtheorem{corollary}[theorem]{Corollary}
\newtheorem{definition}[theorem]{Definition}

\theoremstyle{definition}
\newtheorem{notation}[theorem]{Notation}
\newtheorem{example}[theorem]{Example}
\newtheorem{exercice}[theorem]{Exercice}
\newtheorem{remark}[theorem]{Remark}

\numberwithin{equation}{section}
\numberwithin{figure}{section}

%
%

\def \N{\mathbb{N}}
\def \Z{\mathbb{Z}}
\def \Q{\mathbb{Q}}
\def \C{\mathbb{C}}
\def \R{\mathbb{R}}
\def \F{\mathbb{F}}

\def \Aut{\operatorname{Aut}}
\def \Coker{\operatorname{Coker}}
\def \Diff{\operatorname{Diff}}
\def \Eul{\operatorname{Eul}}
\def \Free{\operatorname{F}}
\def \Hom{\operatorname{Hom}}
\def \IAut{\operatorname{IAut}}
\def \Id{\operatorname{Id}}
\def \incl{\operatorname{incl}}
\def \int{\operatorname{int}}
\def \K{\operatorname{K}}
\def \Ker{\operatorname{Ker}}
\def \length{\operatorname{length}}
\def \Magnus{\operatorname{M}}
\def \Magnusab{\operatorname{M}_{\operatorname{ab}}}
\def \ord{\operatorname{ord}}
\def \Quad{\operatorname{Quad}}
\def \rank{\operatorname{rank}}
\def \sgn{\operatorname{sgn}}
\def \span{\operatorname{span}}
\def \Tors{\operatorname{Tors}}

\def \mcg{\mathcal{M}}

\def \ie{i$.$e$.$ }

\newcommand{\Thurston}[1]{\Vert #1 \Vert_T}
\newcommand{\torsion}[1]{\Vert #1 \Vert_\tau}
\newcommand{\Alexander}[1]{\Vert #1 \Vert_A}

\title[]{An introduction to the abelian Reidemeister torsion of three-dimensional manifolds}

\date{October 12, 2010}
 
\author[]{Gw\'ena\"el Massuyeau}

\address{IRMA\\
Universit\'e de Strasbourg \& CNRS\\
7 rue Ren\'e Descartes\\ 
67084 Strasbourg\\ 
France}

\email{massuyeau@math.unistra.fr}

\keywords{$3$-manifold, Reidemeister torsion, Alexander polynomial}

\thanks{The author thanks Vladimir Turaev for comments on an earlier version of these notes.}

\subjclass{57M27, 57N10, 57M05, 57M10}

\begin{document}

\begin{abstract}
These notes accompany some lectures given at the autumn school ``\emph{Tresses in Pau}'' in October 2009.
The abelian Reidemeister torsion for $3$-manifolds, and its refinements by Turaev, are introduced.
Some applications, including relations between the Reidemeister torsion and other classical invariants, are surveyed.
\end{abstract}

\maketitle

\setcounter{tocdepth}{1}
{\small \tableofcontents} 

\section{Introduction}

The Reidemeister torsion and the Alexander polynomial are among the most classical invariants of $3$-manifolds. 
The first one was introduced in 1935 by Kurt Reidemeister \cite{Reidemeister} 
in order to classify lens spaces -- those closed $3$-manifolds which
are obtained by gluing two copies of the solid torus  along their boundary.
The second invariant is even  older since it dates back to 1928, 
when James Alexander defined it for knot and link complements \cite{Alexander}.
Those two invariants can be extended to higher dimensions and 
refer by their very definition to the maximal \emph{abelian} cover of the $3$-manifold.

In 1962, John Milnor interpreted in \cite{Milnor_duality} the Alexander polynomial of a link as a  kind of Reidemeister torsion 
(with coefficients in a field of rational fractions, instead of a cyclotomic field as was the case for lens spaces). 
This new viewpoint on the Alexander polynomial clarified its properties, among which its symmetry. 
This approach to study the Alexander polynomial  was systematized by Vladimir Turaev in \cite{Turaev_Alexander,Turaev_knot},
where he re-proved most of the known properties of the Alexander polynomial of $3$-manifolds and links
using general properties of the Reidemeister torsion. 
Vladimir Turaev also defined a kind of  ``maximal'' torsion, which is universal 
among Reidemeister torsions with abelian coefficients \cite{Turaev_Alexander_torsion}.
If $H$ denotes the first homology group of a compact $3$-manifold $M$, his invariant is an element
$$
\tau(M) \in Q(\Z[H])/\pm H
$$
of the ring of fractions of the group ring $\Z[H]$, up to some multiplicative indeterminacy. 
He also  explained how the sign ambiguity $\pm 1$ and the ambiguity in $H$ 
can be fixed by endowing $M$ with two extra structures which he introduced on this purpose
(a ``homological orientation'' \cite{Turaev_knot} and an ``Euler structure'' \cite{Turaev_Euler}).

These notes are aimed at introducing those invariants of $3$-manifolds by  Alexander, 
Reidemeister,  Milnor and  Turaev, and at presenting some of their properties and applications.
This will also give us the opportunity to present a few aspects of $3$-dimensional topology.
We have focussed on closed oriented $3$-manifolds to simplify the exposition,
although most of the material extends to compact $3$-manifolds with toroidal boundary (which include link complements).
Besides, we have restricted to \emph{abelian} Reidemeister torsions. 
In particular, geometrical aspects of Reidemeister torsions are not considered  (see \cite{Porti} for hyperbolic $3$-manifolds)
and we have not dealt with twisted Alexander polynomials (see \cite{FV} for a survey).
The theme of simple homotopy and Whitehead torsion is no treated neither
and, for this, we refer to Milnor's survey \cite{Milnor_Whitehead} and Cohen's book \cite{Cohen}.
For further reading on abelian Reidemeister torsions, 
we recommend Turaev's books \cite{Turaev_book,Turaev_book_dim_3},
which we used to prepare these notes, as well as Nicolaescu's book \cite{Nicolaescu}.

Throughout the notes, we use the following notation.

\begin{notation} 
An abelian group $G$ is denoted additively, except when it is thought of as a subset of the group ring $\C[G]$,
in which case multiplicative notation is used. 
Besides, (co)homology groups are taken with $\Z$ coefficients if no coefficients are  specified. 
\end{notation}

Moreover, as already mentioned, we are mainly interested in closed oriented connected topological $3$-manifolds.
So, without further mention, the term \emph{$3$-manifold} will refer to a  manifold of this kind.

\section{Three-dimensional manifolds}

We start by recalling a few general facts about $3$-manifolds.
We also introduce an important class of $3$-manifolds, namely the lens spaces.

\subsection{About triangulations and smooth structures}

One way to present a $3$-manifold $M$ is as the result of gluing finitely many tetrahedra, face by face, using affine homeomorphisms. 
For instance, $S^3$ can be identified with the boundary of a $4$-dimensional simplex, 
and so appears as the union of $5$ tetrahedra. In general, any $3$-manifold can be presented in that way:

\begin{theorem}[Triangulation in dimension $3$]
\label{th:triangulation}
For any $3$-manifold $M$, there exists a simplicial complex $K$ and a homeomorphism $\rho: |K| \to M$.
\end{theorem}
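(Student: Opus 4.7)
The plan is to invoke the celebrated theorem of Moise (1952): every topological $3$-manifold admits a piecewise-linear (PL) structure. Triangulability of $M$ then follows, since any PL manifold admits a simplicial subdivision. The complete argument is notoriously delicate, so my outline can only sketch the main strategy.

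First, I would cover $M$ by countably (in our compact case, finitely) many coordinate charts $\phi_i : U_i \to \R^3$. Each chart inherits a canonical PL structure from $\R^3$, but the transition homeomorphisms $\phi_{ij} = \phi_j \circ \phi_i^{-1}$ between open subsets of $\R^3$ are a priori only topological. The goal is to approximate each $\phi_{ij}$ by a PL homeomorphism, in a manner compatible enough to glue the local PL structures into a global one via an inductive patching procedure.

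The core technical tool is Bing's \emph{side approximation theorem}: every topologically embedded $2$-sphere (and, more generally, surface) in a PL $3$-manifold can be approximated arbitrarily closely by a PL-embedded surface. This result, which rests on the Schoenflies theorem in dimension $2$ combined with an intricate combinatorial shrinking argument, is the hard step I expect to wrestle with. It is also essentially $3$-dimensional: the analogous statement fails in higher dimensions, where non-triangulable topological manifolds are known to exist (e.g.\ Kirby--Siebenmann obstructions, and Freedman's $E_8$-manifold in dimension $4$). Granted the side approximation theorem, one can replace the transition maps by PL homeomorphisms on successively larger compact subsets, checking at each stage that the previously-constructed PL structure is preserved.

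Once a global PL structure on $M$ is in hand, the required simplicial complex $K$ and homeomorphism $\rho : |K| \to M$ are produced by any standard simplicial subdivision of the PL atlas, a purely combinatorial procedure. Alternatively, one could invoke Munkres' theorem that every topological $3$-manifold admits a smooth structure, and then appeal to the Cairns--Whitehead theorem that every smooth manifold is triangulable; this route trades Bing's approximation for the smoothing theory of topological manifolds, but concentrates the dimension-$3$ difficulty in the same place. Either path delivers the conclusion.
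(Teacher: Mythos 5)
The paper does not actually prove Theorem~\ref{th:triangulation}; it states it and cites Moise \cite{Moise}, exactly as you do. So in that sense your proposal matches the paper's approach, and citing Moise is the right move at the level of these notes. Two cautionary remarks about your expository sketch, though. First, the \emph{side approximation theorem} you invoke is Bing's tool from the early 1960s, which gave an \emph{alternative} proof of triangulability; Moise's original 1952 argument proceeded differently (via a sequence of ``partial orderings'' and cellular subdivision results), so attributing the side-approximation route to Moise muddles the history. Second, your proposed ``alternative route'' is circular: Munkres' and Whitehead's results go from a PL (or smooth) structure to a compatible smooth (or PL) structure; there is no smoothing theory that produces a smooth structure on a bare topological $3$-manifold without already having Moise--Bing in hand. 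Indeed the paper's own phrasing makes this order explicit (``these results \emph{also imply} that every $3$-manifold has an essentially unique smooth structure''): smoothing is a consequence of triangulability here, not an independent path to it.
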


\noindent
The pair $(K,\rho)$ is called a \emph{triangulation} of the manifold $M$.
The reader is refered to \cite{RS} for an introduction to piecewise-linear topology  and its terminology.

Furthermore, the triangulation of a $3$-manifold is essentially unique.

\begin{theorem}[Hauptvermutung in dimension $3$]
\label{th:hauptvermutung}
For any two triangulations $(K_1,\rho_1)$ and $(K_2,\rho_2)$ of a $3$-manifold $M$,
the homeomorphism ${\rho_2}^{-1} \circ \rho_1:|K_1| \to |K_2|$ is homotopic to a piecewise-linear homeomorphism.
In other words, there exist subdivisions $K'_1 \leq K_1$ and $K'_2 \leq K_2$ 
and a simplicial isomorphism $f:K'_1 \to K'_2$ such that $|f|: |K_1| \to |K_2|$ is homotopic to ${\rho_2}^{-1} \circ \rho_1$.
\end{theorem}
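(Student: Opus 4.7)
The plan is to reduce the statement to Moise's approximation theorem, which is the deep technical core. Write $h = \rho_2^{-1} \circ \rho_1 : |K_1| \to |K_2|$; this is a topological homeomorphism between compact polyhedra. The goal is to produce, up to subdivision, a simplicial isomorphism $f$ whose realisation $|f|$ is homotopic to $h$.

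First I would invoke the following approximation theorem of Moise: any homeomorphism between PL $3$-manifolds can be approximated, arbitrarily closely in the $C^0$ (compact--open) topology, by a PL homeomorphism. Applied to $h$, this gives, for any $\varepsilon > 0$, a PL homeomorphism $g : |K_1| \to |K_2|$ with $d(g,h) < \varepsilon$. I would then argue that $g$ and $h$ are homotopic: choose a finite cover of $|K_2|$ by open sets $U_i$ each contained in the star of some vertex of a sufficiently fine subdivision, so that every $U_i$ is contractible; if $\varepsilon$ is smaller than the Lebesgue number of the pullback cover $\{h^{-1}(U_i)\}$, then for every $x$ the points $g(x)$ and $h(x)$ lie in a common contractible $U_i$, and a standard obstruction/straight-line argument in local PL charts produces a homotopy between $g$ and $h$. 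Finally, since $g : |K_1| \to |K_2|$ is a PL homeomorphism between finite polyhedra, a standard result of PL topology (see \cite{RS}) provides subdivisions $K_1' \leq K_1$ and $K_2' \leq K_2$ and a simplicial isomorphism $f : K_1' \to K_2'$ with $|f| = g$; together with the homotopy $g \simeq h$, this is exactly the statement.

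The main obstacle is, of course, Moise's approximation theorem itself, which is genuinely three-dimensional and has no analogue in dimensions $\geq 4$ (the higher-dimensional Hauptvermutung fails). To sketch its proof, one proceeds locally: near each point of $|K_2|$ one combines Theorem \ref{th:triangulation} (to triangulate the image of a small ball under $h$) with a careful refinement of both source and target triangulations, using general position for $2$-dimensional subcomplexes in a $3$-manifold, to produce a PL approximation on that neighbourhood. The delicate part is to patch these local PL approximations into a single global PL homeomorphism; this is handled by an inductive argument over the simplices of $K_2$, modifying the map on successive regular neighbourhoods and using Bing's shrinking technique (or Moise's original direct combinatorial arguments) to control the accumulated error.

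A brief remark on strategy: conceptually one can also deduce the theorem from the equivalent statement that a topological $3$-manifold admits a \emph{unique} PL structure up to PL homeomorphism isotopic to the identity. In that formulation, Theorem \ref{th:triangulation} provides existence and the Hauptvermutung is precisely uniqueness, both being packaged together in Moise's theorem. For the purposes of these notes I would present the reduction to the approximation theorem as above, and refer the reader to Moise's monograph for the detailed proof of the approximation statement.
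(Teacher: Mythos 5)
The paper does not prove this theorem; it simply attributes it (together with the preceding Triangulation theorem) to Moise \cite{Moise} and moves on, so there is no in-text proof to compare against. Your reduction to Moise's approximation theorem is precisely the standard route -- indeed it is how Moise himself packaged the result -- and the overall structure (approximate $h$ by a PL homeomorphism $g$, show closeness implies homotopy, then unpack ``PL homeomorphism'' as ``simplicial isomorphism after subdivision'') is sound. Two small points of precision. First, in the closeness-implies-homotopy step you want $\varepsilon$ smaller than the Lebesgue number of the cover $\{U_i\}$ of the \emph{codomain} $|K_2|$ (so that the pair $\{g(x),h(x)\}$, of diameter $<\varepsilon$, falls inside some $U_i$); the Lebesgue number of the pullback cover $\{h^{-1}(U_i)\}$ of the domain controls something else. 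Second, ``patch local straight-line homotopies'' is harder to make rigorous than it sounds; the clean version is to embed $|K_2|$ in some $\R^N$, take a regular neighborhood $N$ with retraction $r\colon N\to|K_2|$, and observe that for $\varepsilon$ small the straight-line homotopy from $g$ to $h$ in $\R^N$ stays in $N$, so $r$ of it is the desired homotopy (this is just the ANR property of compact polyhedra). Neither issue affects the logic; both are standard and easily repaired.
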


Those two theorems, which are due to Moise \cite{Moise}, show that
piecewise-linear invariants give rise to topological invariants in dimension $3$.
As shown by Munkres \cite{Munkres} and Whitehead \cite{Whitehead_smooth},
these results also imply that every $3$-manifold has an essentially unique smooth structure.
The reader is refered to \cite[\S 3.10]{Thurston_book}.

\begin{theorem}[Smoothing in dimension $3$]
\label{th:smooth}
Any $3$-manifold $M$ has a smooth structure, and any homeomorphism
between smooth $3$-manifolds is homotopic to a diffeomorphism.
\end{theorem}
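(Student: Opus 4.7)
The plan is to bootstrap the smoothing theorem from the two preceding results (triangulation and Hauptvermutung) via the classical smoothing theory of Cairns--Whitehead--Munkres, which is particularly well-behaved in low dimensions.

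First, to get existence of a smooth structure: apply Theorem~\ref{th:triangulation} to produce a triangulation $(K,\rho)$ of $M$, which equips $M$ with a piecewise-linear atlas whose charts are the stars of vertices. Then I would invoke the Whitehead smoothing theorem for PL manifolds, which asserts that every PL manifold of dimension at most $7$ admits a compatible smooth structure (in fact, by Munkres, such a smoothing is unique up to concordance when the dimension is at most $3$, since the relevant obstructions live in cohomology groups with coefficients in $\Gamma_k = 0$ for $k \leq 3$). Pulling this smooth structure back along $\rho$ endows $M$ with the desired smooth structure.

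Second, to show that a homeomorphism $f \colon M_1 \to M_2$ between smooth $3$-manifolds is homotopic to a diffeomorphism: choose triangulations $(K_1,\rho_1)$ and $(K_2,\rho_2)$ of $M_1$ and $M_2$ whose underlying PL structures are compatible (in the sense of smoothing theory) with the given smooth structures; this is possible because every smooth manifold admits a Whitehead-compatible triangulation. Apply Theorem~\ref{th:hauptvermutung} to the homeomorphism $\rho_2^{-1} \circ f \circ \rho_1 \colon |K_1| \to |K_2|$ to replace it, up to homotopy, by a PL homeomorphism. Conjugating back by $\rho_1$ and $\rho_2$, we obtain a map homotopic to $f$ which is PL with respect to our chosen triangulations, hence smoothable.

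The main obstacle, and the real content that must be quoted from the literature, is the smoothing-theoretic statement that in dimension $3$ a PL homeomorphism between smooth PL manifolds can be approximated by a diffeomorphism (rather than merely being homotopic to one through PL maps). This rests on the vanishing of the Cairns--Hirsch obstructions in low dimensions, equivalently on the fact that $\mathrm{PL}/O$ is $6$-connected; in the $3$-dimensional setting this is exactly the content of the Munkres--Whitehead results cited in the paper. Granting this, the PL homeomorphism produced in the previous step can be isotoped (hence in particular homotoped) to a diffeomorphism, which completes the argument.
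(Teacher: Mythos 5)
The paper gives no proof of Theorem~\ref{th:smooth}: it simply states that the result follows from Theorems~\ref{th:triangulation} and~\ref{th:hauptvermutung} via Munkres \cite{Munkres} and Whitehead \cite{Whitehead_smooth}, and refers the reader to Thurston's book for details. Your sketch is an accurate account of exactly that deduction --- triangulate, smooth the PL structure, then use the Hauptvermutung plus the vanishing of the low-dimensional Cairns--Hirsch/Munkres obstructions to upgrade a PL homeomorphism to a diffeomorphism --- so it follows the same route the paper points to.
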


\subsection{Heegaard splittings}

A \emph{handlebody} of \emph{genus} $g$ is a compact oriented $3$-manifold  
(with boundary) obtained by attaching $g$ handles to a $3$-dimensional ball: 
\begin{center}
\labellist \small \hair 2pt
\pinlabel {$1$} [br] at 48 212
\pinlabel {$g$} [bl] at 383 210
\pinlabel {$\cdots$} at 215 193
\endlabellist
\centering
\includegraphics[scale=0.45]{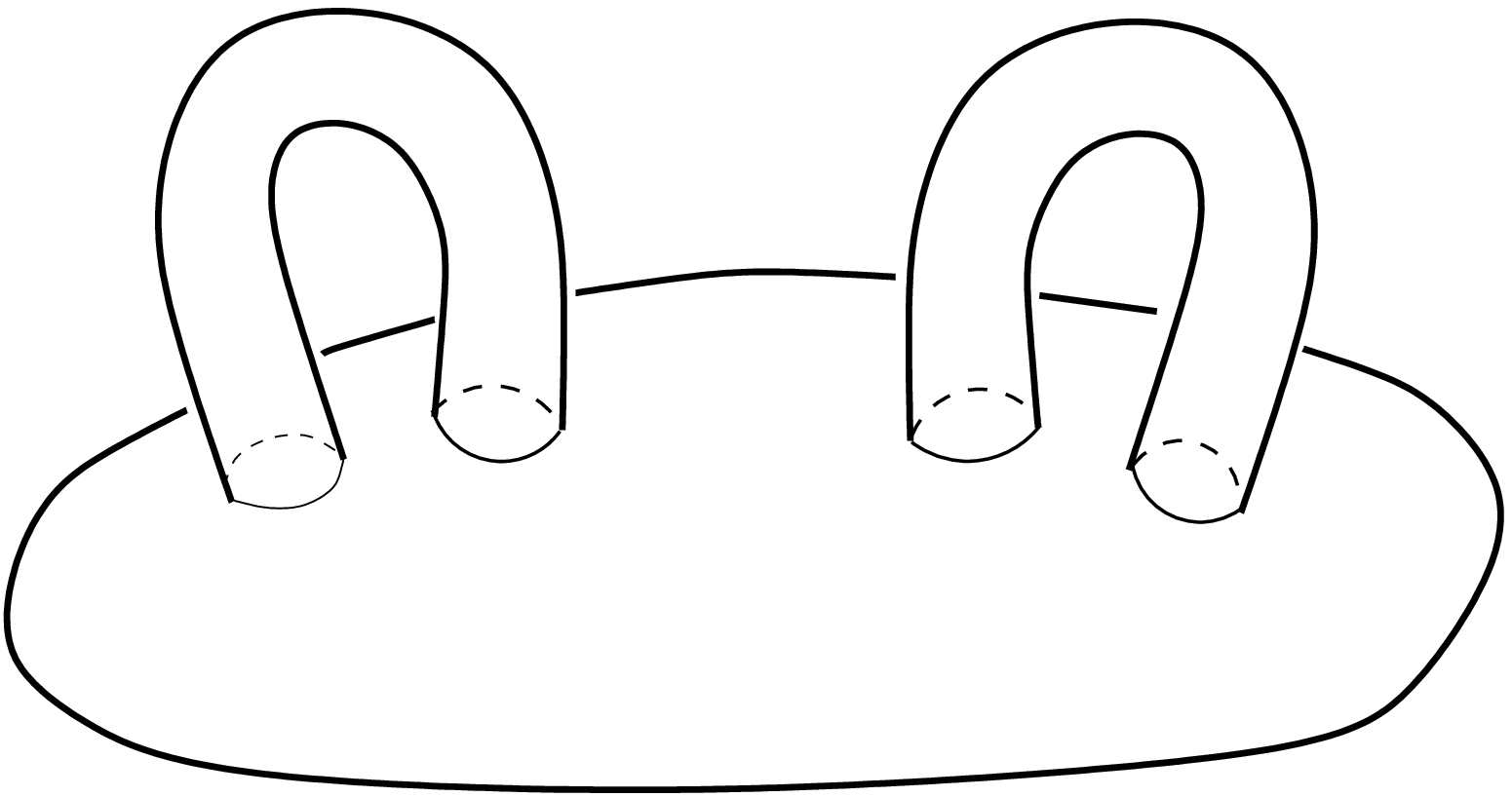}
\end{center}
(Those handles have  ``index $1$'' in the terminology of handle decompositions.)
It is easily checked that any two handlebodies are homeomorphic if, and only if, they have the same genus.
We fix  a ``standard'' handlebody of genus $g$, which we denote by $H_g$. We also set
$$
\Sigma_g := \partial H_g
$$
which is our ``standard'' closed oriented connected surface of genus $g$.

\begin{definition}
A \emph{Heegaard splitting} of \emph{genus} $g$ for a $3$-manifold $M$ is a decomposition $M=A \cup B$ 
into two handlebodies $A$ and $B$ of genus $g$ such that $A\cap B= \partial A = -\partial B$.
\end{definition}

\noindent
For example, the decomposition of the $3$-sphere
$$
S^3 = \{x=(x_0,x_1,x_2,x_3)\in \R^4 : x_0^2 + x_1^2 + x_2^2 + x_3^2  = 1\}
$$
into two hemispheres
$$
D^3_+ := \{x \in S^3: x_3\geq 0\} \quad \hbox{and} \quad
D^3_- := \{x  \in S^3: x_3\leq 0\}
$$
gives a genus $0$ Heegaard splitting of $S^3$.

\begin{proposition}
\label{prop:Heegaard}
Any $3$-manifold has a Heegaard splitting.
\end{proposition}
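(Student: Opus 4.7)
The plan is to produce the splitting directly from a triangulation of $M$, taking the two handlebodies to be regular neighborhoods of the $1$-skeleton and of its dual.

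First, by Theorem~\ref{th:triangulation} fix a triangulation $(K,\rho)$ of $M$; denote by $V, E, F, T$ the numbers of $0$-, $1$-, $2$-, and $3$-simplices of $K$, and by $K^{(1)} \subset M$ its $1$-skeleton. Let $L \subset M$ be the dual $1$-skeleton: a graph with one vertex at the barycenter of each $3$-simplex and one edge, passing through the barycenter of each $2$-simplex, joining the barycenters of the two $3$-simplices sharing that $2$-face. Since $M$ is closed and connected, both $K^{(1)}$ and $L$ are finite connected graphs embedded in $M$.

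Second, take $A$ to be a closed regular neighborhood of $K^{(1)}$ and set $B := \overline{M \setminus A}$. Working in the second barycentric subdivision $K''$, one can realize $A$ as the union of all closed simplices of $K''$ meeting $K^{(1)}$; then $B$ is the union of the remaining simplices, whose vertices are exactly the barycenters of the $2$- and $3$-simplices of $K$, so that $B$ is itself a closed regular neighborhood of $L$. By construction $A \cup B = M$ and $A \cap B = \partial A$ coincides with $\partial B$ (with opposite orientation), a closed orientable surface.

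Third, I claim that any closed regular neighborhood of a finite connected graph $G$ in an orientable $3$-manifold is a handlebody of genus $b_1(G)$. Indeed, collapsing $G$ to a wedge of circles, the neighborhood deformation retracts onto a $3$-ball with $b_1(G)$ tubes attached along disjoint pairs of disks in its boundary, which is exactly the definition of a handlebody. Applied to $K^{(1)}$ and $L$, this yields that $A$ is a handlebody of genus $g_A = E - V + 1$ and $B$ is a handlebody of genus $g_B = F - T + 1$. Since $\chi(M) = V - E + F - T = 0$ for any closed $3$-manifold, we conclude that $g_A = g_B =: g$, so that $M = A \cup B$ with $\partial A = -\partial B \cong \Sigma_g$ is the desired Heegaard splitting.

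The main obstacle is the piecewise-linear verification that $B$ truly coincides with a regular neighborhood of the dual $1$-skeleton, and that regular neighborhoods of graphs in orientable $3$-manifolds are genuine handlebodies rather than merely homotopy equivalent to graphs; both points reduce to local calculations tetrahedron by tetrahedron and rely crucially on the existence of a triangulation (Theorem~\ref{th:triangulation}).
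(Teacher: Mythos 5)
Your proof is correct and follows essentially the same classical argument as the paper: triangulate $M$, take the handlebody $A$ to be a regular neighborhood of the $1$-skeleton of $K$, take $B$ to be a regular neighborhood of the dual $1$-skeleton, and observe these fit together to give the splitting. The paper's proof is a brief sketch; you have added helpful detail (the second barycentric subdivision realization, the explicit claim that regular neighborhoods of embedded graphs in orientable $3$-manifolds are handlebodies, and the Euler-characteristic check that $g_A = g_B$, though the last is automatic once one knows $\partial A = -\partial B$).
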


\begin{proof} 
Here is the classical argument.
Let $M$ be a $3$-manifold and let $K$ be a triangulation of $M$: $|K|=M$. 
Then, we notice in $M$ two embedded graphs: 
one is the $1$-skeleton $a$ of $K$ and the other one is the $1$-skeleton $b$ of the cell decomposition dual to $K$.
We fatten the graph $a$ to a $3$-dimensional submanifold $A \subset M$: 
each vertex of $a$ is fattened to a ball, while each edge of $a$ is fattened 
to a handle connecting two such balls. By choosing a maximal subtree of the graph $a$, we see that $A$ is a handlebody. 
Similarly, we fatten the graph $b$ to a handlebody $B \subset M$.
Moreover, we can choose $A$ and $B$ fat enough so that $M =A \cup B$ and $A\cap B= \partial A = -\partial B$.
\end{proof}

Up to homeomorphism, any Heegaard splitting of genus $g$ can be written
in terms of the ``standard'' genus $g$ handlebody as
$$
M_f := H_g \cup_f (-H_g)
$$
where $f:\Sigma_g \to \Sigma_g$ is an orientation-preserving homeomorphism.
This viewpoint leads us to consider the \emph{mapping class group} of the surface $\Sigma_g$, \ie the group
$$
\mcg(\Sigma_g) := \operatorname{Homeo}^{+}(\Sigma_g)/ \cong
$$
of orientation-preserving homeomorphisms $\Sigma_g \to \Sigma_g$ up to isotopy. 

\begin{lemma}
The homeomorphism class of $M_f$ only depends on  $[f] \in \mcg(\Sigma_g)$.
\end{lemma}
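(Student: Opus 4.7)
The plan is to absorb any isotopy between $f$ and $f'$ into a collar of the boundary of one of the two handlebodies, turning it into a self-homeomorphism of that handlebody while leaving the other factor untouched. Suppose then that $F : \Sigma_g \times [0,1] \to \Sigma_g$ is an ambient isotopy with $F_0 = f$ and $F_1 = f'$. I would first form the \emph{relative} isotopy $G_t := F_t \circ f^{-1}$, which runs from $G_0 = \mathrm{Id}_{\Sigma_g}$ to $G_1 = f' \circ f^{-1}$ through orientation-preserving self-homeomorphisms of $\Sigma_g$. This reduces the task to extending the boundary homeomorphism $G_1$ to a self-homeomorphism of $-H_g$ supported near the boundary.

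Next I would invoke the existence of a (topological) collar $c : \Sigma_g \times [0,1] \hookrightarrow -H_g$ of $\partial(-H_g)$, with $c(x,0) = x$, and define a self-homeomorphism $\psi : -H_g \to -H_g$ by
$$
\psi\bigl(c(x,t)\bigr) := c\bigl(G_{1-t}(x),\, t\bigr) \quad \text{for } (x,t) \in \Sigma_g \times [0,1],
$$
extended by the identity outside the collar. Since $G_0 = \mathrm{Id}$, the formula agrees with the identity on the inner face $\Sigma_g \times \{1\}$ of the collar, so $\psi$ is well-defined and continuous; its restriction to $\partial(-H_g)$ equals exactly $G_1 = f' \circ f^{-1}$, and its inverse is built from $G_{1-t}^{-1}$.

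I would then define $\Phi : M_f \to M_{f'}$ as the identity on the first handlebody $H_g$ and as $\psi$ on the second copy $-H_g$, and check compatibility with the gluings: a point $x \in \partial H_g$ on the first copy is identified in $M_f$ with $f(x)$ on the second copy, and under $\Phi$ these go respectively to $x$ and to $\psi(f(x)) = G_1(f(x)) = f'(x)$, which are precisely the points glued together in $M_{f'}$. The inverse homeomorphism is obtained by the same recipe applied to the reversed isotopy $F_{1-t}$. There is no deep obstacle here: the only care required is in writing the absorption formula so that $\psi$ is continuous at both ends of the collar, after which orientation-preservation of $\Phi$ is automatic from that of the homeomorphisms $G_t$.
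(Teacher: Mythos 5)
Your proof is correct and takes essentially the same approach as the paper: absorb an isotopy between the gluing maps into a collar neighborhood of the boundary of one handlebody, producing a self-homeomorphism of that handlebody which extends the boundary discrepancy $f'\circ f^{-1}$. The paper states this more compactly via the general observation (\ref{eq:extension}) that pre- or post-composing $f$ with the restriction of a handlebody self-homeomorphism does not change $M_f$, but the underlying collar-absorption argument is identical to yours.
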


\begin{proof}
For any orientation-preserving homeomorphisms $E:H_g \to H_g$ and $f:\Sigma_g \to \Sigma_g$, we have
\begin{equation}
\label{eq:extension}
M_{f \circ E|_{\Sigma_g}} \cong_+ M_f \cong_+ M_{E|_{\Sigma_g} \circ f}.
\end{equation}
If $e:\Sigma_g \to \Sigma_g$ is a homeomorphism isotopic to the identity, 
we can use a collar neighborhood of $\Sigma_g$ in $H_g$ to construct a homeomorphism $E:H_g \to H_g$
such that $E|_{\Sigma_g}=e$. So, we obtain $M_{f\circ e}\cong_+ M_f$.
\end{proof}

The previous lemma, together with the next proposition, shows that there is only one Heegaard splitting of genus $0$,
namely $S^3 \subset \R^4$  decomposed into two hemispheres.

\begin{proposition}[Alexander]
\label{prop:Alexander}
The group $\mcg(S^2)$ is trivial.
\end{proposition}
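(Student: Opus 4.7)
The plan is to isotope any orientation-preserving self-homeomorphism $f:S^2 \to S^2$ to the identity in three stages, progressively fixing more and more of $S^2$, so that the only genuine ``analysis'' is a single invocation of the Alexander trick on a disk at the end.

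\textbf{Stage 1 (fix a circle setwise).} Fix a separating circle $C \subset S^2$ (say an equator) cutting $S^2$ into two closed disks $D_+, D_-$. By the Schoenflies theorem the embedded circle $f(C)$ bounds two disks in $S^2$, and moreover is carried onto $C$ by an ambient isotopy $h_t$. Replacing $f$ by $h_1 \circ f$, I may assume $f(C) = C$.

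\textbf{Stage 2 (fix the circle pointwise).} A local chart argument at any point of $C$ shows that, since $f$ is orientation-preserving on $S^2$, the restriction $f|_C$ is orientation-preserving on $C$ \emph{if and only if} $f$ preserves each hemisphere. Let $R$ be the rotation of $S^2$ by $\pi$ about a diameter lying in the plane of $C$: it is orientation-preserving, isotopic to $\Id$ through rotations, swaps the two hemispheres, and reverses the orientation of $C$. Since $R$ is isotopic to $\Id$, I may replace $f$ by $R \circ f$ if needed, and assume that $f$ preserves each hemisphere and that $f|_C$ is orientation-preserving. As $\operatorname{Homeo}^+(S^1)$ is path-connected (linear interpolation on the universal cover), $f|_C$ is isotopic to $\Id_C$; using a collar neighborhood of $C$ in $S^2$, this isotopy extends to an ambient isotopy, and I may further assume $f|_C = \Id_C$.

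\textbf{Stage 3 (Alexander trick).} Now $f$ restricts to self-homeomorphisms $f_\pm : D_\pm \to D_\pm$ each fixing the boundary pointwise. Identifying $D_\pm$ with the closed unit disk of $\R^2$, the classical Alexander trick
$$
F_t(x) = \begin{cases} t \, f_\pm(x/t) & \text{if } 0 < t \le 1 \text{ and } \|x\| \le t, \\ x & \text{otherwise,} \end{cases}
\qquad F_0 = \Id,
$$
gives an isotopy of $D_\pm$ from $f_\pm$ to $\Id$, rel $\partial D_\pm$. The two isotopies agree on $C$ and assemble into an isotopy from $f$ to $\Id_{S^2}$.

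The main obstacle is Stage 2: once $f(C) = C$ is in place one must simultaneously normalize two orientation data (which hemisphere $f$ sends where, and the orientation of $f|_C$), and the crux is to notice that these are coupled through the global orientation of $S^2$, so that a single composition with the rotation $R$ suffices. Stages 1 and 3 are then standard appeals to the Schoenflies theorem and the Alexander trick.
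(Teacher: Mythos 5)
Your proof is correct and takes essentially the same route as the paper's: reduce to a fixed circle via Jordan--Sch\"onflies, make $f$ the identity on that circle, then finish with the Alexander trick on the two complementary disks. The one place you go beyond the paper's exposition is Stage~2: the paper implicitly handles the orientation bookkeeping by choosing an \emph{oriented} curve $\gamma$ and isotoping $f(\gamma)$ onto $\gamma$ as oriented curves (from which orientation-preservation of $f|_\gamma$ and preservation of each hemisphere both follow, since $f$ preserves the orientation of $S^2$), whereas you make the coupling between ``$f|_C$ orientation-preserving'' and ``$f$ preserves each hemisphere'' explicit and normalize with the rotation $R$. That is a useful clarification of a point the paper glosses over, but it is an elaboration of the same argument rather than a different one.
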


\begin{proof} We start with the following
\begin{quote}
\textbf{Claim.} {Let $n\geq 1$ be an integer.
Any homeomorphism $h:D^n \to D^n$ which restricts to the identity on the boundary,
is isotopic to the identity of $D^n$ relatively to $\partial D^n$.}
\end{quote}

\noindent
Indeed, for all $t\in [0,1]$, we  define a homeomorphism $h_t:D^n \to D^n$ by 
$$
h_t(x) := \left\{\begin{array}{ll}
t\cdot h(x/t) & \hbox{if } 0 \leq \Vert x \Vert \leq t,\\
x & \hbox{if } t \leq \Vert x \Vert \leq 1.
\end{array}\right.
$$
Here $D^n$ is seen as a subset of $\R^n$ and $\Vert x \Vert$ is the euclidian norm of $x\in \R^n$. Then, the map
$$
H: D^n \times [0,1] \longrightarrow D^n, \ (x,t) \longmapsto h_t(x)
$$
is an isotopy (relatively to $\partial D^n$) between $h$ and the identity. 
This way of proving the claim is known as the ``Alexander's trick''.

Now, let $f:S^2 \to S^2$ be an orientation-preserving homeomorphism.
We are asked to prove that $f$ is isotopic to the identity. 
We choose an oriented simple closed curve $\gamma \subset S^2$.
It follows from the Jordan--Sch\"onflies theorem that the curves $f(\gamma)$ and $\gamma$ are isotopic
and, so, they are ambiently isotopic.
Therefore we can assume that $f(\gamma)=\gamma$.
Since any orientation-preserving homeomorphism $S^1\to S^1$ is isotopic to the identity 
(as can be deduced from the claim for $n=1$), 
we can  assume that $f|_\gamma$ is the identity of $\gamma$.
Then, because $\gamma$ splits $S^2$ into two disks -- say $D$ and $D'$ -- 
it is enough to show that $f|_D:D \to D$ is isotopic to the identity of $D$ relatively to $\partial D$,
and similarly for $D'$. But, this is an application for $n=2$ of the above claim.
\end{proof}

\subsection{Example: lens spaces}

In contrast with the genus $0$ case, there are infinitely many $3$-manifolds
with a Heegaard splitting of genus $1$. Such manifolds are called \emph{lens spaces}.
In order to enumerate them, we need to determine the mapping class group of a $2$-dimensional torus.

\begin{proposition}
\label{prop:torus}
Let $(a,a^\sharp)$ be the basis of $H_1(S^1 \times S^1)$ defined by
$a := [S^1\times 1]$ and $a^\sharp:=[1 \times S^1]$. Then, the map
$$
\mcg(S^1\times S^1) \longrightarrow \operatorname{SL}(2;\Z)
$$ 
which sends an $[f]$ to the matrix of $f_*:H_1(S^1 \times S^1) \to H_1(S^1 \times S^1)$
relative to the basis $(a,a^\sharp)$, is a group isomorphism.
\end{proposition}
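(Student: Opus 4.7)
The plan is to verify in turn that the prescribed map is well-defined, that it is a group homomorphism, that it is surjective, and that it is injective. Well-definedness follows from the homotopy invariance of $H_1$. The image lies in $\operatorname{SL}(2;\Z)$ because an orientation-preserving self-homeomorphism acts as $+1$ on $H_2(S^1\times S^1)\cong \Z$, and this action coincides with the determinant of the induced action on $H_1$ via the unimodular cup product pairing $H^1 \times H^1 \to H^2$. Functoriality of $H_1$ then makes the map a homomorphism. For surjectivity, one observes that every matrix $M \in \operatorname{SL}(2;\Z)$ acts linearly on $\R^2$ preserving $\Z^2$, and so descends to an orientation-preserving self-homeomorphism of $S^1\times S^1 = \R^2/\Z^2$ inducing $M$ on $H_1$ relative to the basis $(a,a^\sharp)$.

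The heart of the argument is injectivity. Let $f$ be an orientation-preserving self-homeomorphism of $S^1 \times S^1$ with $f_*=\Id$ on $H_1$. I would progressively simplify $f$ by composing it with ambient isotopies. First, since $[f(a)]=[a]$ and any two simple closed curves on the torus representing the same primitive homology class are ambient isotopic, one may arrange $f(a)=a$. Next, $f|_a$ is an orientation-preserving self-homeomorphism of the circle $a$, hence isotopic to $\Id_a$, and this isotopy can be extended through a collar of $a$ to arrange $f|_a=\Id$. Cutting $S^1\times S^1$ along $a$ produces an annulus in which $a^\sharp$ becomes a properly embedded arc; the arc $f(a^\sharp)$ shares both endpoints with $a^\sharp$ and is homologous to it rel endpoints (as $f_*[a^\sharp]=[a^\sharp]$ and the relative homology in the annulus records the winding around the $S^1$ factor), and two such simple arcs in an annulus are isotopic rel endpoints. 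Thus one can further isotope $f$ to arrange $f(a^\sharp)=a^\sharp$ and subsequently $f|_{a \cup a^\sharp}=\Id$. Finally, cutting along $a\cup a^\sharp$ opens $S^1\times S^1$ into a closed disk $D^2$ on which $f$ restricts to a self-homeomorphism equal to the identity on $\partial D^2$; Alexander's trick, already established in the proof of Proposition~\ref{prop:Alexander}, then provides an isotopy rel boundary from $f|_{D^2}$ to $\Id_{D^2}$, which reassembles to an isotopy from $f$ to $\Id$ on $S^1\times S^1$.

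The main technical obstacle is the very first normalization step: establishing that any two simple closed curves on $S^1\times S^1$ representing the same primitive homology class are ambient isotopic. One clean way is to pass to the universal cover $\R^2$, where any lift of such a curve is a properly embedded line asymptotic to the corresponding lattice direction, and then to straighten it equivariantly to the straight line through the origin. Once this classification of curves (and the analogous statement for arcs in an annulus) is in hand, the remainder of the argument reduces to iterated applications of Alexander's trick.
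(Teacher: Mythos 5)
Your proof of surjectivity and the determinant constraint is essentially the paper's (the cup-product form on $H^1$ and the intersection form on $H_1$ being Poincar\'e dual, these are the same observation). The real divergence is in injectivity. The paper lifts $f$ to a $\Z^2$-equivariant homeomorphism $\widetilde{f}$ of $\R^2$ fixing the origin, produces the straight-line homotopy $t\,\widetilde{f}(x)+(1-t)x$ (which is equivariant, hence descends), and then invokes Baer's theorem that homotopic self-homeomorphisms of a closed surface are isotopic. You instead normalize $f$ on a one-vertex CW-structure of the torus -- first on $a$, then on $a^\sharp$, then cut open to a disk and finish with Alexander's trick. The advantage of your route is that it avoids the Baer black box entirely and produces the isotopy by hand; the cost is that it front-loads two curve-classification lemmas (primitive simple closed curves on $T^2$ are ambient isotopic when homologous; properly embedded arcs in an annulus are isotopic rel endpoints when homologous rel endpoints), plus the bookkeeping of making $f$ successively standard on larger subcomplexes. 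The paper's route is shorter given Baer, and Baer is cited elsewhere anyway, so the trade is roughly even.

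There is, however, one genuine gap in the cut-and-normalize argument. After you have arranged $f(a)=a$ and $f|_a=\Id$, you cut along $a$ and claim that $f(a^\sharp)$ becomes a single properly embedded arc in the annulus sharing both endpoints with $a^\sharp$. This presupposes that $f(a^\sharp)\cap a$ is the single point $(1,1)$, which is not automatic: $f|_a=\Id$ only guarantees $(1,1)\in f(a^\sharp)\cap a$, and nothing so far prevents $f(a^\sharp)$ from crossing $a$ at additional points (in which case the cut-open image is a disjoint union of several arcs, some with both endpoints on the same boundary circle, and the rel-endpoints arc classification does not apply). You need an intermediate step: isotope $f$ rel $a$ so that $f(a^\sharp)$ meets $a$ transversely and in the minimal number of points. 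Since the algebraic intersection number $f(a^\sharp)\cdot a = a^\sharp\cdot a = 1$ and the curves are simple, any extra intersection points come in pairs bounding an innermost bigon, which can be removed by an ambient isotopy supported near the bigon; iterating reduces $|f(a^\sharp)\cap a|$ to $1$. This is standard (and of the same flavor as the curve-classification lemma you already flag as the main technical input), but it is a distinct step and without it the claim that $f(a^\sharp)$ ``shares both endpoints with $a^\sharp$'' in the annulus does not follow. With that step inserted, the rest of the argument goes through.
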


\begin{proof}
Here is the classical argument.
The fact that we have a group homomorphism $\mcg(S^1\times S^1) \to \operatorname{GL}(2;\Z)$ is clear.
For all $[f]\in \mcg(S^1\times S^1)$, 
the matrix of $f_*$ has determinant one for the following reason:
$f$ preserves the orientation, 
so that $f_*$ leaves invariant the intersection pairing  $H_1(S^1\times S^1) \times H_1(S^1\times S^1) \to \Z$.

The surjectivity of the homomorphism can be proved as follows. We realize $S^1\times S^1$ as $\R^2/ \Z^2$,
in such a way that the loop $S^1\times 1$ lifts to $[0,1]\times 0$ and $1 \times S^1$ lifts to $0 \times [0,1]$. 
Any matrix $T \in \operatorname{SL}(2;\Z)$ defines a linear homeomorphism $\R^2 \to \R^2$,
which leaves $\Z^2$ globally invariant and so induces an (orientation-preserving) homeomorphism 
$t:\R^2/\Z^2 \to \R^2/\Z^2$. It is easily checked that the matrix of $t_*$ is exactly $T$. 

To prove the injectivity, let $f:S^1\times S^1 \to S^1\times S^1$ be a homeomorphism 
whose corresponding matrix is trivial. Since $\pi_1(S^1 \times S^1)$ is abelian, this implies that $f$ acts trivially at the level of 
the fundamental group. The canonical projection $\R^2 \to \R^2/\Z^2$ gives the universal cover of $S^1\times S^1$.
Thus, $f$ can be lifted to a unique homeomorphism $\widetilde{f}: \R^2 \to \R^2$ such that $\widetilde{f}(0)=0$ and, 
by our assumption on $f$, $\widetilde{f}$ is $\Z^2$-equivariant. Therefore, the ``affine'' homotopy
$$
H: \R^2 \times [0,1] \longrightarrow \R^2, \ (x,t) \longmapsto t\cdot\widetilde{f}(x) +(1-t) \cdot x
$$
between the identity of $\R^2$ and $\widetilde{f}$, descends to a homotopy between $\Id_{S^1\times S^1}$ and $f$.
An old result of Baer asserts that two self-homeomorphisms of a closed surface are homotopic
if and only if they are isotopic \cite{Baer1,Baer2}, so we deduce that $[f] = 1\in \mcg(S^1 \times S^1)$. 
\end{proof}

In the sequel, we identify the standard handlebody $H_1$ with the solid torus $D^2 \times S^1$,
so that $\Sigma_1$ is identified with $S^1 \times S^1$. A  Heegaard splitting of genus $1$ 
$$
M_f:= \left(D^2 \times S^1\right) \cup_f \left(-D^2 \times S^1\right)
$$
is thus encoded by $4$ parameters $p,q,r,s \in \Z$ such that
$$
\hbox{matrix of $f_*$} = \left(\begin{array}{cc} q & s\\ p & r 
\end{array}\right) 
\quad \hbox{with} \ qr-ps=1.
$$
So, $3$-manifolds having a genus $1$ Heegaard splitting can be indexed by  quadruplets $(p,q,r,s)$ of that sort, 
but many repetitions then occur. Indeed, let $E_k:H_1 \to H_1$ (for $k\in \Z$) 
be the self-homeomorphism of $H_1 =  D^2 \times S^1 \subset \C \times \C$ 
defined by $E_k(z_1,z_2) := (z_1z_2^k,z_2)$. We have
$$
\hbox{matrix of $\left(E_k|_{S^1 \times S^1}\right)_*$ }
=\left(\begin{array}{cc} 1 & k \\ 0 & 1
\end{array}\right)
$$
so that 
$$
\hbox{matrix of $\left(f \circ E_k|_{S^1 \times S^1}\right)_*$}
=\left(\begin{array}{cc} q & kq + s \\ p & kp +r
\end{array}\right).
$$
We deduce from (\ref{eq:extension}) that the homeomorphism class of $M_f$ 
only depends on the pair of parameters $(p,q)$. Furthermore, we have
$$
\hbox{matrix of $\left(E_k|_{S^1 \times S^1} \circ f\right)_*$}
=\left(\begin{array}{cc} q + pk & s +rk \\ p & r \end{array}\right)
$$
so that (for a given $p$) only the residue class of $q$ modulo $p$ does matter. 
Finally, the self-homeomorphism $C: H_1 \to H_1$ of 
$H_1 =  D^2 \times S^1 \subset \C \times \C$ defined by $C(z_1,z_2) := (\overline{z_1},\overline{z_2})$ satisfies
$$
\hbox{matrix of $\left(C|_{S^1 \times S^1}\right)_*$ }
=\left(\begin{array}{cc} -1 & 0 \\ 0 & -1
\end{array}\right)
$$
so that the $3$-manifolds corresponding to the pairs $(p,q)$ and $(-p,-q)$ are orientation-preserving homeomorphic.
Thus, we can assume that $p\geq 0$.

\begin{definition}
Let $p\geq 0$ be an integer and let $q$ be an invertible element of $\Z_p$.
The \emph{lens space} of parameters $(p,q)$ is the $3$-manifold 
$$
L_{p,q} :=  \left(D^2 \times S^1\right) \cup_f \left(-D^2 \times S^1\right)
$$
where $f:S^1 \times S^1 \to S^1 \times S^1$ is an orientation-preserving homeomorphism
such that $f_*(a) = q \cdot a + p\cdot a^\sharp$ (in the notation of Proposition \ref{prop:torus}). 
\end{definition}

The cases $p=0$ and $p=1$ are special. For $p=0$, $q\in \Z_0=\Z$ must take the value $+1$ or $-1$
but, by the previous discussion, we have $L_{0,1} \cong_+ L_{0,-1}$.
For $p=1$, there is no choice for $q\in \Z_1 =\{0\}$.
Observe that $L_{0,1} \cong S^2 \times S^1$ (by decomposing $S^2$ into two hemispheres $D^2_+$ and $D^2_-$)
and that $L_{1,0} \cong S^3$ (by identifying $S^3$ with the boundary of $D^2 \times D^2$). 
Thus, the topological classification of lens spaces is interesting only for $p\geq 2$.

\begin{theorem}[Reidemeister \cite{Reidemeister}]
\label{th:lens}
For any integers $p,p'\geq 2$ and invertible residue classes $q \in \Z_p, q' \in \Z_{p'}$, we have
$$
L_{p,q} \cong_+ L_{p',q'} \Longleftrightarrow \left(p=p' \hbox{ and } q' = q^{\pm 1}\right).
$$
\end{theorem}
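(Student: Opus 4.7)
The plan is to split the equivalence into its two directions. The backward direction is the easier one: if $q' = q$ there is nothing to prove, and to obtain $L_{p,q} \cong_+ L_{p,q^{-1}}$ I would swap the two solid tori in the Heegaard splitting, which replaces the gluing $f$ of matrix $\begin{pmatrix} q & s \\ p & r\end{pmatrix}$ (with $qr-ps=1$) by $f^{-1}$, whose upper-left entry $r$ satisfies $r \equiv q^{-1} \pmod{p}$. Composing with the orientation-reversing conjugation $C(z_1,z_2)=(\overline{z_1},\overline{z_2})$ introduced just before the definition of $L_{p,q}$ restores the orientation and produces the required homeomorphism.

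For the forward direction I would first dispose of the parameter $p$. A Mayer--Vietoris computation on the Heegaard splitting, or equivalently the CW structure with a single cell in each dimension $0,1,2,3$ dictated by $(p,q)$, gives $\pi_1(L_{p,q}) \cong H_1(L_{p,q}) \cong \Z/p$, so a homeomorphism between two lens spaces forces $p=p'$. The genuine content is then to show $q' \equiv q^{\pm 1} \pmod{p}$, and this cannot be done by any homotopy-theoretic invariant: classically there are pairs $(p,q), (p,q')$ with $qq' \equiv \pm k^2 \pmod{p}$ for which the lens spaces are homotopy equivalent without being homeomorphic. My strategy, and indeed the historical motivation for the whole subject of these notes, is to compute the abelian Reidemeister torsion of $L_{p,q}$. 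Using the CW structure from the Heegaard splitting together with a non-trivial character $\chi : \Z/p \to \C^*$ sending a chosen generator to a primitive $p$-th root of unity $\zeta^k$, one finds
\[
\tau_\chi(L_{p,q}) \;=\; \frac{1}{(\zeta^k-1)\,(\zeta^{k q^{-1}}-1)}
\]
up to a unit $\pm \zeta^m$; the unordered multiset of these values, as $k$ ranges over $(\Z/p)^\times$, is a topological invariant of the lens space.

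The main obstacle is the concluding arithmetic step: one must show that this multiset of cyclotomic fractions determines the unordered pair $\{q, q^{-1}\} \subset (\Z/p)^\times$. This reduces to a statement about factorizations of cyclotomic integers in $\Z[\zeta_p]$, which can be handled either directly via unique factorization together with careful control of the indeterminacy $\pm H$, or by exploiting the Galois action on the characters $\chi$. Once that is done, matching $\tau_\chi(L_{p,q})$ with $\tau_\chi(L_{p',q'})$ forces the desired congruence and completes the classification.
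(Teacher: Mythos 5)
Your high-level architecture matches the paper's: compute $H_1 \cong \Z/p$ to force $p=p'$, compute the $\varphi$-twisted torsion via the genus-one Heegaard splitting (your formula for $\tau_\chi(L_{p,q})$ agrees with Lemma~\ref{lem:torsion_lens}), and then extract $\{q,q^{-1}\}$ from the resulting cyclotomic expression. The backward direction is also handled the same way as the paper (which calls it ``easily checked by exchanging the two solid tori'').

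The genuine gap is in the concluding arithmetic step, which is the actual mathematical content of the theorem. Your first proposed route --- ``unique factorization together with careful control of the indeterminacy $\pm H$'' --- is not available: the ring of cyclotomic integers $\Z[\zeta_p]$ is \emph{not} a unique factorization domain in general (this is precisely the famous obstruction in early approaches to Fermat's Last Theorem; the class number of $\Q(\zeta_p)$ is $>1$ already for $p=23$). Your second suggestion, ``exploiting the Galois action,'' is too vague to count as a proof and does not by itself close the gap. What the paper actually uses is Franz's independence lemma (Lemma~\ref{lem:Franz}): if $a:\Z_p^\times\to\Z$ is even, sums to zero, and satisfies $\prod_j(\zeta^j-1)^{a(j)}=1$ for every nontrivial $p$-th root of unity $\zeta$, then $a\equiv 0$. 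This is a nontrivial number-theoretic result, proved not by factorization but via Gauss sums and a non-vanishing theorem for Dirichlet $L$-series. Applying it to the difference of the multiplicity functions of $\{1,-1,r,-r\}$ and $\{k,-k,kr',-kr'\}$ yields equality of these multisets, and one then still needs a case-by-case analysis (eight cases), using identities in the group ring $\C[H]$ at the full set of $p$-th roots of unity including $\zeta=1$, to eliminate spurious solutions like $k=1,\,kr'=-r$ and finally conclude $q'=q^{\pm1}$. These two ingredients --- Franz's lemma and the case analysis --- are what your sketch elides, and the first one in particular cannot be replaced by the factorization argument you suggest.
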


\noindent
The direction ``$\Leftarrow$'' is easily checked by exchanging the two solid tori 
in the Heegaard splitting of $L_{p,q}$. 
The direction ``$\Rightarrow$'' needs topological invariants 
and will be proved in \S \ref{sec:torsion_dim_3} by means of the Reidemeister torsion.

\begin{exercice}
The terminology ``lens spaces'' (which dates back to Seifert and Threlfall \cite{ST})
is justified by the following equivalent description. 
Draw the planar regular $p$-gone $G_p$ in the plane $\R^{2} \times \{0\}$, 
with cyclically-ordered vertices $(v_i)_{i \in \Z_p}$:

$$
\labellist \small \hair 2pt
\pinlabel {$v_0$} [l] at 153 67
\pinlabel {$v_1$} [bl] at 113 131
\pinlabel {$v_2$} [br] at 39 131
\pinlabel {$v_3$} [r] at 0 65
\pinlabel {$v_4$} [tr] at 38 0
\pinlabel {$v_5$} [tl] at 116 0
\pinlabel {$G_6$} at 77 66
\endlabellist
\includegraphics[scale=0.35]{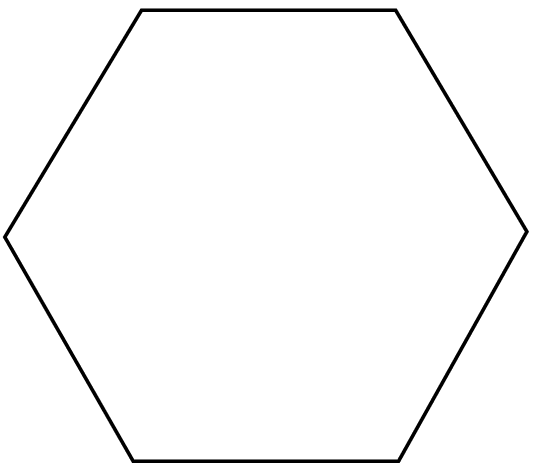}
$$
\vspace{0cm}

\noindent
Next, consider the polyhedron $B_p \subset \R^3$ whose boundary is
the bicone with base $G_p$ and with vertices $n:= (0,0,1)$ (the ``north pole'') and $s:=(0,0,-1)$ (the ``south pole''):

$$
\labellist \small \hair 2pt
\pinlabel {$n$} [b] at 75 123
\pinlabel {$s$} [t] at 76 0
\pinlabel {$B_6$}  at 136 10
\endlabellist
\includegraphics[scale=0.55]{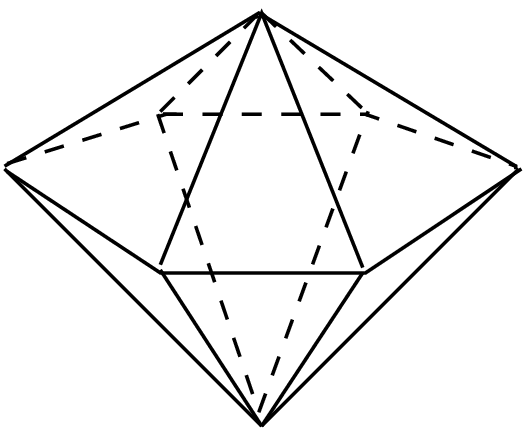}
$$
Let $\sim_q$ be the equivalence relation in $B_p$ such that $n\sim_q s$
and $v_{i} \sim_q v_{i+q}$ for all $i\in \Z_p$, 
and which identifies linearly the north face $(v_i,v_{i+1},n)$ of $B_p$ with its south face $(v_{i+q},v_{i+q+1},s)$.
Show that the quotient space $B_p/\!\sim_q$ is homeomorphic to $L_{p,q}$
and deduce that $L_{2,1}$ is homeomorphic to the projective space $\R P^3$.
\end{exercice}

\begin{exercice}
\label{ex:quotient_sphere}
Here is yet another description of lens spaces.
Consider the $3$-sphere $S^3= \{(z,z') \in \C^2 : |z|^2 + |z'|^2=1 \}$,
and let $\Z_p = \{\zeta \in \C : \zeta^p=1\}$ act on $S^3$ by 
$$
\forall \zeta \in \Z_p, \ \forall (z,z') \in S^3, \quad
\zeta \cdot (z,z') := \left(\zeta  z, \zeta^q z'\right).
$$
Show that the quotient space $S^3/\Z_p$ is homeomorphic to $L_{p,q}$.
\end{exercice}

\begin{exercice}
Check that, for any integer $p\geq 0$ and for any invertible $q\in \Z_p$, we have
$L_{p,-q} \cong_+ -L_{p,q}$. Deduce from Theorem \ref{th:lens}
that $L_{p,q}$ has an orientation-reversing self-homeomorphism
if, and only if, $q^2 = -1 \in \Z_p$. 
\end{exercice}

\section{The abelian Reidemeister torsion}

\label{sec:torsion_CW}

We aim at introducing the abelian Reidemeister torsion of $3$-manifolds.
This is a \emph{combinatorial} invariant in the sense that it is defined (and can be computed) 
from triangulations or, more generally, from cell decompositions.
Thus, we start by introducing the abelian Reidemeister torsion of CW-complexes.

\subsection{The torsion of a chain complex}

\label{subsec:torsion_chain_complex}

Let $\F$ be a commutative field, and let $C$ be a finite-dimensional chain complex over $\F$:
$$
C = \left( 0 \to C_m \stackrel{\partial_m}{\longrightarrow} C_{m-1} 
\stackrel{\partial_{m-1}}{\longrightarrow} \cdots \stackrel{\partial_{2}}{\longrightarrow} 
C_1 \stackrel{\partial_{1}}{\longrightarrow} C_0 \to 0\right).
$$
We assume that $C$ is acyclic and \emph{based}
in the sense that we are given a basis $c_i$ of $C_i$ for each $i=0,\dots,m$.

We denote by $B_i\subset C_i$ the image of $\partial_{i+1}$ and, for each $i$,
we choose a basis $b_i$ of $B_i$. The short exact sequence of $\F$-vector spaces
$$
0 \to B_{i} \longrightarrow C_i \stackrel{\partial_i}{\longrightarrow} B_{i-1} \to 0
$$
shows that we can obtain a new basis of $C_i$ by taking, first, the vectors of $b_i$
and, second, some lifts $\widetilde{b}_{i-1}$ of the vectors $b_{i-1}$. 
We denote by $b_i \widetilde{b}_{i-1}$ this new basis, and we compare it to $c_i$ by computing
$$
[b_i b_{i-1}/c_i] := \det
\left(\begin{array}{c}
\hbox{matrix expressing}\\
\hbox{$b_i \widetilde{b}_{i-1}$ in the basis $c_i$} 
\end{array}\right) \ \in \F\setminus \{0\}.
$$
This scalar does not depend on the choice of the lift $\widetilde{b}_{i-1}$ of $b_{i-1}$.

\begin{definition}
The \emph{torsion} of $C$ based by $c=(c_0,\dots,c_m)$ is
$$
\tau(C,c) := \prod_{i=0}^m \ [b_i b_{i-1}/c_i]^{(-1)^{i+1}} \ \in \F\setminus\{0\}.
$$
\end{definition}

\noindent
One easily checks that $\tau(C,c)$ does not depend on the choice of $b_0,\dots,b_m$.

\begin{remark}
\label{rem:non-commutative}
The torsion can also be defined for an acyclic free chain complex $C$ over an associative ring $\Lambda$.
Still, we must assume that the rank of free $\Lambda$-modules is well defined, 
\ie $\Lambda^r$ is not isomorphic to $\Lambda^s$ for $r\neq s$.
Then, the torsion of $C$ based by $c$ is defined, without taking determinants, as an element of 
$$
K_1(\Lambda)
:= \left( \hbox{the abelianization of $\hbox{GL}(\Lambda) = {\displaystyle \lim_{\longrightarrow}}\ \hbox{GL}(\Lambda;n)$}\right). 
$$
(Let us note that, when $\Lambda$ is a commutative field, 
the determinant provides an isomorphism between $K_1(\Lambda)$ and $\Lambda \setminus \{0\}$.)
This generalization is for instance needed in the definition of 
the Whitehead torsion of a homotopy equivalence  between finite CW-complexes,
which is the obstruction for it to be \emph{simple}, 
\ie to be homotopic to a finite sequence of elementary ``collapses'' and ``expansions''.
We refer to Milnor's survey  \cite{Milnor_Whitehead} or Cohen's book \cite{Cohen} 
for an introduction to this important subject.
\end{remark}

By its definition, the torsion of a chain complex $C$ can be seen as a multiplicative analogue of its Euler characteristic, namely
$$
\chi(C) := \sum_{i=0}^m (-1)^i \cdot\dim(C_i) \ \in \Z.
$$
Keeping in mind this analogy, let us state some of the most important properties of the torsion.
We refer to Milnor's survey \cite{Milnor_Whitehead} or to Turaev's book \cite{Turaev_book} for proofs.

Firstly, the Euler characteristic is additive in the sense that 
$\chi(C_1 \oplus C_2) = \chi(C_1) + \chi(C_2).$
Similarly, the torsion is multiplicative.

\begin{proposition}[Multiplicativity]
\label{prop:multiplicativity}
Let $C_1,C_2$ be some finite-dimensional acyclic based chain complexes over $\F$.
If their direct sum $C_1 \oplus C_2$ is based in the usual way, then we have
$$
\tau(C_1 \oplus C_2) = \pm \tau(C_1) \cdot \tau(C_2).
$$
\end{proposition}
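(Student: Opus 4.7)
The plan is to compute the torsion of $C_1 \oplus C_2$ using bases of the boundary subspaces that come from bases of the boundary subspaces of $C_1$ and $C_2$ separately, and then observe that the relevant change-of-basis matrix is block-diagonal up to a column permutation whose sign is independent of every choice.

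Concretely, write $C_i = C_i^{(1)} \oplus C_i^{(2)}$, pick bases $b_i^{(k)}$ of $B_i^{(k)} = \mathrm{im}(\partial_{i+1}^{(k)})$ inside $C_i^{(k)}$, and choose lifts $\widetilde{b}_{i-1}^{(k)} \subset C_i^{(k)}$ of $b_{i-1}^{(k)}$. Then $b_i := b_i^{(1)} \sqcup b_i^{(2)}$ is a basis of $B_i = B_i^{(1)} \oplus B_i^{(2)}$, and $\widetilde{b}_{i-1}^{(1)} \sqcup \widetilde{b}_{i-1}^{(2)}$ is a lift of $b_{i-1}$ in $C_i$. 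Since $\tau(C_1 \oplus C_2)$ does not depend on the auxiliary choices (this is the fact recalled right after the definition of $\tau$), I am free to work with these.

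The heart of the argument is the following block computation. Expressed in the basis $c_i = c_i^{(1)} \sqcup c_i^{(2)}$, the ordered family
$$
\bigl(b_i^{(1)},\, b_i^{(2)},\, \widetilde{b}_{i-1}^{(1)},\, \widetilde{b}_{i-1}^{(2)}\bigr)
$$
has a matrix of the form $\bigl(\begin{smallmatrix} X_1 & 0 & Y_1 & 0 \\ 0 & X_2 & 0 & Y_2 \end{smallmatrix}\bigr)$, because each vector lies in the summand indexed by its superscript. Swapping the middle two blocks of columns (that is, moving the $\beta_i^{(2)} := |b_i^{(2)}|$ columns past the $\beta_{i-1}^{(1)} := |\widetilde{b}_{i-1}^{(1)}|$ columns) brings it to block-diagonal form with blocks $(X_1\,|\,Y_1)$ and $(X_2\,|\,Y_2)$, contributing a sign $(-1)^{\beta_i^{(2)} \beta_{i-1}^{(1)}}$. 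Taking determinants gives
$$
[b_i b_{i-1}/c_i] \;=\; (-1)^{\beta_i^{(2)} \beta_{i-1}^{(1)}}\, [b_i^{(1)} b_{i-1}^{(1)}/c_i^{(1)}]\,[b_i^{(2)} b_{i-1}^{(2)}/c_i^{(2)}].
$$

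Finally I form the alternating product over $i$. The two honest factors on the right reassemble into $\tau(C_1)$ and $\tau(C_2)$, while the signs collect into $\varepsilon := \prod_{i=0}^{m} (-1)^{(-1)^{i+1}\beta_i^{(2)} \beta_{i-1}^{(1)}} \in \{\pm 1\}$, which depends only on the dimensions of the $B_i^{(k)}$ and not on any of the choices involved. Hence $\tau(C_1 \oplus C_2) = \varepsilon \cdot \tau(C_1) \cdot \tau(C_2)$, proving the stated equality up to sign. The only potentially delicate point is bookkeeping the column permutation cleanly; once that is done, the rest is just tracking block structure, so I do not anticipate a real obstacle beyond careful notation.
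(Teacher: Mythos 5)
Your argument is correct, and it gives a clean, self-contained proof of the direct-sum case. Note that the paper itself does not supply a proof of Proposition~\ref{prop:multiplicativity}; it refers the reader to Milnor's survey on Whitehead torsion and to Turaev's book. Those references establish a stronger statement --- multiplicativity of torsion across a short exact sequence of based chain complexes $0 \to C' \to C \to C'' \to 0$ --- and obtain the direct-sum statement as the split special case. Your route avoids that machinery: since $B_i(C_1 \oplus C_2) = B_i(C_1) \oplus B_i(C_2)$ and $\tau$ is independent of the auxiliary choice of bases $b_i$ of the $B_i$, you may take $b_i = b_i^{(1)} \sqcup b_i^{(2)}$, after which the change-of-basis matrix is block-diagonal up to an explicit column permutation of sign $(-1)^{\beta_i^{(2)}\beta_{i-1}^{(1)}}$, and the alternating product telescopes. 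That buys a shorter argument specific to direct sums; the exact-sequence lemma buys generality, which is wanted elsewhere (e.g.\ for the Mayer--Vietoris property of Reidemeister torsion alluded to in \S\ref{subsec:torsion_CW}). One small simplification: since $(-1)^{-n}=(-1)^{n}$, your sign is simply $\varepsilon=\prod_{i=0}^{m}(-1)^{\beta_i^{(2)}\beta_{i-1}^{(1)}}$, which visibly depends only on the dimensions $\dim B_i^{(k)}$ and confirms the stated $\pm$ with no residual ambiguity.
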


Secondly, the Euler characteristic behaves well with respect to duality in the sense that 
$\chi(C^*) = (-1)^m \cdot \chi(C).$  
Here, $C^*$ is the dual chain complex 
$$
C^* = \left( 0 \to C^*_m \stackrel{\partial_m^*}{\longrightarrow} C^*_{m-1} 
\stackrel{\partial^*_{m-1}}{\longrightarrow} \cdots \stackrel{\partial^*_{2}}{\longrightarrow} 
C^*_1 \stackrel{\partial^*_{1}}{\longrightarrow} C^*_0 \to 0\right)
$$
defined by
$C^*_{i} := \Hom(C_{m-i},\F) $ and $\partial_i^* := (-1)^{i} \cdot \Hom(\partial_{m-i+1} ,\F)$.
The torsion enjoys a similar property.

\begin{proposition}[Duality]
\label{prop:duality}
Let $C$ be a finite-dimensional acyclic based chain complex over $\F$.
If the dual chain complex $C^*$ is equipped with the dual basis, then we have
$$
\tau(C^*) = \pm \tau(C)^{(-1)^{m+1}}.
$$
\end{proposition}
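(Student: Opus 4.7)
The plan is to construct bases for the relevant subspaces of $C^*$ out of those already chosen in $C$, so that each change-of-basis determinant appearing in $\tau(C^*, c^*)$ is, up to sign, the reciprocal of the corresponding one appearing in $\tau(C,c)$; a reindexing $j \mapsto m-j$ then produces the claimed exponent $(-1)^{m+1}$.

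First, I would note that acyclicity of $C$ over the field $\F$ implies acyclicity of $C^*$. Recall that, after choosing lifts $\widetilde{b}_{i-1} \subset C_i$ of $b_{i-1}$ through $\partial_i$, one has a splitting $C_i = B_i \oplus \widetilde{B}_{i-1}$, and $[b_i b_{i-1}/c_i]$ is the determinant of the transition from $c_i$ to $b_i \widetilde{b}_{i-1}$. Setting $B^*_j := \operatorname{im}(\partial^*_{j+1}) \subset C^*_j = \Hom(C_{m-j}, \F)$, the explicit formula for $\partial^*_j$ together with acyclicity of $C^*$ yields
\[
B^*_j \ = \ \ker(\partial^*_j) \ = \ B_{m-j}^{\perp}.
\]
Dualising the splitting $C_{m-j} = B_{m-j} \oplus \widetilde{B}_{m-j-1}$ therefore identifies $B^*_j$ with $\widetilde{B}_{m-j-1}^*$, so that $B^*_j$ acquires a canonical basis $b^*_j$ dual to $\widetilde{b}_{m-j-1}$.

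Next, for a lift of $b^*_{j-1}$ through $\partial^*_j$, I would take (up to the sign $(-1)^j$ built into the definition of $\partial^*_j$) the dual basis of $b_{m-j}$ inside $B_{m-j}^*$, viewed as a subspace of $C^*_j = C^*_{m-j}$ through the dual splitting. By construction the resulting basis $b^*_j \widetilde{b^*}_{j-1}$ of $C^*_j$ is, up to signs, exactly the basis dual to $b_{m-j} \widetilde{b}_{m-j-1}$ of $C_{m-j}$. Since the matrix expressing one dual basis in another dual basis is the inverse transpose of the matrix expressing the original bases, this gives
\[
[b^*_j b^*_{j-1}/c^*_j] \ = \ \pm\, [b_{m-j} b_{m-j-1}/c_{m-j}]^{-1}.
\]

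Finally, substituting this into $\tau(C^*, c^*) = \prod_{j=0}^m [b^*_j b^*_{j-1}/c^*_j]^{(-1)^{j+1}}$ and reindexing by $i := m-j$, the exponent $(-1)^{j+1}$ becomes $(-1)^{m-i+1} = (-1)^{m+1}(-1)^{i+1}$, rearranging the product into $\tau(C,c)^{(-1)^{m+1}}$ up to a global sign that gets absorbed into the $\pm$. The only delicate part, as usual for torsion identities, is sign bookkeeping: the factor $(-1)^j$ from $\partial^*_j$ and the signs coming from inverse-transposing each block of the change-of-basis matrices must be shown to collapse into a pure $\pm 1$. A neat way to sidestep this tracking is to apply multiplicativity (Proposition~\ref{prop:multiplicativity}) to decompose $C$ as a direct sum of two-term acyclic complexes $\widetilde{B}_{i-1} \stackrel{\partial_i}{\to} B_{i-1}$ and reduce to the trivial case $m=1$, where the identity boils down to $\det(\partial^{t}) = \det(\partial)$ and every sign can be read off by hand.
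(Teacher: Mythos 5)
Your argument is correct and is, up to phrasing, the standard proof one finds in the references the paper cites (Milnor's survey and Turaev's book) — the paper itself only states the Proposition and refers to those sources without giving a proof. The key steps are all sound: the identification $B^*_j = \ker(\partial^*_j) = B_{m-j}^\perp$, the observation that dualising the splitting $C_{m-j} = B_{m-j}\oplus\widetilde{B}_{m-j-1}$ gives $b^*_j$ as the dual of $\widetilde{b}_{m-j-1}$ and $\widetilde{b^*}_{j-1}$ as the dual of $b_{m-j}$, the computation that $\partial^*_j$ sends $\widetilde{b^*}_{j-1}$ to $(-1)^j b^*_{j-1}$, the inverse-transpose relation between change-of-basis matrices of dual bases, and finally the reindexing $i=m-j$ turning $(-1)^{j+1}$ into $(-1)^{m+1}(-1)^{i+1}$.

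One small caveat concerns the ``neat way to sidestep'' suggested in your last sentence. Decomposing an acyclic complex into two-term pieces $\widetilde{B}_{i-1}\to B_{i-1}$ is possible over a field, but that decomposition is not adapted to the \emph{given} bases $c_i$; the multiplicativity property (Proposition~\ref{prop:multiplicativity}) requires the direct sum to be ``based in the usual way,'' which here it is not. So to apply it you would first have to re-base each $C_i$ by $b_i\widetilde{b}_{i-1}$ (making every torsion trivially $1$) and then account for the determinant of the change of basis back to $c_i$ — which is precisely the product of the $[b_i b_{i-1}/c_i]$ that you were trying to avoid manipulating. The suggestion therefore does not genuinely bypass the bookkeeping; it merely repackages it. Fortunately, your main argument does not rely on this aside and stands on its own.
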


Finally, the Euler characteristic can be computed homologically by the classical formula $\chi(C) = \chi( H_*(C))$. 
If $\F=Q(R)$ is the field of fractions of a domain $R$, and if $C = Q(R) \otimes_R D$
is the localization of a chain complex $D$ over $R$, this formula takes the form
$$
\chi\big(Q(R) \otimes_R D\big) = \sum_{i=0}^m (-1)^i \cdot \rank H_i(D).
$$
There is a multiplicative analogue of this identity for the torsion, 
where ranks of $R$-modules are replaced by their orders.

\begin{theorem}[Homological computation]
\label{th:homological_computation}
Let $R$ be a noetherian unique factorization domain, and let $D$ be a finitely generated free chain complex over $R$.
We assume that $D$  is based and that $\rank H_i(D)=0$ for all $i$. 
Then, we have 
$$
\tau\big( Q(R) \otimes_R D \big) = 
\prod_{i=0}^m \left( \ord H_i(D) \right)^{(-1)^{i+1}} \in (Q(R) \setminus \{0\})/R^\times
$$
where $R^\times$ is the multiplicative group of invertible elements of $R$. 
\end{theorem}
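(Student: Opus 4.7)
The plan is to reduce to the case where $R$ is a discrete valuation ring and then to compute both sides explicitly using the structure theorem over a PID. Since $\rank H_i(D) = 0$ means each $H_i(D)$ is a torsion $R$-module, $C := Q(R) \otimes_R D$ is acyclic and $\tau(C, c)$ is defined; switching the given $R$-basis of $D$ for another one changes $\tau$ only by a unit of $R$, so both sides of the asserted identity live in $Q(R)^\times / R^\times$.

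For the reduction, recall that a noetherian UFD is a Krull domain, so the height-one valuations give an injection
$$
Q(R)^\times/R^\times \ \hookrightarrow\ \bigoplus_\mathfrak{p} \mathbb{Z}, \qquad f \longmapsto \big(v_\mathfrak{p}(f)\big)_\mathfrak{p}.
$$
Both $\tau$ and $\ord$ commute with localization at a height-one prime $\mathfrak{p}$ (bases, boundary maps, and homology all localize in the obvious way, and $Q(R_\mathfrak{p}) = Q(R)$), so it is enough to prove the identity after replacing $R$ by the DVR $R_\mathfrak{p}$; fix a uniformizer $\pi$.

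Over this DVR, each $Z_i := \Ker \partial_i$ and $B_i := \operatorname{im} \partial_{i+1}$ is free of the same rank (by torsion-freeness and by acyclicity of $C$ respectively), and Smith normal form applied to $B_i \subset Z_i$ produces a basis $z^{(i)} = (z^{(i)}_j)_j$ of $Z_i$ together with integers $a^{(i)}_j \geq 0$ such that $(\pi^{a^{(i)}_j} z^{(i)}_j)_j$ is a basis of $B_i$; the structure theorem then gives $\ord(H_i(D)) \doteq \pi^{\sum_j a^{(i)}_j}$, where $\doteq$ means equality modulo $R^\times$. Pick $x^{(i)}_j \in D_{i+1}$ with $\partial_{i+1}(x^{(i)}_j) = \pi^{a^{(i)}_j} z^{(i)}_j$; the concatenation $e_i := z^{(i)} \sqcup x^{(i-1)}$ is then an $R$-basis of $D_i$ (the $z^{(i)}$'s span $Z_i$ and the $x^{(i-1)}$'s lift a basis of $D_i/Z_i \cong B_{i-1}$), so $[e_i/c_i] \in R^\times$.

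Finally, plug these adapted choices into the torsion formula. Take $b_i := z^{(i)}$ as the basis of $B_i(C) = Z_i(C)$, and lift $b_{i-1,j}$ to $\tilde b_{i-1, j} := \pi^{-a^{(i-1)}_j} x^{(i-1)}_j \in C_i$, which indeed maps to $z^{(i-1)}_j$ under $\partial_i$. The basis $b_i \tilde b_{i-1}$ of $C_i$ differs from $e_i$ only by the diagonal rescaling $x^{(i-1)}_j \mapsto \pi^{-a^{(i-1)}_j} x^{(i-1)}_j$, so
$$
[b_i \tilde b_{i-1}/c_i] \ \doteq\ \pi^{-\sum_j a^{(i-1)}_j} \ \doteq\ \ord(H_{i-1}(D))^{-1}.
$$
Substituting into $\tau(C, c) = \prod_{i=0}^m [b_i \tilde b_{i-1}/c_i]^{(-1)^{i+1}}$, reindexing $j = i-1$, and noting that $H_m(D) = 0$ (since $Z_m$ is both free and torsion) yields $\prod_{j=0}^m \ord(H_j(D))^{(-1)^{j+1}}$, as desired. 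The main obstacle is the reduction step: one must verify carefully that $\tau$ and $\ord$ behave functorially under localization at each height-one prime so that the comparison really is controlled by the $\mathfrak{p}$-adic valuations; once this is in place, the DVR computation with adapted bases is routine.
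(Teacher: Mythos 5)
The paper does not actually prove Theorem~\ref{th:homological_computation} in the text: it states the result and attributes the PID case to Milnor \cite{Milnor_cyclic} and the general case to Turaev \cite{Turaev_knot}. So there is no in-paper proof to compare against; what I can do is assess your argument on its own merits.

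Your proof is correct, and it is essentially the standard route (Milnor's adapted-bases computation over a PID, promoted to a noetherian UFD by Turaev-style localization). The key points all check out. The reduction step works because for a UFD the map $Q(R)^\times/R^\times \to \bigoplus_\mathfrak{p}\Z$ given by valuations at height-one primes is injective, the torsion is tautologically unchanged by localization (since $Q(R_\mathfrak{p})=Q(R)$ and the given $R$-basis localizes to an $R_\mathfrak{p}$-basis), and $v_\mathfrak{p}(\ord_R M)=v_\mathfrak{p}(\ord_{R_\mathfrak{p}}M_\mathfrak{p})$ because $v_\mathfrak{p}(\gcd I)=\min_{x\in I}v_\mathfrak{p}(x)$ and elementary ideals commute with localization. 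Over the DVR, the adapted basis $e_i=z^{(i)}\sqcup x^{(i-1)}$ is indeed an $R_\mathfrak{p}$-basis of $D_i$ (the short exact sequence $0\to Z_i\to D_i\to B_{i-1}\to 0$ splits since $B_{i-1}$ is free), and the diagonal change of basis gives $[b_i\widetilde b_{i-1}/c_i]\doteq\pi^{-\sum_j a_j^{(i-1)}}\doteq\ord(H_{i-1}(D))^{-1}$. The bookkeeping at the endpoints is handled correctly: $H_{-1}=0$ trivially, and $H_m(D)=Z_m$ is both free (submodule of $D_m$) and torsion (by hypothesis), hence zero, so the shifted product matches the desired one. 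The only thing I would have stated a bit more carefully is that after localizing the ``$\doteq$'' means equality mod $R_\mathfrak{p}^\times$, which is exactly equality of $\mathfrak{p}$-adic valuations, which is what the injectivity of the divisor map requires; but your sketch makes this clear enough.
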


\noindent
(The definition of the order of a finitely generated module
over a unique factorization domain is recalled in \S \ref{subsec:order}.)
This theorem is due to Milnor for a principal ideal domain  $R$ \cite{Milnor_cyclic},
and to Turaev in the general case \cite{Turaev_knot}.\\

The torsion can also be defined for a non-acyclic chain complex $C$.
In this case, $C$ should also be equipped with a basis $h_i$ of $H_i(C)$ for each $i=0,\dots,m$.
Let $Z_i\subset C_i$ be the kernel of $\partial_i$. The short exact sequence
$$
0 \to B_{i} \longrightarrow Z_i \longrightarrow H_i(C) \to 0
$$
shows that a basis of $Z_i$ is obtained by taking, first, a basis $b_i$ of $B_i$ and,
second, some lifts $\widetilde{h}_{i}$ of the vectors $h_i$. 
The resulting basis is denoted by $b_i \widetilde{h}_i$ and, according to the short exact sequence
$$
0 \to Z_{i} \longrightarrow C_i \stackrel{\partial_i}{\longrightarrow} B_{i-1} \to 0,
$$
it can be juxtaposed to some lift $\widetilde{b}_{i-1}$ of $b_{i-1}$ 
to get a new basis $b_i \widetilde{h}_i \widetilde{b}_{i-1}$ of $c_i$. We set
$$
[b_i h_i b_{i-1}/c_i] := \det
\left(\begin{array}{c}
\hbox{matrix expressing}\\
\hbox{$b_i \widetilde{h}_i \widetilde{b}_{i-1}$ in the basis $c_i$} 
\end{array}\right) \ \in \F\setminus \{0\}.
$$

\begin{definition}
The \emph{torsion} of $C$ based by $c=(c_0,\dots,c_m)$ and homologically based by $h=(h_0,\dots,h_m)$ is
$$
\tau(C,c,h) := (-1)^{N(C)} \cdot \prod_{i=0}^m \ [b_i h_i b_{i-1}/c_i]^{(-1)^{i+1}} \ \in \F\setminus\{0\}
$$
where $N(C)$ is the mod $2$ integer
$$
N(C) := \sum_{i=0}^m \left(\sum_{j=0}^i \dim(C_j)\right)  \cdot \left(\sum_{j=0}^i \dim H_j(C)\right).
$$
\end{definition}

\noindent
The sign $(-1)^{N(C)}$ is here for technical convenience and follows the convention of \cite{Turaev_knot}.

\subsection{Abelian Reidemeister torsions of a CW-complex}

\label{subsec:torsion_CW}

Let $X$ be a finite connected CW-complex, whose first homology group is denoted by $H:=H_1(X)$.
Assume that
$$
\varphi:\Z[H] \longrightarrow \F
$$ 
is a ring homomorphism with values in a commutative field $\F$.
We consider the cellular chain complex of $X$ with \emph{$\varphi$-twisted} coefficients
$$
C^\varphi(X) := \F \otimes_{\Z[H]} C(\widehat{X})
$$
where $\widehat{X}$ denotes the maximal abelian covering space of $X$.
This is a finite-dimensional chain complex over $\F$ whose homology
$$
H_*^\varphi(X) := H_*\left(C^\varphi(X)\right)
$$
may be trivial, or may be not. 

Let $E$ be the set of cells of $X$. For each $e \in E$, we choose a lift $\widehat{e}$ to $\widehat{X}$,
and we denote by $\widehat{E}$ the set of the lifted cells.
We also put a total ordering on the finite set $E$,
and we choose an orientation for each cell $e \in E$:
this double choice ($o$rdering+$o$rientation) is denoted by $oo$. 
The choice of $\widehat{E}$ combined to $oo$ induces a basis  $\widehat{E}_{oo}$ of $C(\widehat{X})$,
which defines itself a basis  $1 \otimes \widehat{E}_{oo}$ of  $C^\varphi(X)$.

\begin{definition}
\label{def:Reidemeister}
The \emph{Reidemeister torsion}  with $\varphi$-twisted coefficients of the CW-complex $X$ is 
$$
\tau^\varphi(X) := 
\tau\left(C^\varphi(X), 1 \otimes \widehat{E}_{oo} \right)
\ \in \F/\pm \varphi(H),
$$
with the convention that $\tau^\varphi(X):=0$ if $H_*^\varphi(X) \neq 0$.
\end{definition}

Because of the choices that we were forced to make, 
the quantity $\tau^\varphi(X)$ has two kinds of indeterminacy: 
a sign $\pm 1$ and the image by $\varphi$ of an element of $H$.
Those two ambiguities are resolved by Turaev \cite{Turaev_knot,Turaev_Euler}.
The $\varphi(H)$ indeterminacy is the most interesting and it will be discussed in \S \ref{sec:structures}.
To kill the sign indeterminacy, Turaev defines in \cite{Turaev_knot}
a  \emph{homological orientation} $\omega$ of $X$ to be an orientation of the $\R$-vector space $H_*(X;\R)$.
Then, the quantity
$$
\tau^\varphi(X,\omega) := 
\sgn \tau\big(C(X;\R),  oo, w \big) 
\cdot \tau\big(C^\varphi(X), 1\otimes \widehat{E}_{oo}\big) \ \in \F/\varphi(H),
$$
where $w$ is any basis of $H_*(X;\R)$ representing $\omega$,
does not depend on the choice of $oo$ and  only depends on  $\omega$.
Here again, we set $\tau^\varphi(X,\omega):=0$ if $H_*^\varphi(X) \neq 0$.
This invariant $\tau^\varphi(X,\omega)$ is sometimes called the \emph{sign-refined} Reidemeister torsion of $(X,\omega)$. 
We have $\tau^\varphi(X,-\omega)=-\tau^\varphi(X,\omega)$.

The algebraic properties of the torsion of chain complexes 
(some of those have been recalled in \S \ref{subsec:torsion_chain_complex}) have topological implications.
For instance, Proposition \ref{prop:multiplicativity} translates into a kind of Mayer--Vietoris theorem
for the Reidemeister torsion of CW-complexes. Also, Theorem \ref{th:homological_computation}
allows one to compute the Reidemeister torsion of a CW-complex by homological means 
when $\F$ is the field of fractions of a domain. 

\begin{remark}
Here, we have restricted ourselves to the \emph{abelian} version of the Reidemeister torsion,
in the sense that coefficients are taken in a commutative field $\F$.
Nonetheless, the same construction applies to any ring homomorphism 
$$
\varphi:\Z[\pi_1(X)]\longrightarrow \Lambda
$$
with values in a ring $\Lambda$ for which the rank of free modules is well-defined.
In this situation, we need the torsion of $\Lambda$-complexes evoked in Remark \ref{rem:non-commutative}
and we work with the universal cover  of $X$ instead of its maximal abelian cover. The Reidemeister torsion is then an element
$$
\tau^\varphi(X) \in K_1(\Lambda)/\pm \varphi(\pi_1(X)) \cup \{0\}.
$$ 
\end{remark}

\subsection{Turaev's maximal abelian torsion of a CW-complex}

\label{subsec:maximal}

Let $X$ be a finite connected  CW-complex,
and let $Q(\Z[H_1(X)])$ be  the ring of fractions of $\Z[H_1(X)]$. 
The canonical injection  $\Z[H_1(X)] \to Q(\Z[H_1(X)])$ can \emph{not} play
the role of ``universal coefficients'' for  abelian Reidemeister torsions of $X$,
since the ring $Q(\Z[H_1(X)])$ is not a field (unless $H_1(X)$ is torsion-free).
Nonetheless, we have the following statement.

\begin{proposition}
\label{prop:splitting}
Let $H$ be a finitely generated abelian group. 
Then, the ring $Q(\Z[H])$ splits in a unique way as a direct sum of finitely many commutative fields.
\end{proposition}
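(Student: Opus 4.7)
The plan is to reduce everything to the structure theorem for finitely generated abelian groups and Maschke's theorem, localizing carefully along the way.

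First, I would decompose $H = \Z^n \oplus T$ where $T$ is the (finite) torsion subgroup, giving a ring isomorphism
$$
\Z[H] \cong \Z[T][x_1^{\pm 1},\dots,x_n^{\pm 1}].
$$
The initial step is to reduce from $\Z$-coefficients to $\Q$-coefficients. Since every nonzero integer is a non-zero-divisor in $\Z[H]$ (the group ring of a free abelian group embeds in a Laurent polynomial ring over $\Z[T]$, and $\Z[T]$ is a finitely generated free $\Z$-module, so $\Z$ acts faithfully), one may first invert $\Z\setminus\{0\}$ to obtain $\Q[H]$, and then observe that $Q(\Z[H])=Q(\Q[H])$ because a two-step localization at non-zero-divisors equals the total ring of fractions.

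Next, I apply Maschke's theorem to the group algebra of the finite abelian group $T$ over the characteristic-zero field $\Q$: this gives a canonical decomposition
$$
\Q[T] \ = \ \bigoplus_{i=1}^{k} K_i
$$
where each $K_i$ is a field (in fact a cyclotomic field, corresponding to a Galois orbit of characters of $T$). Tensoring this decomposition with $\Q[x_1^{\pm 1},\dots,x_n^{\pm 1}]$ over $\Q$ yields
$$
\Q[H] \ \cong \ \bigoplus_{i=1}^k K_i\bigl[x_1^{\pm 1},\dots,x_n^{\pm 1}\bigr],
$$
a direct sum of Laurent polynomial rings over fields, each of which is an integral domain.

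For the existence half, I would then use the elementary fact that in a finite direct sum $R=\bigoplus_i R_i$ of commutative rings, an element $(r_i)_i$ is a non-zero-divisor if and only if every $r_i$ is a non-zero-divisor in $R_i$; consequently $Q(R)=\bigoplus_i Q(R_i)$. Applied to our decomposition, this gives
$$
Q(\Z[H]) \ = \ \bigoplus_{i=1}^k K_i(x_1,\dots,x_n),
$$
a finite direct sum of fields, as desired.

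For uniqueness, I would argue via primitive idempotents: in any decomposition $Q(\Z[H])=F_1\oplus\cdots\oplus F_\ell$ of a commutative ring as a direct sum of fields, the unit elements of the summands are precisely the primitive idempotents of $Q(\Z[H])$ (each $F_j$ has only the idempotents $0$ and $1$, so $1_{F_j}$ is primitive in the product, and any idempotent is a sum of a unique subset of these). Since the set of primitive idempotents of a commutative ring is a canonical invariant, the decomposition is unique up to permutation of the summands. The main conceptual obstacle is ensuring that the reduction $Q(\Z[H])=Q(\Q[H])$ is actually legitimate (i.e.\ that no new zero-divisors are introduced by inverting integers), which is where the freeness of $\Z[T]$ as a $\Z$-module and the domain property of each $K_i[x_1^{\pm 1},\dots,x_n^{\pm 1}]$ are genuinely used.
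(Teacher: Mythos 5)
Your proof is correct and follows essentially the same route as the paper's: split $H$ into torsion and free parts, decompose $\Q[\Tors H]$ into a direct sum of cyclotomic fields via Maschke/Artin--Wedderburn, pass to Laurent polynomial rings over those fields, and localize. The one place you are a bit more self-contained is uniqueness, where the paper merely cites the general fact that a splitting of a commutative ring into finitely many domains is unique, while you give the underlying primitive-idempotent argument explicitly; otherwise the two proofs run in parallel.
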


\noindent
We apply this to $H:= H_1(X)$ to write  $Q(\Z[H_1(X)])$ as a  direct sum of commutative fields:
$$
Q(\Z[H]) = \F_1 \oplus \cdots \oplus \F_n.
$$
We denote by $\varphi_i: Q(\Z[H]) \to \F_i$ the corresponding projections.

\begin{definition}[Turaev \cite{Turaev_Alexander_torsion}]
The \emph{maximal abelian torsion} of $X$ is 
$$
\tau(X) :=\tau^{\varphi_1}(X) + \cdots + \tau^{\varphi_n}(X) \ \in Q(\Z[H])/\pm H.
$$
\end{definition}

\noindent
Recall that the construction of $\tau^{\varphi_i}(X)$ needs some choices (Definition \ref{def:Reidemeister}).
Here, we do the same choices for all $i=1,\dots,n$ so that the indeterminacy is ``global'' (in $\pm H$) instead of ``local'' 
(one for each component $i=1,\dots,n$).
A sign-refined version $\tau(X,\omega) \in Q(\Z[H])/H$ 
of  $\tau(X)$ is also defined in the obvious way,
for any homological orientation $\omega$ of $X$.

The invariant $\tau(X)$ determines the abelian Reidemeister torsion $\tau^\varphi(X)$ 
for any  ring homomorphism $\varphi:\Z[H] \to \F$, which justifies the name given to $\tau(X)$.
Here is the precise statement, where it is assumed that $\tau^\varphi(X) \neq 0$.
We consider the subring 
$$
Q_\varphi(\Z[H]) := \{x\in Q(\Z[H]): \exists y \in \Z[H], \varphi(y)\neq 0, xy \in \Z[H]  \}
$$
of $Q(\Z[H])$. It is easily checked that $\varphi$ extends in a unique way to a ring homomorphism 
$\widetilde{\varphi}: Q_\varphi(\Z[H]) \to \F$. Then, it can be proved \cite[\S 13]{Turaev_book} that
$$
\tau(X) \in Q_\varphi(\Z[H])/\pm H
\quad \hbox{and} \quad
\tau^\varphi(X) = \widetilde{\varphi}\left(\tau(X)\right).
$$

\begin{proof}[Proof of Proposition \ref{prop:splitting}]
We follow Turaev \cite[\S 12]{Turaev_book}.
The unicity of the splitting means that, for two decompositions of $Q(\Z[H])$
as a direct sum of fields
$$
\F_1 \oplus \cdots \oplus \F_n = Q(\Z[H]) = \F'_1 \oplus \cdots \oplus \F'_{n'}
$$
we must have $n=n'$ and $\F_j = \F'_{\alpha(j)}$ for some permutation $\alpha \in S_n$.
This is an instance of the following general fact, which is easily  proved: 
If a ring splits as a direct sum of finitely many domains, then the splitting is unique in the above sense.

To prove the existence of the splitting, let us assume first that $H$ is finite. 
Each character $\sigma: H \to \C^*$ of $H$ extends to a ring homomorphism $\sigma:\Q[H] \to \C$ by linearity.
The subgroup $\sigma(H)$ of $\C^*$ is finite and, so, is cyclic:
thus, $\sigma\left(\Q[H]\right)$ is the cyclotomic field $\Q\left(e^{2i\pi/m_\sigma}\right)$ 
where $m_\sigma$ is the order of $\sigma(H)$.
Two characters $\sigma$ and $\sigma'$ of $H$ are declared
to be \emph{equivalent} if $m_\sigma=m_{\sigma'}$ and if 
$\sigma,\sigma': \Q[H] \to \Q\left(e^{2i\pi/m_\sigma}\right)$ 
differ by a Galois automorphism of $\Q\left(e^{2i\pi/m_\sigma}\right)$ over $\Q$. 
Let $\sigma_1,\dots, \sigma_n$ be some representatives for the equivalence classes of $\Hom(H,\C^*)$.
Then, the ring homomorphism
\begin{equation}
\label{eq:decomposition}
(\sigma_1,\dots,\sigma_n): \Q[H] \longrightarrow 
\bigoplus_{j=1}^n \Q(e^{2i\pi/m_{\sigma_j}})
\end{equation}
is injective since any non-trivial element of the ring $\Q[H]$ can be detected by a character.
(Indeed, by Maschke's theorem, the $\C$-algebra $\C[H]$ is semi-simple so that,
according to the Artin--Wedderburn theorem, it is isomorphic to $\hbox{End}(V_1) \oplus \cdots \oplus \hbox{End}(V_r)$
for some $\C$-vector spaces $V_1,\dots, V_r$. But, since the $\C$-algebra $\C[H]$ is commutative,
each $V_i$ should be one-dimensional. 
Thus, the $\C$-algebra $\C[H]$ is isomorphic to a direct sum of $r$ copies of $\C$ 
and each corresponding projection $\C[H] \to \C$ restricts to a character $H \to \C^*$.)
Therefore, the ring homomorphism (\ref{eq:decomposition}) is bijective
since its source and target have the same dimension over $\Q$:
\begin{eqnarray*}
\sum_{j=1}^n \dim \Q(e^{2i\pi/m_{\sigma_j}}) &=&
\sum_{j=1}^n \left|\operatorname{Gal}\left(\left.\Q(e^{2i\pi/m_{\sigma_j}})\right/ \Q \right)\right|\\
&=& |\Hom(H,\C^*)| = |H| = \dim \Q[H].
\end{eqnarray*}
(Here, we have used the fact that the extension $\Q(e^{2i\pi/m})$ of $\Q$ is galoisian.) So, when $H$ is finite, we get
\begin{eqnarray*}
Q\left(\Z[H]\right) = 
Q\left(\Q[H]\right) &\simeq &Q\left(\bigoplus_{j=1}^n \Q(e^{2i\pi/m_{\sigma_j }})\right)\\
&=& \bigoplus_{j=1}^n Q\left(\Q(e^{2i\pi/m_{\sigma_j}})\right)
= \bigoplus_{j=1}^n \Q(e^{2i\pi/m_{\sigma_j}})
\end{eqnarray*}
which also shows that $Q\left(\Z[H]\right) = \Q[H]$ in this case.

In general, we set $G:=H/\Tors H$ and we fix a splitting $H \simeq \Tors H \oplus G$.
Then, we have
$$
\Q[H] \simeq \left(\Q[\Tors H]\right) [G] \simeq
\bigoplus_{j=1}^n \Q(e^{2i\pi/m_{\sigma_j}})[G]
$$
and $Q\left(\Z[H]\right) = Q\left(\Q[H]\right)$ can be written
as a direct sum of finitely many commutative fields:
$$
Q\left(\Z[H]\right)  \simeq 
Q\left(\bigoplus_{j=1}^n \Q(e^{2i\pi/m_{\sigma_j}})[G]\right)=
\bigoplus_{j=1}^n Q\left(\Q(e^{2i\pi/m_{\sigma_j}})[G]\right).\\[-0.5cm]
$$
\end{proof}

\section{The Alexander polynomial}

\label{sec:Alexander}

We introduce the Alexander polynomial of a topological space $X$ of finite type\footnote{
A topological space is of \emph{finite type} if it has the homotopy type of a finite CW-complex.}
and, following Milnor and Turaev, 
we explain how to obtain it as a kind of Reidemeister torsion. 
In a few words, ``the Alexander polynomial of $X$ is the order of the first homology group
of the free maximal abelian cover of $X$.'' Thus, we start by recalling what the order of a module is.

\subsection{The order of a module}

\label{subsec:order}

Let $R$ be a unique factorization domain, whose multiplicative group of invertible elements is denoted by $R^\times$.
Let also $M$ be a finitely generated $R$-module.

We choose a presentation of $M$ with, say, $n$ generators and $m$ relations,
and we denote by $A$ the corresponding $m\times n$ matrix:
$$
R^m \stackrel{\cdot A}{\longrightarrow} R^n \longrightarrow M \longrightarrow 0.
$$
Here, $m$ may be infinite. Besides, we can assume that $m\geq n$ with no loss of generality.

\begin{definition}
For any integer $k\geq 0$, the \emph{$k$-th elementary ideal} of $M$ is  
$$
E_k(M) := \big\langle  (n-k)\hbox{-sized minors of } A \big\rangle_{\hbox{\footnotesize ideal}} \ \subset R
$$
with the convention that $E_k(M) :=  R  \ \hbox{ if } k\geq n.$
The \emph{$k$-th order} of $M$ is
$$
\Delta_k(M) := \gcd E_k(M) \ \in R/R^\times.
$$
The \emph{order} of $M$ is $\Delta_0(M)$ and is denoted by $\ord(M)$.
\end{definition}

\noindent
It is easily checked that the elementary ideals and, a fortiori, their greatest common divisors,
do not depend on the choice of the presentation matrix $A$. 
We have the following inclusions of ideals:
$$
E_0(M) \subset E_1(M) \subset \cdots \subset E_{n-1}(M) \subset E_n(M)= E_{n+1}(M) = \cdots = R,
$$
hence the following divisibility relations:
$$
1=\! \cdots\! = \Delta_{n+1}(M) =\Delta_n(M)\ 
|\ \Delta_{n-1}(M)\ | \cdots |\ \Delta_1(M)\ |\ \Delta_0(M) = \ord (M).
$$

\begin{example}
\label{ex:PID}
Let $R$ be a principal ideal domain. Then, $M$ can be decomposed as a direct sum of cyclic modules:
$M = R/n_1R\oplus \cdots \oplus R/n_kR.$
The order of $M$ is represented by the product $n_1\cdots n_k \in R$. 
\end{example}

\begin{exercice} Show that $E_0(M)$ is contained in the \emph{annihilator} of $M$
$$
\hbox{Ann}(M) := \left\{ r\in R : \forall m\in M, r \cdot m=0\right\},
$$
and prove that the converse is not true in general. 
\end{exercice}

\begin{exercice}
\label{ex:multiplicativity}
Show that $\ord(M_1 \oplus M_2) = \ord(M_1) \cdot \ord(M_2)$ 
for any two finitely generated $R$-modules $M_1$ and $M_2$.
\end{exercice}

\subsection{The Alexander polynomial of a  topological space}

Let $X$ be a connected topological space of finite type.
Then, the free abelian group 
$$
G := H_1(X)/ \Tors H_1(X)
$$
is finitely generated. We observe that 
$$
\Z[G] \simeq \Z[t_1^{\pm},\dots,t_b^{\pm}]
$$ 
where $b=\beta_1(X)$ is the first Betti number of $X$, so that the ring $\Z[G]$ has essentially 
the same properties as a polynomial ring with integer coefficients.
In particular, $\Z[G]$ is a unique factorization domain (by Gauss's theorem, since $\Z$ is so)
and it is noetherian (by Hilbert's basis theorem, since $\Z$ is so).
Moreover, we have $\Z[G]^\times = \pm G$.

We are interested in the \emph{maximal free abelian} cover  $\overline{X} \to X$
whose group of covering automorphisms is identified with $G$.
More precisely, we are interested in the homology of $\overline{X}$ as a $\Z[G]$-module.
By our assumptions, the $\Z[G]$-module $H_i(\overline{X})$ is finitely generated for any $i\geq 0$ 
and, sometimes, it is called the \emph{$i$-th Alexander module} of $X$. 
Here, we are mainly interested in the first Alexander module 
which \emph{only} depends  on the fundamental group of $X$.
Indeed, the Hurewicz theorem gives a canonical isomorphism
$$
H_1(\overline{X}) \simeq   K(X)/[K(X),K(X)]
$$
where  $K(X)$ denotes the kernel of the canonical epimorphism $\pi_1(X) \to G$.

\begin{definition}
The \emph{Alexander polynomial} of $X$ is 
$$
\Delta(X) := \ord \left( K(X)/[K(X),K(X)]\right) \ \in \Z[G]/\pm G
$$
where $G \simeq \pi_1(X)/K(X)$ acts on $K(X)/[K(X),K(X)]$ by conjugation.
\end{definition}

Here is a recipe to compute the Alexander polynomial by means of Fox's ``free derivatives''.
(The basics of Fox's free differential calculus are recalled in the appendix.)

\begin{theorem}[Fox \cite{Fox_Alexander}]
\label{th:Alexander_computation}
Let $X$ be a connected topological space of finite type.
Consider a finite presentation of $\pi_1(X)$
\begin{equation}
\label{eq:group_presentation}
\pi_1(X) = \langle x_1,\dots,x_n| r_1 , \dots, r_m\rangle
\end{equation}
and the associated matrix
$$
A := \left(\begin{array}{ccc}
\frac{\partial r_1}{\partial x_1} & \cdots & \frac{\partial r_1}{\partial x_n} \\
\vdots & \ddots & \vdots \\
\frac{\partial r_m}{\partial x_1} & \cdots & \frac{\partial r_m}{\partial x_n}
\end{array}\right)
$$
where $\frac{\partial\ }{\partial x_1},\dots, \frac{\partial \ }{\partial x_n}$ 
denote the free derivatives with respect to $(x_1,\dots,x_n)$. 
Then, the Alexander polynomial of $X$ is 
$$
\Delta(X) = \gcd \big\{ \hbox{$(n-1)$-sized minors of the reduction of $A$ to $\Z[G]$}\big\}.
$$
\end{theorem}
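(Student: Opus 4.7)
The plan is to reduce the statement to a cellular chain-complex computation on a $2$-dimensional CW model of $X$ built from the given presentation. Fox calculus will then produce the Jacobian $A$ as a boundary operator, and the Alexander polynomial will emerge from an elementary-ideal analysis.

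First, I replace $X$ by the standard $2$-complex $Y$ associated to the presentation (\ref{eq:group_presentation}), with one $0$-cell, $n$ oriented $1$-cells $x_i$, and $m$ $2$-cells $r_j$. Since the first Alexander module depends only on $\pi_1$ through the Hurewicz identification $H_1(\overline X) \cong K(X)/[K(X),K(X)]$, this substitution preserves both $G$ and $\Delta(X)$.

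Next, I choose compatible lifts $\tilde v, \tilde x_i, \tilde r_j$ in the maximal free abelian cover $\overline Y$. The fundamental formula $w - 1 = \sum_i (\partial w/\partial x_i)(x_i - 1)$ of Fox calculus, applied to $w = r_j$ and projected to $\Z[G]$, yields $\partial_2(\tilde r_j) = \sum_i \overline{\partial r_j/\partial x_i}\cdot \tilde x_i$; together with $\partial_1(\tilde x_i) = g_i - 1$, the cellular chain complex of $\overline Y$ reads
\begin{equation*}
0 \to \Z[G]^m \xrightarrow{A^{\mathrm T}} \Z[G]^n \xrightarrow{(g_1-1,\,\dots,\,g_n-1)} \Z[G] \to 0,
\end{equation*}
where the matrix of $\partial_2$ is the transpose of the statement's $A$, reduced to $\Z[G]$. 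The cokernel $N := \Coker(\partial_2)$ is thus presented by $A$, so $E_k(N)$ is generated by the $(n-k)$-sized minors of $A$. Since $\partial_1 \partial_2 = 0$, the map $\partial_1$ descends to $\bar\partial_1 \colon N \to \Z[G]$, with image the augmentation ideal $\mathfrak I$ (because the $g_i$ generate $G$) and kernel $H_1(\overline Y)$. The theorem thus reduces to the purely algebraic identity
\begin{equation*}
\Delta_0\bigl(H_1(\overline Y)\bigr) \ = \ \Delta_1(N) \ = \ \gcd\{(n-1)\text{-sized minors of }A\}.
\end{equation*}

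This last identity is where I expect the main difficulty. One inclusion is constructive: for each index $i_0$, a Cramer-like vector $\xi_{i_0} \in \Z[G]^n$ whose $j$-th entry is a suitable signed $(n-1)$-minor of $A$ lies in $\ker\bar\partial_1$, thanks to the orthogonality relation $A\cdot(g_1-1,\dots,g_n-1)^{\mathrm T} = 0$ (the concrete incarnation of $\partial_1\partial_2 = 0$); hence every $(n-1)$-minor of $A$ belongs to $E_0(H_1(\overline Y))$. The converse inclusion, asserting that nothing more can be extracted from $E_0(H_1(\overline Y))$, is the delicate step. In the generic case $\rank G \geq 1$, at least one $g_i - 1$ is a non-zero-divisor, and the argument compares presentations of $N$ and of $\ker\bar\partial_1$ through the augmented matrix $\tilde A := \binom{A}{v^{\mathrm T}}$ with $v = (g_i - 1)_i$, whose $n$-sized minors expand along the adjoined row into products of some $(g_j - 1)$ with $(n-1)$-minors of $A$; an analysis of the common factors then yields the equality of ideals. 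The degenerate case where $G$ is finite must be handled separately: there $\overline Y = Y$ and $\Delta(X)$ is computed directly as the order of $H_1(Y)$ from the abelianized Jacobian.
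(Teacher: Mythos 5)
You follow the same route as the paper up to the crucial algebraic step: pass to the realization $Y$, identify $\partial_2$ of the cellular chain complex of $\overline{Y}$ with the reduction of $A$ (via the topological interpretation of Fox derivatives, Proposition \ref{prop:topological_fundamental_formula}), and observe that your $N := \Coker(\partial_2)$ is precisely $H_1(\overline{Y},\overline{Y}^0)$. The theorem then hinges on the module identity $\Delta_0\bigl(H_1(\overline{Y})\bigr) = \Delta_1(N)$, and this is where your proposal has a genuine gap, at exactly the point you flag as the ``main difficulty''.

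Your ``constructive inclusion'' does not work. Exhibiting elements $\xi_{i_0}\in\Ker\bar{\partial}_1$ whose coordinates are $(n-1)$-minors of $A$ does not place those minors in $E_0(H_1(\overline{Y}))$: elementary ideals are generated by minors of a \emph{presentation matrix} for the module, and carry no direct information about the coordinates of \emph{elements} with respect to some ambient generating set. The ``converse inclusion'' is also unconvincing as sketched: the augmented matrix $\tilde{A}$ presents the \emph{quotient} $N/\Z[G]\bar{v}$, not the submodule $\Ker\bar{\partial}_1$, and the promised ``analysis of common factors'' is not carried out.

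What is missing --- and what the paper supplies --- is a rank/torsion argument. The long exact sequence of the pair $(\overline{Y},\overline{Y}^0)$ gives $\Tors H_1(\overline{Y}) = \Tors N$ and, for $G\neq 1$, $\rank H_1(\overline{Y}) = \rank N - 1$ (since $H_0(\overline{Y}^0)\cong\Z[G]$ has rank one while $H_0(\overline{Y})\cong\Z$ has $\Z[G]$-rank zero). One then invokes Blanchfield's fact \cite{Blanchfield_intersection}: for a finitely generated module $M$ over a noetherian UFD, $\Delta_i(M)=0$ if $i<\rank M$, and $\Delta_i(M)=\Delta_{i-\rank M}(\Tors M)$ if $i\geq\rank M$. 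Applied to $H_1(\overline{Y})$ at $i=0$ and to $N$ at $i=1$, this yields the identity at once, replacing your ideal-theoretic manipulations entirely. A last remark: $G=H_1(X)/\Tors H_1(X)$ is always torsion-free, so ``$G$ finite'' should read ``$G$ trivial''; and the rank count above genuinely uses $G\neq 1$, consistent with the fact that the $(n-1)$-minor formula fails when $\beta_1(X)=0$ (take $\pi_1=\Z_p=\langle x\mid x^p\rangle$: then $A=(p)$ over $\Z[G]=\Z$, the $0$-minor is $1$, yet $\Delta(X)=\ord\Z_p = p$).
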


\begin{proof}
We follow Turaev \cite[\S 16]{Turaev_book}.
Let $Y$ be the $2$-dimensional cellular \emph{realization} of the group presentation (\ref{eq:group_presentation}).
More explicitely, $Y$ has a unique $0$-cell, $n$ $1$-cells (in bijection with the generators $x_1\dots,x_n$) 
and $m$ $2$-cells (in bijection with the relations $r_1,\dots,r_m$ which are interpreted as attaching maps
for the $2$-cells).  Then, $\pi_1(Y)$ has the same presentation (\ref{eq:group_presentation}) as the group $\pi_1(X)$. 
Since $\Delta(X)$ only depends on $\pi_1(X)$, we have
\begin{equation}
\label{eq:Delta_Delta}
\Delta(X) = \Delta(Y) = \ord H_1(\overline{Y})
\end{equation}
where $\overline{Y}$ denotes the maximal free abelian cover of $Y$.
It follows from the topological interpretation of Fox's free derivatives (see \S \ref{subapp:topological_interpretation}) 
that $A$ reduced to $\Z[\pi_1(Y)]$ is the matrix of the boundary operator of the universal cover $\widetilde{Y}$ of $Y$
$$
\partial_2 : C_2(\widetilde{Y}) \longrightarrow C_1(\widetilde{Y}) 
$$
with respect to some appropriate basis (which are obtained by lifting the cells of $Y$).
Let $\overline{Y}^0$ be the $0$-skeleton of $\overline{Y}$. Because we have
$$
\Coker \left(\partial_2:C_2(\overline{Y}) \longrightarrow C_1(\overline{Y})\right)
= H_1(\overline{Y},\overline{Y}^0),
$$
that topological interpretation of the matrix $A$ leads to 
\begin{equation}
\label{eq:Delta_1}
\gcd \big\{ \hbox{$(n-1)$-sized minors of $A$ reduced to $\Z[G]$} \big\} 
= \Delta_1 \left(H_1(\overline{Y},\overline{Y}^0)\right).
\end{equation}
The exact sequence of $\Z[G]$-modules
$$
0 \to H_1(\overline{Y}) \longrightarrow H_1(\overline{Y},\overline{Y}^0)
\longrightarrow H_0(\overline{Y}^0) \longrightarrow H_0(\overline{Y}) \to 0
$$
shows that 
$$
\Tors H_1(\overline{Y}) = \Tors H_1(\overline{Y},\overline{Y}^0)
\quad \hbox{and}\quad   \rank H_1(\overline{Y}) = \rank H_1(\overline{Y},\overline{Y}^0) -1.
$$ 
The following statement is proved by Blanchfield in \cite{Blanchfield_intersection}.
See also \cite[\S 3.1]{Hillman}.

\begin{quote}\textbf{Fact.} 
{\it Let $M$ be a finitely generated module over a noetherian unique factorization domain.
Then, we have
$$
\Delta_i(M) = \left\{\begin{array}{ll}
0 & \hbox{ if } i < \rank(M)\\
\Delta_{i-\rank{M}}(\Tors M) & \hbox{ if } i \geq  \rank(M).
\end{array}\right.
$$}
\end{quote}

\noindent
We deduce that $\Delta_0(H_1(\overline{Y})) = \Delta_1( H_1(\overline{Y},\overline{Y}^0))$.
The conclusion then follows from equations (\ref{eq:Delta_Delta})  and (\ref{eq:Delta_1}). 
\end{proof}

\subsection{Alexander polynomial and Milnor torsion}

Let $X$ be a finite connected CW-complex,
with maximal free abelian cover $\overline{X}$. As before, we set
$$ 
G:= H_1(X)/ \Tors H_1(X).
$$
We consider the fraction
$$
A(X) := \prod_{i\geq 0} \left( \ord  H_i(\overline{X})\right)^{(-1)^{i+1}} 
\ \in Q(\Z[G])/\pm G
$$
with the convention that $A(X):=0$ if $\ord H_i(\overline{X}) =0$ for some $i\geq 0$.
Observe that the Alexander polynomial $\Delta(X)$ appears as a numerator of $A(X)$,
which is sometimes called the \emph{Alexander function} of $X$.

\begin{definition}
The \emph{Milnor torsion} of $X$ is the Reidemeister torsion
$$
\tau^\mu(X) \in Q(\Z[G])/\pm G
$$
where the coefficients $\mu: \Z[H_1(X)] \to Q(\Z[G])$
are induced by the canonical map $H_1(X) \to G$.
\end{definition}

The next result shows that the combinatorial invariant $\tau^\mu(X)$
is in fact a topological invariant, and is more precisely  a homotopy invariant.

\begin{theorem}[Milnor \cite{Milnor_duality,Milnor_cyclic}, Turaev \cite{Turaev_knot}]
\label{th:Milnor-Turaev}
For any finite connected CW-complex $X$, we have 
$$
\tau^\mu(X) = A(X) \  \in Q(\Z[G])/\pm G.
$$
\end{theorem}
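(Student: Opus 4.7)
\medskip

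\noindent\textbf{Proof plan.} The strategy is to identify $\tau^\mu(X)$ as the torsion of a localized complex over $\Z[G]$ and then apply the homological computation of Theorem \ref{th:homological_computation} directly. The main point is to observe that the coefficient homomorphism $\mu$ factors through $\Z[G]$, which lets us replace the maximal abelian cover $\widehat{X}$ (with deck group $H:=H_1(X)$) by the maximal free abelian cover $\overline{X}$ (with deck group $G = H/\Tors H$) before taking torsion.

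\medskip

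\noindent\textbf{Step 1: Rewrite the twisted chain complex.} The homomorphism $\mu:\Z[H] \to Q(\Z[G])$ factors as $\Z[H] \twoheadrightarrow \Z[G] \hookrightarrow Q(\Z[G])$. Since the intermediate cover $\overline{X}\to X$ corresponding to $K(X) = \ker(\pi_1(X) \to G)$ has deck group $G$, lifting each cell of $X$ to $\overline{X}$ gives a $\Z[G]$-basis of $C(\overline{X})$, and there is a canonical identification
$$C(\overline{X}) \;\cong\; \Z[G] \otimes_{\Z[H]} C(\widehat{X})$$
of based $\Z[G]$-complexes (lifting cells to $\widehat X$ first, then pushing down to $\overline X$, gives the same basis). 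Tensoring further with $Q(\Z[G])$ over $\Z[G]$ yields
$$C^\mu(X) \;=\; Q(\Z[G]) \otimes_{\Z[H]} C(\widehat{X}) \;\cong\; Q(\Z[G]) \otimes_{\Z[G]} C(\overline{X}),$$
again as based complexes.

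\medskip

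\noindent\textbf{Step 2: Apply Theorem \ref{th:homological_computation}.} I set $R := \Z[G]$ and $D := C(\overline{X})$. The ring $R$ is a noetherian unique factorization domain (Gauss plus Hilbert, as recalled in \S\ref{sec:Alexander}), with $R^\times = \pm G$. The complex $D$ is finitely generated free because $X$ has finitely many cells, and it is based as above. There are two cases, corresponding to the conventions on each side:

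\emph{Case (a): $\ord H_i(\overline{X}) = 0$ for some $i$.} Then $H_i(\overline{X})$ has positive $\Z[G]$-rank, so $Q(\Z[G]) \otimes_{\Z[G]} C(\overline{X})$ has non-trivial homology in that degree; hence $\tau^\mu(X) = 0$ by definition and $A(X) = 0$ by convention, and the equality holds trivially.

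\emph{Case (b): $\ord H_i(\overline{X}) \neq 0$ for all $i$.} Then every $H_i(\overline{X})$ is a torsion $\Z[G]$-module, so $\rank H_i(D) = 0$ for all $i$, which is precisely the hypothesis of Theorem \ref{th:homological_computation}. The theorem gives
$$\tau\!\left(Q(\Z[G]) \otimes_{\Z[G]} C(\overline{X})\right) \;=\; \prod_{i\geq 0}\bigl(\ord H_i(\overline{X})\bigr)^{(-1)^{i+1}}$$
in $(Q(\Z[G])\setminus\{0\})/R^\times = (Q(\Z[G])\setminus\{0\})/\pm G$. By Step 1 the left side equals $\tau^\mu(X)$, while the right side is $A(X)$ by definition.

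\medskip

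\noindent\textbf{Anticipated obstacle.} The conceptual content is entirely in Theorem \ref{th:homological_computation}; the new work here is just bookkeeping. The only point requiring a little care is the identification of bases in Step 1: one must check that the ``$1 \otimes \widehat{E}_{oo}$'' basis of $C^\mu(X)$ used to define $\tau^\mu(X)$ is the same as the ``$1 \otimes \overline{E}_{oo}$'' basis of $Q(\Z[G]) \otimes_{\Z[G]} C(\overline{X})$ used to apply the homological computation. This is true because the two bases differ by the action of an element of $H$ on each cell lift, and both torsions are taken modulo $\pm H$ (resp.\ modulo $\pm G$, whose image contains the image of $\pm H$ under $\mu$), so the indeterminacies absorb the discrepancy.
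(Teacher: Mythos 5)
Your proof is correct and follows essentially the same route as the paper: identify $C^\mu(X)$ with $Q(\Z[G])\otimes_{\Z[G]}C(\overline{X})$, split into the degenerate case (both sides zero, using flatness of $Q(\Z[G])$ over $\Z[G]$ plus the universal coefficients theorem to see that positive rank of some $H_i(\overline X)$ forces $H^\mu_*(X)\neq 0$) and the non-degenerate case, and in the latter apply Theorem \ref{th:homological_computation} with $R=\Z[G]$, $D=C(\overline X)$. The only thing the paper makes explicit that you leave implicit is the flatness/UCT step justifying $H_i(Q(\Z[G])\otimes_{\Z[G]} C(\overline X))\cong Q(\Z[G])\otimes_{\Z[G]}H_i(\overline X)$ in your Case (a), but this is the same argument.
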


\begin{proof}
Assume that $\tau^\mu(X)=0$. By our convention, this means that 
$$
0 \neq H^\mu_i(X) = H_i\left( Q(\Z[G])\otimes_{\Z[G]} C(\overline{X})\right)
= Q(\Z[G])\otimes_{\Z[G]} H_i\left( \overline{X}\right)
$$
for some $i$.
Here, the last identity follows from the universal coefficients theorem 
(which can be applied since $C(\overline{X})$ is $\Z[G]$-free)
and the fact that the field of fractions of $\Z[G]$ is $\Z[G]$-flat. 
So, the $\Z[G]$-module $H_i\left( \overline{X}\right)$ is not fully torsion
or, equivalently, it has order $0$. It follows that $A(X)=0$ by convention.
If $\tau^\mu(X)$ is not zero, then the identity $\tau^\mu(X) = A(X)$ 
is an application of Theorem \ref{th:homological_computation}.
\end{proof}

\section{The abelian Reidemeister torsion for three-dimensional manifolds}

\label{sec:torsion_dim_3}

We introduced abelian Reidemeister torsions of CW-complexes in \S \ref{sec:torsion_CW},
and we saw  in \S \ref{sec:Alexander} that the Alexander polynomial  fits into this framework.
In this section, we apply the theory of abelian Reidemeister torsions to $3$-manifolds and we compute them
for two important classes of $3$-manifolds: lens spaces (which are classified in this way) and surface bundles.

\subsection{Abelian Reidemeister torsions of a $3$-manifold}

\label{subsec:torsion_dim_3}

A theorem of Chapman asserts that the Reidemeister torsion of CW-complexes is invariant under homeomorphisms,
so that it defines a topological invariant of those topological spaces which admit cell decompositions \cite{Chapman}. 
We will not need this deep result here. 
It is not too difficult (although technical) to prove that the Reidemeister torsion is invariant under cellular subdivisions,
so that the Reidemeister torsion defines a piecewise-linear invariant of polyhedra \cite{Milnor_Whitehead}.
It follows from Theorem \ref{th:triangulation} and Theorem \ref{th:hauptvermutung} 
that the Reidemeister torsion induces a \emph{topological} invariant of $3$-manifolds.

In more details, let $M$ be a $3$-manifold and let $\varphi:\Z[H_1(M)] \to \F$
be a ring homomorphism with values in a commutative field $\F$.
The given orientation of $M$ induces an orientation $\omega_M$ of $H_*(M;\R)$,
namely the orientation defined by the basis
$$
\left([\star],b,b^\sharp,[M]\right).
$$
Here $[\star]\in H_0(M;\R)$ is the homology class of a point,
$b$ is a basis of $H_1(M;\R)$, $b^\sharp$ is the dual basis of $H_2(M;\R)$
with respect to the intersection pairing and $[M] \in H_3(M;\R)$ is the fundamental class.
Observe that  $\omega_{-M}= (-1)^{\beta_1(M)+1} \cdot \omega_M$.

\begin{definition}
The \emph{Reidemeister torsion}  with $\varphi$-twisted coefficients of the $3$-manifold $M$ is
$$
\tau^\varphi(M) := \tau^{\varphi \circ \rho_*}\left(K,\rho_*^{-1}(\omega_M) \right) \ \in \F/\varphi\left(H_1(M)\right)
$$
where $(K,\rho)$ is a triangulation of $M$ and the cellular homology of $K$ is identified with its singular homology.
\end{definition}

\noindent
Theorem \ref{th:triangulation} ensures the existence of the triangulation $(K,\rho)$ of $M$.
Theorem \ref{th:hauptvermutung} and the invariance of the Reidemeister torsion under subdivisions
imply that the above definition does not depend on the choice of $(K,\rho)$.
Thus, for any cell decomposition $X$ of $M$ (which can be subdivided to a triangulation), we also have
$$
\tau^\varphi(M) = \tau^\varphi(X,\omega_M) \ \in \F/\varphi\left(H_1(M)\right)
$$
where the cellular homology of $X$ and the singular homology of $M$ are identified.

We mentioned in \S \ref{subsec:torsion_CW} two properties of the Reidemeister torsion of CW-complexes,
which were inherited from algebraic properties of the torsion of chain complexes.
Here is a third important property,  which is specific to manifolds. 

\begin{theorem}[Franz \cite{Franz_duality}, Milnor \cite{Milnor_duality}]
\label{th:duality_manifolds}
Let $M$ be a $3$-manifold and let $\F$ be a commutative field with an involution $f\mapsto \overline{f}$.
Consider a ring homomorphism  $\varphi:\Z[H_1(M)] \to \F$ such that
$$
\forall h \in H_1(M), \quad \varphi(h^{-1}) = \overline{\varphi(h)} \ \in \F.
$$
Then,  we have the following symmetry property:
$$
\tau^\varphi(M) = \overline{\tau^\varphi(M)} \ \in \F/\varphi\left(H_1(M)\right).
$$
\end{theorem}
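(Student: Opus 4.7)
The plan is to reduce the statement to the algebraic duality of Proposition \ref{prop:duality} via Poincar\'e duality at the chain level, exactly as was done by Franz and Milnor.

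First, fix a smooth triangulation $K$ of $M$ and its dual cell decomposition $K^*$, whose cells are in an inclusion-reversing bijection $e\leftrightarrow e^*$ with $\dim e + \dim e^* = 3$. Both cell decompositions compute $\tau^\varphi(M)$, so it suffices to relate $\tau^\varphi(K,\omega_M)$ to $\tau^\varphi(K^*,\omega_M)$ and to recognize the result as complex conjugation. Lift each cell $e$ of $K$ to a cell $\widehat{e}$ of the maximal abelian cover $\widehat{M}$; then there is a canonical dual lift $\widehat{e^*}$ of $e^*$ (the one whose intersection number with $\widehat{e}$ in $\widehat{M}$ is $\pm 1$). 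This yields two preferred bases of $C(\widehat M)$ corresponding to the two cell structures.

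Next, I would exploit Poincar\'e duality for $\widehat M$ viewed as $\Z[H_1(M)]$-modules. The intersection pairing
\[
\langle \cdot, \cdot \rangle : C_i(\widehat K)\otimes_\Z C_{3-i}(\widehat{K^*}) \longrightarrow \Z[H_1(M)],\qquad \widehat e \otimes \widehat{e^*} \longmapsto \sum_{h\in H_1(M)} (\widehat e \cdot h\widehat{e^*})\,h,
\]
is $\Z[H_1(M)]$-sesquilinear with respect to the canonical involution $h\mapsto h^{-1}$, and its adjoint is a chain isomorphism $C(\widehat{K^*}) \stackrel{\simeq}{\longrightarrow} \overline{\Hom_{\Z[H_1(M)]}(C(\widehat K),\Z[H_1(M)])}[-3]$, where the overline indicates that the module structure has been twisted by $h\mapsto h^{-1}$. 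Applying $\varphi$ on one side and using the hypothesis $\varphi(h^{-1})=\overline{\varphi(h)}$ on the other, this induces an isomorphism of $\F$-chain complexes
\[
C^\varphi(K^*) \;\stackrel{\simeq}{\longrightarrow}\; \overline{C^\varphi(K)^*}[-3]
\]
which carries the basis $\{1\otimes\widehat{e^*}\}$ to a basis dual, after conjugation, to $\{1\otimes\widehat e\}$.

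Then Proposition \ref{prop:duality} applied to $C^\varphi(K)$ with $m=3$ gives
\[
\tau\bigl(C^\varphi(K^*)\bigr) \;=\; \pm\,\overline{\tau\bigl(C^\varphi(K)\bigr)^{(-1)^{3+1}}} \;=\; \pm\,\overline{\tau\bigl(C^\varphi(K)\bigr)},
\]
and both sides are defined modulo $\varphi(H_1(M))$ (the conjugation involution preserves this subgroup by the hypothesis). Feeding the two expressions for $\tau^\varphi(M)$ — one computed from $K$, one from $K^*$ with the corresponding homological orientation — yields the desired identity.

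The main obstacle is keeping track of the sign and of the homological orientation. Concretely, one must verify that, after transporting $\omega_M$ through the duality isomorphism on $H_*(M;\R)$ induced by the intersection pairing, the sign produced by $\sgn\tau(C(K^*;\R),oo^*,w^*)$ matches the sign appearing in Proposition \ref{prop:duality} together with the ambiguity in the choice of lifts $\widehat e$ versus $\widehat{e^*}$. This is precisely where the dual basis $b^\sharp$ enters the definition of $\omega_M$, and the compatibility ensures that the sign indeterminacy disappears once one passes to the sign-refined torsion $\tau^\varphi(M)\in \F/\varphi(H_1(M))$.
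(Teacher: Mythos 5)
Your proposal follows the same route as the paper: compare the triangulation $K$ with its dual cell decomposition $K^*$, show via the chain-level Poincar\'e duality that the dual of the based complex $(C^\varphi(K),1\otimes\widehat E_{oo})$ is isomorphic to $(C^{\overline\varphi}(K^*),1\otimes\widehat{E^*}_{oo})$, and then invoke Proposition \ref{prop:duality} with $m=3$; your unpacking of the isomorphism through the sesquilinear intersection pairing is simply an explicit rendering of the step the paper summarizes as ``as in the proof of the Poincar\'e duality theorem.'' This matches the paper's argument, including the deferred treatment of signs and the homological orientation.
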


\begin{proof}[Sketch of the proof]
Let $K$ be a triangulation of $M$: $|K|=M$. It can be lifted to a triangulation $\widehat{K}$ 
of the maximal abelian cover $\widehat{M}$ of $M$. We fix a total ordering  on the set $E$ of simplices of $K$, 
we choose an orientation for each simplex $e \in E$ and we choose a lift $\widehat{e}$ of $e$ to $\widehat{K}$.
Thus, we obtain a basis $\widehat{E}_{oo}$ of $C(\widehat{K})$ and, by definition, we have
$$
\pm \tau^\varphi(M) = \tau( C^\varphi(K), 1 \otimes \widehat{E}_{oo}) \ \in \F/\pm \varphi\left(H_1(M)\right)
$$
with the convention that $\tau^\varphi(M) := 0$ if $H_*^\varphi(M) \neq 0$.

Besides,  we can consider the  cell decomposition $K^*$ dual to the triangulation $K$,
which can be lifted to a cell decomposition $\widehat{K^*}$ of $\widehat{M}$. Each cell $e^*$ of $K^*$ is dual
to a unique simplex $e$ of $K$, so that it has a preferred orientation and a preferred lift $\widehat{e^*}$
which are determined by the choices we did for $e$. Moreover, the total ordering on $E$ induces a total ordering
on the set $E^*$ of the cells of $K^*$. Thus, we obtain a basis $\widehat{E^*}_{oo}$ of $C(\widehat{K^*})$ and, by definition, we have
$$
\pm \tau^{\overline{\varphi}}(M) = \tau( C^{\overline{\varphi}}(K^*), 1 \otimes \widehat{E^*}_{oo}) \ \in \F/\pm \varphi\left(H_1(M)\right)
$$
with the convention that $\tau^{\overline{\varphi}}(M) := 0$ if $H_*^{\overline{\varphi}}(M) \neq 0$.
Here we have denoted by $\overline{\varphi}$ the ring homomorphism $\varphi$ composed with the involution of $\F$.

As in the proof of the Poincar\'e duality theorem (relating the $\varphi$-twisted cohomology of $K$ 
to the $\overline{\varphi}$-twisted homology of $K^*$), one can prove that the dual of the based chain complex 
$( C^\varphi(K), 1 \otimes \widehat{E}_{oo})$  is isomorphic to the based chain complex
$( C^{\overline{\varphi}}(K^*), 1 \otimes \widehat{E^*}_{oo})$. 
We conclude with the duality property for torsions of chain complexes (Proposition \ref{prop:duality}) 
that $\tau^{\overline{\varphi}}(M) = \pm \tau^\varphi(M)$.
We refer to \cite{Milnor_duality} or to \cite{Turaev_book} for the details of proof.
See \cite{Turaev_knot} for the computation of signs.
\end{proof}

Let us now compute the Reidemeister torsion of a $3$-manifold $M$  presented 
by a Heegaard splitting $M= A \cup B$, where $A$ and $B$ are genus $g$ handlebodies. 
We assume that the ring homomorphism $\varphi:\Z[H_1(M)] \to \F$ giving the coefficients
is non-trivial, \ie $\varphi(H_1(M))\neq 1$. 
(Otherwise, we would have $H^\varphi_0(M)\neq 0$ so that $\tau^\varphi(M) =0$ by our convention.)

We consider the $2g$ oriented simple closed curves $\alpha_1, \dots, \alpha_g, \alpha^\sharp_1,\dots, \alpha^\sharp_g$
on the genus $g$ surface $\partial A$ shown on Figure \ref{fig:4g_curves}, 
as well as the curves $\beta_1, \dots, \beta_g, \beta^\sharp_1,\dots, \beta^\sharp_g$ on $\partial B$.
If we focus on $A$, the $3$-manifold $M$ is determined by 
how the curves $\beta_1,\dots,\beta_g$ read in the surface $\partial A$ when $\partial B$ is identified to $\partial A$.
So, we are aiming at a formula expressing $\tau^\varphi(M)$ 
in terms of  the curves $\beta_1,\dots,\beta_g \subset \partial A=\partial B$.

\begin{figure}[h]
\labellist \small \hair 2pt
\pinlabel {$A$}  at 417 150
\pinlabel {$B$} at 417 390
\pinlabel {$\cdots$} at 218 196
\pinlabel {$\cdots$} at 218 440
\pinlabel {$\alpha_1^\sharp$} [t] at 107 70
\pinlabel {$\alpha_g^\sharp$} [t] at 322 67
\pinlabel {$\beta_1^\sharp$} [t] at 107 322
\pinlabel {$\beta_g^\sharp$} [t] at 322 322
\pinlabel {$\alpha_1$} [t] at 98 194
\pinlabel {$\alpha_g$} [t] at 337 191
\pinlabel {$\beta_1$} [t] at 98 443
\pinlabel {$\beta_g$} [t] at 336 442
\endlabellist
\centering
\includegraphics[scale=0.4]{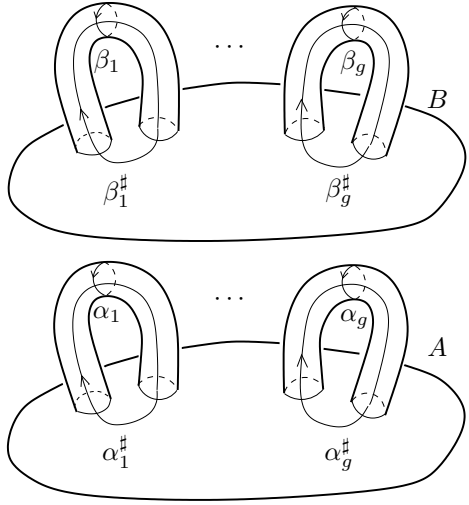}
\label{fig:4g_curves}
\caption{The curves $\alpha,\alpha^\sharp$ on $\partial A$
and the curves $\beta,\beta^\sharp$ on $\partial B$.}
\end{figure}

We choose a small disk $D \subset \partial A$ and a base point $\star \in \partial D$.
We base the oriented simple closed curves $\alpha,\alpha^\sharp$ at $\star$ 
to get a basis of the free group $\pi_1(\partial A\setminus D,\star)$.
We also denote by 
$$
\frac{\partial \ }{\partial \alpha_1}, \dots, \frac{\partial \ }{\partial \alpha_g}, \
\frac{\partial \ }{\partial \alpha_1^\sharp}, \dots, \frac{\partial \ }{\partial \alpha_g^\sharp}
$$
the Fox's free derivatives with respect to this basis. (See the appendix.) 
We also base the simple oriented closed curves $\beta$ at $\star$, 
so that they define elements of $\pi_1(\partial A \setminus D, \star)$.
Then, we can consider the following $g\times g$ matrix with coefficients in $\Z[H_1(M)]$:
$$
\mathcal{A} := \incl_* \left(\begin{array}{ccc}
\frac{\partial \beta_1}{\partial \alpha_1^\sharp} & \cdots & \frac{\partial \beta_1}{\partial \alpha_g^\sharp}\\
\vdots & \ddots & \vdots \\ 
\frac{\partial \beta_g}{\partial \alpha_1^\sharp} & \cdots & \frac{\partial \beta_g}{\partial \alpha_g^\sharp}
\end{array}\right)
$$
where $\incl_*: \pi_1(\partial A\setminus D,\star) \to H_1(M)$ is induced by the inclusion.

\begin{lemma}
\label{lem:torsion_Heegaard_splitting}
With the above notation and for any indices $i,j\in \{1,\dots,g\}$, we have
\begin{equation}
\label{eq:torsion_Heegaard_splitting}
\tau^\varphi(M) \cdot (\varphi(\alpha_i^\sharp)-1) \cdot (\varphi(\beta_j^\sharp)-1)
= (-1)^{i+j+g+1} \cdot \tau_0 \cdot \varphi(\mathcal{A}_{ij}) \ \in \F/\varphi(H_1(M))
\end{equation}
where $\tau_0$ is a certain explicit sign, 
and where $\mathcal{A}_{ij}$ is the $(i,j)$-th minor of the matrix $\mathcal{A}$.
\end{lemma}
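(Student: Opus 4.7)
The plan is to compute $\tau^\varphi(M)$ directly from the CW structure on $M$ induced by the Heegaard splitting, and to recognize $\mathcal{A}$ as the Fox matrix of the resulting boundary operator $\partial_2$. Concretely, I would use the standard minimal CW decomposition of $M$ associated to the Heegaard diagram: one $0$-cell $\star$, the $g$ $1$-cells $\alpha_1^\sharp,\ldots,\alpha_g^\sharp$ (cores of the $1$-handles of $A$), the $g$ $2$-cells $D_1^\beta,\ldots,D_g^\beta$ attached along $\beta_1,\ldots,\beta_g$, and one $3$-cell $C$. This realizes the presentation $\pi_1(M)=\langle \alpha_1^\sharp,\ldots,\alpha_g^\sharp \mid \beta_1,\ldots,\beta_g\rangle$, and lifting to the maximal abelian cover yields a free $\Z[H]$-chain complex with ranks $1,g,g,1$.

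Using the topological interpretation of Fox calculus recalled in the appendix, the three boundary operators become explicit. The row $\partial_1$ has entries $\alpha_i^\sharp-1$, since each $1$-cell returns to $\star$. The $g\times g$ matrix of $\partial_2$ has $(i,j)$-entry $\partial \beta_j/\partial \alpha_i^\sharp$ computed in $\pi_1(A)$, which by naturality under the quotient $\pi_1(\partial A\setminus D)\to \pi_1(A)$ killing $\alpha_1,\ldots,\alpha_g$ coincides with the image in $\Z[H]$ of the corresponding entry of $\mathcal{A}^T$. For $\partial_3$, a direct geometric argument shows that the two sides of each lifted disk $\widehat{D_j^\beta}$ border two lifts of $C$ that differ by the meridian of the $j$-th $2$-handle, and this meridian is precisely $\beta_j^\sharp\in H$ because $\beta_j^\sharp$ meets $\beta_j$ transversely in a single point on the Heegaard surface. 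Thus $\partial_3$ is the column with entries $\epsilon_j(\beta_j^\sharp-1)$ for some signs $\epsilon_j\in\{\pm 1\}$.

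After tensoring with $\F$, the relations $\partial_1\partial_2=0$ and $\partial_2\partial_3=0$ say that the vectors $v:=(\varphi(\alpha_i^\sharp)-1)_i$ and $u:=(\epsilon_j(\varphi(\beta_j^\sharp)-1))_j$ span respectively the left and right kernels of the matrix $P:=\varphi(\mathcal{A})^T$. Assuming $\varphi$ nontrivial so that the complex is acyclic, $P$ has rank $g-1$ and its classical adjugate is rank one; the standard cofactor identity therefore gives $(-1)^{i+j}\,\varphi(\mathcal{A}_{ij})=\kappa\cdot v_i\, u_j$ for a unique scalar $\kappa\in\F^\times$. A direct evaluation of the alternating determinantal product defining the torsion of this $4$-term complex, using $u$ as the canonical basis of $B_2$ and any $\partial_1$-section to produce the required preimage basis, identifies $\kappa$ with $(-1)^{g+1}\tau_0\cdot\tau^\varphi(M)$ modulo $\varphi(H)$; rearranging yields the lemma.

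The main obstacle will be sign bookkeeping, both for the explicit constant $\tau_0$ (inherited from the sign-refinement conventions of $\tau^\varphi$ on a CW-complex) and for the universal $(-1)^{i+j+g+1}$ prefactor. A secondary technical point is the geometric identification of the meridian of the $j$-th $2$-handle with $\beta_j^\sharp\in H_1(M)$, which rests on the intersection $\beta_j\cdot\beta_j^\sharp=1$ on the Heegaard surface together with the naturality of meridians under the inclusion $B\hookrightarrow M$.
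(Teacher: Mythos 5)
Your proposal is correct and follows essentially the same route as the paper: you use the same CW decomposition of $M$ coming from the Heegaard splitting (one $0$-cell, $g$ $1$-cells from the cores of the $1$-handles of $A$, $g$ $2$-cells attached along the $\beta$-curves, and one $3$-cell), the same identification of $\partial_1$ and $\partial_3$ with the column and row vectors built from $\varphi(\alpha_i^\sharp)-1$ and $\varphi(\beta_j^\sharp)-1$ respectively, and the same recognition of $\partial_2$ as the Fox Jacobian $\mathcal{A}$ via the topological interpretation of free differential calculus. The only real difference is in the final step: the paper invokes Turaev's general formula for the torsion of a cell decomposition with a single $0$-cell and a single $3$-cell (Theorem II.1.2 of \cite{Turaev_book_dim_3}), whereas you unfold that step explicitly, observing that $\varphi(\mathcal{A})$ has rank $g-1$ by acyclicity, that its adjugate is therefore a rank-one matrix whose rows and columns lie along the left and right kernel vectors, and reading the torsion off as the scalar of proportionality in the cofactor identity. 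This adjugate argument is precisely what underlies Turaev's cited theorem, so your sketch supplies somewhat more internal detail while remaining at the same level of rigor as the paper's sketch. Your remarks that the sign bookkeeping and the identification of the meridian of the $j$-th $2$-handle with $\beta_j^\sharp$ (via the intersection $\beta_j \centerdot \beta_j^\sharp = 1$) are the points requiring care are exactly right; the paper addresses them by an appropriate choice of cell orientations and lifts, which is why its $\partial_3$ comes out with uniformly trivial signs $\epsilon_j = 1$.
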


\begin{proof}[Sketch of the proof]
The lemma is an application of a formula by Turaev which computes $\tau^\varphi(M)$
from a cell decomposition of $M$ with a single $0$-cell and a single $3$-cell \cite[\S II.1]{Turaev_book_dim_3}.
In more details, we consider the cell decomposition $X$  defined as follows by the Heegaard splitting.
There is only one $0$-cell $e^0$, the center of the ball to which
handles have been added to form $A$; the $1$-cells are $e^1_1,\dots,e^1_g$  
where $e^1_i$ is obtained from the core of the $i$-th handle of $A$,
which is bounded by two points, by adding the trace of those two points 
when the previous ball is ``squeezed'' to $e^0$; 
the $2$-cells are $e^2_1,\dots,e^2_g$ where $e^2_j$ is obtained from the co-core
of the $j$-th handle of $B$ (with boundary $\beta_j$) 
by adding the trace of $\beta_j \subset \partial A$ when $A$ is ``squeezed'' to
$e^0 \cup e^1_1\cup \cdots \cup e^1_g$; there is only one $3$-cell $e^3$, 
namely the complement in $M$ of the cells of smaller dimension. 
See Figure \ref{fig:cell_decomposition}.
The cell decomposition $X$ is related to the curves $(\alpha,\alpha^\sharp)$
and $(\beta, \beta^\sharp)$ in the following way:
each curve $\alpha_i^\sharp$ is isotopic to $e^1_i$ in $A$, 
and each curve $\beta_j$ is the boundary of $e^2_j \cap B$.

\begin{figure}[h]
\vspace{0.5cm}
\labellist \small \hair 2pt
\pinlabel {$A$}  at 417 150
\pinlabel {$B$} at 417 390
\pinlabel {$\cdots$} at 218 196
\pinlabel {$\cdots$} at 218 440
\pinlabel {$e^0$} [t] at 213 51
\pinlabel {$e^1_1$} [tr] at 120 62
\pinlabel {$e^1_g$} [t] at 306 65
\pinlabel {(part of) $e^2_1$} [b] at 101 485
\pinlabel {(part of) $e^2_g$} [b] at 334 483
\pinlabel {(part of) $e^3$} at 215 313
\endlabellist
\centering
\includegraphics[scale=0.4]{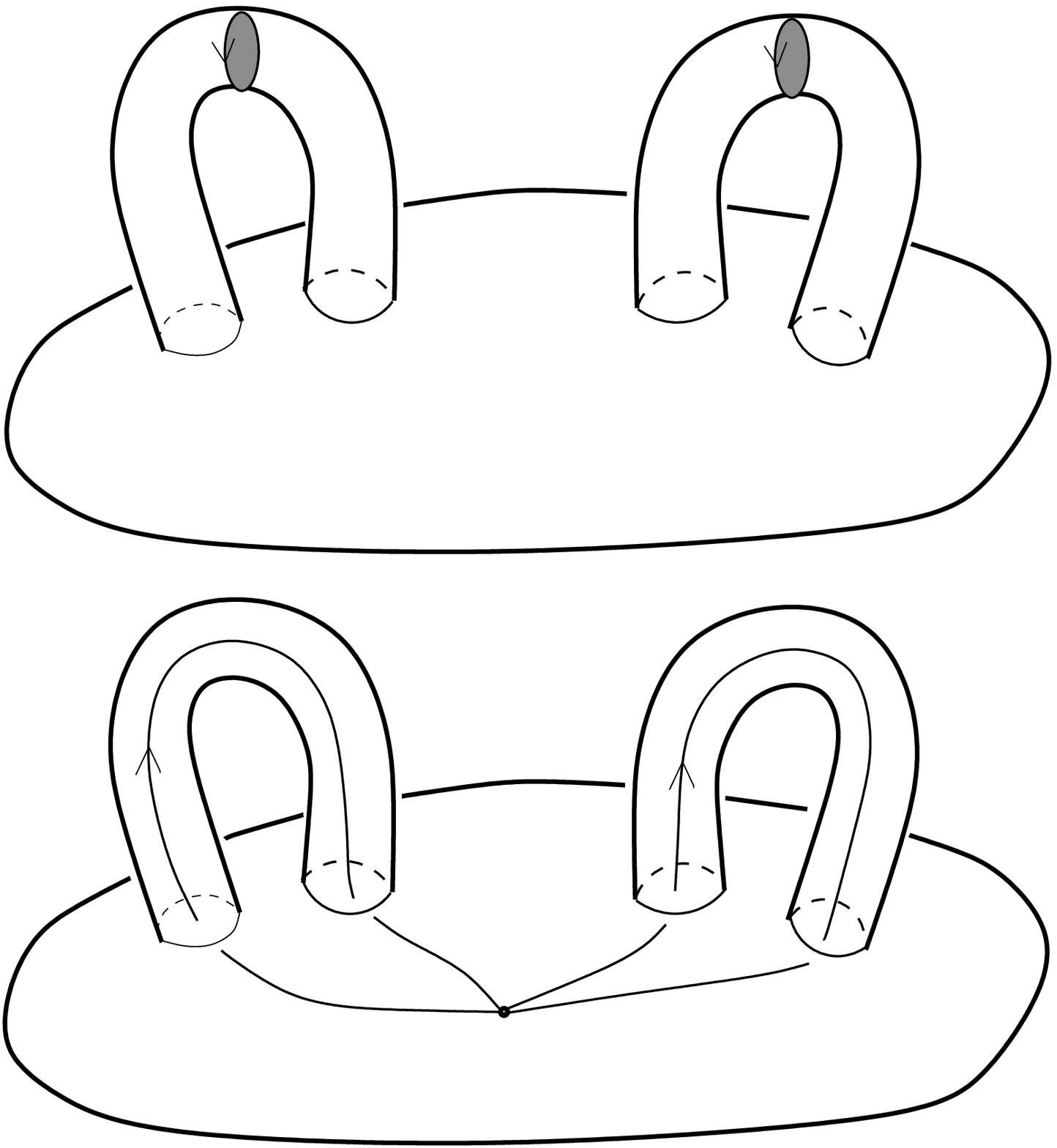}
\caption{The cell decomposition $X$ of $M$ induced by the Heegaard splitting.}
\label{fig:cell_decomposition}
\end{figure}

The cells of $X$ can be oriented as follows: $e^0$ is given the $+$ sign, 
the $1$-cell $e^1_i$ is oriented coherently with $\alpha_i^\sharp$,
the $2$-cell $e^2_j$ is oriented so that the intersection number $e^2_j \centerdot \beta_j^\sharp$ is $+1$
and $e^3$ inherits the orientation from $M$. 
Let $\widehat{X}$ be the maximal abelian cover of $X$, 
and choose some lifts $\widehat{e}^0,\widehat{e}^1_1,\dots,\widehat{e}^1_g,
\widehat{e}^2_1,\dots,\widehat{e}^2_g, \widehat{e}^3$ of the cells  to $\widehat{X}$.
The computation of $\tau^\varphi(M)$ in terms of the cell decomposition $X$ involves the sign
$$
\tau_0 := \sgn \tau\big(C(X;\R),  oo, \hbox{\footnotesize representative of } \omega_M \big)
$$
where $oo$ refers to the above choice of $o$rder and $o$rientations for the cells of $X$.
It also involves the cell chain complex of $\widehat{X}$.
For some appropriate choices of the lifts,
the boundary operators $\partial_3$ and $\partial_1$ are given by 
$$
\partial_3(\widehat{e}^3 )= \sum_{j=1}^g \left(\left[\beta_j^\sharp\right]-1\right)\cdot \widehat{e}^2_j
\quad \hbox{and} \quad 
\partial_1 ( \widehat{e}^1_i ) = \left(\left[\alpha_i^\sharp\right]-1\right) \cdot \widehat{e}^0,
$$
respectively. Thus, the main indeterminate is the boundary operator $\partial_2$ and we denote
$$
\mathcal{A}' := \left(\hbox{matrix of } \partial_2: C_2(\widehat{X}) \longrightarrow C_1(\widehat{X}) 
\hbox{ in the basis $\widehat{e}^2$ and $\widehat{e}^1$} \right).
$$
Then, a computation of $\tau^\varphi(X)$ from its definition gives
$$
\tau^\varphi(M) \cdot (\varphi(\alpha_i^\sharp)-1) \cdot (\varphi(\beta_j^\sharp)-1)
= (-1)^{i+j+g+1} \cdot \tau_0 \cdot \varphi(\mathcal{A}'_{ij}) \ \in \F/\varphi(H_1(M))
$$
See the proof of Theorem II.1.2 in \cite{Turaev_book_dim_3} for the details of  computation.
The topological interpretation of free differential calculus (see \S \ref{subapp:topological_interpretation})
shows that, for appropriate choices of the lifted cells, the matrix $\mathcal{A}'$ is equal to $\mathcal{A}$.
The conclusion follows.
\end{proof}

\subsection{The Alexander polynomial of a $3$-manifold}

Let $M$ be a $3$-manifold. We consider the free abelian group
$$
G := H_1(M)/ \Tors H_1(M)
$$
and the canonical ring homomorphism
$$
\mu: \Z[H_1(M)] \longrightarrow Q(\Z[G])
$$
with values in the field of fractions of the group ring $\Z[G]$.

\begin{theorem}[Milnor \cite{Milnor_duality}, Turaev \cite{Turaev_Alexander}]
\label{th:Milnor-Turaev_dim_3}
For any $3$-manifold $M$ with $\beta_1(M)\geq 2$, we have
$$
\pm \tau^\mu(M) = \Delta(M) \ \in \Z[G]/\pm G
$$
and for any $3$-manifold $M$ with $\beta_1(M)=1$, we have
$$
\pm \tau^\mu(M) = \frac{\Delta(M)}{(t-1)^2} \ \in Q(\Z[G])/\pm G
$$
where $t$ is a generator of $G \simeq \Z$.
\end{theorem}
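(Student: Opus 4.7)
The plan is to apply Theorem \ref{th:Milnor-Turaev} to a CW-decomposition of $M$ (which exists by Theorem \ref{th:triangulation}), so that
$$
\pm\tau^\mu(M) \;=\; A(M) \;=\; \frac{\ord H_1(\overline{M})\cdot \ord H_3(\overline{M})}{\ord H_0(\overline{M})\cdot \ord H_2(\overline{M})} \;\in\; Q(\Z[G])/\pm G,
$$
where $\overline{M}\to M$ is the maximal free abelian cover, and the numerator already contains $\Delta(M)=\ord H_1(\overline{M})$ by definition. The task thus reduces to evaluating the ``auxiliary'' orders $\ord H_i(\overline{M})$ for $i=0,2,3$.

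First I would dispose of the two easy factors. Since $\beta_1(M)\geq 1$ the group $G\simeq \Z^b$ is infinite, so $\overline{M}$ is a non-compact $3$-manifold, hence $H_3(\overline{M})=0$ and $\ord H_3(\overline{M})=1$. Connectedness of $\overline{M}$ gives $H_0(\overline{M})=\Z$ with trivial $G$-action, which as a $\Z[G]$-module admits the presentation matrix $(t_1-1,\dots,t_b-1)^T$ (for a basis $t_1,\dots,t_b$ of $G$), so
$$
\ord H_0(\overline{M}) \;=\; \gcd(t_1-1,\dots,t_b-1) \;=\; \begin{cases} t-1 & \hbox{if }b=1,\\ 1 & \hbox{if }b\geq 2,\end{cases}
$$
the case $b\geq 2$ holding because the elements $t_i-1$ are pairwise coprime irreducibles in the UFD $\Z[G]$.

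The heart of the argument is the identity $\ord H_2(\overline{M})=\ord H_0(\overline{M})$. I would obtain this via Poincar\'e duality with $\Z[G]$-coefficients on the closed oriented $3$-manifold $M$, which produces isomorphisms $H_i(\overline{M})\cong H^{3-i}(M;\Z[G])$ of $\Z[G]$-modules (the $\Z[G]$-structure on the right being twisted by the standard involution $g\mapsto g^{-1}$). Assuming first that $\Delta(M)\neq 0$, all $H_q(\overline{M})$ are $\Z[G]$-torsion, so every $E_2^{0,q}=\Hom_{\Z[G]}(H_q(\overline{M}),\Z[G])$ in the universal coefficient spectral sequence vanishes, and on the diagonal $p+q=1$ only $E_2^{1,0}=\operatorname{Ext}^1_{\Z[G]}(\Z,\Z[G])$ survives. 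The Koszul resolution of $\Z$ coming from the regular sequence $(t_1-1,\dots,t_b-1)\subset \Z[G]$ then shows that $\operatorname{Ext}^1_{\Z[G]}(\Z,\Z[G])\cong \Z$ (of order $t-1$) when $b=1$ and $\operatorname{Ext}^1_{\Z[G]}(\Z,\Z[G])=0$ when $b\geq 2$, yielding $\ord H_2(\overline{M})=\ord H_0(\overline{M})$ in both cases.

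Substituting these computations into $A(M)$ gives $\pm\tau^\mu(M)=\Delta(M)/(\ord H_0(\overline{M}))^2$, which is $\Delta(M)$ when $b\geq 2$ and $\Delta(M)/(t-1)^2$ when $b=1$. The degenerate case $\Delta(M)=0$ forces $H_1(\overline{M})$ to have positive $\Z[G]$-rank, so $A(M)=0$ by convention and both sides of the identity vanish. The main obstacle in executing this plan is the duality-plus-universal-coefficient step: one must justify the Poincar\'e duality chain-level equivalence over the non-principal ring $\Z[G]$ and control the convergence of the spectral sequence, after which the Koszul computation of $\operatorname{Ext}^1(\Z,\Z[G])$ is routine.
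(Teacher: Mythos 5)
Your proposal is correct and carries out precisely the route that the paper mentions but declines to pursue: the paper's proof opens with ``This can be deduced from Theorem~\ref{th:Milnor-Turaev}. Alternatively, we can use the following argument which is more direct,'' and then switches to the Heegaard-splitting formula of Lemma~\ref{lem:torsion_Heegaard_splitting}, matching $\tau^\mu(M)$ (via a minor of the Fox Jacobian) against $\Delta(M)$ (via the gcd of such minors, as in the proof of Theorem~\ref{th:Alexander_computation}). You instead start from the identity $\pm\tau^\mu(M)=A(M)$ of Theorem~\ref{th:Milnor-Turaev} and isolate the auxiliary orders $\ord H_0(\overline M)$, $\ord H_2(\overline M)$, $\ord H_3(\overline M)$: the first by the Koszul presentation of $\Z$ over $\Z[G]$, the third by non-compactness of $\overline M$, and the second by local-coefficient Poincar\'e duality plus the universal-coefficient spectral sequence, reducing to $\operatorname{Ext}^1_{\Z[G]}(\Z,\Z[G])$, which the Koszul resolution computes. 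The paper's approach is combinatorially explicit and entirely elementary (no duality, no spectral sequences); yours is homological-algebraic and closer in spirit to Milnor's original duality argument, trading a concrete Heegaard diagram for the general machinery. Two minor points worth tightening in your write-up: the claim ``all $H_q(\overline M)$ are $\Z[G]$-torsion'' is only known a priori for $q=0,1,3$ (that $H_2(\overline M)$ is torsion follows either from $\chi(M)=0$, or a posteriori from your own $\operatorname{Ext}^1$ computation), though your argument only actually uses torsion-ness of $H_1$ to kill $E_2^{0,1}$; and the convergence worry you flag at the end is harmless, since along $p+q=1$ the entry $E_2^{1,0}$ neither receives nor emits differentials (all such would land in or come from negative $p$ or $q$), so $E_\infty^{1,0}=E_2^{1,0}$ and the filtration on $H^1(M;\Z[G])$ has a single nonzero graded piece.
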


\begin{proof}
This can be deduced from Theorem \ref{th:Milnor-Turaev}.
Alternatively, we can  use the following argument which is more direct.
The torsion $\tau^\mu(M)$ can be computed from a Heegaard splitting
as explained by Lemma  \ref{lem:torsion_Heegaard_splitting}. Using the same notations, we get
\begin{equation}
\label{eq:tau_Delta_1}
\tau^\mu(M) \cdot (\mu(\alpha_i^\sharp)-1) \cdot (\mu(\beta_j^\sharp)-1)
=  \mu(\mathcal{A}_{ij}) \ \in Q(\Z[G])/\pm G
\end{equation}
where $\mathcal{A}_{ij}$ is the $(i,j)$-th minor of
$$
\mathcal{A} = \left(\hbox{matrix of } \partial_2: C_2(\widehat{X}) \longrightarrow C_1(\widehat{X}) 
\hbox{ in the basis $\widehat{e}^2$ and $\widehat{e}^1$} \right).
$$
Let $Y$ be the $2$-skeleton of $X$ and, as before, 
we denote by $\overline{Y}$ the  maximal free abelian cover of $Y$.
The proof of Theorem \ref{th:Alexander_computation} tells us that
\begin{equation}
\label{eq:tau_Delta_2}
\Delta(M) = \gcd \{\hbox{$(g-1)$-sized minors of $\mathcal{D}$}\}
\end{equation}
where $\mathcal{D}$ is the matrix of $\partial_2 : C_2(\overline{Y}) \to C_1(\overline{Y})$.
Since $Y$ is the $2$-skeleton of $X$, $\mathcal{D}$ is also the reduction of $\mathcal{A}$ to $\Z[G]$, 
\ie $\mathcal{D} = \mu(\mathcal{A})$. We can conclude thanks to (\ref{eq:tau_Delta_1}) and (\ref{eq:tau_Delta_2}). 
\end{proof}

\begin{remark}
Theorem \ref{th:Milnor-Turaev_dim_3} allows one to study the Alexander polynomial of $3$-manifolds
using the technology of Reidemeister torsions \cite{Milnor_duality,Turaev_knot}. 
For example, the symmetry property of the Reidemeister torsion (Theorem \ref{th:duality_manifolds}) 
results in the same property for the Alexander polynomial: 
$$
\Delta(M) = \overline{\Delta(M)} \ \in \Z[G]/\pm G
$$
where the bar denotes the  ring endomorphism of $\Z[G]$ defined by $\overline{g}:=g^{-1}$ for all $g\in G$. 
\end{remark}

\subsection{Turaev's maximal abelian torsion of a $3$-manifold}

\label{subsec:maximal_dim_3}

The maximal abelian torsion of CW-complexes induces a topological invariant of $3$-manifolds.
As explained in \S \ref{subsec:torsion_dim_3}, this follows 
from Theorem \ref{th:triangulation} and Theorem \ref{th:hauptvermutung} using triangulations.
Let $M$ be a $3$-manifold  with first homology group $H := H_1(M)$.

\begin{definition}[Turaev \cite{Turaev_Alexander_torsion}]
\label{def:maximal_dim_3}
The \emph{maximal abelian torsion} of $M$ is
$$
\tau(M) := \tau(X,\omega_M) \ \in Q(\Z[H])/ H
$$
where $X$ is a cell decomposition of $M$ and where the identification
between the cellular homology of $X$ and the singular homology of $M$ is implicit.
\end{definition}

The target of the  maximal abelian torsion depends a lot on the first Betti number
of the $3$-manifold. Three cases have to be distinguished.\\

\noindent
\emph{Assume that $\beta_1(M)=0$.} 
Among the cyclotomic fields into which the field $Q(\Z[H])=\Q[H]$ splits 
(see the proof of Proposition \ref{prop:splitting}), there is $\Q$
which corresponds to the trivial character of $H$.
The corresponding projection is the \emph{augmentation} map
$$
\varepsilon: \Q[H] \longrightarrow \Q, \ \sum_{h\in H} q_h \cdot h \longmapsto \sum_{h\in H} q_h.
$$
Its kernel is called the \emph{augmentation ideal} of $\Q[H]$. 
The Reidemeister torsion with $\varepsilon$-twisted coefficients is zero since $H_*(M;\Q) \neq 0$.
We conclude that
$$
\tau(M) \in \Ker(\varepsilon) \subset \Q[H] = Q(\Z[H])
$$
for any $3$-manifold $M$ with $\beta_1(M)=0$.\\

\noindent
\emph{Assume that $\beta_1(M)\geq 2$.} 
Among the fields into which $Q(\Z[H])$ splits, there is the field of rational fractions $Q(\Z[G])$
where $G:=H/\Tors H$. Let $\mu:Q(\Z[H]) \to Q(\Z[G])$ be the corresponding projection so that,
by definition of $\tau(M)$, we have $\mu\left(\tau(M)\right) = \tau^\mu(M)$.
We know from Theorem \ref{th:Milnor-Turaev_dim_3} that $\tau^\mu(M)$ is the Alexander polynomial $\Delta(M)$,
so that it belongs to $\Z[G]$.
More generally, Turaev proves that 
$$
\tau(M) \in \Z[H]/H \subset Q(\Z[H])/H
$$ 
for any $3$-manifold $M$ with $\beta_1(M)\geq 2$.
See \cite[\S II]{Turaev_book_dim_3} for details.\\

\noindent
\emph{Assume that $\beta_1(M)=1$.} We proceed as in the previous case, and we denote by $t$ a generator of $G = H/\Tors H$.
We know from Theorem \ref{th:Milnor-Turaev_dim_3} that $\mu\left(\tau(M)\right)$ coincides with $\Delta(M)/(t-1)^2$,
so that it belongs to $\Z[t^\pm]/(t-1)^2$. 
More generally, Turaev ``extracts'' from $\tau(M)$ an \emph{integral part} denoted by
$$
[\tau](M) \in \Z[H]/H.
$$
In more details, we deduce from Theorem \ref{th:duality_manifolds} that $\tau(M)$ is symmetric
in the sense that $\overline{\tau(M)} = \tau(M) \in Q(\Z[H])/H$,
where the bar denotes the ring endomorphism of $Q(\Z[H])$ defined by $\overline{h}:= h^{-1}$ for all $h\in H$. 
In fact, we can find a representative of $\tau(M)$ which satisfies this symmetry property in $Q(\Z[H])$.
(This is a refinement of Theorem \ref{th:duality_manifolds} proved in \cite{Turaev_knot}.)
Then, $[\tau](M)$ is defined to be the mod $H$ class of
$$
(\hbox{\footnotesize a symmetric representative of }\tau(M)) -  \frac{\sum_{h\in \Tors H} h}{(t-1)\cdot(t^{-1}-1)}
\ \in \Z[H]
$$
where $t \in H$ projects to a generator of $G \simeq \Z$. We refer to \cite[\S II]{Turaev_book_dim_3} for further details.

\subsection{Example: torsion of lens spaces} 

\label{subsec:torsion_lens_spaces}

Recall that the lens space $L_{p,q}$ is defined by the Heegaard splitting
\begin{equation}
\label{eq:lens}
L_{p,q}= \left(D^2 \times S^1\right) \cup_f \left(-D^2 \times S^1\right)
\end{equation}
where $f: S^1 \times S^1 \to S^1 \times S^1$ is a homeomorphism
such that the matrix of $f_*$ in the basis $\left(a := [S^1 \times 1], a^\sharp:=[1 \times S^1]\right)$ 
of $H_1(S^1\times S^1)$ is
$$
\left(\begin{array}{cc} q & s\\ p & r 
\end{array}\right) 
\quad \hbox{with} \ qr-ps=1.
$$
Let $T$ be the \emph{preferred} generator of $H_1(L_{p,q})$ defined by the core $0\times S^1$ 
of the left-hand solid torus in the decomposition (\ref{eq:lens}).

\begin{lemma}
\label{lem:torsion_lens}
Let $\varphi: \Z[H_1(L_{p,q})]\to \F$ be a non-trivial ring homomorphism with values in a commutative field. 
Then, we have
$$
\tau^\varphi(L_{p,q}) = \varepsilon_p \cdot (\varphi(T)-1)^{-1} \cdot(\varphi(T)^r -1)^{-1}
\ \in \F/\varphi(H_1(L_{p,q}))
$$
where $\varepsilon_p$ is a sign which does not depend on $q\in \Z_p$ and $r:=q^{-1} \in \Z_p$.
\end{lemma}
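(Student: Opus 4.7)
The plan is to apply Lemma \ref{lem:torsion_Heegaard_splitting} directly to the genus-one Heegaard splitting (\ref{eq:lens}) defining $L_{p,q}$, taking $g=1$ and $i=j=1$. Two simplifications are immediate in this case: the matrix $\mathcal{A}$ is $1\times 1$, so its $(1,1)$-minor $\mathcal{A}_{11}$ is the determinant of the empty matrix and equals $1$; and the sign prefactor $(-1)^{i+j+g+1}$ equals $(-1)^4=1$. Thus formula (\ref{eq:torsion_Heegaard_splitting}) collapses to
$$
\tau^\varphi(L_{p,q}) \cdot \bigl(\varphi(\alpha_1^\sharp)-1\bigr)\bigl(\varphi(\beta_1^\sharp)-1\bigr) = \tau_0 \in \F/\varphi(H_1(L_{p,q})).
$$

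Next I would identify the classes of $\alpha_1^\sharp$ and $\beta_1^\sharp$ in $H_1(L_{p,q})=\Z/p\Z$. By definition $T$ is represented by the core $0\times S^1$ of the left-hand solid torus, which is isotopic in $A$ to the longitude $a^\sharp=\alpha_1^\sharp\subset \partial A$; hence $[\alpha_1^\sharp]=T$. On the other side, $\beta_1^\sharp = a^\sharp \subset \partial B$ is the longitude of $B$, and under the gluing identification of $\partial B$ with $\partial A$ via $f$ it corresponds to the class $s\cdot a + r\cdot a^\sharp \in H_1(\partial A)$. A short van Kampen/Mayer--Vietoris calculation (using that $a=\alpha_1$ bounds a meridian disk in $A$ and so vanishes in $H_1(L_{p,q})$, while $a^\sharp$ projects to $T$) yields $[\beta_1^\sharp]=r\cdot T$, i.e.\ $T^r$ multiplicatively. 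Substituting these into the collapsed formula gives
$$
\tau^\varphi(L_{p,q}) = \tau_0 \cdot \bigl(\varphi(T)-1\bigr)^{-1}\bigl(\varphi(T)^r-1\bigr)^{-1},
$$
which is the claimed identity with $\varepsilon_p := \tau_0$.

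It remains to argue that $\tau_0$ depends only on $p$, not on $q$. Recall $\tau_0$ is the sign of the torsion of the real cellular chain complex $C(X;\R)$ with the ordering and orientations $oo$, homologically based by a representative of $\omega_M$. For the $4$-cell decomposition $X$ of a lens space one checks that $\partial_1=\partial_3=0$ over $\R$, while $\partial_2(e^2)=\varepsilon(\mathcal{A})\cdot e^1$, where $\varepsilon(\mathcal{A})$ is the exponent sum of $\alpha_1^\sharp$ in the word $\beta_1 \in \pi_1(\partial A\setminus D)$. Since $\beta_1$ represents $q\alpha_1+p\alpha_1^\sharp$ in $H_1(\partial A)$, this exponent sum equals $p$. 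Thus the based real chain complex (and the homological orientation, which here reduces to the basis $([\star],[M])$ of $H_*(M;\R)$) depend only on $p$, so its sign-refined torsion does as well.

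The main obstacle is the final sign analysis: strictly speaking the orientation chosen for the $2$-cell $e^2$ uses the curve $\beta_1^\sharp$, which does vary with $q$ via the parameters $r,s$. What must be checked is that the contributions of this choice to the sign of $\partial_2$ on the one hand, and to the sign of the chosen representative of $\omega_M$ on the other, cancel out, leaving a quantity $\tau_0$ expressible purely in terms of $p$. Granting this bookkeeping, the proof is complete.
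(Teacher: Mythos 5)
Your proof is correct and follows the same route as the paper: apply Lemma~\ref{lem:torsion_Heegaard_splitting} with $i=j=g=1$, note that $\mathcal{A}_{11}$ is the empty minor ($=1$) and $(-1)^{i+j+g+1}=1$, identify $[\alpha_1^\sharp]=T$ and $[\beta_1^\sharp]=s\cdot[\alpha_1]+r\cdot[\alpha_1^\sharp]=rT$, and observe that $\tau_0$ depends only on $p$. Your final worry is not actually a gap: the orientation of $e^2$ is pinned down by the condition $e^2\centerdot\beta_1^\sharp=+1$, and this intersection occurs entirely inside $B$ between the co-core disk $e^2\cap B$ and the longitude $\beta_1^\sharp\subset\partial B$ pushed slightly into $B$ — both intrinsic to the handlebody $B$ and hence independent of the gluing $f$. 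Consequently the based real complex $C(X;\R)$ (with $\partial_1=\partial_3=0$, $\partial_2=\pm p$, and homological basis $([\star],[M])$) is the same for all $q$ with a given $p$, which is exactly the paper's (terser) justification that $\tau_0$ is independent of $q$.
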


\begin{proof}
We apply Lemma \ref{lem:torsion_Heegaard_splitting} with $i=j=g=1$.
We have $[\alpha_1^\sharp]=T$ by definition of $T$, and
$[\beta_1^\sharp]= s\cdot[\alpha_1]+r\cdot [\alpha_1^\sharp] = r\cdot T \in H_1(L_{p,q})$.
Just like the cellular chain complex for the cell decomposition induced by the Heegaard splitting,
the sign $\tau_0$ in (\ref{lem:torsion_Heegaard_splitting}) does not depend on $q$.
\end{proof}

The topological classification of lens spaces can now be completed
and, for this, we need a number-theoretic result.
This is for instance proved in \cite[\S I]{deRham} using elementary properties of Gauss sums
and a non-vanishing property of Dirichlet series.

\begin{lemma}[Franz \cite{Franz}]
\label{lem:Franz}
Let $p\geq 1$ be an integer and let $\Z_p^\times$ be the multiplicative group of invertible elements of $\Z_p$.
Let $a:\Z_p^\times \to \Z$ be a map such that
\begin{enumerate}
\item $\sum_{j\in \Z_p^\times} a(j) =0$,
\item $\forall j \in \Z_p^\times, \ a(j)=a(-j)$,
\item ${\displaystyle \prod_{j\in \Z_p^\times} (\zeta^j-1)^{a(j)}=1}$ for any $p$-th root of unity $\zeta \neq 1$.
\end{enumerate}
Then, $a(j)=0$ for all $j\in \Z_p^\times$.
\end{lemma}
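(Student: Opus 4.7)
The plan is to convert the multiplicative relations (3) into additive ones by taking $\log|\cdot|$, and then decouple them via finite Fourier analysis on the character group of $\Z_p^\times$. Extend $a$ to $\Z/p\Z$ by setting $a(j):=0$ on non-units, fix the primitive $p$-th root of unity $\omega := e^{2\pi i/p}$, and set $f(m) := \log|1-\omega^m|$ for $m \not\equiv 0 \pmod p$. Writing each non-trivial $p$-th root of unity as $\omega^k$ for $k \in \{1,\dots,p-1\}$ and taking $\log|\cdot|$ in (3), we obtain the linear system
$$
\sum_{j \in \Z_p^\times} a(j)\, f(jk) = 0, \qquad k \in \{1,\dots,p-1\}.
$$

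Next, I would test these relations against an arbitrary Dirichlet character $\chi$ of $\Z_p^\times$. Multiplying the relation for $k\in \Z_p^\times$ by $\chi(k)$, summing, and substituting $m := jk$ (valid since $j$ is a unit) factors the result as
$$
\Bigl(\sum_{j\in\Z_p^\times} a(j)\,\bar\chi(j)\Bigr)\Bigl(\sum_{m\in\Z_p^\times} \chi(m)\, f(m)\Bigr) = 0.
$$
Now analyze the second factor by parity. When $\chi$ is odd it vanishes automatically since $f(-m)=f(m)$, but in that case hypothesis (2) together with $\chi(-1)=-1$ already forces $\sum_j a(j)\chi(j) = 0$ by pairing $j$ with $-j$. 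When $\chi$ is non-trivial and even, a classical Gauss-sum manipulation identifies the second factor with a non-zero scalar multiple of $L(1,\chi)$, so Dirichlet's non-vanishing theorem forces $\sum_j a(j)\bar\chi(j) = 0$. For the trivial character this relation is exactly hypothesis (1).

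Thus $\sum_j a(j)\chi(j) = 0$ for every Dirichlet character $\chi$ of $\Z_p^\times$, and orthogonality of characters on the finite abelian group $\Z_p^\times$ forces $a\equiv 0$. The main obstacle is the one analytic input — the Gauss-sum identity expressing $\sum_{m\in\Z_p^\times} \chi(m)\log|1-\omega^m|$ in terms of $L(1,\chi)$, combined with the non-vanishing $L(1,\chi)\neq 0$ — which is precisely the ``elementary properties of Gauss sums and non-vanishing property of Dirichlet series'' cited by the author. Everything else is routine finite-group Fourier analysis.
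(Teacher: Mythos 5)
Your approach — take $\log|\cdot|$ to make the relations linear, decouple them via the characters of $\Z_p^\times$, dispatch the trivial character with (1) and the odd ones with (2), and kill the remaining ones using $L(1,\chi)\neq 0$ — is exactly the Franz--de~Rham argument that the paper cites; the paper itself gives no proof, only the reference. There is one spot where the sketch is imprecise: for a non-trivial even $\chi$ modulo $p$, the factor $\sum_{m\in\Z_p^\times}\chi(m)\log|1-\omega^m|$ is a non-zero multiple of $L(1,\chi)$ via a Gauss-sum inversion only when $\chi$ is \emph{primitive} mod $p$. When $\chi$ is imprimitive (which can happen as soon as $p$ is composite), the inversion $\sum_{m}\chi(m)e^{2\pi imn/p}=\bar\chi(n)\tau(\chi)$ fails for $\gcd(n,p)>1$, and $\tau(\chi)$ itself can vanish (e.g.\ $p=27$ and $\chi$ of conductor $9$), so that the putative ``$L(1,\chi)$ part'' of the sum is identically zero and the entire non-vanishing comes from the overlooked terms. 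The conclusion that the sum is non-zero does remain true — this is precisely what de~Rham establishes, by reducing to $L(1,\chi^*)\neq 0$ for the primitive character $\chi^*$ inducing $\chi$ and checking the intervening factors — but that reduction is a genuine step, not a one-line Gauss-sum manipulation, and a complete write-up would have to carry it out.
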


\begin{proof}[Proof of Theorem \ref{th:lens}]
It remains to prove the necessary condition for two lens spaces $L_{p,q}$ and $L_{p',q'}$ 
to be homeomorphic with their orientations preserved. 
The interesting case is when $p\geq 3$, which we assume.

Let $f:L_{p,q} \to L_{p',q'}$ be an orientation-preserving homeomorphism.
Because $f$ induces an isomorphism in homology and because $H_1(L_{p,q})\simeq \Z_p$, we must have $p=p'$. 
Let $T \in H_1(L_{p,q})$ and $T' \in H_1(L_{p,q'})$ be the preferred generators defined by the Heegaard splittings.
There is a $k\in \Z_p^\times$  such that $T'=k\cdot f_*(T)$.
For any $p$-th root of unity $\zeta\neq 1$,
we consider the ring homomorphism $\varphi:\Z[H_1(L_{p,q})] \to \C$ defined by $\varphi(T):= \zeta$,
as well as the homomorphism $\varphi' := \varphi \circ f_*^{-1}: \Z[H_1(L_{p,q'})] \to \C$.
Since the Reidemeister torsion is a topological invariant, 
we have $\tau^{\varphi}(L_{p,q})=\tau^{\varphi'}(L_{p,q'})$ and we deduce from Lemma \ref{lem:torsion_lens} that
\begin{equation}
\label{eq:zeta}
\zeta^u \cdot (\zeta-1) \cdot (\zeta^r-1) = (\zeta^k-1) \cdot (\zeta^{kr'}-1)
\end{equation}
where $r:=q^{-1}$, $r':=(q')^{-1}$ and $u\in \Z_p$ is unknown. 
If we multiply (\ref{eq:zeta}) by its conjugate, we get the formula
\begin{equation}
\label{eq:zeta_double}
(\zeta-1) \cdot (\zeta^r-1) \cdot (\zeta^{-1}-1) \cdot (\zeta^{-r}-1)
= (\zeta^k-1) \cdot (\zeta^{kr'}-1) \cdot (\zeta^{-k}-1) \cdot (\zeta^{-kr'}-1).
\end{equation}
For all $j\in \Z_p^\times$, let $m(j)$ be the number of times $j$ appears in the sequence $(1,-1,r,-r)$
and let $m'(j)$ be the number of times it appears in the sequence $(k,-k,kr',-kr')$.
We have $m(j)=m(-j)$ and $\sum_{j\in \Z_p^\times} m(j) =4$, and similarly for $m'$. 
We deduce from this  and from (\ref{eq:zeta_double}) that the map
$a:=m-m'$ satisfies the hypothesis of Lemma \ref{lem:Franz}. Thus, we have $m=m'$ 
so that the two sequences $(1,-1,r,-r)$ and $(k,-k,kr',-kr')$ coincide up to some permutation, which we write
\begin{equation}
\label{eq:quadruplets}
\{1,-1,r,-r\} = \{k,-k,kr',-kr'\}.
\end{equation}

Moreover, let $P\in \C[X]$ be a polynomial such that the identity
$$
P(T) = T^u \cdot (T-1) \cdot (T^r-1) -(T^k-1) \cdot (T^{kr'}-1)
$$ 
holds in the group ring $\C[H]$ where, following our convention, the group $H:=H_1(L_{p,q})$ is written multiplicatively.
Since $T^p=1 \in \C[H]$, we can assume that $P$ has degree at most $p-1$.
But, since the identity (\ref{eq:zeta}) holds for any $p$-th root of unity $\zeta$ (including $\zeta=1$), 
this polynomial has at least $p$ roots. Therefore, $P$ is zero and
\begin{equation}
\label{eq:zeta_universal}
T^u \cdot (T-1) \cdot (T^r-1) = (T^k-1) \cdot (T^{kr'}-1) \ \in \C[H].
\end{equation}
Equation (\ref{eq:quadruplets}) leaves out $8$ possible cases
which must be analized separately, possibly using (\ref{eq:zeta_universal}).
For instance, if $k=-r$ and $kr'=-1$, then we have $rr'=1$ and we are done.
As another example, let us consider the case when $k=1$ and $kr'=-r$.
Then, (\ref{eq:zeta_universal}) gives
\begin{equation}
\label{eq:special_case}
T^{u+r} \cdot (T-1) \cdot (T^r-1) = - (T-1) \cdot (T^{r}-1).
\end{equation}
To pursue, we point out the remarkable identity
$$
\forall h\in H^{\times}, \
(h - 1) \cdot \frac{1}{p}\left(1 + 2h + \cdots + p\cdot h^{p-1}\right) = 1 - \frac{\Sigma_H}{p} \ \in \C[H]
$$
where, again, $H$ is written multiplicatively and $\Sigma_H := \sum_{h \in H} h$.
Using that identity for $h=T$ and $h=T^r$, we get
$$
T^{u+r}\cdot \left(1 - \frac{\Sigma_H}{p}\right) = -\left(1-\frac{\Sigma_H}{p}\right)
$$
or, equivalently,
$$
T^{u+r} + 1 = \frac{2}{p} \cdot \Sigma_H 
$$
which is impossible if $p>2$. So, the case $(k=1,kr'=-r)$ must be excluded.
The $6$ remaining cases are treated in a similar way.
\end{proof}

\begin{remark}
Bonahon proved  by purely topological methods that a lens space has, up to isotopy, 
only one Heegaard splitting of genus one \cite{Bonahon}.
The topological classification of lens spaces is easily deduced from this result, 
without using the Reidemeister torsion.
The proof of Theorem \ref{th:lens} that we have presented here, which dates back
to Reidemeister \cite{Reidemeister} and Franz \cite{Franz}, has the advantage to extend to higher dimensions.
(Lens spaces can be defined in any odd dimension by generalizing the description proposed in Exercice \ref{ex:quotient_sphere}: 
see \cite[\S V]{Cohen} for instance.)
\end{remark}

To conclude with lens spaces, we  compute the maximal abelian torsion of $L_{p,q}$.
We deduce from Lemma \ref{lem:torsion_lens} that
\begin{equation}
\label{eq:maximal_lens}
\tau(L_{p,q}) = \varepsilon_p \cdot (T-1)^{-1} \cdot (T^r-1)^{-1} \ \in \Q[H]/H 
\end{equation}
where $T \in H := H_1(L_{p,q})$ is the preferred generator, 
and where $(T-1)^{-1}$ and $(T^r-1)^{-1}$ denote inverses \emph{in} the subring $\Ker(\varepsilon) \subset \Q[H]$.
(Indeed,  $T-1$ and $T^r-1$ are invertible in $\Ker(\varepsilon)$ since they project
to non-zero elements in each cyclotomic field $\neq \Q$ into which $\Q[H]$ splits.)
The unit element of the ring $\Ker(\varepsilon)$ is $1-\Sigma_H/p$ where $\Sigma_H:= \sum_{h\in H} h$,
and we have the following identity:
$$
\forall h\in H^\times, 
\ (h-1) \cdot \left(\sum_{i=0}^{p-1}\frac{2i-p+1}{2p} \cdot h^i\right) = 1 - \frac{\Sigma_H}{p}
\ \in \Ker(\varepsilon) \subset \Q[H].
$$
Thus, we obtain the formula
$$
\tau(L_{p,q}) = \varepsilon_p \cdot  
\left(\sum_{i=0}^{p-1}\frac{2i-p+1}{2p} \cdot T^i\right) \cdot 
\left(\sum_{i=0}^{p-1}\frac{2i-p+1}{2p} \cdot T^{ri} \right) \ \in \Q[H]/H.
$$

\subsection{Example: torsion of surface bundles}

\label{subsec:surface_bundles}

Let $\Sigma_g$ be a closed connected oriented surface of genus $g$.
The \emph{mapping torus} of an  orientation-preserving homeomorphism $f:\Sigma_g \to \Sigma_g$  is the $3$-manifold
$$
T_f := \left(\Sigma_g \times [-1,1]\right)/\!\sim
$$ 
where $\sim$ is the equivalence relation generated by $(x,-1) \sim (f(x),1)$ for all $x\in \Sigma_g$.
The cartesian projection $\Sigma_g \times [-1,1] \to [-1,1]$ induces a projection $T_f \to S^1$,
which makes $T_f$ a bundle with base $S^1$ and fiber $\Sigma_g$. 
Conversely, any surface bundle over $S^1$ can be obtained in that way.

We wish to compute the maximal abelian torsion of $T_f$.
For this, we can assume that $g\geq 1$ since $T_f$ only depends on the isotopy class of $f$ (up to homeomorphism)
and we have seen that $\mcg(S^2)$ is trivial.
We restrict ourselves to the case when \emph{$f$ acts trivially in homology}.
Then, $H:=H_1(T_f)$ is free abelian (isomorphic to $H_1(\Sigma_g) \oplus \Z$)
and Theorem \ref{th:Milnor-Turaev_dim_3} says that $\tau(T_f) = \Delta({T_f})$.
Thus, we are reduced in this case to compute the Alexander polynomial of $T_f$.
(See Remark \ref{rem:other_proofs} for the general case.)

Thanks to an isotopy of $f$, we can further assume that $f$ is the identity on a $2$-disk $D$.
The bordered surface $\Sigma_g \setminus \operatorname{int}(D)$ is denoted by $\Sigma_{g,1}$,
and the restriction of the homeomorphism $f$ to $\Sigma_{g,1}$ is denoted by $f|$.
We pick a base point $\star \in \partial \Sigma_{g,1}$. Then, we have
$$
H = H_1(\Sigma_g) \oplus (\Z \cdot t)
$$
where $t$ is the homology class of the circle $(\star \times [-1,1])/\!\sim$.
The group $\pi_1(\Sigma_{g,1},\star)$ is freely generated 
by the based loops $\alpha_1,\dots,\alpha_g,\alpha_1^\sharp,\dots,\alpha_g^\sharp$ shown on Figure \ref{fig:surface}. 
We denote by $\frac{\partial \ }{\partial {\alpha_1}}, \dots, \frac{\partial \ }{\partial {\alpha_g}},
\frac{\partial \ }{\partial {\alpha_1^\sharp}}, \dots, \frac{\partial \ }{\partial {\alpha_g^\sharp}}$
the free derivatives with respect to this basis,
and we consider the ``Jacobian matrix'' of $f|_*:\pi_1(\Sigma_{g,1},\star) \to \pi_1(\Sigma_{g,1},\star)$ defined by
$$
J(f|_*) :=  \left(\begin{array}{cc}
\begin{array}{ccc}
\frac{\partial f|_*(\alpha_1)}{\partial \alpha_1} & \cdots &  \frac{\partial f|_*(\alpha_g)}{\partial \alpha_1} \\
\vdots & \ddots & \vdots \\
\frac{\partial f|_*(\alpha_1)}{\partial \alpha_g} & \cdots &  \frac{\partial f|_*(\alpha_g)}{\partial \alpha_g}  
\end{array} &
\begin{array}{ccc}
\frac{\partial f|_*(\alpha_1^\sharp)}{\partial \alpha_1} & \cdots &  \frac{\partial f|_*(\alpha_g^\sharp)}{\partial \alpha_1} \\
\vdots & \ddots & \vdots \\
\frac{\partial f|_*(\alpha_1^\sharp)}{\partial \alpha_g} & \cdots &  \frac{\partial f|_*(\alpha_g^\sharp)}{\partial \alpha_g}  
\end{array} \\
\begin{array}{ccc}
\frac{\partial f|_*(\alpha_1)}{\partial \alpha_1^\sharp} & \cdots &  \frac{\partial f|_*(\alpha_g)}{\partial \alpha_1^\sharp} \\
\vdots & \ddots & \vdots \\
\frac{\partial f|_*(\alpha_1)}{\partial \alpha_g^\sharp} & \cdots &  \frac{\partial f|_*(\alpha_g)}{\partial \alpha_g^\sharp}  
\end{array} &
\begin{array}{ccc}
\frac{\partial f|_*(\alpha_1^\sharp)}{\partial \alpha_1^\sharp} & \cdots &  \frac{\partial f|_*(\alpha_g^\sharp)}{\partial \alpha_1^\sharp} \\
\vdots & \ddots & \vdots \\
\frac{\partial f|_*(\alpha_1^\sharp)}{\partial \alpha_g^\sharp} & \cdots &  \frac{\partial f|_*(\alpha_g^\sharp)}{\partial \alpha_g^\sharp}  
\end{array}
\end{array}\right).
$$
It turns out that $\tau(T_f)$ is essentially
given by the characteristic polynomial with indeterminate $t$ of (a reduction of) the matrix $J(f|_*)$.

\begin{figure}[h]
\begin{center}
{\labellist \small \hair 0pt 
\pinlabel {\Large $\star$} at 567 3
\pinlabel {$\alpha_1$} [l] at 310 105
\pinlabel {$\alpha_g$} [l] at 568 105
\pinlabel {$\alpha_1^\sharp$} [b] at 216 70
\pinlabel {$\alpha_g^\sharp$} [b] at 472 65
\endlabellist}
\includegraphics[scale=0.5]{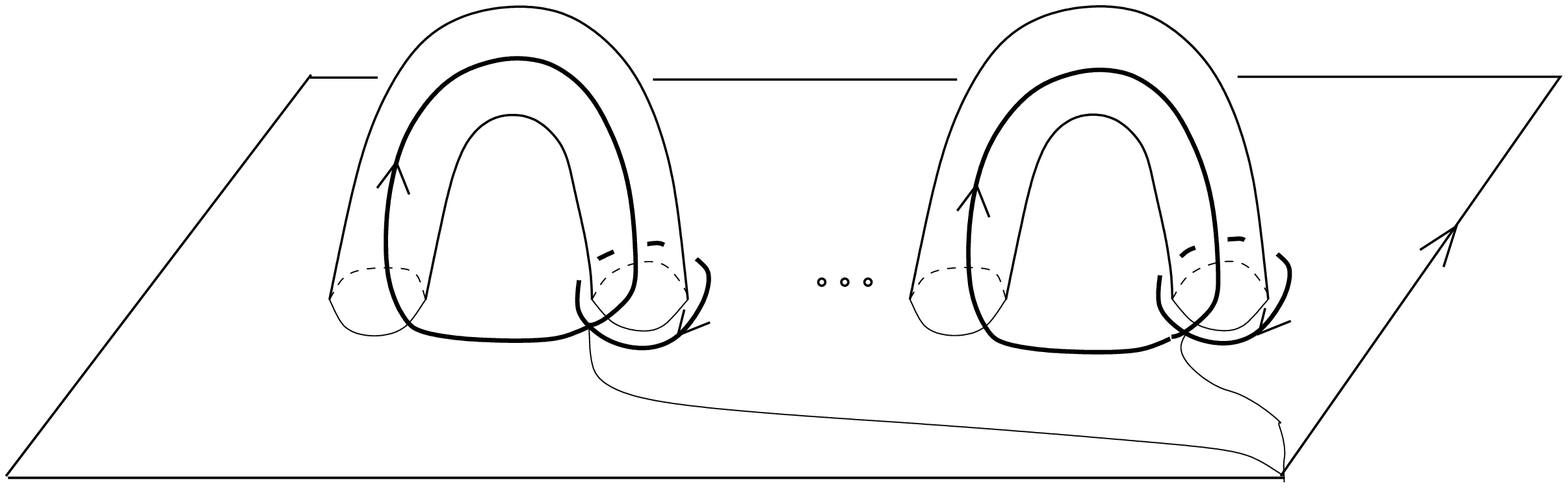}
\end{center}
\caption{The surface $\Sigma_{g,1}$ and a system of meridians and parallels $(\alpha,\alpha^\sharp)$.}
\label{fig:surface}
\end{figure}

\begin{proposition}
\label{prop:surface_bundle}
With the above notation and assumption, we have 
$$
\tau(T_f) = (t-1)^{-2} \cdot \det\Big(t \cdot \operatorname{I}_{2g} - \incl_* J(f|_*)\Big) \in \Z[H]/\pm H
$$
where $\incl_*: \pi_1(\Sigma_{g,1}) \to H_1(T_f) =H$ is induced by the inclusion.
\end{proposition}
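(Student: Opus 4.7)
Since $f$ acts trivially on $H_1(\Sigma_g)$, we have $H := H_1(T_f) \cong H_1(\Sigma_g) \oplus \Z\langle t \rangle \cong \Z^{2g+1}$, free abelian of rank $\geq 3$. Hence $G = H/\Tors H = H$, $Q(\Z[H])$ is a field, and Theorem~\ref{th:Milnor-Turaev_dim_3} gives $\tau(T_f) = \pm \Delta(T_f)$ in $\Z[H]/\pm H$. The task thus reduces to proving
$$(t-1)^2 \cdot \Delta(T_f) \;=\; \pm \det\!\bigl(t\cdot \operatorname{I}_{2g} - \incl_* J(f|_*)\bigr) \quad \in \Z[H]/\pm H.$$

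The mapping torus structure, combined with the hypothesis that $f$ is the identity on the disk $D$, produces the presentation
$$\pi_1(T_f) \;=\; \big\langle x_1,\dots,x_{2g}, t \;\big|\; R_1,\dots,R_{2g},\, R_0\big\rangle,$$
where $(x_1,\dots,x_{2g}) := (\alpha_1,\dots,\alpha_g,\alpha_1^\sharp,\dots,\alpha_g^\sharp)$, $R_i := t x_i t^{-1} f|_*(x_i)^{-1}$, and $R_0 := \prod_{i=1}^g [\alpha_i,\alpha_i^\sharp]$. I apply Theorem~\ref{th:Alexander_computation}: $\Delta(T_f)$ is the gcd of the $(2g)\times(2g)$ minors of the Fox Jacobian $A = (\partial R_k/\partial y_\ell)_{k,\ell}$ after reduction to $\Z[H]$. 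Since $t x_i t^{-1}$ and $f|_*(x_i)$ both equal $x_i$ in $H$, the entries compute to
$$A \;=\; \begin{pmatrix} t\cdot \operatorname{I}_{2g} - \bigl(\incl_* J(f|_*)\bigr)^{T} & (1-x_i)_{1\leq i\leq 2g} \\[1mm] \bigl(\partial R_0/\partial x_j\bigr)_{1\leq j\leq 2g} & 0 \end{pmatrix}.$$

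I then exhibit two explicit null vectors of $A$. The right-null column $\mathbf{u} := (x_1-1,\dots,x_{2g}-1,\,t-1)^{T}$ comes from the fundamental Fox identity $\sum_\ell(\partial w/\partial y_\ell)(y_\ell-1) = w-1$ applied to each $R_k$, which is trivial in $H$. The left-null row $\mathbf{v}^T := (-\partial R_0/\partial x_1,\dots,-\partial R_0/\partial x_{2g},\, t-1)$ requires more care: combining the Fox chain rule
$$\partial f|_*(R_0)/\partial x_\ell \;=\; \sum_{k} f|_*\!\bigl(\partial R_0/\partial x_k\bigr)\cdot \partial f|_*(x_k)/\partial x_\ell$$
with the equality $f|_*(R_0) = R_0$ (since $f|$ fixes $\partial \Sigma_{g,1}$ pointwise) and the triviality of $f|_*$ on $\Z[H]$ produces, in $\Z[H]$, the identity $\partial R_0/\partial x_\ell = \sum_k (\partial R_0/\partial x_k)\cdot (\incl_* J(f|_*))_{\ell k}$, which is exactly what is needed to check $(\mathbf{v}^T A)_\ell = 0$ for $\ell \leq 2g$; the column $\ell = 2g+1$ is again the fundamental identity for $R_0$.

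Both null vectors being non-zero, $\rank A \leq 2g$, so $\det A = 0$ and $\operatorname{adj}(A)\cdot A = A\cdot \operatorname{adj}(A) = 0$. Assuming $\rank A = 2g$ (otherwise all $2g$-minors vanish and one must argue separately that the right-hand side is then also divisible by $(t-1)^2$ modulo $\Delta(T_f)=0$), the kernels are one-dimensional over $Q(\Z[H])$, forcing $\operatorname{adj}(A) = \lambda\, \mathbf{u}\mathbf{v}^T$ for some $\lambda \in Q(\Z[H])$. The $(2g+1,2g+1)$-cofactor simultaneously equals $\det(t I - \incl_* J(f|_*))$ and $\lambda(t-1)^2$. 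To finish, I compute $\partial R_0/\partial \alpha_j = 1-\alpha_j^\sharp$ and $\partial R_0/\partial \alpha_j^\sharp = \alpha_j - 1$ in $\Z[H]$, which identifies the entries of $\mathbf{u}$ and $\mathbf{v}$ with (signed) pairwise distinct irreducibles in the UFD $\Z[H]$; therefore $\gcd_i u_i = \gcd_j v_j = 1$, hence $\gcd_{i,j}(u_i v_j) = 1$, so $\lambda \in \Z[H]$ and $\Delta(T_f) = \gcd_{i,j}\{\pm \lambda\, u_i v_j\} = \lambda$. Combined with the cofactor identity, this yields the formula. The most delicate step is the verification of $\mathbf{v}^T A = 0$, which is precisely where the hypothesis that $f$ acts trivially in homology is used.
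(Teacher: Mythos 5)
Your argument is correct and takes a genuinely different route from the paper's. The paper works geometrically: it realizes the maximal abelian cover $\widehat{T}_f$ as $\widehat{\Sigma}_g \times \R$, applies Mayer--Vietoris to exhibit $H_1(\widehat{T}_f)$ as the cokernel of $\mathcal{E}\widehat{f}_* - t\cdot\Id$, and then uses two ``snake lemma'' reductions (from $\Sigma_g$ to $\Sigma_{g,1}$ to the bouquet $X_{2g}$) together with the multiplicativity of orders in short exact sequences, landing on the Fox Jacobian via the topological interpretation of free derivatives. You instead write the presentation of $\pi_1(T_f)$ directly as an extension of $\pi_1(\Sigma_g)$ by the stable letter $t$, apply the Fox/Alexander Theorem~\ref{th:Alexander_computation}, and compute the gcd of the relevant minors by a clean linear-algebra argument: the two explicit null vectors $\mathbf{u}$, $\mathbf{v}$ force $\operatorname{adj}(A) = \lambda\,\mathbf{u}\mathbf{v}^T$, and the coprimality of the entries of $\mathbf{u}$ and of $\mathbf{v}$ pins down $\Delta(T_f) = \lambda$. (Your coprimality claim follows because $\gcd_{i,j}u_iv_j = \gcd_j\bigl(v_j\gcd_i u_i\bigr) = \gcd_j v_j = 1$; the entries of $\mathbf{u}$ and $\mathbf{v}$, namely the various $(\alpha_j - 1)$, $(\alpha_j^\sharp - 1)$, $(t-1)$, are pairwise non-associate primes in the Laurent-polynomial ring $\Z[H]$.) Both proofs hinge on the hypothesis that $f$ acts trivially in homology, but in different places: the paper uses it to ensure $\widehat{f}_*$ is $\Z[S]$-linear, while you use it (together with $f|_*(R_0) = R_0$ from the boundary-fixing hypothesis) to check that $\mathbf{v}^T$ is a left-null row via the chain rule. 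Your approach avoids the covering-space machinery and is more self-contained, though arguably less conceptual.

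One loose end you flag but do not close: the case $\rank A < 2g$. In fact this never occurs, and you can say so. Since $f|$ is an automorphism of the free group, the Magnus matrix $\overline{J(f|_*)}$ is invertible over $\Z[\Free(x)]$ (Proposition~\ref{prop:Magnus}), hence its reduction $\incl_* J(f|_*)$ is invertible over $\Z[S]$ with $\det\bigl(\incl_* J(f|_*)\bigr)$ a unit of $\Z[S]$. Then $\det\bigl(t\cdot\operatorname{I}_{2g} - \incl_* J(f|_*)\bigr)$, as a polynomial in $t$ with leading coefficient $1$ and nonzero constant term, is nonzero. Since this determinant is the $(2g+1,2g+1)$-cofactor of $A$, the matrix $A$ has rank exactly $2g$ over $Q(\Z[H])$, and your adjugate argument applies unconditionally.
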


\begin{proof}
We are asked to compute
$$
\tau(T_f) = \Delta({T_f}) = \ord  H_1(\widehat{T}_f)
$$
where  $\widehat{T}_f$ denotes the maximal (free) abelian cover of $T_f$.
If $\widehat{\Sigma}_g$ is the maximal abelian cover of $\Sigma_g$,
then $\widehat{T}_f$ can be realized as $\widehat{\Sigma}_g \times \R$, so that it decomposes into ``slices'':
$$
\widehat{T}_f = \cdots \cup \left(\widehat{\Sigma}_g \times [-1,0]\right) \cup 
\left(\widehat{\Sigma}_g \times [0,1]\right) \cup \left(\widehat{\Sigma}_g \times [1,2]\right) \cup \cdots
$$
The covering transformation corresponding to $t \in H$ shifts the slices from left to right.

We set $S:=H_1(\Sigma_g)$ so that $H = S \oplus (\Z \cdot t)$.
Observe that $\Z[H]$ is free as a $\Z[S]$-module: 
it follows that the functor $\mathcal{E} := \Z[H] \otimes_{\Z[S]} -$ is exact.
An application of the Mayer--Vietoris theorem shows that, in the category of $\Z[H]$-modules, we have 
\begin{equation}
\label{eq:cokernel}
H_1(\widehat{T}_f) = \Coker\left(\mathcal{E}\widehat{f}_* - t\cdot \Id: 
\mathcal{E} H_1(\widehat{\Sigma}_g) \longrightarrow  \mathcal{E} H_1(\widehat{\Sigma}_g)\right)
\end{equation}
where $\widehat{f}: \widehat{\Sigma}_g \to \widehat{\Sigma}_g$ is the lift of $f$ that fixes a preferred lift $\widehat{\star}$ of $\star$.
(The map $\widehat{f}_*$ is $\Z[S]$-linear by our assumption that $f$ acts trivially on $S$.)

The maximal abelian cover $\widehat{\Sigma}_{g,1}$ of $\Sigma_{g,1}$ is a surface with boundary, 
and the group of covering transformations $H_1(\Sigma_{g,1})\simeq S$ acts freely and transitively
on the set of its boundary components. 
Therefore, $\widehat{\Sigma}_{g}$ is obtained from $\widehat{\Sigma}_{g,1}$
by gluing $2$-disks $\widehat{D}_s$ indexed by $s \in S$. We deduce the short exact sequence
$$
\xymatrix{
0 \ar[r] & \Z[S] \cdot \widehat{D} \ar[r] & H_1(\widehat{\Sigma}_{g,1}) \ar[r] & H_1(\widehat{\Sigma}_{g}) \ar[r] & 0.
}
$$
The following is a commutative diagram in the category of $\Z[H]$-modules:
$$
\xymatrix{
0 \ar[r] & \Z[H] \ar[d]_{(1-t)\cdot \Id}\ar[r] & 
\mathcal{E} H_1\left(\widehat{\Sigma}_{g,1}\right) \ar[r] \ar[d]_-{\mathcal{E}\widehat{f|}_* - t\cdot \Id} & 
\mathcal{E} H_1\left(\widehat{\Sigma}_{g}\right) \ar[r]  \ar[d]_-{\mathcal{E}\widehat{f}_* - t\cdot \Id}& 0 \\
0 \ar[r] & \Z[H] \ar[r] & \mathcal{E} H_1\left(\widehat{\Sigma}_{g,1}\right) \ar[r] & 
\mathcal{E} H_1\left(\widehat{\Sigma}_{g}\right) \ar[r] & 0.
}
$$
An application of the ``snake'' lemma relates the cokernels of the three vertical maps
by a short exact sequence. Besides, the order of a module is multiplicative in short exact sequences:
this fact generalizes Exercice \ref{ex:multiplicativity} and can be found in \cite[\S 3.3]{Hillman}. We deduce that
\begin{equation}
\label{eq:order_1}
\ord \Coker\left(\mathcal{E}\widehat{f|}_* - t\cdot \Id\right)
= \ord \Coker \left(\mathcal{E}\widehat{f}_* - t\cdot \Id\right) \cdot (1-t).
\end{equation}

The surface $\Sigma_{g,1}$ deformation retracts to the union of based loops
$X_{2g}:=\alpha_1 \cup \alpha_1^\sharp \cup \dots \cup \alpha_g \cup \alpha_g^\sharp$,
which becomes a bouquet of $2g$ circles when all the basing arcs are collapsed to $\star$. 
Let $p:\widehat{\Sigma}_{g,1} \to \Sigma_{g,1}$ be the projection of the maximal abelian cover of $\Sigma_{g,1}$.
Then, $\widehat{X}_{2g} := p^{-1}(X_{2g})$ is the maximal abelian cover of $X_{2g}$. 
There is a map $\phi:\Sigma_{g,1} \to \Sigma_{g,1}$ homotopic to $f|$ 
such that $\phi(X_{2g}) = X_{2g}$ and $\phi(\star)=\star$,
and we have the following commutative square in the category of $\Z[S]$-modules:
$$
\xymatrix{
H_1\left(\widehat{X}_{2g}\right) \ar[r]_-\simeq^{\incl_*} \ar[d]_-{\widehat{\phi}_*}&
H_1\left(\widehat{\Sigma}_{g,1}\right) \ar[d]^-{\widehat{f|}_*}& \\
H_1\left(\widehat{X}_{2g}\right) \ar[r]^-\simeq_{\incl_*} & H_1\left(\widehat{\Sigma}_{g,1}\right)
}
$$
It follows that
\begin{equation}
\label{eq:order_2}
\ord \Coker\left(\mathcal{E}\widehat{f|}_* - t\cdot \Id\right)
= \ord \Coker\left(\mathcal{E}\widehat{\phi}_* - t\cdot \Id\right).
\end{equation}

Finally, the long exact sequence for the pair $(\widehat{X}_{2g}, p^{-1}(\star))$
gives the short exact sequence
$$
\xymatrix{
0 \ar[r] & H_1(\widehat{X}_{2g}) \ar[r] & H_1\left(\widehat{X}_{2g},p^{-1}(\star) \right) \ar[r]  & 
I(S) \cdot \widehat{\star} \ar[r] & 0
}
$$
where $I(S)$ is the augmentation ideal of $\Z[S]$, 
\ie the ideal generated by all the $(s-1)$ with $s \in S$.
Thus, we obtain another  commutative diagram in the category of $\Z[H]$-modules:
$$
\xymatrix{
0 \ar[r] & \mathcal{E} H_1(\widehat{X}_{2g}) \ar[d]_-{\mathcal{E} \widehat{\phi}_* - t\cdot \Id}
\ar[r] &\mathcal{E}  H_1\left(\widehat{X}_{2g},p^{-1}(\star) \right) 
\ar[r] \ar[d]_-{\mathcal{E} \widehat{\phi}^r_* - t\cdot \Id} & 
\Z[H]  \cdot I(S) \ar[r]  \ar[d]_-{(1-t)\cdot \Id}& 0\\
0 \ar[r] & \mathcal{E} H_1(\widehat{X}_{2g}) \ar[r] & 
\mathcal{E}H_1\left(\widehat{X}_{2g},p^{-1}(\star) \right) \ar[r]  & 
\Z[H]  \cdot I(S) \ar[r] & 0
}
$$
where $\widehat{\phi}^r$ denotes the relative version of $\widehat{\phi}$.
Again, by applying the ``snake'' lemma and the multiplicativity of orders, we obtain
\begin{equation}
\label{eq:order_3}
\ord \Coker(\mathcal{E} \widehat{\phi}^r_* - t\cdot \Id)
= \ord \Coker(\mathcal{E} \widehat{\phi}_* - t\cdot \Id) \cdot (1-t).
\end{equation}
The $\Z[S]$-module $H_1(\widehat{X}_{2g},p^{-1}(\star))$ is freely generated by the lifts 
$\widehat{\alpha}_1,\dots, \widehat{\alpha}_g,$ $\widehat{\alpha}^\sharp_1,\dots, \widehat{\alpha}^\sharp_g$
of the loops $\alpha_1,\dots,\alpha_g,\alpha_1^\sharp,\dots,\alpha_g^\sharp$ starting at $\widehat{\star}$.
Moreover, Proposition \ref{prop:topological_fundamental_formula} tells us that 
the matrix of $\widehat{\phi}_*^r$ in that basis is the reduction to $\Z[H_1(X_{2g})]$ 
of the jacobian matrix $J(\phi_*)=J(f|_*)$. So, we have
$$
\ord \Coker(\mathcal{E} \widehat{\phi}^r_* - t\cdot \Id) = 
\det \left(J(f|_*) - t\cdot I_{2g}\right).
$$
We conclude thanks to (\ref{eq:cokernel}), (\ref{eq:order_1}), (\ref{eq:order_2}) and (\ref{eq:order_3}).
\end{proof}

\begin{remark}
\label{rem:other_proofs}
Proposition \ref{prop:surface_bundle} is proved by Turaev in \cite{Turaev_book_dim_3} 
with no restriction on $f\in \mcg(\Sigma_g)$ using other kinds of arguments.
Observing that $T_f$ has a preferred Heegaard splitting of genus $2g+1$,
another proof would be to apply Lemma \ref{lem:torsion_Heegaard_splitting}.
Proposition \ref{prop:surface_bundle} is the analogue for $3$-manifolds 
of a classical formula expressing the Alexander polynomial of a closed braid (respectively, of a pure braid) 
from its Burau representation (respectively, from its Gassner representation) -- see Birman's book \cite{Birman}.
\end{remark}

\begin{remark}
\label{rem:Magnus_Torelli}
Just as the closed surface $\Sigma_g$, the bordered surface $\Sigma_{g,1}$ has a 
 \emph{mapping class group} defined as
the group of isotopy classes of homeomorphisms $\Sigma_{g,1} \to \Sigma_{g,1}$ which fix pointwise the boundary:
$$
\mcg(\Sigma_{g,1}) := \operatorname{Homeo}\left(\Sigma_{g,1}, \partial \Sigma_{g,1}\right)/ \cong.
$$
The natural action  on the free group $\pi_1(\Sigma_{g,1},\star) = \Free(\alpha,\alpha^\sharp)$ 
allows us to regard $\mcg(\Sigma_{g,1})$ as a subgroup of $\Aut\left(\Free(\alpha,\alpha^\sharp)\right)$.
In particular, the Magnus representation defined in \S \ref{subsec:Magnus} applies to $\mcg(\Sigma_{g,1})$:
$$
\Magnus: \mcg(\Sigma_{g,1}) \longrightarrow \operatorname{GL}\left(2g;\Z[\Free(\alpha,\alpha^\sharp)]\right),
f \longmapsto \overline{J(f_*)}.
$$
An important subgroup of $\mcg(\Sigma_{g,1})$ is the \emph{Torelli group} of $\Sigma_{g,1}$, namely
$$
\mathcal{I}(\Sigma_{g,1}) := \Ker\big(\mcg(\Sigma_{g,1}) \longrightarrow \Aut(S), \ f \longmapsto f_* \big)
$$
where $S:= H_1(\Sigma_{g,1})$. By Corollary \ref{cor:Magnus_abelian}, we obtain a group homomorphism
$$
\Magnusab:
\mathcal{I}(\Sigma_{g,1}) \longrightarrow \operatorname{GL}(2g;\Z[S])
$$
with values in a group of matrices over a commutative ring.
As illustrated by Proposition \ref{prop:surface_bundle},
this representation appears naturally in the context of abelian Reidemeister torsions,
but it is unfortunately \emph{not} faithfull \cite{Suzuki}. 
Its kernel consists of those elements of $\mathcal{I}(\Sigma_{g,1})$ which act trivially at the level 
of the second solvable quotient $\pi_1(\Sigma_{g,1},\star)/\pi_1(\Sigma_{g,1},\star)''$ of the fundamental group.
Explicit elements of the kernel are described by Suzuki in \cite{Suzuki_bis}.
\end{remark}

\section{Homotopy invariants derived from the abelian Reidemeister torsion}

In this section, we relate the maximal abelian torsion of $3$-manifolds
to some invariants which are defined with just a little bit of algebraic topology,
and which only depend on the \emph{oriented} homotopy type\footnote{
Two $3$-manifolds $M$ and $M'$ are said to have the \emph{same oriented homotopy type}
if there exists a homotopy equivalence $f:M \to M'$ such that $f_*\left(\left[M\right] \right) = \left[M'\right] \in H_3\left(M'\right)$.}.

\subsection{The triple-cup product forms}

Let $M$ be a $3$-manifold. Apart from the fundamental group,
the first homotopy invariant of $M$ which comes to one's mind is probably the cohomology ring.
Poincar\'e duality shows that all the (co)homology groups of $M$ are determined by $H_1(M;\Z)$
and that, for any integer $r\geq 0$, the cohomology ring $H^*(M;\Z_r)$ is determined by the \emph{triple-cup product form}
$$
u^{(r)}_M: H^1(M;\Z_r) \times H^1(M;\Z_r) \times H^1(M;\Z_r) \longrightarrow \Z_r.
$$
This form, defined  by 
$$
\forall x,y,z \in H^1(M;\Z_r), \quad
u^{(r)}_M(x,y,z) := \langle x \cup y \cup z, [M] \rangle \ \in \Z_r 
$$
is trilinear and skew-symmetric. 
Clearly, its isomorphism class is an invariant of the oriented homotopy type of $M$.

For instance, $u^{(0)}_M$ is a trilinear alternate form on the free abelian group $H^1(M;\Z)$.
It turns out that any algebraic object of this kind can be realized by a $3$-manifold.

\begin{theorem}[Sullivan \cite{Sullivan}]
\label{th:Sullivan}
Let $u: \Z^b \times \Z^b \times \Z^b \to \Z$ be an alternate trilinear form.
Then, there exists a $3$-manifold $M$ and an isomorphism $H^1(M;\Z)\simeq\Z^b$
through which $u^{(0)}_M$ and $u$ correspond.
\end{theorem}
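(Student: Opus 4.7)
The plan is to reduce the theorem, via Poincar\'e duality on the torus $T^b = K(\Z^b,1)$, to the problem of representing a prescribed homology class of $T^b$ by a continuous map from a closed oriented $3$-manifold, and then to correct the representative so that it induces an isomorphism on $H^1$.

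First, an alternating trilinear form $u: \Z^b \times \Z^b \times \Z^b \to \Z$ is nothing but an element of $\Lambda^3(\Z^b)^* = \Lambda^3 H^1(T^b;\Z) = H^3(T^b;\Z)$; the chosen basis of $\Z^b$ identifies it, via the Kronecker pairing, with a unique class $\widetilde{u} \in H_3(T^b;\Z) = \Lambda^3 \Z^b$. For any map $f: M \to T^b$ from a closed oriented $3$-manifold inducing an isomorphism $f^*: H^1(T^b;\Z) \to H^1(M;\Z)$, the identity
$$u^{(0)}_M(f^* a, f^* b, f^* c) = \langle a \cup b \cup c, f_*[M]\rangle$$
shows that $u^{(0)}_M$, read through $f^*$, equals $u$ if and only if $f_*[M] = \widetilde{u}$. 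So Sullivan's theorem is equivalent to producing such a pair $(M,f)$.

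To get a first candidate, I would invoke oriented bordism: the vanishings $\Omega_i^{SO} = 0$ for $i=1,2,3$ together with the Atiyah--Hirzebruch spectral sequence give a natural isomorphism $\Omega_3^{SO}(T^b) \simeq H_3(T^b;\Z)$ sending $[N,g] \mapsto g_*[N]$, so some pair $(M_0, g_0)$ maps to $\widetilde{u}$. Next I would modify $(M_0, g_0)$ within its bordism class so that $(g_0)_* : H_1(M_0) \to \Z^b$ becomes an isomorphism modulo torsion. If some generator $e_i \in \Z^b$ is missing from the image, I would form $M_0 \# (S^1 \times S^2)$ and extend $g_0$ so that the new $S^1$ factor hits the $i$-th coordinate circle of $T^b$: this preserves $\widetilde{u}$ (the added summand factors through a $1$-complex, whose third homology vanishes) while enlarging the image of $(g_0)_*$ by $\Z e_i$. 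Once surjectivity is achieved, I would kill the free part of $\ker(g_0)_*$ one class at a time: each such class is represented by an embedded loop $\gamma \subset M_0$ whose image $g_0(\gamma)$ is nullhomologous, hence nullhomotopic in $T^b$ (since $T^b$ is a $K(\Z^b,1)$ with abelian fundamental group), so $0$-framed surgery on $\gamma$ extends to a bordism of $(M_0, g_0)$ to the surgered pair over $T^b$, and the infinite order of $[\gamma]$ in $H_1(M_0)$ ensures that the rank of $\ker(g_0)_*$ drops by one.

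The delicate part is the surgery phase: one must verify via Mayer--Vietoris that each $0$-framed surgery really does kill $[\gamma]$ in $H_1$ without reintroducing gaps in the image, and that the iteration terminates --- which it does, by a rank count on $H_1 \otimes \Q$ after arranging the successive loops to be pairwise disjoint, so that the surgeries do not interfere. A more direct alternative would be to present $M$ by a Heegaard splitting $H_b \cup_\phi H_b$ of genus $b$ and to tune the gluing homeomorphism $\phi \in \mcg(\Sigma_b)$ so that the resulting triple cup product form realizes $u$ explicitly; but that amounts to a nontrivial realization problem inside a subgroup of $\operatorname{Sp}(2b;\Z)$ and does not appear simpler than the bordism approach.
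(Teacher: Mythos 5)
Your argument is correct in outline, but it follows a genuinely different route from the paper. The paper's proof is constructive and local: it starts from $\sharp^b S^1\times S^2$, whose triple-cup form vanishes and whose $H^1$ is already $\Z^b$, writes $u=\sum_j l_1^{(j)}\wedge l_2^{(j)}\wedge l_3^{(j)}$ as a sum of decomposables, and realizes each summand by one $T^3$-surgery along a genus-$3$ handlebody representing $(l_1^{(j)},l_2^{(j)},l_3^{(j)})$. The crux is the displayed Claim computing the exact change of $u_M^{(0)}$ under a single $T^3$-surgery, from which the whole theorem drops out after reversing orientation; no bordism theory is needed, and the same surgery move is reused later in the paper (\S\ref{subsec:polynomial}). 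Your proof instead passes through $\Omega_3^{SO}(T^b)\simeq H_3(T^b)$, then repairs $H^1$ by connected sums with $S^1\times S^2$ (to achieve surjectivity of $(g_0)_*$) and $0$-framed surgeries on loops in $\ker(g_0)_*$ (to kill the free kernel), keeping the class $g_*[M]=\widetilde{u}$ fixed because each move extends over a bordism to $T^b$. This is valid: the Mayer--Vietoris check you defer does go through (the rank of $H_1$ drops by exactly one because $[\gamma]$ has infinite order, surjectivity persists because $H_1(M_0\setminus N(\gamma))\to H_1(M_0)$ is onto and $[\lambda]\mapsto[\gamma]\mapsto 0$, and disjointness of the loops by general position makes the induction terminate). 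What the bordism route buys is generality and a clean conceptual reduction to standard machinery; what the paper's route buys is an explicit manifold, a self-contained two-page argument with no appeal to $\Omega^{SO}_*$, and the introduction of $T^3$-surgery as a tool to be used again. Neither is ``the'' original proof, but yours would be a perfectly acceptable alternative if the Mayer--Vietoris bookkeeping were written out.
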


\noindent
This realization result has first been proved for $\Z_2$ coefficients by Postnikov \cite{Postnikov},
and it has been generalized to the coefficient ring $\Z_r$ for any integer $r\geq 0$ by Turaev \cite{Turaev_cohomology}.

To prove Theorem \ref{th:Sullivan}, we need to introduce a way of modifying $3$-manifolds.
For this, we consider the torus $T^3 :=  S^1 \times S^1 \times S^1$ and its Heegaard splitting of genus $3$
\begin{equation}
\label{eq:Heegaard_torus}
T^3 = A \cup B
\end{equation}
where $A$ is a closed regular neighborhood of the graph 
$(S^1 \times 1 \times 1) \cup (1 \times S^1 \times 1) \cup (1 \times 1 \times S^1)$
and $B$ is the complement of $\int(A)$. 
Given a $3$-manifold $M$ and an embedding $a:A \hookrightarrow M$, 
we can ``cut'' $A$ out of $M$ and ``replace'' it by $B$. More formally, we define the $3$-manifold
$$
M_a := \left(M \setminus \int\ a(A) \right) \cup_{a_{ \partial}} (-B)
$$
where $a_\partial: \partial B =\partial A \hookrightarrow M$ is the restriction of $a$ to the boundary.

\begin{definition}
\label{def:surgery}
The move $M \leadsto M_a$ is called the \emph{$T^3$-surgery} along $a$.
\end{definition}

\noindent
This terminology is justified by the facts that  (\ref{eq:Heegaard_torus}) is the unique Heegaard splitting of $T^3$ of genus $3$ 
(according to Frohman \& Hass \cite{FH})
and there is no Heegaard splitting of $T^3$ of lower genus (for obvious homological reasons).
In this sense, the decomposition (\ref{eq:Heegaard_torus}) of $T^3$ is \emph{canonical}.
The $T^3$-surgery  is implicitly defined in Sullivan's paper \cite{Sullivan}.
Rediscovered by Matveev under the name of ``Borromean surgery'' \cite{Matveev}, 
this kind of modification has become fundamental in the theory of finite-type invariants. 
(See \S \ref{subsec:polynomial} in this connection.) 

\begin{proof}[Proof of Theorem \ref{th:Sullivan}] 
We reformulate Sullivan's proof \cite{Sullivan} in terms of $T^3$-surgeries.
Consider the basis $(e_1,e_2,e_3)$ of $H_1(T^3)$ where
$$
e_1 := [S^1 \times 1 \times 1], \ e_2 := [1 \times S^1 \times 1]
\quad \hbox{ and } \quad e_3 := [1 \times 1 \times S^1],
$$
and let $(e_1^*,e_2^*,e_3^*)$ be the dual basis of 
$H^1(T^3) \simeq \Hom \left(H_1(T^3),\Z \right)$. One can check that
\begin{equation}
\label{eq:cohomology_torus}
\langle e_1^* \cup e_2^* \cup e_3^*, [S^1 \times S^1 \times S^1]\rangle = 1 \ \in\Z.
\end{equation}

Let us now compute how the triple-cup product form changes under a $T^3$-surgery $M \leadsto M_a$.
An application of the Mayer--Vietoris theorem shows that there exists 
a unique isomorphism $\Phi_a: H_1(M) \to H_1(M_a)$ such that the following diagram is commutative:
$$
\xymatrix @!0 @R=0.8cm @C=2.5cm {
& H_1(M) \ar[dd]^-{\Phi_a}_-\simeq \\
H_1(E) \ar@{->>}[ru] \ar@{->>}[rd]& \\
& H_1(M_a)
}
\quad \hbox{where } E:= M \setminus \int\ a(A).
$$
We also denote by $\Phi_a^*: H^1(M_a) \to H^1(M)$ the dual isomorphism in cohomology.

\begin{quote}
\textbf{Claim.} For all $y_1',y_2',y_3' \in H^1(M_a)$, we have
\begin{eqnarray*}
\left\langle y_1' \cup y_2' \cup y_3', [M_a] \right\rangle
-  \left\langle \Phi_a^*(y_1') \cup \Phi_a^*(y_2') \cup \Phi_a^*(y_3'), [M] \right\rangle\\
= - 
\left| \begin{array}{ccc}
\left\langle \Phi_a^*(y_1') , a_*(e_1) \right\rangle & \left\langle \Phi_a^*(y_1') , a_*(e_2) \right\rangle 
& \left\langle \Phi_a^*(y_1') , a_*(e_3) \right\rangle\\
\left\langle \Phi_a^*(y_2') , a_*(e_1) \right\rangle & \left\langle \Phi_a^*(y_2') , a_*(e_2) \right\rangle 
& \left\langle \Phi_a^*(y_2') , a_*(e_3) \right\rangle\\
\left\langle \Phi_a^*(y_3') , a_*(e_1) \right\rangle & \left\langle \Phi_a^*(y_3') , a_*(e_2) \right\rangle 
& \left\langle \Phi_a^*(y_3') , a_*(e_3) \right\rangle\\
\end{array} \right|.
\end{eqnarray*}
\end{quote}

\noindent
To prove this, we consider  the singular $3$-manifold
$$
N := E \cup_{a_{\partial}} (A \cup B) 
$$
which contains $M$, $M_a$ as well as $T^3$.
The inclusions induce isomorphisms $\incl^*:H^1(N) \to H^1(M)$ and $\incl^*:H^1(N) \to H^1(M_a)$.
Let $z_1,z_2,z_3 \in H^1(N)$ correspond to $y'_1,y'_2,y'_3 \in  H^1(M_a)$ by $\incl^*$.
By definition of $\Phi_a$, we have $\incl^*(z_i) = \Phi_a^*(y_i')$ for $i=1,2,3$. Thus, we obtain
\begin{eqnarray*}
&&\left\langle y_1' \cup y_2' \cup y_3', [M_a] \right\rangle
-  \left\langle \Phi_a^*(y_1') \cup \Phi_a^*(y_2') \cup \Phi_a^*(y_3'), [M] \right\rangle\\
&=& \left\langle z_1\cup z_2 \cup z_3, \incl_*([M_a]) - \incl_*([M]) \right\rangle\\
&=& - \left\langle z_1\cup z_2 \cup z_3, \incl_*([T^3]) \right\rangle\\
&=& - \left\langle \incl^*(z_1)\cup \incl^*(z_2) \cup \incl^*(z_3), [T^3] \right\rangle
\end{eqnarray*}
so that the claim can be deduced from (\ref{eq:cohomology_torus}).

We conclude thanks to the claim in the following way.
Let $b\geq 0$ be an integer and let $H:= H_1(\sharp^b S^1 \times S^2)$.
Then, $H^* := H^1(\sharp^b S^1 \times S^2)$ can be identified with $\Z^b$.
There is a non-singular bilinear pairing
$$
\langle -,- \rangle: \Lambda^3 H^* \times \Lambda^3 H \longrightarrow \Z
$$
defined by
$$
\langle y_1 \wedge y_2 \wedge y_3, x_1 \wedge x_2 \wedge x_3 \rangle =
\left| \begin{array}{ccc}
\langle y_1,x_1 \rangle & \langle y_1 , x_2 \rangle & \langle y_1 , x_3 \rangle\\
\langle y_2,x_1 \rangle & \langle y_2 , x_2 \rangle & \langle y_2 , x_3 \rangle\\
\langle y_3,x_1 \rangle & \langle y_3 , x_2 \rangle & \langle y_3 , x_3 \rangle
\end{array} \right|
$$
which allows us to identify $\Hom(\Lambda^3 H^*,\Z)$ with $\Lambda^3 H$.
Thus, we can write the trilinear alternate form $u: \Z^b \times \Z^b \times \Z^b \to \Z$ as
$$
u = \sum_{j=1}^n l_1^{(j)} \wedge l_2^{(j)} \wedge l_3^{(j)} \ \in \Lambda^3 H.
$$
For each $j=1,\dots,n$, we consider an embedding $a^{(j)}: A \to \sharp^b S^1 \times S^2$ such that
${a^{(j)}}_*(e_i) = l_i^{(j)}$ for $i=1,2,3$, and we assume that the  handlebodies 
$a^{(1)}(A),\dots,a^{(n)}(A)$ are pairwise disjoint.
Thus, we can perform simultaneously the $n$ $T^3$-surgeries along $a^{(1)},\dots, a^{(n)}$
and we denote by $M'$ the resulting $3$-manifold. It follows from the above claim that
$$
u^{(0)}_{M'} - u^{(0)}_{\sharp^b S^1 \times S^2} = 
- \sum_{j=1}^n l_1^{(j)} \wedge l_2^{(j)} \wedge l_3^{(j)} = -u 
\ \in \Hom(\Lambda^3 H^*,\Z) = \Lambda^3 H
$$
where $H^1(M')$ is identified with $H^1(\sharp^b S^1 \times S^2) = H^*=\Z^b$ as explained before. 
The triple-cup product form of $S^1 \times S^2$ being trivial (since the rank of $H^1(S^1\times S^2)$
is less than $3$), we have $u^{(0)}_{\sharp^b S^1 \times S^2} =0$. 
So, $u$ is realized as the triple-cup product form of $M'$ (with the opposite orientation).
\end{proof}

\begin{remark}
The classification of trilinear alternate forms seems to be a hard problem.
So, given two $3$-manifolds $M$ and $M'$, it can be quite difficult to decide 
whether the cohomology ring of $M$ is isomorphic to the cohomology ring of $M'$.
\end{remark}

We shall now state a formula due to Turaev, which relates the torsion $\tau(M)$ to the cohomology ring of $M$. 
To simplify the exposition, we will only consider $3$-manifolds $M$ with $\beta_1(M)\geq 3$,
and we will restrict ourselves to the triple-cup product form with $\Z$ coefficients.
The reader is refered to \cite[\S III \& \S IX]{Turaev_book_dim_3} for the other cases.

Let $L$ be a free abelian group of finite rank $b$, and let 
$u:L \times L \times L \to \Z$ be a skew-symmetric trilinear form. We consider the \emph{adjoint} of $u$
$$
\widehat{u}: L \times L \longrightarrow L^*, \ (y_1,y_2) \longmapsto u(y_1,y_2,-) 
$$
where $L^*$ denotes $\Hom(L,\Z)$. 
Let $e=(e_1,\dots,e_b)$ be a basis of $L$, and let $(\widehat{u} / e)$  be the matrix of $\widehat{u}$ in the basis $e$.
This is  a $b\times b$ matrix with coefficients in the symmetric algebra $S(L^*) \supset L^*$.
Because of the skew-symmetry of $u$, its determinant happens to be zero,
and we consider instead its $(i,j)$-th minor $(\widehat{u}/e)_{i,j}$. Turaev proves that
$$
(\widehat{u}/e)_{i,j} = (-1)^{i+j} \cdot e_i^* e_j^* \cdot \operatorname{Det}(u)
\ \in S^{b-1}(L^*)
$$
for a certain $\operatorname{Det}(u) \in S^{b-3}(L^*)$ which does not depend on $e$.

\begin{example}
The matrix $(\widehat{u}/e)$ being antisymmetric, we have $\operatorname{Det}(u)=0$ whenever $b$ is even.
So, the tensor $\operatorname{Det}(u)$ is only interesting for odd $b\geq 3$. 
For instance, for $b=3$, we have $\operatorname{Det}(f) = u(e_1,e_2,e_3)^2$. 
\end{example}

Recall from \S \ref{subsec:maximal_dim_3} that, for a $3$-manifold $M$ with $\beta_1(M)\geq 2$,
$\tau(M)$ lives in $\Z[H_1(M)]$. Consider the augmentation ideal of $\Z[H_1(M)]$:
$$
I\left(H_1(M)\right) := \Ker\left(\varepsilon: \Z[H_1(M)] \longrightarrow \Z, 
\ \sum_{h} z_h \cdot h \longmapsto \sum_{h} z_h\right).
$$
The ring  $\Z[H_1(M)]$ is filtered by the powers of the augmentation ideal:
$$
\Z[H_1(M)] \supset I(H_1(M)) \supset I(H_1(M))^2 \supset I(H_1(M))^3 \supset \cdots 
$$
and one can wonder what is the ``leading term'' of $\tau(M)$ with respect to that filtration.
The triple-cup product form with $\Z$ coefficients gives a partial answer to that question.

\begin{theorem}[Turaev \cite{Turaev_book_dim_3}]
\label{th:torsion_cohomology}
Let $M$ be a $3$-manifold with first Betti number $b:=\beta_1(M)\geq 3$, for which we set $H:=H_1(M)$ and $G:= H/\Tors H$.
Expand the determinant of the trilinear skew-symmetric form $u^{(0)}_M$ as 
$$
\operatorname{Det}\left(u^{(0)}_M\right) = \sum_{g_1, \dots, g_{b-3} \in G} 
u_{g_1,\dots,g_{b-3}}\cdot g_1 \cdots g_{b-3} \ \in S^{b-3}(G).
$$
Then, we have
$$
\tau(M) = \left| \Tors H \right|\cdot \! \!  \sum_{g_1, \dots, g_{b-3} \in G} 
u_{g_1,\dots,g_{b-3}}\cdot (\widehat{g}_1-1) \cdots (\widehat{g}_{b-3}-1) \ \in \frac{I(H)^{b-3}}{I(H)^{b-2}} 
$$
where $\widehat{g_i}\in H$ denotes a lift of $g_i\in G$.
\end{theorem}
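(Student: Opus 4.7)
The plan is to combine the Heegaard-splitting formula of Lemma \ref{lem:torsion_Heegaard_splitting} with an analysis of the associated graded of $\Z[H]$ with respect to the augmentation filtration. Fix a Heegaard splitting $M=A\cup B$ of some genus $g\geq b$, with systems of curves $(\alpha,\alpha^\sharp)$ on $\partial A$ and $(\beta,\beta^\sharp)$ on $\partial B$, as in \S\ref{subsec:torsion_dim_3}. By Lemma \ref{lem:torsion_Heegaard_splitting} applied to $\varphi=\mu$, one has, for each pair of indices $(i,j)$,
$$
\tau(M)\cdot(\alpha_i^\sharp-1)(\beta_j^\sharp-1) = \pm\,\mathcal{A}_{ij} \ \in \Z[H]/\pm H,
$$
where $\mathcal{A}_{ij}$ is a $(g-1)$-sized minor of the Fox matrix $\mathcal{A}=\incl_*(\partial\beta_k/\partial\alpha_\ell^\sharp)_{k\ell}$. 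The task is therefore to compute the right-hand side modulo $I(H)^{b-1}$.

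The next step is to pass to the associated graded ring. The map $(h-1)\mapsto h$ induces a canonical isomorphism $I(H)^k/I(H)^{k+1}\simeq S^k(H)$; in particular, multiplication by $(\alpha_i^\sharp-1)(\beta_j^\sharp-1)$ corresponds, on the graded level, to multiplication by $[\alpha_i^\sharp]\cdot[\beta_j^\sharp]$ in the symmetric algebra of $H$. So the theorem amounts to identifying the class of $\mathcal{A}_{ij}$ in $I(H)^{b-1}/I(H)^{b}\simeq S^{b-1}(H)$ with $[\alpha_i^\sharp]\cdot[\beta_j^\sharp]$ times $|\Tors H|$ times the $(i,j)$-cofactor $(\widehat{u}/e)_{i,j}$ of the adjoint of $u^{(0)}_M$, evaluated on an appropriate basis of $H_1(M;\R)$ lifted from $(\alpha^\sharp,\beta^\sharp)$.

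To effect this identification, I would use the well-known dictionary between Fox calculus and cup products at the graded level: if $r\in\Free(\alpha,\alpha^\sharp)$ and $\overline{r}$ denotes its reduction modulo $I^k$ of the group algebra of the abelianization, then the coefficient of a monomial of degree $k-1$ in $\sum_i(\partial r/\partial\alpha_i^\sharp)\cdot(\alpha_i^\sharp-1)+\sum_i(\partial r/\partial\alpha_i)\cdot(\alpha_i-1)$ records the $k$-fold Massey product of the loop $r$ in the surface (see the topological interpretation of Fox derivatives in the appendix). Applied to the relations $\beta_j$, and combined with the fact that the Heegaard splitting presents $\pi_1(M)$ by $\langle\alpha^\sharp\mid\beta\rangle$ (after killing the $\alpha$'s, which bound disks in $A$), this expresses the expansion of $\mathcal{A}_{ij}$ modulo $I(H)^{b-1}$ entirely in terms of the cup-product pairing on $H^1(M)$, i.e.\ in terms of $u^{(0)}_M$ via the pairing between the $\alpha^\sharp$-classes and $H^1(M)$.

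With that identification in hand, the proof finishes by a determinantal manipulation: expanding $\det(\widehat{u^{(0)}_M}/e)_{i,j}$ by cofactors gives exactly the matrix of $(g-1)$-minors of the reduction of $\mathcal{A}$, and Turaev's formula $(\widehat{u}/e)_{i,j}=(-1)^{i+j}e_i^*e_j^*\cdot\operatorname{Det}(u)$ then yields the stated expression. The torsion factor $|\Tors H|$ appears in the last step: since $\tau(M)$ lives in $\Z[H]$ while $u^{(0)}_M$ only sees the torsion-free quotient $G$, one must integrate out the torsion subgroup, and the map $\Z[H]\to\Z[G]$ multiplies orders by $|\Tors H|$ (this is the point where the statement of Turaev in \cite[\S II]{Turaev_book_dim_3} that $\tau(M)\in\Z[H]$ for $\beta_1(M)\geq 2$ combines with Theorem \ref{th:Milnor-Turaev_dim_3}).

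The main obstacle will be the third step: carefully setting up the correspondence between Fox derivatives of the $\beta_j$'s (which see the \emph{surface} fundamental group) and cup products in the closed $3$-manifold $M$, and checking that the indices and signs match those arising from the cofactor expansion of $\widehat{u^{(0)}_M}$. The sign conventions of Lemma \ref{lem:torsion_Heegaard_splitting} and of the determinantal formula $(\widehat{u}/e)_{i,j}=(-1)^{i+j}e_i^*e_j^*\operatorname{Det}(u)$ must be tracked simultaneously, and the $|\Tors H|$ factor must be accounted for via a norm-element argument when passing from the $\Z[G]$-valued Alexander polynomial to its lift in $\Z[H]$.
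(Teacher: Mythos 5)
The paper itself does not reproduce a proof of Theorem \ref{th:torsion_cohomology}: after the statement it only remarks that the result ``can be proved starting from Lemma \ref{lem:torsion_Heegaard_splitting}'' and refers to \cite[\S III.2]{Turaev_book_dim_3}. Your proposal follows this hint, so the overall strategy --- read off the leading $I(H)$-adic term of $\tau(M)$ from the Fox matrix of a Heegaard splitting --- is in line with what the paper intends, and you rightly single out the Fox-to-cup-product dictionary as the crux.

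However, two of the steps you treat as routine are not correct as stated. (i) The map $(h-1)\mapsto h$ gives a canonical \emph{surjection} $S^k(H)\twoheadrightarrow I(H)^k/I(H)^{k+1}$, but in general it is not an isomorphism when $H$ has torsion; and since the theorem is genuinely a statement in $I(H)^{b-3}/I(H)^{b-2}$ (not in a symmetric power of $G$ or of $H$), you cannot simply ``pass to the symmetric algebra.'' (ii) Your account of the factor $|\Tors H|$ misses what it is for. Its primary role is to make the right-hand side well-defined: the class of $(\widehat{g}_1-1)\cdots(\widehat{g}_{b-3}-1)$ in $I(H)^{b-3}/I(H)^{b-2}$ depends on the choice of lifts $\widehat{g}_i\in H$, and becomes independent of that choice precisely after multiplication by $|\Tors H|$, since $|\Tors H|\cdot(t-1)\in I(H)^2$ for every $t\in\Tors H$ (use $I(H)/I(H)^2\simeq H$). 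On the computational side, the factor has to be extracted from the integer reduction $\varepsilon(\mathcal{A})$ of the Fox matrix, which presents $H$ and hence contributes $\pm|\Tors H|$ through a maximal full-rank subdeterminant; your sketch does not perform that reduction, and the naive $(g-1)\times(g-1)$ minors $\varepsilon(\mathcal{A}_{ij})$ all vanish once $b\geq 2$, so by themselves they carry no information about the leading $I(H)$-adic order. Finally, the phrasing that the leading Fox-derivative coefficient ``records the $k$-fold Massey product of the loop $r$ in the surface'' does not parse (Massey products live on cohomology classes, not on loops): what is actually needed is an explicit expansion of $\incl_*(\partial\beta_j/\partial\alpha_i^\sharp)$ modulo a fixed power of $I(H)$ in terms of $u^{(0)}_M$, and supplying that expansion, with its signs and its interaction with the Smith reduction of $\varepsilon(\mathcal{A})$, is essentially the entire content of the proof in \cite[\S III.2]{Turaev_book_dim_3}.
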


\noindent
Theorem \ref{th:torsion_cohomology} can be proved starting from Lemma \ref{lem:torsion_Heegaard_splitting}:
see \cite[\S III.2]{Turaev_book_dim_3}.
It is the analogue for $3$-manifolds of a knot-theoretic result by Traldi \cite{Traldi_Milnor,Traldi_Conway}:
the leading term of the Alexander polynomial of a link in $S^3$ (whose linking matrix is assumed to be trivial) 
is determined by its Milnor's triple linking numbers. 

\begin{exercice}
Check Theorem \ref{th:torsion_cohomology} for the trivial surface bundle $\Sigma_g \times S^1$. 
\end{exercice}

\subsection{The linking pairing}

\label{subsec:linking}

The \emph{linking pairing} of  a $3$-manifold $M$ is the map
$$
\lambda_M: \Tors H_1(M) \times \Tors H_1(M) \longrightarrow \Q/\Z
$$
defined, for all $x,y \in \Tors H_1(M)$, by
\begin{equation}
\label{eq:linking_pairing}
\lambda_M(x,y) :=   \frac{1}{m} \Sigma \centerdot Y \mod 1.
\end{equation}
Here,  $X$ and $Y$ are disjoint oriented knots which represent $x$ and $y$ respectively, 
$\Sigma$ is a compact connected oriented surface which is transverse to $Y$ and whose boundary goes $m\geq 1$ times around $X$.
We have denoted by $\Sigma \centerdot Y \in \Z$ the intersection number.

\begin{lemma}
The map $\lambda_M$ is a well-defined, bilinear, symmetric and non-degenerate pairing.
Up to isomorphism, $\lambda_M$ only depends on the oriented homotopy type of $M$.
\end{lemma}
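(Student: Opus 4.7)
The plan is to split the assertion into five parts — well-definedness, bilinearity, symmetry, non-degeneracy, and homotopy invariance — and treat each in turn, with well-definedness and bilinearity being routine, symmetry and non-degeneracy carrying the substance, and homotopy invariance following from a cohomological rewriting. For well-definedness: since $x \in \Tors H_1(M)$, one has $m[X]=0$ for some $m\geq 1$, so a $2$-chain $\Sigma$ with $\partial \Sigma = mX$ exists and can be chosen transverse to $Y$. If $\Sigma'$ is another such choice, $\Sigma - \Sigma'$ is a closed $2$-cycle, and I would argue that $(\Sigma - \Sigma') \centerdot Y = 0$ using that $Y$ is torsion in $H_1(M)$ while $H_2(M)$ is torsion-free (by Poincar\'e duality), so that the intersection pairing $H_2(M) \times H_1(M) \to \Z$ factors through $H_1(M)/\Tors H_1(M)$. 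Analogous arguments (replacing $X$ by a homologous knot, etc.) handle independence of the choice of $X$, $Y$ and $m$. Bilinearity is then immediate from the additivity of the intersection number on disjoint unions of representatives.

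For symmetry I would work at the chain level. Pick disjoint $1$-cycles $c,c'$ representing $x,y$ and $2$-chains $\sigma,\sigma'$ in general position with $\partial\sigma = mc$, $\partial\sigma' = nc'$. A Leibniz-type identity for the intersection pairing of chains in the closed $3$-manifold $M$ gives, up to a boundary term that contributes trivially to intersections,
\[
m\,(c \centerdot \sigma') \;=\; \pm\, n\,(\sigma \centerdot c').
\]
Dividing by $mn$ and reducing modulo $\Z$ yields $\lambda_M(y,x) = \pm \lambda_M(x,y)$, and checking the orientation conventions in dimension three gives the plus sign, hence symmetry.

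For non-degeneracy, I would identify the adjoint map $\widehat{\lambda}_M \colon \Tors H_1(M) \to \Hom(\Tors H_1(M),\Q/\Z)$ with a composite of classical isomorphisms: Poincar\'e duality yields $\Tors H_1(M) \cong \Tors H^2(M;\Z)$; the universal coefficient theorem identifies $\Tors H^2(M;\Z)$ with $\operatorname{Ext}(H_1(M),\Z) = \operatorname{Ext}(\Tors H_1(M),\Z)$; and the long exact cohomology sequence of $0 \to \Z \to \Q \to \Q/\Z \to 0$ gives $\operatorname{Ext}(\Tors H_1(M),\Z) \cong \Hom(\Tors H_1(M),\Q/\Z)$, using divisibility of $\Q$. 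Tracing definitions, this composite coincides up to sign with $\widehat{\lambda}_M$, which establishes non-degeneracy.

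Finally, homotopy invariance follows by rewriting $\lambda_M$ cohomologically: if $\bar x, \bar y \in H^1(M;\Q/\Z)$ denote the Poincar\'e duals of $x,y$ with $\Q/\Z$ coefficients and $\beta$ is the Bockstein of $0 \to \Z \to \Q \to \Q/\Z \to 0$, one checks that
\[
\lambda_M(x,y) \;=\; \pm\, \langle \beta(\bar x) \cup \bar y,\,[M] \rangle,
\]
which depends only on the cohomology ring, the Bockstein and the fundamental class, hence on the oriented homotopy type. I expect the chain-level symmetry argument to be the main obstacle: stating a clean Leibniz identity for the intersection of singular (or simplicial) chains and tracking the signs is where the bookkeeping is most delicate, while the other four points are essentially applications of Poincar\'e duality and the universal coefficient theorem.
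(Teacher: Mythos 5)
Your proposal is correct and covers all five assertions, but it is packaged rather differently from the paper's proof, which is worth comparing. The paper's key move is to reformulate $\lambda_M(x,y)$ as $B^{-1}(x)\bullet y$, where $B\colon H_2(M;\Q/\Z)\to \Tors H_1(M)$ is the Bockstein of $0\to\Z\to\Q\to\Q/\Z\to 0$. That single reformulation earns well-definedness and bilinearity for free (two antecedents of $x$ under $B$ differ by an element coming from $H_2(M;\Q)$, which intersects the torsion class $y$ trivially), and it hands you a commutative diagram --- Bockstein, Poincar\'e duality $H_2(M;\Q/\Z)\cong H^1(M;\Q/\Z)$, evaluation onto $\Hom(H_1(M),\Q/\Z)$, restriction to $\Hom(\Tors H_1(M),\Q/\Z)$ --- from which \emph{both} non-degeneracy and homotopy invariance drop out simultaneously. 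You instead prove well-definedness geometrically, non-degeneracy via $\Tors H^2(M)\cong\Ext(\Tors H_1(M),\Z)\cong\Hom(\Tors H_1(M),\Q/\Z)$, and homotopy invariance via a cup-product/Bockstein formula; each route works, but you end up re-proving the same diagram chase three times over. Your symmetry argument is exactly the paper's: the Stokes-type identity $\partial\Sigma\centerdot\Theta = \Sigma\centerdot\partial\Theta$ for transverse Seifert-type surfaces. Your worry that this would be the ``main obstacle'' is in fact misplaced --- the paper dispatches it in four lines by observing that the boundary points of the $1$-manifold $\Sigma\cap\Theta$ pair off --- whereas the place where a short argument is genuinely needed is non-degeneracy, which the paper's diagram makes transparent. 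One small imprecision in your well-definedness argument: you cite torsion-freeness of $H_2(M)$ as the reason the intersection pairing factors through $H_1(M)/\Tors H_1(M)$, but the operative fact is simply that the pairing is $\Z$-valued, so any torsion class in $H_1(M)$ pairs trivially with everything; the torsion-freeness of $H_2(M)$ is a separate consequence of Poincar\'e duality and not what is used.
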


\begin{proof}
To prove that $\lambda_M$ is well-defined and bilinear, we give it a more concise definition.
Let $B: H_2(M;\Q/\Z) \to H_1(M)$ be the Bockstein homomorphism 
for the short exact sequence of coefficients
$$
\xymatrix{
0 \ar[r] & \Z \ar[r] &  \Q \ar[r] & \Q/\Z \ar[r] & 0. 
}
$$
The image of $B$ is the kernel of the map $H_1(M;\Z) \to H_1(M;\Q)$
and, so, coincides with $\Tors H_1(M)$. 
Then, formula (\ref{eq:linking_pairing}) can be written as
$$
\lambda_M(x,y) :=  B^{-1}(x) \bullet y \ \in \Q/\Z
$$
where $\bullet: H_2(M;\Q/\Z) \times H_1(M;\Z) \to \Q/\Z$ denotes the homological intersection pairing,
and $B^{-1}(x)$ stands for an antecedent of $x$ by $B$.
Thus, we are led to check that, for any antecedents $u,u'$ of $x$ by $B$, $(u-u') \bullet y$ vanishes. 
But, this follows from the fact that $u-u' \in H_2(M;\Q/\Z)$ comes from some $v\in H_2(M;\Q)$
and $v \bullet y=0$ since the image of $y$ in $H_1(M;\Q)$ is trivial.

To check the symmetry of $\lambda_M$, \ie $\lambda_M(x,y) = \lambda_M(y,x)$,
we choose a compact connected oriented surface $\Theta$ whose boundary goes $n \geq 1$ times around $Y$.
We also assume that $\Theta$ is transverse to $\Sigma$, so that $\Sigma \cap \Theta$ is an oriented $1$-dimensional manifold.
Thus, the boundary points of $\Sigma \cap \Theta$ come by pairs. 
From this, we obtain  $\partial \Sigma \centerdot \Theta - \Sigma \centerdot \partial \Theta    = 0 \in \Z$
or, equivalently, $m \cdot X \centerdot \Theta - n \cdot \Sigma \centerdot Y = 0 \in \Z$.
We deduce that $\frac{1}{n} \Theta \centerdot  X = \frac{1}{m} \Sigma \centerdot Y \in \Q$,
hence $\lambda_M(x,y) = \lambda_M(y,x)\in \Q/\Z$.

To prove that $\lambda_M$ is non-degenerate, we use the  commutative diagram
$$
\xymatrix @!0 @R=1.5cm @C=4.5cm {
H_2(M;\Q/\Z) \ar[d]_-B \ar[r]^-{\hbox{\scriptsize Poincar\'e}}_-\simeq & H^1(M;\Q/\Z) 
\ar[r]^-{\hbox{\scriptsize evaluation}}_-\simeq & 
\Hom\left(H_1(M),\Q/\Z\right) \ar@{->>}[d]^-{\hbox{\scriptsize restriction}}\\
\Tors H_1(M)\ar[rr]_-{\widehat{\lambda}_M}&& \Hom\left(\Tors H_1(M),\Q/\Z\right).
}
$$
Here $\widehat{\lambda}_M: x \mapsto \lambda_M(x,-)$ denotes the adjoint of $\lambda_M$.
We deduce that $\widehat{\lambda}_M$ is surjective and, so, bijective.
The  diagram also shows that the isomorphism class of $\lambda_M$ only depends
on the oriented homotopy type of $M$.
\end{proof}

Any symmetric non-degenerate bilinear pairing on a finite abelian group can be realized by a $3$-manifold.

\begin{theorem}[Wall \cite{Wall}]
Let  $\lambda: G \times G \to \Q/\Z$  be a symmetric non-degenerate bilinear form on a finite abelian group $G$.
Then, there exists a $3$-manifold $M$
and an isomorphism $\Tors H_1(M) \simeq G$ through which $\lambda_M$ corresponds to $\lambda$. 
\end{theorem}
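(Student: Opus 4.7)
The plan is to combine a classification result for symmetric non-degenerate pairings on finite abelian groups with the multiplicativity of the linking pairing under connected sum, and then realize each ``atomic'' piece by an explicit small $3$-manifold.

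The first step is a structural reduction. Given $M_1$ and $M_2$, the connected sum satisfies $H_1(M_1 \# M_2) = H_1(M_1) \oplus H_1(M_2)$, and using the definition of $\lambda_M$ via disjoint Seifert surfaces one checks immediately that $\lambda_{M_1 \# M_2}$ is the orthogonal sum $\lambda_{M_1} \oplus \lambda_{M_2}$. Thus it is enough to realize an orthogonal basis of the monoid of isomorphism classes of symmetric non-degenerate pairings on finite abelian groups. Here one invokes Wall's classification (\emph{op.\,cit.}), which says that such a pairing $(G,\lambda)$ splits as an orthogonal sum of pieces of two kinds: (i) a pairing on a cyclic group $\Z_{p^k}$ given by $(x,y) \mapsto a x y / p^k \bmod 1$ for some $a$ coprime to $p$, with $p$ any prime; and (ii) for $p=2$ only, two exceptional ``hyperbolic'' and ``twisted'' pairings on $\Z_{2^k} \oplus \Z_{2^k}$ with off-diagonal matrix $\left(\begin{smallmatrix} 0 & 2^{-k}\\ 2^{-k} & 0 \end{smallmatrix}\right)$ and $\left(\begin{smallmatrix} 2^{1-k} & 2^{-k}\\ 2^{-k} & 2^{1-k} \end{smallmatrix}\right)$ respectively.

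For the cyclic pieces I would use lens spaces. A direct Seifert-surface computation in the Heegaard splitting of $L_{p,q}$ (using the meridian disks of the two solid tori as Seifert surfaces) shows that if $T$ is the preferred generator of $H_1(L_{p,q}) = \Z_p$ then $\lambda_{L_{p,q}}(T,T) = \pm q/p \bmod 1$, with the sign depending on orientation conventions. Varying $q$ over units mod $p^k$ and taking a prime power $p = p^k$, every cyclic piece (i) is realized by some $L_{p^k,q}$, possibly with reversed orientation. For the $2$-primary exceptional forms (ii) I would use a small plumbed $3$-manifold: take the plumbing on a graph with two vertices of appropriate framings (e.g.\ $(0,0)$ joined by a single edge for the hyperbolic form, and $(2,2)$ for the twisted form, suitably scaled to get $2^k$-torsion by an additional chain of $-2$'s). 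These plumbings are closed oriented $3$-manifolds whose $H_1$ and linking pairing can be read off from the linking matrix of the corresponding framed link in $S^3$; a short calculation matches them with the exceptional forms.

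Concatenating the building blocks by connected sum then yields a $3$-manifold $M$ with $\Tors H_1(M) \simeq G$ and $\lambda_M \simeq \lambda$, since by construction each connected summand is a rational homology sphere and the homology of the sum is a direct sum of torsion groups carrying the prescribed orthogonal summands of $\lambda$. The main obstacle I expect is the $2$-primary part: both the classification theorem for pairings on $2$-primary groups and the explicit realization of the two non-cyclic atomic forms (ii) are more delicate than the odd-prime case, because the extra indecomposable $(\Z_{2^k} \oplus \Z_{2^k})$-pieces have no lens-space realization and must be produced by a genuinely $2$-dimensional plumbing whose linking pairing one then has to diagonalize to verify agreement with the prescribed Gram matrix modulo the standard equivalence relations on linking forms.
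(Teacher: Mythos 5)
Your proof is correct in outline, but it takes a genuinely different route from the paper's, and it is worth being clear about which two distinct results of Wall are being invoked. The paper uses Wall's \emph{presentation} theorem: every nondegenerate symmetric pairing $\lambda$ on a finite abelian group arises as $\lambda_f$ for some symmetric bilinear form $f$ on a finitely generated free abelian group $H$. One then builds a $4$-manifold $W$ with $H_2(W)\simeq H$ and intersection form $-f$ by attaching $2$-handles to $D^4$, and takes $M=\partial W$. This is a one-shot construction with no case analysis. You instead invoke Wall's \emph{classification} theorem (the monoid of linking forms is generated by explicit indecomposables), plus the additivity of $\lambda_M$ under connected sum, and realize each atom separately: lens spaces for the cyclic pieces (correctly, since $\lambda_{L_{p,q}}(T,T)=\varepsilon_p q/p$ with $\varepsilon_p$ independent of $q$, and one can also reverse orientation), and surgery along small framed links for the $2$-primary indecomposables on $\Z_{2^k}\oplus\Z_{2^k}$. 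This buys you explicit, small representatives for each atom, at the cost of needing the harder classification theorem (especially at $p=2$) and a separate computation for the exceptional pieces.

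Two concrete points to tighten. First, be aware that the paper's route and yours both ultimately reduce to ``$M=\partial W$ where $W$ is built from $2$-handles,'' i.e.\ to surgery on a framed link whose linking matrix presents the pairing; in your version this happens atom by atom instead of all at once. Second, your description of the plumbings for the exceptional $2$-primary forms is off as stated: a plumbing graph with two weight-$0$ vertices joined by a single edge has linking matrix $\left(\begin{smallmatrix}0&1\\1&0\end{smallmatrix}\right)$, which is unimodular, hence gives a homology sphere rather than $\Z_2\oplus\Z_2$. To realize the hyperbolic form $\left(\begin{smallmatrix}0&2^{-k}\\2^{-k}&0\end{smallmatrix}\right)$ you want a framed $2$-component link with linking matrix $\left(\begin{smallmatrix}0&2^k\\2^k&0\end{smallmatrix}\right)$ (e.g.\ a $(2,2^{k+1})$-torus link with $0$-framings), and similarly for the other exceptional form. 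Once those linking matrices are corrected, the rest of your argument goes through.
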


\begin{proof}[Sketch of the proof]
This is an application of an algebraic result by Wall.
Let $H$ be a finitely generated free abelian group and let $f:H \times H \to \Z$ be a symmetric bilinear form. 
Then, one can associate to $(H,f)$ the finite abelian group 
$$
G_f := \Tors \Coker(\widehat{f})
$$
where $\widehat{f}: H \to \Hom(H,\Z)$ is the adjoint of $f$. 
Furthermore, there is associated to $(H,f)$ a symmetric bilinear form
$$
\lambda_f: G_f \times G_f \longrightarrow \Q/\Z.
$$
The definition is
$$
\forall \{u\}, \{v\} \in G_f \subset \Hom(H,\Z)/\widehat{f}(H), \
\lambda_f\left(\{u\}, \{v\}\right) :=  f_\Q \left(\widehat{u},\widehat{v} \right)  \mod 1
$$
where $f_\Q: (H\otimes \Q) \times (H\otimes \Q) \to \Q$ is the extension of $f$ to rational coefficients
and where $\widehat{u},\widehat{v}$ are antecedents of $u_\Q,v_\Q: H\otimes \Q \to \Q$ by the adjoint
$\widehat{f_\Q}: H\otimes \Q \to \Hom(H\otimes \Q,\Q)$. 
It can be checked that $\lambda_f$ is non-degenerate. 
Moreover, Wall proved in \cite{Wall} 
that any symmetric non-degenerate bilinear pairing on a finite abelian group arises in this way.
Thus, there exists a  symmetric  bilinear form $f:H \times H \to \Z$
on a finitely generated free abelian group $H$ such that $(G,\lambda)\simeq (G_f,\lambda_f)$.

Next, by attaching handles of index $2$  to a $4$-dimensional ball, 
one can construct a compact oriented $4$-manifold $W$ such that $H_2(W) \simeq H$
and whose homological intersection pairing on $H_2(W)$ corresponds to $-f$.
Then, one can prove that the $3$-manifold $M:=\partial W$ is such that $H_1(M) \simeq  \Coker(\widehat{f})$ 
and that its linking pairing corresponds to the bilinear form $\lambda_f$.
\end{proof}

\begin{remark}
\label{rem:surgery}
The above proof gives a way to compute the linking pairing $\lambda_M$ of a $3$-manifold $M$,
if $M$ comes as the boundary of a $4$-manifold $W$ obtained by attaching handles of index $2$ to a $4$-ball.
This amounts to obtain $M$ by surgery in $S^3$ along an embedded framed link $L \subset S^3$.
A theorem by Lickorish \cite{Lickorish} and Wallace \cite{Wallace} asserts that any $3$-manifold $M$ has such  a ``surgery presentation''.
\end{remark}

\begin{remark}
The set of  isomorphism classes of non-degenerate symmetric bilinear pairings on finite abelian groups, 
equipped with the direct sum $\oplus$, is an abelian monoid. 
Generators and relations for this monoid are known by works of Wall \cite{Wall} and Kawauchi--Kojima \cite{KK}.
Thus, in contrast with cohomology rings, linking pairings are very well-understood from an algebraic viewpoint.
\end{remark}

In the case of rational homology $3$-spheres, 
the linking pairing is determined by the maximal abelian torsion.

\begin{theorem}[Turaev \cite{Turaev_spin_c}]
\label{th:torsion_linking_pairing}
Let $M$ be a $3$-manifold with $\beta_1(M)=0$, and let $H:=H_1(M)$.
Then, we have
$$
\forall g,g'\in H, \quad
\tau(M) \cdot (g-1) \cdot (g'-1) = -\lambda_M(g,g') \cdot \Sigma_H \ \in \Q[H]/\Z[H]
$$
where we denote $\Sigma_H := \sum_{h\in H} h$.
\end{theorem}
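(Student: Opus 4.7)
My plan is to compute both sides of the identity using a Heegaard splitting of $M$ and match them via Wall's construction of the linking pairing. Fix a Heegaard splitting $M = A \cup B$ of genus $g$, adopting the notation of Lemma~\ref{lem:torsion_Heegaard_splitting}. The classes $T_i := [\alpha_i^\sharp] \in H$ generate $H$, and the integer $g \times g$ matrix $P := \varepsilon(\mathcal{A})$ obtained by augmenting the Fox Jacobian is a presentation matrix of $H$: its $j$-th column records the abelianisation of the relator $\beta_j$ in the generators $T_i$. Since $\beta_1(M) = 0$, we have $|\det P| = |H| \ne 0$, so $P$ is invertible over $\Q$.

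First, I would express $\lambda_M$ in terms of $P$ using the Wall recipe from \S\ref{subsec:linking}: realise $M$ as the boundary of the $4$-manifold $W$ built from $A \times [0,1]$ by attaching $2$-handles along $\beta_j \times \{1\}$; the intersection form on $H_2(W) \cong \Z^g$ is then $\pm P$, whence $\lambda_M(T_i, T_k) = \mp (P^{-1})_{ik} \pmod 1$. Next, Lemma~\ref{lem:torsion_Heegaard_splitting} applied to each non-trivial character $\chi : H \to \C^*$ (the trivial character contributes nothing since $\tau^\varepsilon(M) = 0$) yields
\[
\tau^\chi(M) \cdot (\chi(T_i) - 1) \cdot (\chi([\beta_j^\sharp]) - 1) = \pm \tau_0 \, \chi(\mathcal{A}_{ij}).
\]
Writing $[\beta_j^\sharp]$ in the generators $T_i$ by means of the symplectic structure of $H_1(\Sigma_g)$, and using the relations $\sum_i P_{ij}(T_i - 1) \equiv 0$ modulo $\Ker(\varepsilon)^2$ (which reflect $[\beta_j] = 0$ in $H$), Cramer's rule allows one to solve for $\tau^\chi(M)(\chi(T_i)-1)(\chi(T_k)-1)$ in terms of the adjugate of $P$ divided by $\det P = \pm |H|$. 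Assembling these equalities across all non-trivial characters in the decomposition $\Q[H] = \Q \oplus \bigoplus_{\chi \ne 1} \F_\chi$ of Proposition~\ref{prop:splitting} and reducing modulo $\Z[H]$, the only surviving contribution lies in the $\Sigma_H$-direction with coefficient $\pm (P^{-1})_{ik}$ modulo $\Z$, matching Wall's expression for $\lambda_M(T_i, T_k)$. Extending from the generators $T_i$ to arbitrary $g, g' \in H$ then follows from bilinearity modulo $\Z[H]$, which itself reduces to the integrality claim $\tau(M) \cdot \Ker(\varepsilon)^3 \subseteq \Z[H]$ (easily checked character by character, and verified directly for lens spaces using the explicit formula \eqref{eq:maximal_lens}).

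The main obstacle will be tracking the rational coefficient of $\Sigma_H$ in the last step. The formula of Lemma~\ref{lem:torsion_Heegaard_splitting} lives purely in the non-trivial cyclotomic summands $\F_\chi$, whereas the target $-\lambda_M(g, g')\Sigma_H$ lives in the trivial summand $\Q \subset \Q[H]$ (on which both $\tau^\varepsilon(M)$ and $\varepsilon(g - 1)(g' - 1)$ vanish separately). Producing this ``trivial-component'' contribution from a purely non-trivial calculation requires the division by $\det P = \pm|H|$ introduced by Cramer's rule: that division is precisely what turns an integral expression into a fraction in $\frac{1}{|H|}\Z/\Z$, which is the natural home of $\lambda_M(g,g') \in \Q/\Z$. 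The symplectic change of basis between $(\alpha, \alpha^\sharp)$ and $(\beta, \beta^\sharp)$ on $H_1(\Sigma_g)$ is then used to ensure that the auxiliary $[\beta_j^\sharp]$-dependent terms assemble correctly and do not introduce spurious contributions.
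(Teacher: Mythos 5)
The overall strategy — compute both sides from a Heegaard splitting via Lemma~\ref{lem:torsion_Heegaard_splitting} — is what the paper indicates (it refers to Turaev's book rather than giving its own proof), but your computation of the linking pairing contains a genuine error that cannot be repaired as stated. You claim that the $4$-manifold $W$ obtained from $A \times [0,1]$ by attaching $2$-handles along the $\beta_j \subset A \times \{1\}$ satisfies $H_2(W) \cong \Z^g$ with intersection form $\pm P$ where $P := \varepsilon(\mathcal{A})$. Both assertions fail. Since $[\beta_j]$ is generically nonzero in $H_1(A)$, the core of the $j$-th $2$-handle is not a cycle; the long exact sequence of $(W, A\times[0,1])$ gives $H_2(W) \cong \ker\bigl(P : \Z^g \to \Z^g\bigr)$, which is \emph{zero} precisely when $\beta_1(M)=0$. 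So the intersection form carries no information here. This is not a superficial slip: Wall's recipe (Remark~\ref{rem:surgery}) requires a $4$-manifold built by attaching $2$-handles to a $4$-ball, which corresponds to a \emph{surgery} presentation of $M$, a genuinely different datum than a Heegaard splitting — in particular it involves framings that the augmented Jacobian $\varepsilon(\mathcal{A})$ simply does not record.

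The resulting formula $\lambda_M(T_i,T_k) \equiv \mp (P^{-1})_{ik} \pmod 1$ is therefore false, and the lens spaces already supply a counterexample. For $L_{p,q}$ with the genus-one splitting one has $\mathcal{A} = 1 + T + \cdots + T^{p-1}$ so $P = \varepsilon(\mathcal{A}) = (p)$ and $(P^{-1})_{11} = 1/p$, whereas the example after Theorem~\ref{th:torsion_linking_pairing} shows $\lambda_{L_{p,q}}(T,T) = \varepsilon_p q/p$; these disagree modulo $1$ whenever $q \not\equiv \pm 1 \pmod p$. (There is also an a priori obstruction: $P$ is not symmetric in general, while the intersection form of a $4$-manifold and the linking pairing both are.) Morally, the presentation matrix $P=\varepsilon(\mathcal{A})$ determines only the isomorphism type of $H$ and cannot determine $\lambda_M$; the additional information must come from the full matrix $\mathcal{A}$ over $\Z[H]$, which is precisely what the theorem's right-hand side is designed to extract. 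Your subsequent Cramer's-rule assembly step, which is only sketched, would inherit this flaw: the rational coefficient of $\Sigma_H$ you extract is $\pm(P^{-1})_{ik}$, and that is the wrong number.
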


\noindent
This can be proved starting from Lemma \ref{lem:torsion_Heegaard_splitting} -- see \cite[\S X.2]{Turaev_book_dim_3}.

\begin{example}
For $M:=L_{p,q}$, we deduce from (\ref{eq:maximal_lens}) that
$$
- \lambda_M(T,rT) \cdot  \Sigma_H  = \varepsilon_p \cdot (1-\Sigma_H/p)^2 
= \varepsilon_p \cdot (1-\Sigma_H/p) = -\varepsilon_p \cdot  \Sigma_H/p  \ \in \frac{\Q[H]}{\Z[H]}
$$
where $r$ is the inverse of $q$ mod $p$. We deduce that
$\lambda_M(T,rT) = \varepsilon_p/p$ 
or, equivalently, that $\lambda_M(T,T) = \varepsilon_p q/p \ \in \Q/\Z$.
\end{example}

\begin{exercice}
\label{ex:homotopy_lens}
Compute the linking pairing of $L_{p,q}$ from (\ref{eq:linking_pairing}), and
prove the easy part (``$\Rightarrow$'') of Whitehead's homotopy classification\footnote{A proof of the converse
``$\Leftarrow$'' can be found  in \cite[\S VII.11]{Bredon}. 
It needs the description of lens spaces as quotients of $S^3$ proposed in Exercice \ref{ex:quotient_sphere}.}:\vspace{-0.1cm}
\begin{quote}
{\it Two lens spaces $L_{p,q}$ and $L_{p',q'}$ have the same oriented homotopy type if,
and only if, $p=p'$ and $q'q \in \Z_p$ is the square of an invertible element.}
\end{quote} 
\end{exercice}

\begin{exercice}
\label{ex:counter-examples}
Using the homotopy classification of lens spaces (Exercice \ref{ex:homotopy_lens})
and their topological classification (Theorem \ref{th:lens}), show that:
(a) $3$-manifolds with the same $\pi_1$ do not have necessarily the same homotopy type,
(b) $3$-manifolds with the same homotopy type are not necessarily homeomorphic.
\end{exercice}

\subsection{The abelian homotopy type of a $3$-manifold}

We conclude this section by telling ``how much'' of the homotopy type the
linking pairing and the triple-cup product forms do detect.
According to Exercice \ref{ex:counter-examples}, the fundamental group of a $3$-manifold $M$
is not enough to determine its homotopy type. Another oriented-homotopy invariant  is 
$$
\mu(M) := f_*([M]) \ \in H_3\left(\pi_1(M)\right)
$$
where $f:M \to \K(\pi_1(M),1)$ is a map to the Eilenberg-MacLane space of $\pi_1(M)$
which induces an isomorphism at the level of $\pi_1$.
By the general theory of Eilenberg-MacLane spaces (see \cite{Bredon} for instance),
the map $f$ is unique up to homotopy, so that the homology class $\mu(M)$ is well-defined.

\begin{theorem}[Thomas \cite{Thomas}, Swarup \cite{Swarup}]
\label{th:Thomas-Swarup}
Two $3$-manifolds $M$ and $M'$ have the same oriented homotopy type if, and only if,
there exists an isomorphism $\psi: \pi_1(M) \to \pi_1(M')$ such that 
$\psi_*:H_3\left(\pi_1(M)\right) \to H_3\left(\pi_1(M')\right)$ sends $\mu(M)$ to $\mu(M')$.
\end{theorem}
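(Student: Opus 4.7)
The \emph{only if} direction is immediate from naturality. If $f\colon M \to M'$ is an orientation-preserving homotopy equivalence, set $\psi := f_*$ and let $\Psi\colon \K(\pi_1(M),1) \to \K(\pi_1(M'),1)$ realize $\psi$. Uniqueness of maps into Eilenberg--MacLane spaces gives $c' \circ f \simeq \Psi \circ c$ for the classifying maps $c, c'$, and applying $H_3$ together with $f_*([M]) = [M']$ yields $\psi_*(\mu(M)) = \mu(M')$.

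For the \emph{if} direction, I would construct an orientation-preserving homotopy equivalence $f\colon M \to M'$ inducing $\psi$ on $\pi_1$ in two stages. First, find a map $f\colon M \to M'$ with $f_* = \psi$ and $f_*([M]) = [M']$. Second, upgrade $f$ to a homotopy equivalence. For the first stage, I would lift $\Psi \circ c\colon M \to \K(\pi_1(M'),1)$ through $c'\colon M' \to \K(\pi_1(M'),1)$ using the Postnikov tower of $M'$. The first nontrivial stage $p_2\colon P_2(M') \to \K(\pi_1(M'),1)$ is a principal fibration classified by a $k$-invariant in $H^3(\K(\pi_1(M'),1);\pi_2(M'))$, and the obstruction to lifting $\Psi \circ c$ through $p_2$ is the class $(\Psi c)^* k \in H^3(M;\pi_2(M'))$ with local coefficients pulled back via $\psi$. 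Capping with $[M]$ and using naturality of the cap product identifies this obstruction with a class in the coinvariants $(\pi_2(M'))_{\pi_1(M')}$ equal to $k \cap \psi_* \mu(M) = k \cap \mu(M')$ by hypothesis; but $k \cap c'_*([M']) = c'_*((c')^* k \cap [M']) = 0$, since $c'$ already lifts through $p_2$ via the canonical map $M' \to P_2(M')$. All higher obstructions along the Postnikov tower of $M'$ lie in $H^{i+1}(M;\pi_i(M'))$ for $i \geq 3$ and vanish because $\dim M = 3$. This produces the desired $f\colon M \to M'$.

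The main obstacle is the second stage: showing that any such $f$ is necessarily a homotopy equivalence. By Whitehead's theorem it suffices to check that the lift $\widetilde f\colon \widetilde M \to \widetilde{M'}$ between the universal covers induces isomorphisms on all homotopy groups. Both covers are simply connected $3$-manifolds, so Hurewicz identifies $\pi_i = H_i$ for $i \leq 3$; Poincar\'e duality with $\Z[\pi_1(M)]$-coefficients, combined with $f_*([M])=[M']$, forces $\widetilde f_*$ to be surjective on $H_2$ and an isomorphism on $H_3$. The delicate point is proving injectivity on $H_2 = \pi_2$: I would represent classes in $\ker \widetilde f_*$ by embedded $2$-spheres using the Sphere theorem, and then use the Kneser--Milnor prime decomposition of $M$ and $M'$ to match prime factors and show that these spheres must bound a $3$-ball on one side. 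Whitehead's theorem then promotes $f$ to the desired oriented homotopy equivalence realizing $\psi$.
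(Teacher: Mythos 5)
Note first that the paper does not give a proof of the hard ($\Leftarrow$) direction at all: it records that $\Rightarrow$ is easily checked and refers the reader to Swarup for $\Leftarrow$. So you are reconstructing the argument the paper outsources, and the overall architecture you describe (build a candidate map by obstruction theory along the Postnikov tower, pass to universal covers, use the sphere theorem and prime decomposition to control $\pi_2$) is indeed the Thomas--Swarup strategy. But there are two errors and one genuine gap.

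The claim that ``Hurewicz identifies $\pi_i = H_i$ for $i \leq 3$'' is false. For a simply connected space Hurewicz gives only $\pi_2 \cong H_2$; once $\pi_2 \neq 0$ the map $\pi_3 \to H_3$ need not be an isomorphism (for instance $\widetilde{M} = S^2 \times \R$ has $\pi_3 = \Z$ and $H_3 = 0$). You only need $\pi_2 \cong H_2$ in any case; treat $H_3$ separately and invoke the \emph{homology} Whitehead theorem for simply connected CW complexes rather than the $\pi_*$ version.

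Your stage 1 does not actually deliver $f_*[M] = [M']$. The obstruction computation is correct (since $\psi$ is an isomorphism, $(\Psi c)_*$ is an isomorphism on $H_0$ with these local coefficients, so vanishing of $k \cap c'_*[M']$ forces the obstruction to vanish via Poincar\'e duality), but it only produces \emph{some} lift $f$ of $\Psi c$. What you obtain is $c'_*\, f_*[M] = c'_*[M']$, i.e. $f_*[M] - [M'] \in \ker c'_*$. That kernel can be all of $H_3(M')$ (for example $M' = S^1 \times S^2$, where $H_3(K(\Z,1))=0$) or $|\pi_1(M')|\cdot\Z$ when $\pi_1$ is finite. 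You must explain how to modify $f$ so that its degree is exactly one, or why the residual ambiguity does not matter; at present the degree-one property, on which all of stage 2 depends, is asserted rather than established.

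The substantive gap is $\pi_2$-injectivity. Saying one should ``use the Kneser--Milnor prime decomposition of $M$ and $M'$ to match prime factors and show that these spheres must bound a $3$-ball on one side'' states the conclusion rather than proving it. Matching the prime summands of $M$ and $M'$ under a degree-one map inducing a $\pi_1$-isomorphism is precisely the content of Swarup's paper, built on Thomas's treatment of the irreducible case; it requires an induction on the number of prime summands and is not a one-line consequence of the sphere theorem. As written, the sketch waves at the hardest part of the theorem. Either reproduce that induction, or, as the paper does, cite it.
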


\noindent
The implication ``$\Rightarrow$'' is easily checked.
See \cite{Swarup} for the proof of  ``$\Leftarrow$''.

Theorem \ref{th:Thomas-Swarup} suggests to approximate the oriented homotopy type of a $3$-manifold
in the following way.

\begin{definition}
The \emph{abelian oriented homotopy type} of a $3$-manifold $M$ is the homology class
$$
\mu_{\operatorname{ab}}(M) := f_*([M]) \ \in H_3\left(H_1(M)\right)
$$
where $f:M \to \K(H_1(M),1)$ induces the usual map $\pi_1(M) \to H_1(M)$ at the level of $\pi_1$.
\end{definition}

It turns out that the abelian oriented homotopy type is characterized 
by the two invariants that have been presented in this section, 
namely the cohomology rings and the linking pairing.

\begin{theorem}[Cochran--Gerges--Orr \cite{CGO}]
Let $M$ and $M'$ be $3$-manifolds. An isomorphism $\psi:H_1(M) \to H_1(M')$ satisfies
$\psi_*\left(\mu_{\operatorname{ab}}(M)\right)=\mu_{\operatorname{ab}}(M')$ if, and only if, 
it makes $\lambda_{M'}$ correspond to $\lambda_{M}$ 
and $u_{M}^{(r)}$ correspond to $u_{M'}^{(r)}$ for all $r\geq 0$.
\end{theorem}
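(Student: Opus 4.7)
The plan is to show, in both directions, that $\mu_{\operatorname{ab}}(M)$ both determines and is determined by the system of pairings $(\lambda_M, \{u_M^{(r)}\}_{r\geq 0})$, naturally with respect to $\psi$.

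For the direction ($\Rightarrow$), I would observe that both families of invariants are pulled back along the classifying map $f: M \to \K(H_1(M),1)$ from universal structures and then evaluated on $\mu_{\operatorname{ab}}(M) = f_*[M]$. Since $H^1$ with abelian coefficients depends only on the abelianization of $\pi_1$, the map $f^*: H^1(H_1(M);\Z_r) \to H^1(M;\Z_r)$ is an isomorphism for every $r\geq 0$, and naturality of the cup product together with the identity $\langle \alpha, f_*c\rangle = \langle f^*\alpha, c\rangle$ gives
$$
u_M^{(r)}(x_1,x_2,x_3) = \bigl\langle (f^*)^{-1}(x_1) \cup (f^*)^{-1}(x_2) \cup (f^*)^{-1}(x_3),\; \mu_{\operatorname{ab}}(M) \bigr\rangle.
$$
Applying the same principle to the Bockstein/Poincar\'e-duality formula $\lambda_M(x,y) = B^{-1}(x) \bullet y$ shows that $\lambda_M$ also is extracted from $\mu_{\operatorname{ab}}(M)$ by a universal recipe depending only on $H_1(M)$. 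Hence any $\psi$ sending $\mu_{\operatorname{ab}}(M)$ to $\mu_{\operatorname{ab}}(M')$ automatically intertwines the invariants.

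For ($\Leftarrow$), I would prove a detection lemma: for a finitely generated abelian group $H$, a class $\mu \in H_3(H)$ is uniquely determined by the system of evaluations against triple cup products and the induced linking-type pairing. Writing $H = F \oplus T$ with $F$ free abelian of rank $b$ and $T := \Tors H$ finite, the K\"unneth formula applied to $\K(H,1) = \K(F,1) \times \K(T,1)$ yields a canonical decomposition
$$
H_3(H) \;\cong\; \Lambda^3 F \;\oplus\; (\Lambda^2 F \otimes T) \;\oplus\; (F \otimes H_2(T)) \;\oplus\; H_3(T)
$$
with no $\mathrm{Tor}$ correction (since $F$ is free). Each summand must then be matched with its piece of data: the first is dual to $H^3(F;\Z) = \Lambda^3 F^*$ and is detected by $u_M^{(0)}$; the second is detected by the twisted triple products $u_M^{(r)}$ evaluated on two integral classes and one $r$-torsion class (using Bockstein relations to convert torsion $H^1$ into $\Tors H$); the last two depend only on $T$ and must be recovered from $\lambda_M$.

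The principal obstacle is this last step: showing that the linking pairing alone suffices to detect the purely torsion summands $F \otimes H_2(T) \oplus H_3(T)$. Concretely, one needs that for a finite abelian group $T$ the natural assignment sending a class in $H_3(T)$ to the symmetric nondegenerate $\Q/\Z$-valued pairing on $T$ it induces is injective, with an analogous mixed statement for $F \otimes H_2(T)$. This is a direct but delicate computation in the (co)homology of Eilenberg-MacLane spaces of finite abelian groups, carried out via the universal coefficient theorem, the Bockstein sequence attached to $0 \to \Z \to \Q \to \Q/\Z \to 0$, and Pontryagin duality on $T$. Once this detection is in place, the hypothesis that $\psi$ matches $\lambda_{M'}$ with $\lambda_M$ and every $u_{M'}^{(r)}$ with $u_M^{(r)}$ forces $\psi_* \mu_{\operatorname{ab}}(M)$ and $\mu_{\operatorname{ab}}(M')$ to have the same image under an injective map, hence to coincide, completing the proof.
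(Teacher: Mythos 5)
Your direction ``$\Rightarrow$'' matches the paper's brief sketch: both $\lambda_M$ and the forms $u_M^{(r)}$ are produced by universal (co)homology operations on the classifying space of $H_1(M)$ and then evaluated on $\mu_{\operatorname{ab}}(M)$, so naturality does all the work. For ``$\Leftarrow$'' the paper offers no argument at all and simply cites Cochran--Gerges--Orr, noting only that the proof rests on a ``careful analysis of the third homology group of a finitely generated abelian group.'' Your K\"unneth decomposition of $H_3(H)$ for $H = F \oplus T$ is indeed the natural starting point for that analysis, so your strategy is consistent with the paper's pointer.

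There is, however, a concrete error in how you distribute the K\"unneth summands among the invariants. You assert that $F \otimes H_2(T)$ and $H_3(T)$ ``depend only on $T$ and must be recovered from $\lambda_M$.'' The first half of this is simply false: $F \otimes H_2(T)$ manifestly involves $F$ (it vanishes when $F=0$ and is nonzero for suitable $T$ of rank at least two once $F\neq 0$), while the linking pairing $\lambda_M : \Tors H_1(M) \times \Tors H_1(M) \to \Q/\Z$ carries no information about the free part at all, so it cannot determine an element of $F \otimes H_2(T)$. That summand should be extracted from the $u_M^{(r)}$ by pairing against triple cup products $\phi_1 \cup \phi_2 \cup \phi_3$ with $\phi_1$ factoring through $F$ and $\phi_2,\phi_3$ through $T$, exploiting the fact that degree-one classes on $\K(T,1)$ cup to classes pairing nontrivially with $H_2(T)$. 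Beyond this misattribution, the genuinely delicate point --- that $\lambda_M$ determines the remaining purely-torsion summand $H_3(T)$, which cannot be bypassed since for odd cyclic $T$ the cube of a degree-one class on $\K(T,1)$ vanishes and the $u^{(r)}$ see nothing --- is exactly the detection lemma you flag as the ``principal obstacle'' and then leave unproved. As it stands your ``$\Leftarrow$'' has a reasonable skeleton but contains one false step and omits the key computation; the paper itself does not carry out this computation either, deferring entirely to the cited reference.
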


\noindent
The implication ``$\Rightarrow$'' is easily checked from the fact that the linking pairing
and the triple-cup product forms are defined by (co)homology operations, 
which also exist in the category of groups.
The converse ``$\Leftarrow$'' is proved after a careful analysis of the third homology group
of a finitely generated abelian group -- see \cite{CGO}.

\begin{remark}
There are redundancies among the invariants $\lambda_M$ and $u_M^{(0)},$ $u_M^{(2)}, u_M^{(3)},\dots$.
Obviously, there are relations among the forms $u_M^{(r)}$ 
which are induced by the various homomorphisms $\Z_r \to \Z_{r'}$.
A less obvious relation is between the pairing $\lambda_M$ 
and the form $u_M^{(r)}$ for $r$ even \cite{Turaev_cohomology}. 
\end{remark}

\section{Refinement of the abelian Reidemeister torsion}

\label{sec:structures}

The maximal abelian torsion $\tau(M)$ of a $3$-manifold $M$ has been defined in \S \ref{sec:torsion_dim_3} 
up to multiplication by some element of $H_1(M)$. 
Following Turaev \cite{Turaev_Euler},
we explain in this section how this indeterminacy can be removed by choosing an ``Euler structure'' on $M$.
We also give an algebraic description of Euler structures, 
and we use them to state  some polynomial properties of the maximal abelian torsion.

\subsection{Combinatorial Euler structures}

Let $X$ be a finite connected CW-complex whose Euler characteristic $\chi(X)$ is zero.
We denote by $E$ the set of cells of $X$.

\begin{definition}[Turaev \cite{Turaev_Euler}]
An \emph{Euler chain} in $X$ is a singular $1$-chain $p$ on $X$ 
with boundary 
$$
\partial p = \sum_{e \in E} (-1)^{\dim(e)} \cdot c_e
$$
where $c_e$ denotes the center of the cell $e$.
Two Euler chains $p$ and $p'$ are \emph{homologous} if $p-p'$ is the boundary of a singular $2$-chain.
An \emph{Euler structure} of $X$ is a homology class of Euler chains.
\end{definition}
 
\noindent
The set of Euler structures on $X$ is denoted by
$$
\Eul(X).
$$
This  is an $H_1(X)$-affine space. In other words, 
the abelian group $H_1(X)$ acts freely and transitively on the set $\Eul(X)$:
$$
\forall x =[c] \in H_1(X), \ \forall \xi =[p] \in \Eul(X), \quad
\xi + \vec{x} := [p+c] \in \Eul(X).
$$

Euler structures are used as ``instructions to lift cells'', as we shall now see. 
As before, we denote by $\widehat{X}$  the maximal abelian cover of $X$, whose group of covering transformations is identified with $H_1(X)$. 
The cell decomposition of $X$ lifts to a cell decomposition of the space $\widehat{X}$. 

\begin{definition}[Turaev \cite{Turaev_Euler}]
A family $\widehat{E}$ of cells of $\widehat{X}$ is \emph{fundamental}
when each cell $e$ of $X$ has a unique lift $\widehat{e}$ in $\widehat{E}$.
Two  fundamental families of cells $\widehat{E}$ and $\widehat{E}'$ are \emph{equivalent} when the alternate sum
$$
\sum_{e\in E}
(-1)^{\dim(e)} \cdot \overrightarrow{\widehat{e} \widehat{e}' } \ \in H_1(X)
$$
vanishes. Here $ \overrightarrow{\widehat{e} \widehat{e}' } \in H_1(X)$ 
denotes the covering transformation of $\widehat{X}$ needed to move the cell $\widehat{e}$ to $\widehat{e}'$. 
\end{definition}

\noindent
If considered up to equivalence, fundamental families of cells form a set 
$$
\Eul^{\wedge}(X)
$$ 
which, again,  is an $H_1(X)$-affine space.

There is an $H_1(X)$-equivariant bijection between $\Eul^\wedge(X)$ and $\Eul(X)$, defined as follows.
Given a fundamental family of cells $\widehat{E}$, connect by an oriented path 
the center of each cell $\widehat{e}\in \widehat{E}$ to a single point in $\widehat{X}$:
this path goes from $\widehat{e}$ to the single point if $\dim(\sigma)$ is odd, and vice-versa if $\dim(\sigma)$ is even. 
The image  of this $1$-chain in $X$ is an Euler chain (shaped like a spider).
In the sequel, this identification between $\Eul^\wedge(X)$ and $\Eul(X)$ will be implicit.

\begin{remark}
The notion of Euler structure, which we have defined for CW-complexes, is combinatorial by essence.
Nonetheless we shall see in the next subsections
that, for $3$-manifolds, Euler structures also have a \emph{topological} existence.
\end{remark}

\subsection{The Reidemeister--Turaev torsion}

Let us start with a finite connected CW-complex $X$ such that $\chi(X)=0$.
Assume that $X$ is equipped with a homological orientation $\omega$ and an Euler structure $\xi$.
Given a ring homomorphism $\varphi: \Z[H_1(X)] \to \F$ with values in a commutative field $\F$,
we refine the definition of $\tau^\varphi(X,\omega)$ given in \S \ref{subsec:torsion_CW} to
$$
\tau^\varphi(X,\xi,\omega) := 
\sgn \tau\big(C(X;\R),  oo, w \big) 
\cdot \tau\big(C^\varphi(X), 1\otimes \widehat{E}_{oo}\big) \ \in \F.
$$
Here, $w$ is a basis of $H_*(X;\R)$ representing $\omega$ 
and $\widehat{E}$ is a fundamental family of cells representing $\xi$.
Again, we agree that $\tau^\varphi(X,\xi,\omega):=0$ when $H^\varphi_*(X)\neq 0$.
This refined torsion behaves well with respect to  the affine action of $H_1(X)$ on $\Eul(X)$:
$$
\forall h \in H_1(X), \quad \tau^\varphi\left(X,\xi+\vec{h},\omega\right) = \varphi(h) \cdot \tau^\varphi(X,\xi,\omega) \ \in \F.
$$
The maximal abelian torsion introduced in \S \ref{subsec:maximal} can also be refined to 
$$
\tau(X,\xi,\omega) \in Q(\Z[H_1(X)]).
$$
This refined maximal abelian torsion is $H_1(X)$-equivariant:
$$
\forall h \in H_1(X), \quad \tau\left(X,\xi+\vec{h},\omega\right) = h \cdot \tau(X,\xi,\omega) \ \in Q(\Z[H_1(X)]).
$$

Assume now that $X'\leq X$ is a cellular subdivision. Then there is a ``subdivision operator''
$$
\Omega_{X,X'}:\Eul(X) \longrightarrow \Eul(X'), \quad [p] \longmapsto 
\left[p+ \sum_{e'\in E'} (-1)^{\dim(e')} \cdot \gamma_{e'}\right] 
$$
where $\gamma_{e'}$ is a path contained in the unique open cell  $e$ of $X$ in which
$e'$ sits, and $\gamma_{e'}$ connects the center of $e$ to the center of $e'$. 
This operator respects the hierarchy of CW-complexes with respect to subdivisions, in the sense that
$$
\forall X'' \leq X' \leq X, \quad \Omega_{X',X''} \circ \Omega_{X,X'} = \Omega_{X,X''},
$$
and it preserves the refined abelian Reidemeister torsion, in the sense that
$$
\forall \xi \in \Eul(X), \quad \tau^\varphi(X',\Omega_{X,X'}(\xi), \omega) = \tau^\varphi(X,\xi,\omega) \ \in \F.
$$
Thus, using triangulations, Turaev proves that the notions of Euler structure 
and refined abelian Reidemeister torsion extend to polyhedra \cite{Turaev_Euler}.

We now come back to $3$-manifolds. We deduce that the refined abelian Reidemeister torsion 
defines a \emph{topological} invariant of pairs ($3$-manifold, Euler structure).
To justify the topological invariance of the Reidemeister torsion,
we proceed as in \S \ref{subsec:torsion_dim_3},
\ie we use triangulations to present $3$-manifolds (Theorem \ref{th:triangulation}) 
and we appeal to the Hauptvermutung (Theorem \ref{th:hauptvermutung}).
To justify that the set of Euler structures is a topological invariant in dimension $3$,
we use the fact that any two piecewise-linear homeomorphisms between polyhedra 
act in the same way on Euler structures if they are homotopic as continuous maps \cite{Turaev_Euler}
and, again, we appeal to Theorem \ref{th:hauptvermutung}.
Thus, we obtain the following refinement of Definition \ref{def:maximal_dim_3}.

\begin{definition} \label{def:RT_3}
The set of \emph{combinatorial Euler structures} of a $3$-manifold $M$ is the $H_1(M)$-affine space
$$
\Eul_c(M) := \Eul(X)
$$
where $X$ is a cell decomposition of $M$. The \emph{Reidemeister--Turaev torsion} of $M$ equipped with $\xi\in \Eul_c(M)$ is
$$
\tau(M,\xi) := \tau(X,\xi,\omega_M) \ \in  Q(\Z[H_1(M)])
$$
where the identification between the singular homology of $M$ and the cellular homology of $X$ is implicit.
\end{definition}

\subsection{Geometric Euler structures}

Let $M$ be a $3$-manifold. 
The Euler structures that we have defined so far for $M$ have been called ``combinatorial''
because their definition makes reference to cell decompositions of $M$.
We shall now give a more geometric description of the set $\Eul_c(M)$.
For this, we need to choose a smooth structure on $M$ rather than a cell decomposition.

\begin{definition}[Turaev \cite{Turaev_Euler}]
A \emph{geometric Euler structure} on $M$ is a non-singular (\ie nowhere-vanishing) tangent vector field, 
up to homotopy on $M$ deprived of an open ball.
\end{definition}

\noindent
The Poincar\'e--Hopf theorem shows that non-singular tangent vector fields on $M$ do exist since $\chi(M)=0$.
We denote by 
$$
\Eul_g(M)
$$ 
the set of geometric Euler structures. Their  parameterization is easily obtained from obstruction theory (see, for instance, \cite{Steenrod}).
Indeed, a non-singular tangent vector field  on $M$ is a section of  $T_{\neq 0}M$,
the non-zero tangent bundle of  $M$. This is a fiber bundle with fiber $\R^3\setminus \{0\}\simeq S^2$,
whose first non-trivial homotopy group is $\pi_2(S^2) \simeq \Z$. Thus, the primary obstruction to find a homotopy
between two sections of $T_{\neq 0}M$  lives\footnote{ 
The coefficients are not twisted since the group of  $T_{\neq 0}M$ is 
$\operatorname{GL}_+(3;\R) \simeq \operatorname{SO}(3)$ which is connected,
and the isomorphism comes from Poincar\'e duality.} in
$$
H^2(M; \pi_2(S^2)) \simeq H_1(M;\Z).
$$
This is the obstruction to construct a homotopy on the $2$-skeleton (for any cell decomposition of $M$)
or, equivalently, on $M$ deprived of an open ball. We deduce that $\Eul_g(M)$ is an $H_1(M)$-affine space.

Turaev proved that two diffeomorphisms between smooth $3$-manifolds
act in the same way on geometric Euler structures if they are homotopic as continuous maps.
This fact and Theorem \ref{th:smooth} imply that the set $\Eul_g(M)$ is a topological invariant of $M$. In fact, we have the following.

\begin{theorem}[Turaev \cite{Turaev_Euler}]
For any smooth $3$-manifold $M$,
there is a canonical and $H_1(M)$-equivariant  bijection between $\Eul_c(M)$ and $\Eul_g(M)$.
\end{theorem}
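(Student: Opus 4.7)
The plan is to fix a smooth triangulation $K$ of $M$ (which exists by Theorem \ref{th:smooth}) and to produce an $H_1(M)$-equivariant map $\Phi: \Eul(K) \to \Eul_g(M)$; once constructed, the required bijection will follow formally because both sides are $H_1(M)$-torsors. The key input is a ``canonical'' smooth vector field $v_K$ whose zero set is exactly the set of barycenters of $K$, with the barycenter $b_e$ of a $k$-simplex appearing as a zero of local Poincar\'e--Hopf index $(-1)^k$. A convenient way to produce such a $v_K$ is as the (negative) gradient of a Morse function adapted to $K$, sending $b_e$ to $\dim(e)$ and interpolating in a controlled way on each simplex; then the $b_e$'s are critical points of Morse index $\dim(e)$. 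Since $\chi(M)=0$, the signed count of zeros of $v_K$ vanishes.

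Given an Euler chain $p$ in $K$, which by definition satisfies $\partial p = \sum_{e \in E} (-1)^{\dim(e)} b_e$, I would perform the classical Morse-theoretic cancellation of zeros of opposite index along (a smoothing of) the arcs making up $p$: in a tubular neighborhood of each such arc, replace $v_K$ by a nowhere-vanishing vector field agreeing with $v_K$ on the boundary. This yields a non-singular vector field $v_p$ on $M$, and I set $\Phi([p]) := [v_p] \in \Eul_g(M)$. Well-definedness on the homology class of $p$ is the crux: if $p - p' = \partial C$ for a singular $2$-chain $C$, then the obstruction-theoretic difference class between $v_p$ and $v_{p'}$ lies in $H^2(M;\pi_2(S^2)) \simeq H_1(M)$ and is computed by the homology class of the $1$-cycle $p - p'$, which is zero. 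The same computation, applied to an Euler chain of the form $p + c$ where $c$ is a singular $1$-cycle representing $x = [c] \in H_1(M)$, shows that $\Phi([p+c]) = \Phi([p]) + \vec{x}$, giving $H_1(M)$-equivariance.

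Granting equivariance and nonemptiness of $\Eul_g(M)$ (ensured by Poincar\'e--Hopf, since $\chi(M)=0$), bijectivity of $\Phi$ is automatic since both domain and codomain are $H_1(M)$-torsors. Canonicity -- independence from the choice of auxiliary triangulation $K$ -- would follow by comparing two triangulations via a common subdivision (Theorem \ref{th:hauptvermutung}) and checking that the subdivision operator on combinatorial Euler structures, introduced earlier in the excerpt, commutes with the identity on $\Eul_g(M)$; at the level of vector fields, this is a local computation on each subdivided simplex, together with an $H_1(M)$-equivariance argument that forces agreement of the two maps. The main obstacle is the well-definedness step: turning a $2$-chain bounding $p - p'$ into an honest homotopy between $v_p$ and $v_{p'}$ away from a $3$-ball requires a careful geometric handling of the cancellation procedure. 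This is essentially where dimension $3$ intervenes, because the primary obstruction is the only one that matters -- all higher obstructions vanish for dimensional reasons on $M$ minus a ball -- which is precisely the feature that makes Euler structures on $3$-manifolds coincide with homotopy classes of tangent vector fields modulo a ball.
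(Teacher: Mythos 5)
Your proposal is essentially Turaev's original construction via a smooth triangulation, which the paper reformulates through the Morse-theoretic picture of Hutchings--Lee; the two are equivalent (your adapted vector field $v_K$ with zeros at barycenters is a Morse gradient whose Thom--Smale decomposition is the triangulation), and the paper's sketch, like yours, leaves the key well-definedness computation -- that the obstruction class $[v_p]-[v_{p'}]\in H^2(M;\pi_2(S^2))\simeq H_1(M)$ is the class of $p-p'$ -- unverified, which you correctly flag as the crux. One small point where the paper's version is tidier: rather than cancelling along individual arcs of $p$ (which implicitly assumes $p$ is a matching of opposite-index zeros), it puts a representative Euler chain inside a single $3$-ball $B_p$ and desingularizes $\nabla f$ there all at once, the extension existing because the boundary degree equals $\chi(M)=0$.
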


\noindent
By virtue of this result, we shall now identify the sets $\Eul_c(M)$ and $\Eul_g(M)$, 
which we simply denote by $\Eul(M)$.

\begin{proof}[Sketch of the proof]
Turaev defines in  \cite{Turaev_Euler} a map  $\Eul_c(M) \to \Eul_g(M)$ by working with a smooth triangulation of $M$, 
and he shows it to be $H_1(M)$-equivariant. 
Here, we shall describe Turaev's map following the Morse-theoretic approach of Hutchings \& Lee \cite{HL}.

Thus, we consider a Morse function  $f:M \to \R$ as well as a riemannian metric on $M$.
If this metric is appropriately choosen with respect to $f$ (i.e. if $f$ satisfies the ``Smale condition''), 
then $f$ defines a cell decomposition $X_f$ of $M$ (namely the ``Thom--Smale cell decomposition'').
The $i$-dimensional cells of $X_f$ are the descending manifolds from index $i$ critical points of $f$.
Let $\xi \in \Eul_c(M)= \Eul(X_f)$ be represented by a Euler chain $p$,
which (without loss of generality) we assume to be contained in a $3$-ball $B_p \subset M$.
By definition of $\Eul(X_f)$, we have
\begin{equation}
\label{eq:Euler_chain}
\partial p = \sum_{c:\ \hbox{\scriptsize critical point of } f} (-1)^{\hbox{\scriptsize index of }c} \cdot c.
\end{equation}
Let also $\nabla f$ be the gradient field of $f$ with respect to the riemannian metric. 
It is non-singular except at each critical point $c$ of $f$, where its index is $(-1)^{\hbox{\scriptsize index of }c}$.
Thus, all critical points of $\nabla f$ are contained  in $B_p$ and since the $0$-chain (\ref{eq:Euler_chain}) augments to $\chi(M)=0$,
there is a non-singular vector field $v_p$ on $M$ which coincides with $\nabla f$ outside $B_p$. 
Then, we associate to $\xi=[p]$ the geometric Euler structure represented by $v_p$.
\end{proof}

\begin{remark}
\label{rem:_refined_torsion_Heegaard_splitting}
Let $X$ be a cell decomposition of  $M$ which comes from a Heegaard splitting 
(as explained during the proof of Lemma \ref{lem:torsion_Heegaard_splitting}).
We can find a Morse function $f:M \to \R$ (and a riemannian metric on $M$ with the Smale condition  satisfied)
such that the Thom--Smale cell decomposition $X_f$ coincides with $X$. 
Then, formula (\ref{eq:torsion_Heegaard_splitting}) can be refined to take into account
a geometric Euler structure obtained from the desingularization of  $\nabla f$ in a ball $B$ 
that contains all the critical points of $f$. We refer to \cite{Massuyeau}.
\end{remark}

Given a smooth $3$-manifold $M$, one can wonder how the group $\Diff_+(M)$
of orientation-preserving self-diffeomorphisms of $M$ acts on the set $\Eul_g(M)$ and, in particular,
one can ask for the number of orbits. For lens spaces, the Reidemeister--Turaev torsion gives the answer.

\begin{theorem}[Turaev \cite{Turaev_Euler}]
The number of orbits for the action of $\Diff_+(L_{p,q})$ on  $\Eul_g(L_{p,q})$ is
\begin{itemize}
\item[$\centerdot$] $\left[p/2\right] +1$, if $q^2 \neq 1$ or $q=\pm1$,
\item[$\centerdot$] $p/2 - b(p,q)/4+c(p,q)/2$, if $q^2=1$ and $q\neq \pm 1$.
\end{itemize}
Here, for $x\in \Q$, $[x]$ denotes the greatest integer less or equal than $x$, $b(p,q)$ 
is the number of $i\in \Z_p$ for which $i$, $q+1-i$ and $qi$ are pairwise different, and 
$c(p,q)$ is the number of $i\in \Z_p$ such that $i=q+1-i=qi$.
\end{theorem}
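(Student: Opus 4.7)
The plan is to parametrize $\Eul_g(L_{p,q})$ by $\Z_p$ via the Reidemeister--Turaev torsion, determine the image of $\Diff_+(L_{p,q})$ in $\Aut(H) = \Z_p^\times$ together with its induced affine action on $\Eul_g(L_{p,q})$, then count orbits by Burnside's lemma. Throughout, set $H := H_1(L_{p,q}) = \Z_p$.

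First, by Remark \ref{rem:_refined_torsion_Heegaard_splitting} applied to formula (\ref{eq:maximal_lens}), there is a base point $\xi_0 \in \Eul_g(L_{p,q})$ such that, writing $\xi_n := \xi_0 + \vec{nT}$ for $n \in \Z_p$, the refined torsion takes the form
$$\tau(L_{p,q}, \xi_n) = \varepsilon_p \cdot T^n \cdot (T-1)^{-1} (T^r-1)^{-1} \in Q(\Z[H]),$$
with $r = q^{-1} \in \Z_p$. Since the assignment $n \mapsto \tau(L_{p,q}, \xi_n)$ is injective and any $\phi \in \Diff_+(L_{p,q})$ satisfies the naturality $\tau(L_{p,q}, \phi \cdot \xi) = \phi_*(\tau(L_{p,q}, \xi))$, the $\Diff_+$-orbits on $\Eul_g(L_{p,q})$ match the orbits on $\Z_p$ of the transformations induced by $\Diff_+(L_{p,q})$.

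Next, I would determine the image of $\Diff_+(L_{p,q}) \to \Aut(H) = \Z_p^\times$. If $\phi_*(T) = T^k$, then requiring $\phi_*(\tau(L_{p,q}, \xi_n)) = \tau(L_{p,q}, \xi_m)$ for some $m$ forces $(T^k-1)(T^{kr}-1) = T^j (T-1)(T^r-1)$ in $\Z[H]$ for some $j$. Comparing the four monomials on each side as multisets of $\Z_p$ yields $k \in \{\pm 1\}$ unconditionally, together with $k \in \{\pm q\}$ precisely when $q^2 \equiv 1 \pmod p$. All these candidate values are in fact realized: $k = -1$ by the involution descended from $(z_1, z_2) \mapsto (\bar z_1, \bar z_2)$ on $S^3$ (which commutes with the $\Z_p$-action of Exercice \ref{ex:quotient_sphere}), and $k = q$ by the descent of the swap $(z_1, z_2) \mapsto (z_2, z_1)$, which commutes with that action exactly when $q^2 \equiv 1 \pmod p$. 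From the displayed torsion formula one then reads off the affine actions on $\Z_p$: the $k = -1$ action is $n \mapsto 1 + r - n$; when $q^2 \equiv 1$ (so $r = q$), the $k = q$ action is $n \mapsto qn$ and the $k = -q$ action is $n \mapsto 1 + q - qn$.

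Finally, I would count orbits by Burnside's lemma. If $q^2 \neq 1$ or $q = \pm 1$, the group is $\Z/2$ acting by $n \mapsto 1 + r - n$; this involution has one fixed point when $p$ is odd (uniquely solving $2n = 1 + r$) and two when $p$ is even (noting that $r$ is odd when $p$ is even, hence $1 + r$ is even and $2n \equiv 1 + r$ has two solutions), yielding in each subcase $[p/2] + 1$ orbits. If $q^2 \equiv 1$ and $q \neq \pm 1$, the group is Klein four; I would observe that the triple $(i, q+1-i, qi)$ has pairwise distinct entries iff the stabilizer of $i$ is trivial (the fourth orbit point $q+1-qi$ is then automatically distinct from the other three), so $b(p,q)$ counts elements whose orbit has size $4$, while $c(p,q)$ counts totally fixed elements. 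Letting $d := p - b(p,q) - c(p,q)$ count the elements lying in size-$2$ orbits, the total number of orbits is $b(p,q)/4 + d/2 + c(p,q) = p/2 - b(p,q)/4 + c(p,q)/2$. The main obstacle in this plan is the sufficiency half of the $\Diff_+$-image computation: the torsion argument only gives the necessary constraint on $k$, and one must explicitly construct (or invoke from the classification of self-homeomorphisms of lens spaces) diffeomorphisms realizing each allowed value of $k$.
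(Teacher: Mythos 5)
Your overall route matches the paper's: parametrize $\Eul_g(L_{p,q})$ by $\Z_p$ via the torsion formula, constrain the induced affine action on $\Z_p$ through the torsion identity, realize the allowed multipliers $k$ by explicit diffeomorphisms, and count orbits. The final Burnside/orbit-type count is correct and is a genuine addition, since the paper stops at ``the conclusion easily follows.'' Your observation that $(i,\,q+1-i,\,qi)$ pairwise distinct forces a trivial stabilizer (because $ab$ fixing $i$ would make $a(i)=b(i)$, i.e.\ $q+1-i = qi$) is exactly the right bookkeeping.

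The one real gap is the sentence ``Comparing the four monomials on each side as multisets of $\Z_p$ yields $k \in \{\pm 1\}$ unconditionally, together with $k \in \{\pm q\}$ precisely when $q^2 \equiv 1$.'' This does not follow by inspection: the identity $(T^k-1)(T^{kr}-1) = T^j(T-1)(T^r-1)$ in $\Z[\Z_p]$ admits cancellations among the eight exponents (for instance whenever $r=\pm 1$, or whenever $j$ collides with $1$, $r$, or $1+r$), so a naive monomial-matching argument is not valid without extensive case analysis. The paper's proof is more robust on precisely this point: it evaluates the torsion identity at every $p$-th root of unity $\zeta$, multiplies by the complex conjugate to symmetrize, and invokes Franz's Lemma \ref{lem:Franz} to conclude $\{1,-1,r,-r\} = \{k,-k,kr,-kr\}$ in $\Z_p^\times$; the remaining bad subcase $r^2 = -1$ is then ruled out by lifting to a genuine identity in $\C[H]$ and using the averaging element $\Sigma_H$, exactly as in the proof of Theorem \ref{th:lens}. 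You should replace the multiset comparison with this Franz-lemma argument (or carry out the full case analysis of coincident exponents, which amounts to reproving a special case of Franz).

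Two smaller points. First, $(z_1,z_2)\mapsto(\bar z_1,\bar z_2)$ does \emph{not} commute with the $\Z_p$-action of Exercice \ref{ex:quotient_sphere}; it intertwines $\zeta\cdot{}$ with $\bar\zeta\cdot{}$, which is what one needs for the map to descend and to induce $T\mapsto T^{-1}$ on $H_1$, but ``commutes'' is inaccurate. Second, the natural output of the Franz step is $k\in\{\pm 1,\pm r\}$ with $r=q^{-1}$; the restatement in terms of $\pm q$ is only valid once $r^2 = 1$ has been established, so the logical order should be: constrain $k$, show $k=\pm r$ forces $r^2=\pm 1$, exclude $r^2=-1$, and only then rewrite $r=q$.
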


\begin{proof} 
Let $\zeta$ be a primitive $p$-th root of unity, and let
$\varphi: \Z[H_1(L_{p,q})]\to \C$ be the ring homomorphism defined by $\varphi(T):= \zeta$
(where $T$ is the preferred generator). 
An application of Lemma \ref{lem:torsion_lens} gives
$$
\forall \xi \in \Eul_g(L_{p,q}), \quad
\tau^\varphi(L_{p,q},\xi) = \varepsilon_p \cdot \zeta^{i(\xi)} \cdot (\zeta-1)^{-1} \cdot(\zeta^r -1)^{-1}
\ \in \C \setminus \{0\}
$$
for some unique $i(\xi) \in \Z_p$. Moreover, the map $i:\Eul_g(L_{p,q}) \to \Z_p$
is bijective, since it is affine over the homomorphism $H_1(L_{p,q}) \to \Z_p$ defined by $T \mapsto 1$.
Thus, we can study the action of $\Diff_+(L_{p,q})$ on  $\Eul_g(L_{p,q})$ by means of this refined torsion.

Let $h: L_{p,q} \to L_{p,q}$ be an orientation-preserving diffeomorphism, and let $k\in {\Z_p}^\times$
be such that $h_*(T)= k\cdot T \in H_1(L_{p,q})$. 
The identity $\tau^{\varphi h_*} (L_{p,q},h(\xi))$ $=\tau^\varphi(L_{p,q},\xi)$ writes
\begin{equation}
\label{eq:zeta_bis}
\zeta^{k\cdot i(h(\xi))} \cdot (\zeta^k-1)^{-1} \cdot(\zeta^{kr} -1)^{-1} 
= \zeta^{i(\xi)} \cdot (\zeta-1)^{-1} \cdot(\zeta^r -1)^{-1}.
\end{equation}
Similarly to the proof of Theorem \ref{th:lens} (\S \ref{subsec:torsion_lens_spaces}), we deduce from Franz's Lemma \ref{lem:Franz} that
\begin{equation}
\label{eq:quadruplets_bis}
\{1,-1,r,-r\} = \{k,-k,kr,-kr\}
\end{equation}
and, in particular, that $k=1,-1,r$ or $-r$. 

The value $k=1$ is obviously realizable by an orientation-preserving self-diffeomorphism $h$ 
(for instance, the identity). Then, (\ref{eq:zeta_bis}) implies that $i(h(\xi))=i(\xi)$.

The value $k=-1$ is also realizable: we can take the map $h$
defined by $(z_1,z_2) \mapsto (\overline{z_1},\overline{z_2})$ 
on each solid torus $D^2\times S^1$ of the Heegaard splitting of $L_{p,q}$. 
Then, (\ref{eq:zeta_bis}) implies that $i(h(\xi)) = r+1 - i(\xi)$.

Finally, the value $k=r$ is realizable if and only $k=-r$ is realizable.
Assume that $k=r$ is realizable.
Then, (\ref{eq:quadruplets_bis}) implies that $r^2=\pm 1$. 
Working with any $p$-th root of unity and passing to the group ring $\C[H_1(L_{p,q})]$
as we did in the proof of Theorem \ref{th:lens} (\S \ref{subsec:torsion_lens_spaces}), we see that the value $r^2 = -1$ is impossible for $p>2$.
So, we must have $r^2=1$ or, equivalently, $q^2=1$ 
and, indeed, there exists in this situation an orientation-preserving self-diffeomorphism $h$ of $L_{p,q}$ for which $k=r$
($h$ exchanges the two solid tori in the Heegaard splitting). 
Then, (\ref{eq:zeta_bis}) implies that $r\cdot i(h(\xi))= i(\xi)$ or, equivalently, $i(h(\xi))=  q\cdot i(\xi)$. 

The conclusion easily follows from the above analysis.
\end{proof}

\begin{remark}
As observed by Turaev,  geometric Euler structures are equivalent in dimension $3$
to $\operatorname{Spin}^c$-structures \cite{Turaev_spin_c}, 
and this is the starting point of connections between the Reidemeister--Turaev torsion and other invariants.
Thus, it follows from works of Meng \& Taubes \cite{MT}, Turaev \cite{Turaev_SW} and Nicolaescu \cite{Nicolaescu}  
that the Reidemeister--Turaev torsion  is essentially equivalent  to the Seiberg--Witten invariant 
(in the presence of the Casson--Walker invariant if $\beta_1(M)=0$).
Besides, the Reidemeister--Turaev torsion appears as the Euler characteristic of 
Ozsv\'ath \& Szab\'o's Heegaard Floer homology $HF^\pm$ when $\beta_1(M)>0$ \cite{OS}. 
\end{remark}

\begin{exercice}
\label{ex:Chern_class}
Let $M$ be a smooth $3$-manifold. Any $\xi \in \Eul_g(M)$ has an \emph{opposite}\footnote{
... also called \emph{inverse} and denoted  $\xi^{-1}$ if the abelian group $H_1(M)$ is denoted multiplicatively.}
$-\xi \in \Eul_g(M)$ defined by $-\xi:=[-v]$ if $\xi$ is represented by the non-singular vector field $v$.
Define the \emph{Chern class} of $\xi$ as the only $c(\xi) \in H^2(M)\simeq H_1(M)$ such that $(-\xi) + \overrightarrow{c(\xi)} = \xi$.
Show that $c(\xi)$ is the obstruction to find a non-singular vector field on $M$ linearly independent with $v$.
\end{exercice}

\subsection{Linking quadratic functions}

Let $M$ be a $3$-manifold. We shall now give an algebraic description of the set $\Eul(M)$,
from which it will be apparent that Euler structures of dimension $3$ exist in the topological category.
(See the discussion preceding Definition  \ref{def:RT_3}.)

For this, we need a slight modification of the linking pairing $\lambda_M$ (see \S \ref{subsec:linking}).
Let $B: H_2(M;\Q/\Z) \to \Tors H_1(M)$ be the Bockstein homomorphism 
for the short exact sequence of coefficients
\begin{equation}
\label{eq:coefficients}
\xymatrix{
0 \ar[r] & \Z \ar[r] &  \Q \ar[r] & \Q/\Z \ar[r] & 0. 
}
\end{equation}
The composition $\lambda_M \circ (B \times B)$ defines  a bilinear pairing
$$
L_M: H_2(M;\Q/\Z) \times H_2(M;\Q/\Z) \longrightarrow \Q/\Z
$$
which may be degenerate. Indeed, the long exact sequence in homology induced by (\ref{eq:coefficients})
shows that $\Ker L_M = \Ker B = H_2(M) \otimes \Q/\Z$. 

A \emph{quadratic function} of \emph{polar form} $L_M$
is a map $q: H_2(M;\Q/\Z) \to \Q/\Z$ such that
$$
\forall x,y\in H_2(M;\Q/\Z),\ q(x+y)-q(x)-q(y) = L_M(x,y) \ \in \Q/\Z.
$$
The set of quadratic functions of polar form $L_M$ is denoted by $\Quad(L_M)$.
A quadratic function $q$ is said to be \emph{homogeneous} if we have $q(nx) = n^2 q(x)$ for all $x\in H_2(M;\Q/\Z)$ and $n\in \Z$.
It is easily checked that $q$ is homogeneous if, and only if, the map $d_q:H_2(M;\Q/\Z) \to \Q/\Z$
defined by $d_q(x) := q(x) - q(-x)$ is trivial.

\begin{theorem}[See \cite{DM}]
\label{th:quadratic_functions}
For any $3$-manifold $M$, there is a canonical injection
$$
\xymatrix{
{\phi_M: \Eul(M)}\ \ar@{^{(}->}[r] & {\Quad(L_M)}, \ \xi \ar@{|->}[r] & \phi_{M,\xi}
}
$$
whose image is 
$$
\left\{ q\in \Quad(L_M) : \exists  f \in \Hom(H_2(M), \Z), f \otimes \Q/\Z = q|_{H_2(M)\otimes \Q/\Z} \right\}.
$$
Moreover, the homogeneity defect of $\phi_{M,\xi}$ is given by 
$$
\forall x\in H_2(M;\Q/\Z), \quad d_{\phi_{M,\xi}}(x) =  \langle c(\xi) , x\rangle \in \Q/\Z
$$
where $c(\xi) \in H^2(M)$ is the Chern class of $\xi$ defined in Exercice \ref{ex:Chern_class}.
\end{theorem}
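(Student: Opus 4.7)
The plan is to construct $\phi_{M,\xi}$ using the geometric description of Euler structures as non-singular vector fields. I would represent $\xi$ by a non-singular tangent vector field $v$ on $M$ minus a small open ball $B_0$, and represent $x \in H_2(M;\Q/\Z)$ by a rational $2$-chain whose reduction mod $\Z$ is a cycle realizing $x$. Its boundary is an integral $1$-cycle which, after scaling by some $n\geq 1$, takes the form $nK$ for an oriented link $K \subset M \setminus B_0$ realizing $n\cdot Bx \in \Tors H_1(M)$, together with an integral $2$-chain $\Sigma$ with $\partial \Sigma = nK$. The vector field $v$ frames the normal bundle of $K$ and so provides a push-off $K^v$; the first candidate is
\[
\phi_{M,\xi}(x) := \frac{1}{n^{2}}\,\Sigma \centerdot K^v \ \pmod{\Z},
\]
possibly with an additional Chern-class correction for the $H_2(M)\otimes\Q/\Z$-part of $x$. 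An equivalent formulation, following \cite{DM}, uses a compact oriented $4$-manifold $W$ bounding $M$ together with an extension of the spin$^{c}$-structure associated with $\xi$, and a lift $X \in H_2(W;\Q)$ of $x$, setting $\phi_{M,\xi}(x)= \tfrac{1}{4}(X \centerdot X - \langle c_1(W,\xi), X\rangle)\bmod \Z$. The first step is to verify that this $\Q/\Z$-class depends only on $\xi$ and $x$.

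Next I would check the algebraic properties. The polarization identity $\phi_{M,\xi}(x+y) - \phi_{M,\xi}(x) - \phi_{M,\xi}(y) = L_M(x,y)$ follows from bilinearity of the intersection pairing: choosing disjoint realizations $(K_x, \Sigma_x)$ and $(K_y, \Sigma_y)$ and representing $x+y$ by $(K_x \sqcup K_y, \Sigma_x + \Sigma_y)$, the cross terms $\Sigma_x \centerdot K_y^v$ and $\Sigma_y \centerdot K_x^v$ combine mod $\Z$ to give exactly $\lambda_M(Bx,By) = L_M(x,y)$. For the $H_1(M)$-equivariance $\phi_{M,\xi+\vec h}(x) = \phi_{M,\xi}(x) + \langle \operatorname{PD}(h), x\rangle$, translating $\xi$ by $\vec h$ amounts to twisting $v$ along a loop representing $h$, which alters the framing of $K$ by $\operatorname{lk}(h,K)$; after division by $n^2$ and reduction mod $\Z$, this equals the evaluation of $\operatorname{PD}(h) \in H^2(M;\Z)$ on $x \in H_2(M;\Q/\Z)$. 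Injectivity of $\phi_M$ then follows from equivariance, because the evaluation map $H^2(M;\Z) \to \Hom(H_2(M;\Q/\Z),\Q/\Z)$ is injective by Poincar\'e duality combined with the Bockstein long exact sequence attached to $\Z \to \Q \to \Q/\Z$.

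For the homogeneity defect, I would combine the identity $\phi_{M,-\xi}(x) = \phi_{M,\xi}(-x)$ (reversing $v$ reverses the push-off, which can be absorbed into reversing the orientation of $K$) with the definition $(-\xi) + \overrightarrow{c(\xi)} = \xi$ of Exercice \ref{ex:Chern_class} and the equivariance established above, yielding $d_{\phi_{M,\xi}}(x) = \phi_{M,\xi}(x) - \phi_{M,-\xi}(x) = \langle c(\xi),x\rangle$. For the image description, the restriction of $\phi_{M,\xi}$ to $\Ker L_M = H_2(M)\otimes\Q/\Z$ is a $\Q/\Z$-valued homomorphism; on $y = [\Sigma]\otimes(1/n)$ the Bockstein vanishes, the link $K$ degenerates, and the formula reduces to $\frac{1}{n}$ times the evaluation on $[\Sigma]\in H_2(M;\Z)$ of an integral cohomology class naturally determined by $\xi$, thereby exhibiting the required integer lift $f \in \Hom(H_2(M),\Z)$. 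Conversely, given $q \in \Quad(L_M)$ with an integer-lifted restriction to $H_2(M)\otimes\Q/\Z$, one picks any $\xi_0$ and uses equivariance together with universal coefficients to find $h \in H_1(M)$ with $\phi_{M,\xi_0+\vec h} = q$.

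The main obstacle will be the well-definedness of $\phi_{M,\xi}(x)$: one must show the construction is independent of the choice of $n$, of the link $K$ and bounding chain $\Sigma$, of the push-off within its homotopy class of framings, and of the ball $B_0$ and representative $v$ of $\xi$. This requires comparing any two sets of choices via a rational $3$-chain in $M$ and checking that the discrepancy in the intersection-theoretic formula is an integer. A secondary delicate point is the sharp characterization of the image: showing that the integer-lift condition on $q|_{H_2(M)\otimes\Q/\Z}$ is both necessary and sufficient demands careful control, on one side of the contribution of $c(\xi)$ to the restriction of $\phi_{M,\xi}$, and on the other side of the surjectivity of the $H_1(M)$-translation action onto the prescribed subset of $\Quad(L_M)$.
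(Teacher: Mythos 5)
Your proposal follows the same broad route as the paper's sketch (define $\phi_{M,\xi}$ from a non-singular vector field $v$ representing $\xi$, via the intersection of a bounding rational chain with a $v$-push-off of a knot representing the Bockstein of $x$; establish that $\phi_M$ is affine over $H^2(M)\to\Hom(H_2(M;\Q/\Z),\Q/\Z)$; read off injectivity and the image from that). But the construction as you state it has a genuine gap. First, the leading term should be $\{\tfrac{1}{2n}\,K_v\centerdot S\}$, not $\tfrac{1}{n^{2}}\Sigma\centerdot K^v$: the factor $\tfrac12$ is forced because the function must polarize to $\lambda_M(Bx,Bx)=\tfrac1n\Sigma\centerdot K^v\bmod\Z$. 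Far more seriously, this term \emph{alone} is only defined modulo $\tfrac12\Z$, not modulo $\Z$: changing $v$ within its homotopy class on $M$ minus a ball (or changing the choice of $S$, $K$, $n$, the tubular neighborhood) can twist the push-off framing of $K$ and alter $K_v\centerdot S$ by a multiple of $n$, shifting $\tfrac1{2n}K_v\centerdot S$ by a half-integer. The paper's formula compensates with the second term $c_x=\{\tfrac1{2n}\langle c(w|v),[S\cap(M\setminus\int V)]\rangle+\tfrac12\}$, which carries an equal and opposite half-integer ambiguity; only the \emph{sum} $l_x+c_x$, with the two choices made ``correlatively'', is a well-defined element of $\Q/\Z$. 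Your remark that a Chern-class correction is ``possibly'' needed, and only on the $H_2(M)\otimes\Q/\Z$ part, underestimates the point: without $c_x$ there is no well-defined map to begin with, and the polarization and equivariance checks cannot even be stated.

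Second, the characterization of the image hinges on a fact absent from your proposal: the Chern class $c(\xi)\in H^2(M)$ is \emph{even}, which comes from the parallelizability of oriented $3$-manifolds. The restriction of $\phi_{M,\xi}$ to $\Ker L_M = H_2(M)\otimes\Q/\Z$ is $\phi_{M,\xi}(x\otimes\{r\})=\tfrac12\langle c(\xi),x\rangle\cdot\{r\}$, so the required integer lift $f\in\Hom(H_2(M),\Z)$ exists precisely because $\tfrac12 c(\xi)$ is an honest integral class. Saying the formula ``reduces to $\tfrac1n$ times the evaluation of an integral cohomology class naturally determined by $\xi$'' names neither this class nor the parallelizability that makes it integral, so the argument does not close. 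Finally, your derivation of the homogeneity defect from $\phi_{M,-\xi}(x)=\phi_{M,\xi}(-x)$ is elegant but circular: via the equivariance and $(-\xi)+\overrightarrow{c(\xi)}=\xi$, that identity is \emph{equivalent} to the homogeneity-defect formula, so it must be verified directly from the full formula (including $c_x$), which your proposal has not written down. The push-off $K_{-v}$ is not merely ``$K_v$ with orientation reversed'', and the obstruction term $c(w|v)$ changes non-trivially under $v\mapsto -v$; both effects have to be tracked.
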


\noindent
The definition of $\phi_{M,\xi}$ given in \cite{DM} is a direct generalization 
of  previous constructions by Morgan \& Sullivan \cite{MS}, Lannes \& Latour \cite{LL} and Looijenga \& Wahl \cite{LW}.
More precisely, the map $\phi_{M,\xi}$ is introduced in \cite{LW} when the Chern class $c(\xi)$ is torsion, 
in which case $\phi_{M,\xi}$ factorizes to a quadratic function $\Tors H_1(M) \to \Q/\Z$ with polar form $\lambda_M$.
The special case $c(\xi)=0$ already appears in  the works \cite{MS,LL} and 
corresponds to an Euler structure $\xi=[v]$ such that $v$ is the first vector field of a trivialization of the tangent bundle $TM$.

\begin{proof}[Sketch of  the proof of Theorem \ref{th:quadratic_functions}]
Let us define the quadratic function $\phi_{M,\xi}$ in the general case.
Our first observation is that, for every $x \in H_2(M;\Q/\Z)$, we must have
$2 \phi_{M,\xi}(x) = L_M(x,x) + d_{\phi_{M,\xi}}(x)$. Thus, we shall give a formula of the form 
$$
\phi_{M,\xi}(x) = l_x +c_x \ \in \Q/\Z
$$
where $l_x \in \Q/\Z$ is such that $2l_x= L_M(x,x)=\lambda_M\left(B(x),B(x)\right)$
and where $c_x \in \Q/\Z$ (which should be selected \emph{correlatively} to $l_x$) satisfies $2c_x = \langle c(\xi) ,x \rangle$.
To obtain such a formula,  represent the homology class $x$ in the form
$$
x=\left[S \otimes \{1/n\}\right] \in H_2(M;\Q/\Z)
$$
where $n\geq 1$ is an integer, $S\subset M$ is an oriented compact immersed surface whose boundary
goes $n$ times around an oriented knot $K \subset M$.
We can find a non-singular vector field $v$ which represents $\xi$ and is transverse to $K$.
Let $V$ be a sufficiently small regular neighborhood of $K$ in $M$, and let 
$K_v$ be the parallel of $K$ sitting on $\partial V$ and  obtained by pushing $K$ along the trajectories of $v$.
By an isotopy of $S$ (fixed on the boundary), we can ensure that $S$ is transverse to $K_v$.
Finally, let $w$ be a non-singular vector field on $V$ which is tangent to $K$ and is linearly independent with $v$, and let 
$$
c(w|v) \in H^2\left(M \setminus \int(V), \partial V\right)
$$
denote the obstruction to extend $w|_{\partial V}$ to a non-singular vector field on $M \setminus \int(V)$
linearly independent with $v|_{M \setminus \int(V)}$. With  these notations, we set
\begin{equation}
\label{eq:dim_3_formula}
\phi_{M,\xi}(x): = 
\underbrace{\left\{\frac{1}{2n}\cdot K_v \centerdot S \right\}}_{\hbox{\scriptsize our } l_x} + 
\underbrace{\left\{\frac{1}{2n}\cdot \left\langle c(w|v) , \left[S\cap (M \setminus \int(V))\right] \right\rangle 
+ \frac{1}{2}\right\}}_{\hbox{\scriptsize our } c_x}.
\end{equation}

Recall from Remark \ref{rem:surgery} that there is a formula to compute
$\lambda_M$ when $M$ is presented as the boundary of a $4$-manifold obtained 
by attaching handles of index $2$ to a $4$-ball. 
This can be refined to a $4$-dimensional formula for $\phi_{M,\xi}$, 
which can be more convenient in practice  than the $3$-dimensional formula (\ref{eq:dim_3_formula}).
In particular, it can be used to prove that the map $\phi_M$ is affine over the homomorphism
\begin{equation}
\label{eq:affine}
H^2(M) \longrightarrow \Hom\left(H_2(M;\Q/\Z),\Q/\Z\right), \ y \longmapsto \langle y, -\rangle
\end{equation}
where the affine action of $ \Hom\left(H_2(M;\Q/\Z),\Q/\Z\right)$  on $\Quad(L_M)$ is by addition of maps $H_2(M;\Q/\Z) \to \Q/\Z$.
This homomorphism corresponds by Poincar\'e duality to the map $H_1(M) \to \Hom\left(H^1(M;\Q/\Z),\Q/\Z\right)$
defined by $x\mapsto \langle -,x\rangle$ and, so, it is injective. We deduce that the map $\phi_M$ is injective.

It can also be checked that
$$
\forall x \otimes \{r\} \in H_2(M)\otimes \Q/\Z, \
\phi_{M,\xi}\left(x \otimes \{r\} \right) = \frac{\langle c(\xi), x \rangle}{2} \cdot \{r\} \ \in \Q/\Z.
$$
Using the fact that $M$ (like any $3$-manifold) has a parallelization, 
one can see that $c(\xi)$ is even. Thus, the restriction of $\phi_{M,\xi}$ to $H_2(M)\otimes \Q/\Z$
comes from $\Hom(H_2(M),\Z)$. The converse is deduced from the fact that the map $\phi_M$ is affine.
\end{proof}

We now assume that $M$ is a rational homology $3$-sphere, \ie $\beta_1(M) =0$.
We denote $H:=H_1(M)$.
In this case, the Bockstein $B: H_2(M;\Q/\Z) \to H$ is an isomorphism and Theorem \ref{th:quadratic_functions} gives a bijection
$$
\phi_M: \Eul(M) \stackrel{\simeq}{\longrightarrow} \Quad(L_M) \simeq \Quad(\lambda_M).
$$
The Reidemeister--Turaev torsion of $M$ equipped  with an Euler structure $\xi$ writes
$
\tau(M,\xi) = \sum_{h\in H} t_{M,\xi}(h) \cdot h \ \in \Q[H].
$
By Theorem \ref{th:torsion_linking_pairing}, we have
$$
-\lambda_M(h_1,h_2) =  t_{M,\xi}(h_1h_2) - t_{M,\xi}(h_1) - t_{M,\xi}(h_2) + t_{M,\xi}(1) \mod 1
$$
for all $h_1,h_2 \in H$. This equation shows that the map
$$
q_{M,\xi}: H \longrightarrow \Q/\Z, \ h \longmapsto t_{M,\xi}(1) - t_{M,\xi}(h^{-1})
$$
is a quadratic function with polar form $\lambda_M$. 

\begin{theorem}[See \cite{Nicolaescu_book, DM_torsion_mod_1}]
Let $M$ be a  $3$-manifold with $\beta_1(M)=0$. 
Then, for all $\xi \in \Eul(M)$, we have $q_{M,\xi} \circ B = \phi_{M,\xi}$.
\end{theorem}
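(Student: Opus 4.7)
The plan is to show that $\delta_\xi := q_{M,\xi}\circ B - \phi_{M,\xi}$ vanishes for every $\xi \in \Eul(M)$, by proving in succession that it is a well-defined homomorphism $H_2(M;\Q/\Z) \to \Q/\Z$, is independent of $\xi$, and is zero. First, I would verify that both $q_{M,\xi}\circ B$ and $\phi_{M,\xi}$ belong to the affine space $\Quad(L_M)$ of quadratic refinements of $L_M$. For $\phi_{M,\xi}$ this is immediate from Theorem~\ref{th:quadratic_functions}. For $q_{M,\xi}\circ B$, expanding Theorem~\ref{th:torsion_linking_pairing} coefficient-wise in $\Q[H]$ yields
$$
t_{M,\xi}(g g') - t_{M,\xi}(g) - t_{M,\xi}(g') + t_{M,\xi}(1) \equiv -\lambda_M(g,g') \pmod 1
$$
for all $g,g' \in H = H_1(M)$, which translates via the definition of $q_{M,\xi}$ into $q_{M,\xi}(g\cdot g') - q_{M,\xi}(g) - q_{M,\xi}(g') = \lambda_M(g,g')$. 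So $q_{M,\xi}\circ B \in \Quad(L_M)$, and $\delta_\xi$ is a homomorphism.

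Next, I would compare the $H$-equivariance of the two maps $\xi \mapsto q_{M,\xi}\circ B$ and $\xi \mapsto \phi_{M,\xi}$ from $\Eul(M)$ to $\Quad(L_M)$. The equivariance $\tau(M,\xi+\vec h) = h\cdot \tau(M,\xi)$, combined with the identity above, gives
$$
(q_{M,\xi+\vec h}\circ B)(x) - (q_{M,\xi}\circ B)(x) = \lambda_M(h,B(x)) = L_M(B^{-1}(h), x).
$$
On the other side, the proof of Theorem~\ref{th:quadratic_functions} shows that $\phi_M$ is affine over the injection $H^2(M) \to \Hom(H_2(M;\Q/\Z),\Q/\Z)$, $y \mapsto \langle y,-\rangle$. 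Identifying $H^2(M) \simeq H_1(M)$ via Poincar\'e duality, the classical identity $\lambda_M(h,B(x)) = \langle \mathrm{PD}(h),x\rangle$ in a rational homology $3$-sphere (a consequence of the commutative diagram used in the proof of non-degeneracy of $\lambda_M$) matches the two affine twists. Hence $\delta_\xi$ is independent of $\xi$.

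Finally, I would compare the homogeneity defects. Theorem~\ref{th:quadratic_functions} gives $d_{\phi_{M,\xi}}(x) = \langle c(\xi),x\rangle$. For $q_{M,\xi}$, the refinement of Theorem~\ref{th:duality_manifolds} proved in~\cite{Turaev_knot} reads $\overline{\tau(M,\xi)} = \tau(M,-\xi)$; combined with $-\xi = \xi - \vec{c(\xi)}$ from Exercice~\ref{ex:Chern_class} and the $H$-equivariance of $\tau$, this yields $t_{M,\xi}(h^{-1}) = t_{M,\xi}(c(\xi)\cdot h)$. Feeding this into $d_{q_{M,\xi}}(h) = t_{M,\xi}(h) - t_{M,\xi}(h^{-1})$ and applying Step~1 once more produces $d_{q_{M,\xi}}(h) = \lambda_M(c(\xi),h)$, which by the Poincar\'e duality identity of Step~2 coincides with $d_{\phi_{M,\xi}}(x)$ when $h = B(x)$. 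The main obstacle is the last quarter-turn: matching polar form, affine structure, and defect forces only $2\delta = 0$. To kill the remaining $2$-torsion residue I would either appeal to a direct comparison on a surgery presentation $M = \partial W$ (Remark~\ref{rem:surgery}), pairing the $4$-dimensional formula for $\phi_{M,\xi}$ sketched in the proof of Theorem~\ref{th:quadratic_functions} with the refined torsion computation of Remark~\ref{rem:_refined_torsion_Heegaard_splitting}, or evaluate both sides on a $\xi$ with $c(\xi)=0$ (coming from a spin structure on $M$) where both are homogeneous, and then reduce via Wall's classification to an explicit lens-space verification using the formulas from \S\ref{subsec:torsion_lens_spaces}.
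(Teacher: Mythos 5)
The paper does not give a proof of this theorem; it records the statement with references to Nicolaescu's gauge-theoretic argument and to the purely topological proof of \cite{DM_torsion_mod_1}, so I compare your outline against the shape of the latter. Your soft reduction is sound and is the natural first move: after verifying that both $q_{M,\xi}\circ B$ and $\phi_{M,\xi}$ are quadratic refinements of $L_M$, that both families $\xi\mapsto q_{M,\xi}\circ B$ and $\xi\mapsto\phi_{M,\xi}$ are affine over the same injection $H_1(M)\simeq H^2(M)\hookrightarrow\Hom\left(H_2(M;\Q/\Z),\Q/\Z\right)$, and that both have homogeneity defect $\langle c(\xi),-\rangle$, you correctly conclude that $\delta := q_{M,\xi}\circ B - \phi_{M,\xi}$ is a $\xi$-independent homomorphism with $2\delta=0$ (so the result is automatic when $|H_1(M)|$ is odd). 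One point to tighten in Step~3: plugging $t_{M,\xi}(h^{-1})=t_{M,\xi}(c(\xi)h)$ into the polar-form relation in fact gives $d_{q_{M,\xi}}(h)=\lambda_M(c(\xi),h)+\bigl(t_{M,\xi}(1)-t_{M,\xi}(c(\xi))\bigr)$, so you need the extra observation, obtained by taking $h=1$ in the symmetry relation, that the constant term vanishes.

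To kill the residual $2$-torsion, your Plan~A is essentially what \cite{DM_torsion_mod_1} does; note, however, that the torsion formula it pairs with the $4$-dimensional formula for $\phi_{M,\xi}$ is a \emph{surgery-presentation} formula (from a framed link $L\subset S^3$ with $\partial W_L=M$), not the Heegaard-splitting formula of Remark~\ref{rem:_refined_torsion_Heegaard_splitting}, and this surgery formula is not developed in the present survey, so this step cannot be completed with the tools supplied here. Plan~B does not close the gap: even for a spin $\xi$ with $c(\xi)=0$, both sides are merely \emph{homogeneous} quadratic refinements of $\lambda_M$, which still form a torsor over $\Hom(H_1(M),\Z/2)$; moreover Wall's classification is a decomposition of linking \emph{pairings}, not of $3$-manifolds, and it is not clear a priori that $\delta$ depends only on $(\Tors H_1(M),\lambda_M)$; and the $2$-primary pairings include non-split blocks on $(\Z/2^k)^2$ that are not realized by lens spaces, so there is no reduction to the computations of \S\ref{subsec:torsion_lens_spaces}.
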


\noindent
Nicolaescu proves this in \cite{Nicolaescu_book} (see also \cite{Nicolaescu})
using  the relation between the Reidemeister--Turaev torsion and the Seiberg--Witten invariant. 
The proof given in \cite{DM_torsion_mod_1} is purely topological:
it uses surgery presentations of $3$-manifolds and the $4$-dimensional formula for $\phi_{M,\xi}$.

\begin{exercice}
Let $M$ be a smooth $3$-manifold, 
and let $f:M \to M$ be an orientation-preserving diffeomorphism such that $f_*: H_1(M) \to H_1(M)$ is the identity.
Show that $f$ acts trivially on $\Eul_g(M)$. 
\end{exercice}

\subsection{Some polynomial properties of the Reidemeister--Turaev torsion}

\label{subsec:polynomial}
	
Let $\mathcal{M}$ be the set of $3$-manifolds, up to orientation-preserving homeomorphisms.
Recall from Definition \ref{def:surgery} that the $T^3$-surgery  is a way of modifying 
$3$-manifolds, which is modelled on the genus $3$ Heegaard splitting 
$$
T^3 = A \cup B
$$
of the $3$-dimensional torus $T^3=S^1\times S^1 \times S^1$.

\begin{definition}
\label{def:finite-type}
Let $c: \mathcal{M} \to C$ be an invariant of $3$-manifolds with values in an abelian group $C$.
We say that $c$ is a \emph{finite-type invariant} of \emph{degree} at most $d$ if, for any $3$-manifold $M \in \mathcal{M}$
and for any embeddings $a_0 :A \hookrightarrow M, \dots, a_{d}: A \hookrightarrow M$ whose images are pairwise disjoint, we have
\begin{equation}
\label{eq:finite-type}
\sum_{P\subset \{0, \dots, d \}} (-1)^{|P|} \cdot c(M_{a_P}) = 0 \ \in C.
\end{equation}
Here, $M_{a_P}$ denotes the $3$-manifold obtained from $M$ by simultaneous $T^3$-surgeries
along those embeddings $a_i: A \hookrightarrow M$ for which $i \in P$.
\end{definition}

\noindent
The notion of finite-type invariant has been introduced for homology $3$-spheres (in an equivalent way) by Ohtsuki \cite{Ohtsuki},
and it is similar to the notion of Vassiliev invariant for knots \cite{BL}: 
the $T^3$-surgery plays for $3$-manifolds the role that the crossing-change move plays for knots.
This notion is motivated by the study of quantum invariants.
The reader may consult the survey \cite{Le} for an introduction to the subject of finite-type invariants.

By analogy with a well-known characterization of polynomial functions (recalled in Exercice \ref{ex:polynomial}), 
one can think of finite-type invariants as those maps defined on $\mathcal{M}$ which behave ``polynomially'' with respect to  $T^3$-surgeries. 
We shall now see that the Reidemeister--Turaev torsion do have such polynomial properties.
But, since the Reidemeister--Turaev torsion depends both on the homology and on the Euler structures of $3$-manifolds,
we shall first refine the notion of finite-type invariant. 
Thus, we fix a finitely generated abelian group $G$ and we consider triples of the form
$$
(M,\xi, \psi),
$$
where $M$ is a $3$-manifold, $\xi \in \Eul(M)$ and $\psi: G \to H_1(M)$ is an isomorphism.
Of course, two such triples $(M_1,\xi_1,\psi_1)$ and $(M_2,\xi_2,\psi_2)$ are considered to be \emph{equivalent}
if there is an orientation-preserving homeomorphism $f:M_1 \to M_2$ which carries $\xi_1$ to $\xi_2$
and satisfies $f_* \circ \psi_1 = \psi_2$. We denote by
$$ 
\mathcal{ME}(G)
$$
the set of equivalence classes of such triples. Then, the Reidemeister--Turaev torsion can be seen as a map
$$
\tau: \mathcal{ME}(G) \longrightarrow Q(\Z[G]), \ (M,\xi, \psi) \longmapsto Q(\psi^{-1})\left( \tau(M,\xi)\right).
$$
We have seen during the proof of Theorem \ref{th:Sullivan} that a $T^3$-surgery $M \leadsto M_a$ 
induces a canonical isomorphism in homology:
$$
\Phi_a: H_1(M) \stackrel{\simeq}{\longrightarrow} H_1(M_a).
$$
Similarly, the move $M \leadsto M_a$ induces a canonical correspondence 
$$
\Omega_a: \Eul(M) \stackrel{\simeq}{\longrightarrow} \Eul(M_a)
$$
between Euler structures, which is affine over $\Phi_a$.
We refer to \cite{DM,Massuyeau} for the definition of the map $\Omega_a$ by cutting and pasting vector fields.  
Thus, the notion of $T^3$-surgery exists also
for $3$-manifolds with Euler structure and parameterized homology:
$$
(M,\xi,\psi) \leadsto (M_a,\xi_a,\psi_a) \quad
\quad \hbox{where } \xi_a := \Omega_a(\xi) \hbox{ and } \psi_a := \Phi_a \circ \psi.
$$
Therefore, we get a notion of finite-type invariant for  $3$-manifolds with Euler structure and parameterized homology:
the set $\mathcal{M}$ is replaced by $\mathcal{ME}(G)$ in Definition \ref{def:finite-type}.

\begin{theorem}[See \cite{Massuyeau}]
\label{th:polynomial}
Assume that $G$ is cyclic or has positive rank, and let $I(G)$ be the augmentation ideal  of $\Z[G]$.
Then, for any integer $d\geq 1$, the Reidemeister--Turaev torsion reduced modulo $I(G)^d$
$$
\tau(M,\xi,\psi) \ \in Q(\Z[G])/I(G)^d
$$
is a finite-type invariant of degree at most $d+1$.
\end{theorem}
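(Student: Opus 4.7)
The plan is to compute all of the torsions $\tau(M_{a_P},\xi_{a_P},\psi_{a_P})$ for $P\subset\{0,\dots,d+1\}$ from a single Heegaard splitting of sufficiently large genus. I would choose $M=\mathcal{A}\cup\mathcal{B}$ so that each of the pairwise disjoint handlebodies $a_i(A)$ appears as a standard sub-handlebody of $\mathcal{A}$ built from three of its handles, and so that the base point $\star$ and the disk $D$ used in Lemma~\ref{lem:torsion_Heegaard_splitting} lie outside all the surgery handles. The simultaneous $T^3$-surgery along those $a_i$ with $i\in P$ is then localised: the Heegaard surface $\partial\mathcal{A}$ and the lower handlebody $\mathcal{B}$ are preserved, and only the three-handle pieces $a_i(A)$ (for $i\in P$) get replaced by $-B$. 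The canonical transports $\Phi_{a_i}$ and $\Omega_{a_i}$ carry $\psi$ and $\xi$ consistently across the $M_{a_P}$, so that the refined version of Lemma~\ref{lem:torsion_Heegaard_splitting} (Remark~\ref{rem:_refined_torsion_Heegaard_splitting}) expresses every term of the alternating sum as a fraction $\pm\det(\mathcal{A}_{P,ij})/\delta$ with a common denominator $\delta\in I(G)^{2}$, obtained by choosing the indices $i,j$ outside every surgery region.

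The core structural input is the following \emph{Borromean fact} about the canonical genus-$3$ splitting $T^{3}=A\cup B$: the three meridians of $-B$, expressed in the free group $\pi_{1}(\partial A\setminus D,\star)=\Free(\alpha,\alpha^\sharp)$ attached to $A$, are products of commutators. Consequently, writing $\mathcal{A}_{P}=\mathcal{A}_{\emptyset}+\sum_{i\in P}\Delta_{i}$, the modification $\Delta_{i}$ is supported on the three columns indexed by the handles of $a_{i}(A)$ and has all entries in the augmentation ideal $I(G)$ after the reduction $\psi^{-1}\circ\incl_{*}$. This rests on the Fox identity
\begin{equation*}
\frac{\partial[u,v]}{\partial x}=(1-uvu^{-1})\frac{\partial u}{\partial x}+u(1-vu^{-1}v^{-1})\frac{\partial v}{\partial x},
\end{equation*}
whose right-hand side lies in $I(\pi_{1})\cdot\Z[\pi_{1}]$, so that any Fox derivative of a commutator lands in $I(G)$ after abelianisation.

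Because the $a_i(A)$ are pairwise disjoint, the column supports of $\Delta_{0},\dots,\Delta_{d+1}$ are pairwise disjoint, and multilinearity of the determinant in columns gives
\begin{equation*}
\sum_{P\subset\{0,\dots,d+1\}}(-1)^{|P|}\det(\mathcal{A}_{P,ij})=(-1)^{d+2}\sum_{T}\det(M_{T}),
\end{equation*}
where the inner sum runs over those column-subsets $T$ that meet the support of every $\Delta_{i}$, and $M_{T}$ is obtained from $\mathcal{A}_{\emptyset,ij}$ by substituting the corresponding column of $\Delta_{i(k)}$ in each $k\in T$, with $i(k)$ the unique index whose support contains $k$. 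Every such $M_{T}$ has at least $d+2$ columns lying in $I(G)$, so $\det(M_{T})\in I(G)^{d+2}$. Dividing the alternating sum of numerators by the common denominator $\delta\in I(G)^{2}$ yields a representative in $I(G)^{d+2-2}=I(G)^{d}$, which is the required vanishing modulo $I(G)^{d}$.

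The main obstacle will be making the Borromean identification of the preceding paragraph rigorous: one must pin down the explicit commutator words produced by the gluing $\partial A\to\partial B$ of the canonical $T^{3}$-splitting and verify, via the Fox chain rule, that the induced change of basis for $\pi_{1}(\partial\mathcal{A}\setminus D,\star)$ really keeps the three-column modification of the Fox matrix in $I(G)$ (rather than merely in $\Z[G]$). A secondary difficulty lies in interpreting the target $Q(\Z[G])/I(G)^{d}$ when $G$ has torsion: the hypothesis that $G$ is cyclic or of positive rank is used here to ensure that, in the $I(G)$-adic completion (or after localising away from the non-trivial idempotents of $Q(\Z[G])$), the ideal $I(G)$ admits a generating set by non-zero-divisors, so that the formal division by $\delta$ in the alternating sum produces a well-defined residue in $\Z[G]/I(G)^{d}$.
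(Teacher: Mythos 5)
The paper does not give a proof here: it cites \cite{Massuyeau} and only hints that ``the proof uses a refinement of Lemma~\ref{lem:torsion_Heegaard_splitting} which takes into account Euler structures.'' Your outline is consistent with that hint --- compute all $\tau(M_{a_P},\xi_{a_P},\psi_{a_P})$ from one Heegaard splitting via the Fox-derivative formula, observe that a $T^3$-surgery modifies the Fox matrix by terms lying in the augmentation ideal because the gluing of the canonical genus-$3$ splitting of $T^3$ is ``Borromean'' (i.e.\ given by commutators), and conclude by multilinearity of the determinant plus a degree count --- and the two places you flag as ``main obstacles'' are indeed where the real work lies. So the key structural inputs are correctly identified.

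However, the way you set up the Heegaard splitting does not quite close. You propose to present each $M_{a_P}$ by cutting the sub-handlebody $a_i(A)\subset\mathcal{A}$ out of the upper handlebody and gluing in $-B$. But $(\mathcal{A}\setminus\int a_i(A))\cup_{a_\partial}(-B)$ is not visibly a handlebody, so it is not clear that ``the Heegaard surface $\partial\mathcal{A}$ and the lower handlebody $\mathcal{B}$ are preserved'' gives a genuine Heegaard splitting of $M_{a_P}$ to which Lemma~\ref{lem:torsion_Heegaard_splitting} applies. The more robust picture is to realize a $T^3$-surgery as precomposition of the Heegaard gluing map $f\colon\Sigma_g\to\Sigma_g$ with an element of the Torelli group supported in the surgery region (this is the clasper/Y-graph viewpoint): then both handlebodies survive and only $f$ changes. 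This also clarifies a second issue in your argument: the Fox matrix is modified \emph{multiplicatively}, $\mathcal{A}_{\{i\}}=\mathcal{A}_\emptyset\cdot(\mathrm{I}+E_i)$ with $E_i$ block-supported and with entries in $I(G)$ by Corollary~\ref{cor:Magnus_abelian}, so your clean additivity $\mathcal{A}_P=\mathcal{A}_\emptyset+\sum_{i\in P}\Delta_i$ only holds up to cross-terms $E_iE_j\in I(G)^2$. Those cross-terms are in fact harmless --- they contribute higher powers of $I(G)$ and therefore cannot spoil the degree count --- but as written the multilinearity expansion over column-subsets $T$ is not correct, and you need to expand $\det\bigl(\mathcal{A}_\emptyset\prod_{i\in P}(\mathrm{I}+E_i)\bigr)$ instead and re-derive the lower bound $I(G)^{d+2}$ on the alternating sum. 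Finally, you are right that the step ``divide by $\delta\in I(G)^2$ and land in $I(G)^d$'' is exactly where the hypothesis on $G$ enters; it is not automatic in $\Z[G]$ unless the leading term of $\delta$ in the $I(G)$-adic graded ring is a non-zero-divisor, and the cyclic-or-positive-rank assumption is what guarantees that.
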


\noindent
The proof uses a refinement of  Lemma \ref{lem:torsion_Heegaard_splitting} 
which takes into account Euler structures (see Remark \ref{rem:_refined_torsion_Heegaard_splitting}).
Theorem \ref{th:polynomial} generalizes the fact that the coefficients of the Alexander polynomial of knots in $S^3$
are Vassiliev invariants \cite{BL}. See \cite{Murakami} in the case of links.

\begin{exercice}
\label{ex:polynomial}
Prove that a function $c: \Q^n \to \Q$ is polynomial of degree at most $d$ if, and only if,
we have for any $m \in \Q^n$ and any $a_0,\dots,a_d \in \Q^n$
$$
\sum_{P\subset \{0, \dots, d \}} (-1)^{|P|} \cdot c(m+a_{P}) = 0 \ \in \Q
$$
where $a_P$ is the sum of those vectors $a_i$ for which $i\in P$.
\end{exercice} 

\begin{exercice}
Prove the easy part (``$\Rightarrow$'') of Matveev's theorem \cite{Matveev}:\vspace{-0.cm}
\begin{quote}
\emph{Two $3$-manifolds $M$ and $M'$  are related  by a finite sequence of $T^3$-surgeries
if, and only if, there is an isomorphism $\Phi: H_1(M) \to H_1(M')$ such that 
$\lambda_{M} = \lambda_{M'} \circ \left(\Phi |_{\Tors} \times \Phi |_{\Tors}\right)$.}
\end{quote}\vspace{-0.cm}
Show that two $3$-manifolds $M$ and $M'$ are not distinguished by finite-type invariants of degree $0$
if, and only if, there is a finite sequence of $T^3$-surgeries connecting $M$ to $M'$.
\end{exercice}

\section{Abelian Reidemeister torsion and the Thurston norm}

To conclude, we give a topological application of the abelian Reidemeister torsions:
we shall see that they can be used to define a lower bound for the ``Thurston norm.''

\subsection{The Thurston norm}

Let $M$ be a $3$-manifold with $\beta_1(M) \geq 1$. 
For each  class $ s \in H^1(M;\Z)$, one may ask for the minimal ``complexity'' of
a closed oriented surface representing the Poincar\'e dual of $s$. 
If spheres and tori are discarded, this leads to the following definition:
\begin{equation}
\label{eq:Thurston_norm}
\forall s \in H^1(M;\Z), \quad \Thurston{s} := \min_{S: \hbox{\scriptsize dual to } s } \chi_{-}(S).
\end{equation}
Here, the minimum runs over all closed oriented surfaces $S \subset M$ which are Poincar\'e dual to $s$
and which may be disconnected ($S = S_1 \sqcup \cdots \sqcup S_k$), and we denote  
$$
\chi_{-}(S) := \sum_{i=1}^k \max\{0,-\chi(S_i)\}.
$$

\begin{example}
\label{ex:norm_surface_bundle}
Let $T_f$ be the mapping torus of an orientation-preserving homeomorphism $f:\Sigma_g \to \Sigma_g$
(see \S \ref{subsec:surface_bundles}) with $g\geq 1$. We consider the cohomology class
$$
s_f \in H^1(T_f;\Z)
$$
that is Poincar\'e dual to the fiber $\Sigma_g \subset T_f$. The norm of this class is
$$
\Thurston{s_f} = 2g -2. 
$$
The inequality ``$\leq$'' is obvious since the Euler characteristic of the fiber is $2-2g\leq0$.
We shall prove the inequality ``$\geq$'' with a little bit of $3$-dimensional topology.
Let $S \subset T_f$ be a closed oriented surface which minimizes $\Thurston{s_f}$.
It can be checked -- and this is a general fact (Exercice \ref{ex:incompressible}) -- that the surface $S$ must be incompressible.
Consider the cover of $T_f$ defined by $s_f \in H^1(T_f;\Z) \simeq \Hom(\pi_1(T_f),\Z)$,
which is  $\Sigma_g \times \R$ with the obvious projection. Let $S'\subset T_f$ be the $2$-complex
obtained by connecting each component of $S$ by a path to a single point. 
The inclusion  $S' \hookrightarrow T_f$ lifts to this cover, and by composing with the  cartesian projection $\Sigma_g \times \R \to \Sigma_g$,
we get a  map  $h:S' \to \Sigma_g$. The inclusion $S' \hookrightarrow T_f$ induces an injection at the level of $\pi_1(-)$
since $S$ is incompressible. (This is an application of the loop theorem, see \cite{Jaco} for instance.)  
It follows that $h_*:\pi_1(S') \to \pi_1(\Sigma_g)$ is injective and, $h$ being of degree $1$, it is also surjective.
Therefore, $\pi_1(S')$ is isomorphic to $\pi_1(\Sigma_g)$, which implies that $S$ was connected and its  genus  is equal to $g$.
Thus, $\chi_-(S)=2g-2$ and we are done. 
\end{example}

By a ``norm'' on a real vector space $V$,
we mean a map $\Vert - \Vert : V \to \R$ which satisfies the triangle inequality, \ie
$$
\forall v_1, v_2 \in V, \quad \Vert v_1 + v_2 \Vert \leq \Vert v_1 \Vert +   \Vert v_2 \Vert,
$$
and which is homogeneous, \ie
$$
\forall r \in \R, \forall v \in V, \quad \Vert r\cdot v \Vert = |r| \cdot \Vert v \Vert.  
$$
We do not assume it to be non-degenerate: we may have $\Vert v \Vert = 0$ 
for a vector $v\neq 0$.

\begin{theorem}[Thurston \cite{Thurston_norm}]
Let $M$ be a $3$-manifold with $\beta_1(M) \geq 1$. 
The map $\Thurston{-} : H^1(M;\Z) \to \Z$ defined by (\ref{eq:Thurston_norm})
extends in a unique way to a norm $\Thurston{-} : H^1(M;\R) \to \R$. 
\end{theorem}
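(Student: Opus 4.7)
The plan is to verify on the integer lattice $H^1(M;\Z)$ the three defining axioms of a (semi)norm---symmetry, positive integer homogeneity, and subadditivity---and then extend by homogeneity and density to $H^1(M;\R)$. Symmetry $\Thurston{-s}=\Thurston{s}$ is immediate: reversing the orientation of any representative surface yields a surface dual to $-s$ with identical $\chi_-$.

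The crux is subadditivity $\Thurston{s_1+s_2}\leq \Thurston{s_1}+\Thurston{s_2}$ on integer classes. Given $\chi_-$-minimizing representatives $S_1, S_2$ dual to $s_1, s_2$, I would put them in transverse position and perform the \emph{oriented cut-and-paste} (Haken sum) along the one-dimensional submanifold $S_1 \cap S_2$: at each circle of intersection, the co-orientations of the $S_i$ determine a canonical resolution of the crossing, producing an oriented surface $S$ with $[S]=[S_1]+[S_2]\in H_2(M;\Z)$, so that $S$ is Poincar\'e dual to $s_1+s_2$. Since such a resolution preserves the Euler characteristic, $\chi(S)=\chi(S_1)+\chi(S_2)$; after discarding inessential $2$-sphere components (which, being null-homologous, do not alter the dual class), one obtains $\chi_-(S)\leq \chi_-(S_1)+\chi_-(S_2)$.

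Integer homogeneity $\Thurston{ns}=n\Thurston{s}$ for $n\in\Z_{>0}$ is the main obstacle. The bound $\leq$ is immediate by taking $n$ disjoint parallel copies of a minimizer for $s$ pushed through a collar. For the reverse inequality, one starts from a $\chi_-$-minimizer $T$ dual to $ns$ together with a map $f\colon M\to S^1$ representing $s$. Composing with the $n$-fold self-cover $p\colon S^1\to S^1$ gives $p\circ f$ representing $ns$, and the preimage of a regular value of $p\circ f$ splits as $\bigsqcup_{i=1}^n f^{-1}(q_i)$, a disjoint union of $n$ surfaces each dual to $s$. One then argues that $T$ can be deformed, via a suitable homotopy of the map realizing it as the zero set of a map to $S^1$, into such a level form \emph{without increasing} $\chi_-$, whence $\Thurston{ns}=\chi_-(T)\geq n\Thurston{s}$. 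The genuine three-dimensional content of the proof sits here: the incompressibility/taut properties of norm-minimizing surfaces are what guarantees that the leveling can be done without raising $\chi_-$.

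Once these three properties are established, $\Thurston{-}$ extends unambiguously to a $\Q$-seminorm on $H^1(M;\Q)$ by the rule $\Thurston{s/n}:=\Thurston{s}/n$, well-definedness being a formal consequence of integer homogeneity. Since any seminorm on a finite-dimensional $\Q$-vector space is Lipschitz with respect to any background norm on $H^1(M;\R)$, and since $H^1(M;\Q)$ is dense in $H^1(M;\R)$, the function extends uniquely and continuously to a seminorm on $H^1(M;\R)$; uniqueness of the extension is immediate from continuity and density.
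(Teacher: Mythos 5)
Your treatment of subadditivity has a genuine gap. It is true that the oriented cut-and-paste $S$ of transverse minimizers $S_1$ and $S_2$ satisfies $\chi(S)=\chi(S_1)+\chi(S_2)$, but this alone does \emph{not} give $\chi_-(S)\leq\chi_-(S_1)+\chi_-(S_2)$, and discarding sphere components of $S$ afterward does not help, since spheres contribute $0$ to $\chi_-$ either way. Indeed, if $S$ has $k$ sphere components then $\chi_-(S)=-\chi(S)+2k$ (tori contribute nothing), so the wanted inequality is equivalent to the assertion that the resolution creates no more sphere components than $S_1\sqcup S_2$ already had. This can fail: if a circle of $S_1\cap S_2$ bounds disks $D_1\subset S_1$ and $D_2\subset S_2$, the oriented resolution may glue $D_1$ to $D_2$ and produce a new sphere. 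The missing step---the heart of Thurston's argument---is a decompression move that arranges for no component of $S_1\cap S_2$ to bound a disk in either surface, achievable without raising $\chi_-$ precisely because the $S_i$ are minimizers; once that holds, every sphere component of $S$ is an untouched sphere component of $S_1$ or $S_2$, and $\chi_-$ is additive.

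For homogeneity your reverse inequality runs in the wrong direction and thereby acquires difficulty it need not have. You fix in advance a map $f$ representing $s$ and then propose to deform the minimizer $T$ dual to $ns$ into a regular level set of $p\circ f$; you correctly flag this ``leveling'' as the hard step and invoke incompressibility/tautness. But no deformation is needed. Use the Pontryagin--Thom construction to produce a smooth map $\tilde{f}\colon M\to S^1$ with $\tilde{f}^{-1}(y)=T$ for some regular value $y$. Since $[\tilde f]=ns$ is divisible by $n$, ordinary covering-space theory gives a lift $f'\colon M\to S^1$ with $p\circ f'=\tilde f$ and $[f']=s$, and then $T=\tilde f^{-1}(y)=\bigsqcup_{i=1}^{n}(f')^{-1}(y_i')$ where $y_1',\dots,y_n'$ are the $n$ preimages of $y$; each piece is dual to $s$, whence $\chi_-(T)\geq n\Thurston{s}$ at once, with no appeal to incompressibility.
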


\begin{proof}  We follow Thurston \cite{Thurston_norm} and start with the following fact.
\begin{quote} \textbf{Claim.} Any element $s\in H_2(M;\Z)$ can be realized  by a closed
oriented surface $S \subset M$. Moreover, if $s = k \cdot s'$ is divisible by $k \in \N$,
then the surface $S$ is the union of $k$ sub-surfaces, each realizing $s'$.  
\end{quote}

\noindent
Indeed (assuming that $M$ is smooth), $s\in H_2(M) \simeq \Hom(H_1(M), \Z)$ 
can be realized by a smooth map  $f:M \to S^1=\K(\Z,1)$. Then, for any regular value $y$ of $f$,
the surface $S:= f^{-1}(y)$ with the appropriate orientation is such that $[S]=s$. If now $s=k\cdot s'$,
we can find a smooth map $f': M \to S^1$ which realizes $s'$ and satisfies $\pi_k \circ f' =f$,
where $\pi_k: S^1 \to S^1$ is the $k$-fold cover. Let $y'_1,\dots, y'_k$ be the pre-images of $y$ by $\pi_k$.
They are regular values of $f'$ so that $S$ is the union of the subsurfaces $f'^{-1}(y_1'),\dots, f'^{-1}(y'_k)$.

This fact implies that the map $\Thurston{-}: H^1(M;\Z) \to \Z$ defined by (\ref{eq:Thurston_norm}) is homogeneous.
Let us check that it also satisfies the triangle inequality. Let $s,s' \in H_2(M;\Z)$ and let $S,S' \subset M$
be closed oriented surfaces which are Poincar\'e dual to $s,s'$ respectively and satisfy $\chi_-(S) = \Thurston{s}, \chi_-(S') = \Thurston{s'}$.
We put $S$ and $S'$ in transverse positions, and we consider their intersection which consists of circles.

\begin{quote}
\textbf{Claim.} We can assume that none of the components of $S \cap S'$ bounds a disk in $S$ or in $S'$.
\end{quote}

\noindent
Otherwise,  one of these circles bounds a disk $D$, say in $S$.
We can assume that $S'$ does not meet the interior of $D$.
(If this is not the case, we jump to the innermost circle of $S' \cap D$.)
Let $S'_D$ be the surface obtained by ``decompressing'' $S'$ along $D$: 
$D$ is thickened to $D\times [-1,1] \subset M$ in such a way that 
$\left(D\times[-1,1]\right) \cap S' = \partial D \times [-1,1]$, and $S'_D$ is obtained from 
$\left(D\times[-1,1]\right) \cup S'$ by removing  $D\times ]-1,1[$.
The new surface $S'_D$ is homologous to $S'$ in $M$ and satisfies $\chi(S'_D) = \chi(S') +2$. 
Since $S'$ minimizes $\Thurston{s'}$,  the component of $S'$ which has been ``decompressed''
was either a torus (in which case, it has been transformed into a sphere) 
or was a sphere (in which case, it has been transformed into two spheres). 
Thus, we have $\chi_-(S'_D) = \chi_-(S')$ and we can replace $S'$ by $S'_D$.
This proves the claim by  induction.

Next, we consider the closed oriented surface $S''$ obtained from the desingularization of $S \cup S'$.
Here, by ``desingularization'' we mean to replace $S^1\times$(the graph of $xy=0$) by  $S^1\times$(the graph of $xy=1$)
in a way compatible with the orientations of $S$ and $S'$.
The class $[S'']=[S \cup S']\in H_2(M)$ is Poincar\'e dual to $s + s' \in H^1(M)$, and  we have $\chi(S'')=\chi(S)+\chi(S')$.
By the above claim, a sphere component of $S$ (respectively $S'$) 
can not meet $S'$ (respectively $S$) and, so, remains unchanged in $S''$.
The claim also shows that,  conversely, any sphere component of $S''$ comes from 
a sphere component of  $S$ or of $S'$. 
Thus, we have  $\chi_-(S'')=\chi_-(S)+\chi_-(S')$ and the triangle inequality is proved:
$$
\Thurston{s+s'} \leq \chi_-(S'') = \Thurston{s}+\Thurston{s'}.
$$

The map $\Thurston{-}: H^1(M;\Z) \to \Z$ extends to $H^1(M;\Q)$ in a unique way
using the rule  $\Thurston{q\cdot x}= |q| \cdot \Thurston{x}$ for all $q \in \Q$ and $x \in H^1(M;\Z)$.
The resulting map  $\Thurston{-}: H^1(M;\Q) \to \Q$ is easily checked to be a norm, 
and there is a unique way to extend it by continuity to a norm $\Thurston{-}: H^1(M;\R) \to \R$.
(Any norm in $H^1(M;\R)$ is continuous, so that we had no choice.)
\end{proof}

\begin{remark}
The kernel of the Thurston norm  is the subspace of $H^1(M;\R)$ generated 
by those elements of $H^1(M;\Z)$ which are dual to embedded tori or embedded spheres in $M$.
When  $\Thurston{-}$ is non-degenerate, its unit ball is a finite-sided polyhedron of $H^1(M;\R)$
(defined by  finitely-many linear inequalities with even integer coefficients \cite{Thurston_norm}).
\end{remark}

\begin{remark}
\label{rem:knot}
The Thurston norm generalizes the genus of a non-trivial knot $K \subset S^3$. Indeed,
the above definition of $\Thurston{-}$ also works for a $3$-manifold $M$ \emph{with boundary}. 
Instead of considering closed surfaces $S\subset M$ dual to a cohomology class $s\in H^1(M;\Z)$, 
one then considers properly embedded surfaces $S\subset M$.
In particular, we  can take $M := S^3 \setminus \hbox{N}(K)$ 
(where $\hbox{N}(K)$ is an open regular neighborhood of $K$) 
and $s \in H^1(M;\Z)$ dual to a meridian of $K$.
Then, by considering a Seifert surface which realizes the genus $g$ of $K$, we obtain $\Thurston{s} = 2g-1$.
\end{remark}

\begin{exercice}
\label{ex:incompressible}
Let $M$ be an \emph{irreducible} $3$-manifold, \ie any embedded sphere in $M$ must bound a $3$-ball.
Show that any closed oriented surface $S \subset M$ (with $S \neq S^2$)
which minimizes the Thurston norm is \emph{incompressible} --
\ie any simple closed curve in $S$ which bounds an embedded  disk in $M$ must be homotopically trivial in $S$.
\end{exercice}

\subsection{The Alexander norm and the torsion norm}

Let $M$ be a $3$-manifold with $\beta_1(M)\geq 1$. 
We define two more norms on $H^1(M;\R)$ using topological invariants.

Let $G:= H_1(M) / \Tors H_1(M)$ and let $(-)_{\R} : G \to H_1(M;\R)$ be the canonical injection.
Choose a representative in $\Z[G]$ of the Alexander polynomial of $M$,
and define a map $d_M: G \to \Z$ by the formula
$$
\Delta(M) = \sum_{g\in G} d_M(g) \cdot g \ \in \Z[G].
$$
The \emph{Alexander polytope} of $M$ is the convex hull
$$
P_A(M) := \left\langle (g_\R-g'_\R)/2\ |\ g, g' \in G, d_{M}(g) \neq 0, d_{M}(g') \neq 0 
\right\rangle_{\operatorname{convex}} 
$$
in $H_1(M;\R)$.
Observe that $P_A(M)$ is  compact (since the map $d_M$ vanishes almost everywhere) and  is symmetric in $0 \in H_1(M;\R)$.
Moreover, it does not depend on the choice of a representative for $\Delta(M) \in \Z[G]/\pm G$.

\begin{definition}[McMullen \cite{McMullen}]
The  \emph{Alexander norm}  is the map $\Alexander{-} : H^1(M;\R) \to \R$ defined by the formula
$\Alexander{s} :=  \length \big(s\left(P_A(M)\right)\big)$
or, alternatively, by the formula
$$
\Alexander{s} := \max \left\{ |\langle s,g-g' \rangle|\ \left|\ g,g' \in G, d_M(g)\neq 0, d_M(g')\neq 0\right.\right\}.
$$
\end{definition}

Let $H:= H_1(M)$ and let $(-)_\R: H \to H_1(M;\R)$ be the canonical homomorphism.
For $\xi \in \Eul(M)$, we define an  almost-everywhere zero function $t_{M,\xi}:H \to \Z$ by
$$
\sum_{h \in H} t_{M,\xi}(h) \cdot h = \left\{\begin{array}{ll}
\tau(M,\xi) & \hbox{if } \beta_1(M) \geq 2,\\
{[\tau](M,\xi)} & \hbox{if } \beta_1(M) =1,
\end{array}\right.
$$
where $[\tau](M,\xi)$ is the integral part of $\tau(M,\xi)$ which was alluded to in \S \ref{subsec:maximal_dim_3}.
The \emph{torsion polytope} of $M$ is the convex hull
$$
P_\tau(M) := \left\langle (h_\R-h'_\R)/2\ |\ h, h' \in H, t_{M,\xi}(h) \neq 0, t_{M,\xi}(h') \neq 0\right\rangle_{\operatorname{convex}}
$$
in $H_1(M;\R)$. Again, $P_\tau(M)$ is compact and symmetric in $0$. 
It is easily checked that $P_\tau(M)$ does not depend on the choice of $\xi$ (and neither on the orientation of $M$).

\begin{definition}[Turaev \cite{Turaev_book_dim_3}]
The \emph{torsion norm} is the map $\torsion{-}: H^1(M;\R) \to \R$ defined by the formula
$\torsion{s} :=  \length \big(s\left(P_\tau(M)\right)\big)$
or, alternatively, by the formula
$$
\torsion{s} := \max \left\{ |\langle s,h-h' \rangle|\ \left|\ 
h,h' \in H, t_{M,\xi}(h)\neq 0, t_{M,\xi}(h')\neq 0\right.\right\}.
$$
\end{definition}

In the case when $\beta_1(M)\geq 2$, Theorem \ref{th:Milnor-Turaev_dim_3} tells us that $P_\tau(M)$ contains $P_A(M)$, and we deduce that 
$$
\forall s \in H^1(M;\R), \ \torsion{s} \geq \Alexander{s}.
$$
In the case when $\beta_1(M)=1$,  Theorem \ref{th:Milnor-Turaev_dim_3}  shows that $P_\tau(M)$ 
contains the interval $[-p+1,p-1]$ if $P_A(M)=[-p,p]$ with $p\geq 1$, and we deduce that
$$
\forall s \in H^1(M;\R), \  \torsion{s} \geq \Alexander{s} - 2 |s|
$$
where $|-|$ is the unique norm on $H^1(M;\R)$ which takes $1$ on the generators of $H^1(M;\Z)$.
Those inequalities are strict in general\footnote{ 
See \cite[\S IV.1]{Turaev_book_dim_3} for the exact connection between $P_\tau(M)$, $P_A(M)$ 
and other polytopes associated to those twisted Alexander polynomials 
that are induced by homomorphisms $H_1(M) \to \C^*$.}.

\subsection{Comparison of norms}

\begin{theorem}[McMullen \cite{McMullen}]
\label{th:McMullen}
Let $M$ be a $3$-manifold with $\beta_1(M)\geq 1$. Then, we have the following inequality:
$$
\forall s \in H^1(M;\R), \quad \Thurston{s} \geq 
\left\{\begin{array}{ll}
\Alexander{s} & \hbox{if } \beta_1(M)\geq 2\\
\Alexander{s} - 2 |s| & \hbox{if } \beta_1(M)=1.
\end{array} \right.
$$
\end{theorem}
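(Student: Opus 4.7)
My plan is to reduce by homogeneity to a primitive integer class $s \in H^1(M;\Z)$, realize the Poincar\'e dual of $s$ by an incompressible norm-minimizing surface $F \subset M$, construct a cell decomposition of $M$ adapted to the cut along $F$, and bound the $t$-breadth of $\Delta(M)$ via a presentation matrix with controlled $t$-degree.

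Both $\Thurston{-}$ and $\Alexander{-}$ are continuous and positively homogeneous on $H^1(M;\R)$ (the latter because it is the support function of the symmetric convex polytope $P_A(M)$), so it suffices to prove the inequality for primitive integer classes $s \in H^1(M;\Z)$; in the $\beta_1(M) = 1$ case any such $s$ satisfies $|s| = 1$, matching the correction $-2|s| = -2$. Fix such an $s$ and choose a closed oriented surface $F \subset M$ Poincar\'e dual to $s$ with $\chi_{-}(F) = \Thurston{s}$. By compressing along essential disks (the argument used above in Thurston's proof that $\Thurston{-}$ satisfies the triangle inequality) and discarding sphere components, I may assume $F$ is incompressible and sphere-free; hence $\Thurston{s} = -\chi(F)$. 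Cutting $M$ along $F$ produces a compact cobordism $N$ with $\partial N \cong F \sqcup F$, and $M$ is recovered by regluing these two boundary copies of $F$ via the identity.

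Equip $N$ with a CW structure whose restriction to $\partial N$ is compatible with the identification, and let $X$ be the induced CW structure on $M$. Lift $X$ to the maximal free abelian cover $\bar M$, and pick $t \in G := H_1(M)/\mathrm{Tors}$ mapping to $1$ under $s: G \to \Z$. Since the regluing of the two copies of $F$ is precisely what contributes the deck transformation $t$, the cellular boundary matrix $\partial_2: C_2(\bar X) \to C_1(\bar X)$ has the form $A + tB$ with $A, B$ having entries in $\Z[G_0]$, where $G_0 := \ker(s: G \to \Z)$. Identifying $\Z[G]$ with $\Z[G_0][t^{\pm 1}]$, the $t$-breadth of $\Delta(M)$ is exactly $\Alexander{s}$ (by definition of the Alexander norm as the length of the projection of $P_A(M)$ onto the $s$-direction). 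On the other hand, by Fox calculus (Theorem \ref{th:Alexander_computation}) combined with Milnor--Turaev (Theorem \ref{th:Milnor-Turaev_dim_3}), $\Delta(M)$ is the greatest common divisor of suitable minors of $\partial_2$, up to a $(t-1)^2$ factor when $\beta_1(M) = 1$; as every entry of $\partial_2$ has $t$-degree at most $1$, every $r \times r$ minor has $t$-breadth at most $r$, and the gcd inherits the same bound.

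The remaining task is to choose the CW structure on $N$ so that the minors used have size $r \leq -\chi(F)$ (respectively $r \leq -\chi(F) + 2$ when $\beta_1(M)=1$, the extra $2$ absorbed by the $(t-1)^2$ factor). One constructs such a structure from a Morse function $f: N \to [0,1]$ with $f^{-1}(\{0,1\}) = \partial N$ and no critical points of index $0$ or $3$: the incompressibility of $F$ excludes the former (an index-$0$ critical point would yield an essential $S^2$ in $N$) and the absence of essential spheres in $N$ excludes the latter. Using $\chi(N) = \chi(F)$ (the Euler characteristic of a compact $3$-manifold being half that of its boundary), the resulting relative handle decomposition of $(N, F \times \{0\})$ has $h_1 = h_2$ middle-index handles, and the heart of the argument is to show that this common value can be arranged to be at most $-\chi(F)$ (or $-\chi(F) + 2$ in the $\beta_1(M) = 1$ case). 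This last combinatorial count is the main obstacle: it requires the incompressibility of $F$ in a nontrivial way to rule out extra cancellations among handles, and is the technically most delicate step of McMullen's original argument.
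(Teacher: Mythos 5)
Your approach — cut $M$ along an incompressible norm-minimizing surface $F$, build a CW-structure on $M$ adapted to the decomposition, and bound the $t$-breadth of $\Delta(M)$ by the size of the minors appearing in the Fox-calculus presentation — is genuinely different from the route the paper takes, and the gap you flag at the end is not merely ``technically delicate'': as written, it is the entire content of the inequality and I do not believe it can be closed in the form you propose. The number of $1$-handles in a relative handle decomposition of $(N, F\times\{0\})$ with no $0$- or $3$-handles is constrained only by $h_1 = h_2$ (from $\chi(N)=\chi(F)$); nothing bounds $h_1$ itself by $-\chi(F)$, and incompressibility of $F$ does not force such a bound (it rules out index-$0$ handles, but not large numbers of canceling $1$- and $2$-handles carried by the internal topology of $N$). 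Moreover, even if one could bound the number of handles, the bound ``$t$-breadth of the gcd of $(n-1)$-minors $\leq n-1$'' requires $n-1 \leq -\chi(F)$, which would bound the number of generators of $\pi_1(M)$ by $-\chi(F)+1$; this is false in general. This crude degree-count works for knots in $S^3$ precisely because the Seifert matrix gives a single $2g\times 2g$ determinant, but the generalization to arbitrary $3$-manifolds does not carry over cell-by-cell.

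What the paper (following McMullen and Turaev) does instead is replace your combinatorial count by a homological one, routing both norms through the single quantity $\dim_\Q H_1^s(M)$, where $s:\Z[H_1(M)]\to\Q[t^\pm]$ is the ring map induced by a \emph{generic} primitive class $s$. On the Alexander side, working over the PID $\Q[t^\pm]$ and using Lemma~\ref{lem:torsion_Heegaard_splitting} to show $\ord H_1^s(M) = (t-1)^2\cdot s(\Delta(M))$, genericity of $s$ yields the exact equality $\dim_\Q H_1^s(M) = \Alexander{s}+2$ (this genericity step, entirely absent from your proposal, is what makes the span of $s(\Delta(M))$ equal to $\Alexander{s}$ rather than merely $\leq$). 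On the Thurston side, the paper first establishes — via a connectivity-graph argument you omit — that the norm-minimizing surface $S$ may be taken \emph{connected}, not just incompressible and sphere-free. Then the infinite cyclic cover $M_s$ is an infinite chain of copies of $M\setminus\operatorname{N}(S)$ glued along copies of $S$, and a Mayer--Vietoris argument shows $H_1(M_s;\Q)$ is generated by the image of a single copy of $H_1(S;\Q)$, giving $\dim_\Q H_1^s(M)\leq\beta_1(S)$. Combining: $\Thurston{s}=-\chi(S)=\beta_1(S)-2\geq\dim_\Q H_1^s(M)-2=\Alexander{s}$. This is the step your proposal is missing; the Mayer--Vietoris bound on the cyclic cover does the work that your handle-count cannot.
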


\noindent
Turaev proves in \cite[\S IV]{Turaev_book_dim_3} the stronger (and more concise) inequality
\begin{equation}
\label{eq:Turaev_norm}
\forall s \in H^1(M;\R), \quad \Thurston{s} \geq \torsion{s}. 
\end{equation}

\begin{proof} We mainly follow McMullen's \cite{McMullen} and Turaev's \cite{Turaev_book_dim_3} arguments.
Let us for instance assume that $\beta_1(M)\geq 2$.
We choose a representative of the Alexander polynomial of $M$
$$
\Delta(M) = \sum_{g\in G} d_M(g) \cdot g \ \in \Z[G],
$$
and we assume that it is non-trivial. 
(If $\Delta(M)=0$, then $\Alexander{s}=0$ and there is nothing to prove.)
An element $s\in H^1(M;\R)$ is  said to be ``generic'' if we have  
$$
\forall g \neq g' \in G, (d_M(g) \neq 0, d_{M}(g')\neq 0) \Longrightarrow s(g) \neq s(g').
$$
An element $s\in H^1(M;\R)$ is said to be ``primitive'' if it lives in $H^1(M;\Z)$ and if it is not divisible there.
Generic elements belonging to $H^1(M;\Q)$ are dense in $H^1(M;\R)$, 
and any element of $H^1(M;\Q)$ is a multiple of some primitive element. 
Since both $\Thurston{-}$ and $\Alexander{-}$ are continuous and homogeneous, 
we can assume that $s$ is generic and primitive.

The cohomology class $s\in H^1(M;\Z)=\Hom(H_1(M),\Z)$ extends to a ring homomorphism
$s:\Z[H_1(M)]\to \Q[t^\pm]$ defined by $s(h) :=t^{s(h)}$ for all $h\in H_1(M)$.
We are interested in the $\Q[t^\pm]$-module $H_1^s(M)$.

\begin{quote}
\textbf{Claim.} We have $\dim_\Q H_1^s(M) = \Alexander{s} +2$.
\end{quote}

\noindent
To show this, we consider the cell decomposition $X$ associated to a Heegaard splitting of $M$
(as described in the proof of Lemma \ref{lem:torsion_Heegaard_splitting}) and we denote by $Y$ its $2$-skeleton. 
Then, we have $H_1^s(M) \simeq H_1^s(X) = H_1^s(Y)$.
The same arguments as those used in the proof of Theorem \ref{th:Alexander_computation} show that
\begin{equation}
\label{eq:Deltas}
\ord \left(H_1^s(M)\right) =
\Delta_0 \left(H_1^s(M)\right) \simeq \Delta_0 \left(H_1^s(Y)\right) = \Delta_1 \left(H_1^s(Y,e^0)\right)
\end{equation}
where $e^0$ is the only $0$-cell of $Y$.
Let $\overline{Y}$ be the maximal free abelian cover of $Y$,
and let $D$ be the matrix of the boundary  $\partial_2: C_2(\overline{Y}) \to C_1(\overline{Y})$ in some appropriate basis:
this is a square matrix  with coefficients in $\Z[G]$ of size $g$, the genus of the Heegaard splitting. 
Since the cokernel of  $ \Q[t^\pm] \otimes_{\Z[G]} \partial_2 $ is $H_1^s(Y,e^0)$, we have
$$
\Delta_1 \left(H_1^s(Y,e^0)\right) = 
\gcd \big\{ \hbox{$(g-1)$-sized minors of $s(D)$}\big\} \ \in \Q[t^\pm]/\pm t^k.
$$
But, Lemma \ref{lem:torsion_Heegaard_splitting} now tells us two things. On the one hand, it gives 
$$
\forall i,j \in \{1,\dots,g\}, \quad
\tau^\mu(M) \cdot (\mu(\alpha_i^\sharp)-1) \cdot (\mu(\beta_j^\sharp)-1) =  D_{ij} \ \in Q(\Z[G])/\pm G
$$
from which we  deduced in Theorem \ref{th:Milnor-Turaev_dim_3} that $\Delta(M) = \pm \tau^\mu(M)$.
(Here $\mu: \Z[H_1(M)] \to Q(\Z[G])$ is the canonical map.) 
On the other hand, Lemma \ref{lem:torsion_Heegaard_splitting} tells us that
$$
\tau^s(M) \cdot (s(\alpha_i^\sharp)-1) \cdot (s(\beta_j^\sharp)-1)
=  s(D)_{ij} \ \in \Q(t)/\pm t^k
$$
from which we deduce that $\Delta_1\left(H_1^s(Y,e^0)\right) = \tau^s(M)\cdot(t-1)^2$.
Since $\tau^s(M) = s\left(\tau^\mu(M) \right)$, we obtain with (\ref{eq:Deltas}) that
\begin{equation}
\label{eq:order-span}
\ord\left(H_1^s(M)\right) = (t-1)^2 \cdot s(\Delta(M)).
\end{equation}
Since $\Q[t^\pm]$ is a principal ideal domain, we can compute the order of $H_1^s(M)$
from a decomposition into cyclic modules (see Example \ref{ex:PID}). We deduce that
$$ 
\dim_\Q H_1^s(M) = \span\big(\ord\left(H_1^s(M)\right)\big)
$$
where the \emph{span} of a Laurent polynomial $\sum_{i\in\Z} q_i \cdot t^i \in \Q[t^\pm]$
is the difference between $\max\{i\in \Z: q_i \neq 0\}$ and $\min\{i\in \Z: q_i \neq 0\}$.
It then follows from (\ref{eq:order-span}) and the genericity of $s$ that
$$
\dim_\Q H_1^s(M)  = \span s(\Delta(M)) +2 = \Alexander{s} +2.
$$
This proves the previous claim.

\begin{quote} \textbf{Another claim.}
We can find a \emph{connected} closed oriented surface $S$ Poincar\'e dual to $s$,  
such that $\chi_-(S) = \Thurston{s}$ and $\beta_1(S) \geq \dim_\Q H_1^s(M)$.
\end{quote}

\noindent
This will be enough to conclude. Indeed, if that surface $S$ was a sphere, 
then the $\Q$-vector space $H_1^s(M)$ would be trivial, which would contradict the first claim. So,
$$
\Thurston{s} = -\chi(S) =  \beta_1(S) -2 \geq  \dim_\Q H_1^s(M) - 2 = \Alexander{s}.
$$

To prove the second claim, we consider a closed oriented surface $S \subset M$,
Poincar\'e dual to $s$, which satisfies $\chi_-(S) = \Thurston{s}$ and with minimal $\beta_0(S)$.

We shall first prove that $\beta_0(S)=1$.
We choose one point $v_c$ in each connected component $c$ of $M\setminus S$ 
and, if a component $c$ touches a component $c'$ along a component of $S$, 
we connect $v_c$ to $v_{c'}$ by a simple path which meets that component of $S$ in a single point.
Thus, we obtain a graph $C \subset M$ which encodes the ``connectivity'' of $M \setminus S$.
Since the surface $S$ is oriented, the edges of the graph $C$ have a natural orientation. 
Using an open regular neighborhood $\operatorname{N}(S)$ of $S$,
we can construct a retraction $\pi: M \to C$ which collapses every component of $S$ 
to the center of the corresponding edge of $C$,
and every component of $M\setminus \operatorname{N}(S)$ to the corresponding vertex of $C$.
Let also $w:H_1(C) \to \Z$ be the group homomorphism which maps each oriented edge of $C$ to $1$.
Then, the composition $w \circ \pi_*: H_1(M) \to \Z$ corresponds to $s \in H^1(M;\Z) \simeq \Hom(H_1(M),\Z)$.
Let $C_w \to C$ and $M_s \to M$ be the infinite cyclic covers defined by those homomorphisms.
The map $\pi:M\to C$ lifts to $\pi_s: M_s \to C_w$, and the inclusion $C \to M$ does to.
So, $\pi_s$ induces a surjection at the level of homology, and we deduce that
$$
\dim_\Q H_1^s(M) =  \rank\ H_1(M_s) \geq \rank H_1(C_w).
$$
If $C$ had more than one loop, then $\beta_1(C_w)$ would be infinite, which would contradict the first claim.
Thus, $\beta_1(C)=1$ and $C$ is a circle to which trees may be grafted. 
Using the minimality of $\beta_0(S)$, one easily sees that $C$ does not have univalent vertices, 
nor bivalent vertices with two incident edges pointing in opposite directions.
Therefore, $C$ is an oriented circle, and because $s=w \circ \pi_*$ is assumed to be primitive, it can have only one edge.
We conclude that $S$ is connected.

The surface $S$ being connected and $s$ being not trivial, $M\setminus S$ must be connected.
Thus, the infinite cyclic cover $M_s$ is an infinite chain of copies of $M\setminus \operatorname{N}(S)$
which are glued ``top to bottom'':
$$
M_s = \cdots \cup \left(M\setminus \operatorname{N}(S)\right)_{-1} \cup
\left(M\setminus \operatorname{N}(S)\right)_0 \cup 
\left(M\setminus \operatorname{N}(S)\right)_1 \cup \cdots
$$ 
Since $H_1(M_s;\Q)$ is finite-dimensional, an application of the Mayer--Vietoris theorem shows that
it must be generated by the image of one copy of $H_1(S;\Q)$. 
Thus, we have $\dim_\Q H_1(M_s;\Q) \leq \beta_1(S)$ and the second claim is proved.
\end{proof}

\begin{example}
Let $T_f$ be the mapping torus of an orientation-preserving homeomorphism $f:\Sigma_g \to \Sigma_g$
(as defined in \S \ref{subsec:surface_bundles} with $g\geq 1$).
Let $s_f \in H^1(T_f;\Z)$ be the class Poincar\'e dual to the fiber $\Sigma_g \subset T_f$.
The above proof shows that
$$
\Thurston{s_f} = \Alexander{s_f}.
$$
The value of $ \Alexander{s_f}$ is easily deduced from Proposition \ref{prop:surface_bundle}:
$$
\Alexander{s_f} =2g-2
$$
which agrees with the discussion of Example \ref{ex:norm_surface_bundle}.
\end{example}

\begin{remark}
Theorem \ref{th:McMullen} has an analogue for $3$-manifolds \emph{with boundary} \cite{McMullen,Turaev_book_dim_3}. 
In this sense, it generalizes  a classical fact about the Alexander polynomial $\Delta(K)$
of a knot $K \subset S^3$: twice the genus of $K$ is bounded below by the span of $\Delta(K)$ --
see \cite{Rolfsen} for instance.
\end{remark}

\begin{remark}
Theorem \ref{th:McMullen} has known several recent developments.
Friedl and Kim generalize inequality (\ref{eq:Turaev_norm}) 
to \emph{twisted} Reidemeister torsions defined by group homomorphisms $\pi_1(M) \to \operatorname{GL}(\C;d)$  \cite{FK}.
Cochran \cite{Cochran} and Harvey \cite{Harvey} prove generalizations 
of Theorem \ref{th:McMullen} for non-commutative analogues of the Alexander polynomial.
(Their results are further generalized in \cite{Turaev_norm} and \cite{Friedl}.)
Another spectacular development is the work by Ozsv\'ath and Szab\'o, who proved
that the Heegaard Floer homology \emph{determines} the Thurston norm \cite{OS_genus,OS_Thurston_norm}.
This is an analogue of a property proved in 1997 by Kronheimer and Mrowka for the Seiberg--Witten monopole homology \cite{KM}.
\end{remark}

\appendix

\section{Fox's free differential calculus}

\label{app:Fox}

Fox introduced in the late 40's a kind of differential calculus for free groups \cite{Fox_free}.
This calculus is an efficient tool in combinatorial group theory as well as in low-dimensional topology.
We have used it in these notes for computations related to the Alexander polynomial and to the Reidemeister torsion.
We introduce in this appendix the basics of Fox's calculus.

\subsection{Definition of the free derivatives}

Let $G$ be a group. The \emph{augmentation} of the group ring $\Z[G]$ is the ring homomorphism 
$$
\varepsilon: \Z[G] \longrightarrow \Z, \
\sum_{g\in G} a_g \cdot g \longmapsto \sum_{g\in G} a_g
$$
which sums up the coefficients.
The kernel of the augmentation is denoted by $I(G)$, 
and is called the \emph{augmentation ideal} of $\Z[G]$.

\begin{definition}
A \emph{derivative} of the group ring $\Z[G]$ is a group homomorphism 
$d:\Z[G] \to \Z[G]$ such that 
\begin{equation}
\label{eq:derivative}
\forall a,b \in \Z[G], \quad d(a\cdot b) = d(a) \cdot \varepsilon(b) + a \cdot d(b).
\end{equation}
\end{definition}

\noindent
If follows easily from this definition that d(1)=0  and that
$$
\forall g\in G \subset \Z[G], \quad d(g^{-1}) = - g^{-1}  \cdot d(g).
$$
For example, the map $\delta:\Z[G] \to \Z[G]$ defined by
\begin{equation}
\label{eq:special_derivative}
\forall a\in  \Z[G], \quad \delta(a) := a - \varepsilon(a)
\end{equation}
is easily checked to be a derivative.

We now restrict ourselves to $G:= \Free(x)$, 
the group freely generated by the set $x=\{x_1,\dots,x_n\}$. 
An  element $a$ of the group ring $\Z[\Free(x)]$ 
can be regarded as a Laurent polynomial with integer coefficients 
in $n$ indeterminates $x_1,\dots,x_n$ that do \emph{not} commute. 
In order to emphasize this interpretation, we will sometimes denote $a\in \Z[\Free(x)]$ by $a(x)$
and the augmentation $\varepsilon(a) \in \Z$ by $a(1)$. 

\begin{theorem}[Fox]
\label{th:Fox}
For all $i\in{1,\dots,n}$, there is a unique derivative
$$
\frac{\partial \ }{\partial x_i}: \Z[\Free(x)] \longrightarrow \Z[\Free(x)]
$$
such that 
$$
\forall k=1,\dots,n,\ \frac{\partial x_k}{\partial x_i} = \delta_{i,k}.
$$
More generally, given some $h_1,\dots, h_n \in \Z[\Free(x)]$, 
there is a unique derivative $d:\Z[\Free(x)] \to \Z[\Free(x)]$ such that $d(x_k)= h_k$
for all $k=1,\dots,n$. This derivative is given by the formula
\begin{equation}
\label{eq:derivative_generators}
\forall a \in \Z[\Free(x)], \
d(a) = \sum_{k=1}^n \frac{\partial a}{\partial x_k} \cdot h_k.
\end{equation}
\end{theorem}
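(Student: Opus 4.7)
The plan is to separate uniqueness from existence, then deduce the general formula (\ref{eq:derivative_generators}) from the particular case.

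For uniqueness, observe that a derivative $d$ is, by definition, additive, hence is determined by its values on the $\Z$-basis $\Free(x)\subset\Z[\Free(x)]$. The identities $d(1)=0$ (apply (\ref{eq:derivative}) to $a=b=1$) and $d(w^{-1})=-w^{-1}d(w)$ (apply (\ref{eq:derivative}) to $w\cdot w^{-1}=1$, using $\varepsilon(w^{-1})=1$), together with the Leibniz rule $d(w_1 w_2)=d(w_1)+w_1\,d(w_2)$ valid for $w_1,w_2\in\Free(x)$ (again using $\varepsilon(w_2)=1$), allow one to compute $d(w)$ by induction on the length of a reduced word representing $w$, starting from the values $d(x_1),\dots,d(x_n)$. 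This proves that a derivative is uniquely determined by its values on the generators.

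For existence, given $h_1,\dots,h_n\in\Z[\Free(x)]$, I plan to use a matrix trick that sidesteps any direct verification that a candidate formula respects free-group cancellations. Consider the matrices
$$
M_k := \begin{pmatrix} x_k & h_k \\ 0 & 1 \end{pmatrix}\in \operatorname{GL}\bigl(2;\Z[\Free(x)]\bigr),
$$
which are invertible because $x_k$ is. By the universal property of $\Free(x)$, the assignment $x_k\mapsto M_k$ extends to a unique homomorphism $\rho:\Free(x)\to\operatorname{GL}(2;\Z[\Free(x)])$. A short induction on word length, using the upper-triangular form of $M_k$ and $M_k^{-1}$, shows that
$$
\rho(w) = \begin{pmatrix} w & d(w) \\ 0 & 1 \end{pmatrix}
$$
for some $d(w)\in\Z[\Free(x)]$, with $d(x_k)=h_k$. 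Expanding the identity $\rho(w_1 w_2)=\rho(w_1)\rho(w_2)$ yields precisely $d(w_1 w_2)=d(w_1)+w_1\,d(w_2)$ on $\Free(x)\times\Free(x)$. Extending $d$ by $\Z$-linearity to $\Z[\Free(x)]$ gives an additive map, and the full Leibniz rule (\ref{eq:derivative}) follows from its special case on group elements by bilinear expansion.

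Finally, to establish (\ref{eq:derivative_generators}), I will apply the existence statement to the data $h_k=\delta_{i,k}$ to produce the particular derivatives $\partial/\partial x_i$. For arbitrary $h_1,\dots,h_n$, the map $a\mapsto\sum_k(\partial a/\partial x_k)\cdot h_k$ is again a derivative, since the Leibniz rule is preserved by $\Z$-linear combinations in which the coefficients are applied on the right. This derivative sends $x_i$ to $h_i$, so by the uniqueness already proved, it coincides with the derivative $d$ built above. The main obstacle in this strategy is existence: a direct formulaic definition of $d$ on reduced words would force one to verify consistency under the cancellations $x_k x_k^{-1}=x_k^{-1}x_k=1$, and the purpose of the matrix trick is precisely to delegate that verification to the universal property of $\Free(x)$, making well-definedness automatic.
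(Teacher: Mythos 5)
Your proof is correct, and the existence argument is genuinely different from the paper's. The paper's proof writes down the explicit formula
$$
\frac{\partial f}{\partial x_i} :=
\sum_{\substack{j=1,\dots,r\\ \mu_j=i}}
\epsilon_j\cdot x_{\mu_1}^{\epsilon_1}\cdots
x_{\mu_j}^{\frac{1}{2}(\epsilon_j-1)}
$$
on reduced words $f= x_{\mu_1}^{\epsilon_1} \cdots x_{\mu_r}^{\epsilon_r}$, extends by $\Z$-linearity, and then asserts (without detail) that the Leibniz rule holds; the subtlety there is that multiplying two reduced words may trigger cancellations, so verifying the Leibniz rule requires a small case analysis. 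Your matrix trick sends $x_k$ to the upper-triangular matrix $\begin{pmatrix} x_k & h_k \\ 0 & 1 \end{pmatrix}$, invokes the universal property of $\Free(x)$ for well-definedness, and reads off the Leibniz rule on $\Free(x)\times\Free(x)$ directly from multiplicativity of $\rho$; the cancellation bookkeeping is absorbed into the universal property and never appears. This is essentially the observation that a derivative is the same thing as a group homomorphism from $\Free(x)$ into the semidirect product $\Z[\Free(x)] \rtimes \Free(x)$ splitting the projection, realized concretely as $2\times 2$ upper-triangular matrices. What you trade away is the closed formula: the paper's explicit expression for $\partial f/\partial x_i$ is itself useful later (e.g., in computations such as the one underlying Proposition \ref{prop:topological_fundamental_formula}), and your proof only establishes existence abstractly, so a reader wanting a formula still has to unwind the induction. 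Aside from this, the uniqueness argument (induction on word length using $d(1)=0$, $d(w^{-1})=-w^{-1}d(w)$, and $d(w_1 w_2)=d(w_1)+w_1 d(w_2)$) and the final deduction of formula (\ref{eq:derivative_generators}) from the special case $h_k = \delta_{i,k}$ together with uniqueness are both essentially what the paper does.
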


\noindent
The derivatives $\frac{\partial \ }{\partial x_1}, \dots, \frac{\partial \ }{\partial x_n}$
are called the \emph{free derivatives} of the free group $\Free(x)$ relative to the basis $x=\{x_1,\dots,x_n\}$.

\begin{proof}[Proof of Theorem \ref{th:Fox}]
Any element $f$ of the free group $\Free(x)$ can be written uniquely as a word  
$$
f= x_{\mu_1}^{\epsilon_1} \cdots x_{\mu_r}^{\epsilon_r} \quad 
\hbox{(where $\mu_1,\dots,\mu_r \in\{1,\dots,n\}$ and $\epsilon_1,\dots,\epsilon_r \in\{-1,+1\}$)}
$$
which is \emph{reduced} in the sense that $\mu_j=\mu_{j+1} \Rightarrow \epsilon_j \neq -\epsilon_{j+1}$.
Then, we set 
$$
\frac{\partial f}{\partial x_i} := 
\sum_{\substack{j=1,\dots,r\\ \mu_j=i}} 
\epsilon_j\cdot x_{\mu_1}^{\epsilon_1}\cdots  
x_{\mu_j}^{\frac{1}{2}(\epsilon_j-1)}.
$$
By linear extension, we get a map $\frac{\partial \ }{\partial x_i}: \Z[\Free(x)] \to \Z[\Free(x)]$ 
which is easily checked to satisfy (\ref{eq:derivative}).
Thus, $\frac{\partial \ }{\partial x_i}$ is a derivative that satisfies $\frac{\partial x_k}{\partial x_i}=\delta_{i,k}$. 

Since the ring $\Z[\Free(x)]$ is generated by $x_1, \dots, x_n$, there is at most one derivative $d$ 
such that $d(x_k)=h_k$ for all $k=1,\dots,n$. 
It is easily checked that, if $d_1$ and $d_2$ are two derivatives of $\Z[\Free(x)]$, 
then $d_1 \cdot a_1+ d_2 \cdot a_2$ is also a derivative for all $a_1,a_2 \in \Z[\Free(x)]$.
Therefore, the right-hand side of formula (\ref{eq:derivative_generators}) defines a derivative.
Since this derivative takes the values $h_1,\dots,h_n$ on $x_1,\dots, x_n$ respectively, the conclusion follows. 
\end{proof}

The derivative $\delta$ defined by (\ref{eq:special_derivative}) satisfies $\delta(x_k)=x_k-1$.
Thus, an application of (\ref{eq:derivative_generators}) to this derivative gives the identity
\begin{equation}
\label{eq:fundamental_formula}
\forall a(x) \in \Z[\Free(x)], \quad a(x) = a(1)+ 
\sum_{i=1}^n \frac{\partial a}{\partial x_i}(x) \cdot (x_i-1).
\end{equation}
This is the \emph{fundamental formula} of Fox's free differential calculus,
which can be regarded as a kind of order $1$ Taylor formula with remainder. 

Consider now a second free group,
say the group $\Free(y)$ freely generated by the set $y:=\{y_1,\dots,y_p\}$. 
For all  $b_1, \dots, b_n \in \Free(y)$, there is a unique group homomorphism
$\Free(x) \to \Free(y)$ defined by $x_i \mapsto b_i$. The image of an $a \in \Z[\Free(x)]$
by this homomorphism is denoted by $a(b_1,\dots,b_n)$.
This is compatible with our convention to denote $a \in \Z[\Free(x)]$ also by $a(x)$.

\begin{proposition}[Chain rule]
\label{prop:chain_rule}
Let $a(x) \in \Free(x)$ and let $b_1(y), \dots, b_n(y) \in \Free(y)$.
Then, for all $k=1,\dots,p$, we have 
\begin{equation}
\label{eq:chain_rule}
\frac{\partial a(b_1,\dots,b_n)}{\partial y_k} =
\sum_{j=1}^n \frac{\partial a}{\partial x_j}(b_1,\dots,b_n) \cdot 
\frac{\partial b_j}{\partial y_k} \quad \in \Z[\Free(y)].
\end{equation}
\end{proposition}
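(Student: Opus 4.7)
The plan is to mimic the uniqueness argument of Theorem \ref{th:Fox}, but adapted to the ring homomorphism
$$\phi: \Z[\Free(x)] \longrightarrow \Z[\Free(y)], \quad \phi(a) := a(b_1,\dots,b_n).$$
Fix $k \in \{1,\dots,p\}$ and consider the two $\Z$-linear maps $L, R: \Z[\Free(x)] \to \Z[\Free(y)]$ defined by
$$L(a) := \frac{\partial \phi(a)}{\partial y_k}, \qquad R(a) := \sum_{j=1}^{n} \phi\!\left(\frac{\partial a}{\partial x_j}\right) \cdot \frac{\partial b_j}{\partial y_k}.$$
The claim of the proposition is exactly $L = R$.

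First I would introduce the notion of a \emph{$\phi$-derivation}, namely a $\Z$-linear map $d: \Z[\Free(x)] \to \Z[\Free(y)]$ satisfying
$$d(uv) = d(u)\cdot \varepsilon(v) + \phi(u) \cdot d(v)$$
for all $u,v \in \Z[\Free(x)]$, and verify that both $L$ and $R$ are $\phi$-derivations. For $L$ this is immediate from the Leibniz rule (\ref{eq:derivative}) applied to $\partial/\partial y_k$ on the product $\phi(u)\phi(v)$, together with the identity $\varepsilon \circ \phi = \varepsilon$ (which holds since $\varepsilon(\phi(x_i)) = \varepsilon(b_i) = 1 = \varepsilon(x_i)$). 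For $R$, it follows from the Leibniz rule obeyed by each free derivative $\partial/\partial x_j$ together with the multiplicativity of $\phi$; no cross terms appear because $\varepsilon$ factors through $\phi$.

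Next, I would prove that any $\phi$-derivation $d$ is uniquely determined by its values $d(x_1),\dots,d(x_n)$. From $d(1) = d(1)\cdot 1 + 1 \cdot d(1) = 2 d(1)$ one gets $d(1) = 0$, and then $0 = d(x_i x_i^{-1}) = d(x_i) + \phi(x_i) d(x_i^{-1})$ yields $d(x_i^{-1}) = -b_i^{-1} \cdot d(x_i)$. An induction on reduced word length, using the $\phi$-Leibniz rule to peel off the first letter of a word, then expresses $d(w)$ for any $w \in \Free(x)$ as an explicit $\Z[\Free(y)]$-linear combination of $d(x_1),\dots,d(x_n)$; $\Z$-linearity extends this to $\Z[\Free(x)]$.

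Finally, I would compute that $L$ and $R$ agree on the generators:
$$L(x_i) = \frac{\partial \phi(x_i)}{\partial y_k} = \frac{\partial b_i}{\partial y_k}, \qquad R(x_i) = \sum_{j=1}^{n} \phi(\delta_{i,j}) \cdot \frac{\partial b_j}{\partial y_k} = \frac{\partial b_i}{\partial y_k},$$
so by the uniqueness just established $L = R$, which is the desired identity (\ref{eq:chain_rule}). The main obstacle is purely bookkeeping, namely verifying cleanly that $R$ is a $\phi$-derivation and formulating the correct uniqueness statement; once the notion of $\phi$-derivation is in place, the proof is essentially the same as that of Theorem \ref{th:Fox}. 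A brute-force alternative, bypassing $\phi$-derivations, would be direct induction on the reduced word length of $a \in \Free(x)$, but that is computationally heavier and less conceptual.
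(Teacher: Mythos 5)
Your proof is correct, but it takes a genuinely different route from the paper's. The paper applies the fundamental formula (\ref{eq:fundamental_formula}) three times: once to $a$ and then pushed forward by the substitution $x_i\mapsto b_i$, once to each $b_j$, and once directly to $a(b_1,\dots,b_n)$; comparing the resulting two expansions of $a(b_1,\dots,b_n)$ in terms of the elements $y_k-1$ yields the chain rule. That argument is very short, but it silently uses the fact that the coefficients in an expansion $1+\sum_k c_k(y_k-1)$ are uniquely determined (equivalently, that $\{y_k-1\}$ is a free $\Z[\Free(y)]$-basis of the augmentation ideal). You instead introduce the notion of a $\phi$-derivation twisted by the ring homomorphism $\phi:\Z[\Free(x)]\to\Z[\Free(y)]$, check that both sides of (\ref{eq:chain_rule}) define $\phi$-derivations, prove a uniqueness lemma by induction on reduced word length, and evaluate on generators. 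This is the same uniqueness pattern as in the proof of Theorem \ref{th:Fox}, just relativized to $\phi$, so it sits more naturally alongside that theorem; the cost is that you must define and verify the axioms for a new class of maps, whereas the paper reuses a formula it has already proved. Both are standard approaches; yours is more self-contained and conceptual, the paper's is shorter modulo the implicit freeness fact. One small stylistic caveat: the phrase ``$\varepsilon$ factors through $\phi$'' is potentially confusing — what you actually use is the identity $\varepsilon_{\Free(y)}\circ\phi=\varepsilon_{\Free(x)}$, which you do state correctly at the start.
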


\begin{proof}
The group homomorphism $\Free(x) \to \Free(y)$ defined by $x_i \mapsto b_i$ 
transforms the fundamental formula (\ref{eq:fundamental_formula}) to
$$
a(b_1,\dots,b_n) = 1 + \sum_{j=1}^n \frac{\partial a}{\partial x_j}(b_1,\dots,b_n) \cdot (b_j-1).
$$
Next, we apply the fundamental formula to each $b_j \in \Free(y)$ to get
$$
a(b_1,\dots,b_n) = 1 + \sum_{\substack{j=1\dots,n \\ k=1,\dots,p}} 
\frac{\partial a}{\partial x_j}(b_1,\dots,b_n) \cdot \frac{\partial b_j}{\partial y_k} \cdot (y_k-1)
$$
or, equivalently,
$$
a(b_1,\dots,b_n) = 1 + \sum_{k=1}^p \left(\sum_{j=1}^n \frac{\partial a}{\partial x_j}(b_1,\dots,b_n) 
\cdot \frac{\partial b_j}{\partial y_k}  \right)  \cdot (y_k-1).
$$
By comparing this identity with the fundamental formula for $a(b_1,\dots,b_n) \in \Z[\Free(y)]$,
we exactly obtain  identity (\ref{eq:chain_rule}).
\end{proof}

\begin{exercice} Prove the following formula:
$$
\Z[\Free(x)] \ni \ \frac{\partial x_i^p}{\partial x_i}=\frac{x_i^p-1}{x_i-1} 
=\left\{\begin{array}{ll}
1+x_i+\cdots+x_i^{p-1} & \textrm{if } p\geq 1,\\
0 & \textrm{if } p=0, \\
-x_i^{p}-x_i^{p+1}-\cdots - x_i^{-1} & \textrm{if } p\leq -1.
\end{array}\right.
$$
\end{exercice}

\begin{exercice}
We can define some \emph{higher order} free derivatives by the inductive formula
$$
\forall a \in \Z[\Free(x)], \quad
\frac{\partial^{r} a}{\partial x_{i_r} \cdot \partial x_{i_{r-1}}\cdots \partial x_{i_1}}
:= \frac{\partial \ }{\partial  x_{i_r} } 
\left(\frac{\partial^{r-1} a}{\partial x_{i_{r-1}}\cdots \partial x_{i_1}}\right).
$$
Show, for all $r\geq 1$, the following order $r$ Taylor formula with remainder:
\begin{eqnarray*}
 a(x)&= & a(1) +  \sum_{i=1}^n \frac{\partial a}{\partial x_i}(1)\cdot (x_i-1) + \cdots \\
&&+  \sum_{i_1,\dots, i_{r-1}=1}^n \frac{\partial^{r-1} a}{\partial x_{i_{r-1}}\cdots \partial x_{i_1}}(1) \cdot (x_{i_{r-1}}-1)\cdots (x_{i_1}-1) \\
&& + \sum_{i_1,\dots, i_{r}=1}^n \frac{\partial^{r} a}{\partial x_{i_r} \cdots \partial x_{i_1}}(x) \cdot (x_{i_r}-1)\cdots (x_{i_1}-1). 
\end{eqnarray*} 
\end{exercice}

\subsection{The topology behind the free derivatives}

\label{subapp:topological_interpretation}

The theory of covering spaces gives a topological interpretation to free derivatives.
We consider the bouquet $X_n$ of $n$ circles
\begin{center}
{\labellist \small \hair 0pt 
\pinlabel {$x_1$} [br] at 12 86
\pinlabel {$x_2$} [lb] at 178 207
\pinlabel {$x_n$} [tl] at 386 18
\pinlabel {$\ddots$} at 253 114
\pinlabel {${\Large \star}$} at 200 24 
\endlabellist}
\includegraphics[scale=0.3]{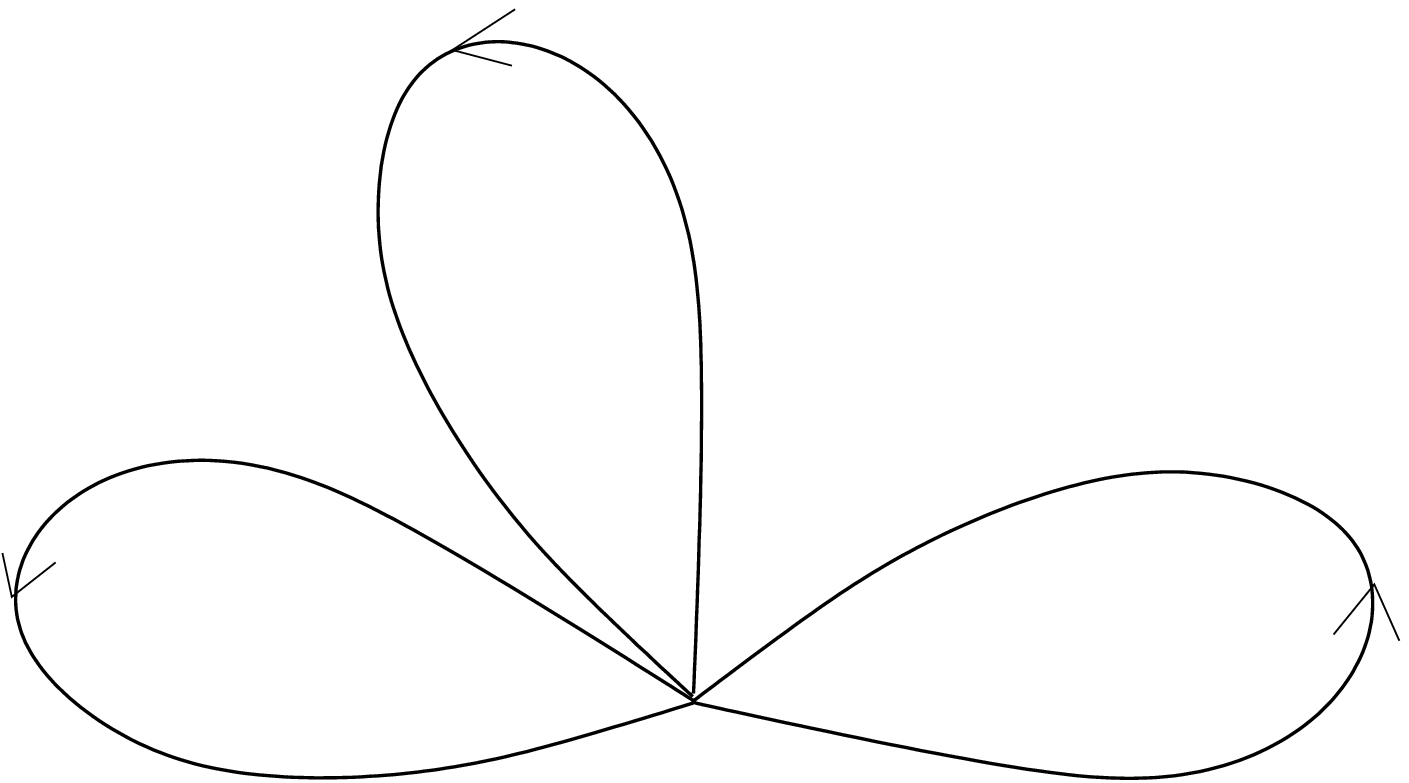}
\end{center}
Note that $\pi_1(X_n,\star)$ is the free group $\Free(x)$ on $x=\{x_1,\dots,x_n\}$.
Let also $\widetilde{X}_n$  be the infinite oriented graph with vertices indexed by $\Free(x)$
$$
V = \left\{ \left. f \cdot \widetilde{\star}\ \right|\ f \in \Free(x) \right\},
$$
with edges indexed by $n$ copies of the set $\Free(x)$
$$
E = \{f \cdot \widetilde{x}_1\ |\ f \in \Free(x) \} \cup \cdots \cup
\{f \cdot \widetilde{x}_n\ |\ f \in \Free(x) \} ,
$$
and with incidence map
$$
i=(i_0,i_1) : E \longrightarrow V \times V
$$
defined by
$$
\forall f \in \Free(x), \ \forall j\in \{1,\dots,n\}, \quad
i_0(f\cdot \widetilde{x}_j) = f \cdot \widetilde{\star}
\quad \hbox{and} \quad 
i_1(f\cdot \widetilde{x}_j) = (fx_j) \cdot \widetilde{\star}.
$$
There is a canonical map $\pi: \widetilde{X}_n \to X_n$
which sends each vertex $f \cdot \widetilde{\star}$ to $\star$
and the interior of each edge $f\cdot \widetilde{x}_j$ homeomorphically onto the interior of $x_j$.
It is easily checked that $\pi$ is the universal covering map of $X_n$.
The canonical action of $\Free(x)=\pi_1(X_n,\star) \simeq \Aut(\pi)$ on $\widetilde{X}_n$
is compatible with our notation for the vertices and edges of $\widetilde{X}_n$.
See Figure \ref{fig:infinite_tree}.

\begin{figure}[h]
\begin{center}
{\labellist \small \hair 0pt 
\pinlabel {$\widetilde{\star}$} [tr] at 206 204
\pinlabel {$\widetilde{x}_1$} [t] at 267 198
\pinlabel {$\widetilde{x}_2$} [r] at 205 269
\pinlabel {\scriptsize $x_1^2  x_2^{-1}\cdot \widetilde{\star}$} [t] at 420 180
\endlabellist}
\includegraphics[scale=0.5]{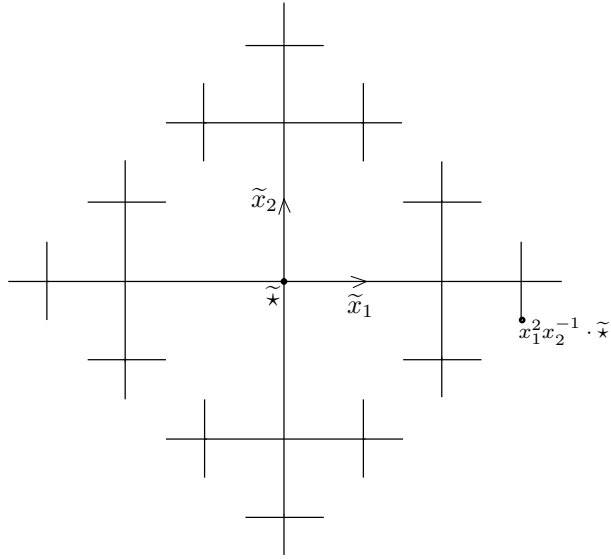}
\end{center}
\caption{The infinite tree $\widetilde{X}_2$,
where $x_1$ acts by  ``horizontal translation'' to the right and $x_2$ acts by ``vertical translation'' to the top.}
\label{fig:infinite_tree}
\end{figure}

For all $f\in \Free(x)= \pi_1(X_n,\star)$, let $\gamma$ be a closed path $\star \leadsto \star$
representing $f$ and let $\widetilde{\gamma}$ be the unique lift of $\gamma$ starting at $\widetilde{\star}$.
This is a path $\widetilde{\star} \leadsto f \cdot \widetilde{\star}$ which defines an element of $H_1(\widetilde{X}_n, \pi^{-1}(\star))$. 
Since the homotopy class of the path $\widetilde{\gamma}$ 
is determined by that of $\gamma$, this element only depends on $f$ and we denote it by
$$
\widetilde{f} \in H_1\left(\widetilde{X}_n, \pi^{-1}(\star)\right).
$$
The left action of $\Free(x)$ on $\widetilde{X}_n$ makes $H_1(\widetilde{X}_n, \pi^{-1}(\star))$ a $\Z[\Free(x)]$-module.

\begin{proposition}
\label{prop:topological_fundamental_formula}
The $\Z[\Free(x)]$-module $H_1(\widetilde{X}_n, \pi^{-1}(\star))$ is freely generated by
the lifts $\widetilde{x}_1 , \dots , \widetilde{x}_n$. Moreover, for all $f\in \Free(x)$, we have
\begin{equation}
\label{eq:topological_fundamental_formula}
\widetilde{f} = \sum_{i=1}^n \frac{\partial f}{\partial x_i}\cdot \widetilde{x_i} 
\quad \in H_1\left(\widetilde{X}_n, \pi^{-1}(\star)\right).
\end{equation}
\end{proposition}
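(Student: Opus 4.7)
The plan is to first dispose of the freeness claim by a straightforward cellular argument, and then establish the formula by realising $f \mapsto \widetilde{f}$ as a derivation of $\Z[\Free(x)]$ valued in the $\Z[\Free(x)]$-module $H_1(\widetilde{X}_n, \pi^{-1}(\star))$, so that the module-valued analogue of Theorem \ref{th:Fox} forces the claimed expansion.

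For the freeness, I will view $\widetilde{X}_n$ as a $1$-dimensional CW-complex whose $0$-cells are the vertices $f\cdot\widetilde{\star}$ and whose $1$-cells are the edges $f\cdot\widetilde{x}_i$. The cellular chain complex of the pair $(\widetilde{X}_n,\pi^{-1}(\star))$ is then concentrated in degree $1$, with $C_2 = 0$, $C_0(\widetilde{X}_n,\pi^{-1}(\star)) = 0$, and $C_1 = \bigoplus_{i=1}^n \Z[\Free(x)] \cdot \widetilde{x}_i$ (free as a $\Z[\Free(x)]$-module via the action of deck transformations). Hence $H_1(\widetilde{X}_n,\pi^{-1}(\star)) = C_1$ is free on $\widetilde{x}_1,\dots,\widetilde{x}_n$.

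Next, I would establish the product rule
\[
\widetilde{fg} = \widetilde{f} + f\cdot \widetilde{g} \ \in H_1(\widetilde{X}_n,\pi^{-1}(\star))
\]
for all $f,g \in \Free(x)$. Indeed, if $\gamma_f$ and $\gamma_g$ are loops at $\star$ representing $f$ and $g$, then the unique lift of the concatenation $\gamma_f \cdot \gamma_g$ starting at $\widetilde{\star}$ is obtained by following the lift of $\gamma_f$ (which ends at $f\cdot\widetilde{\star}$), and then the lift of $\gamma_g$ translated by $f$ (which goes from $f\cdot\widetilde{\star}$ to $fg\cdot\widetilde{\star}$). Extending $f\mapsto \widetilde{f}$ by $\Z$-linearity to a map
\[
T: \Z[\Free(x)] \longrightarrow H_1(\widetilde{X}_n,\pi^{-1}(\star)),
\]
the product rule for group elements and the bilinearity of $\otimes$ give $T(ab) = T(a)\cdot \varepsilon(b) + a\cdot T(b)$ for all $a,b \in \Z[\Free(x)]$. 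Thus $T$ is a derivation of $\Z[\Free(x)]$ with values in the $\Z[\Free(x)]$-module $H_1(\widetilde{X}_n,\pi^{-1}(\star))$, and by construction $T(x_i) = \widetilde{x}_i$.

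It remains to observe that Theorem \ref{th:Fox} admits a trivial generalization: given any $\Z[\Free(x)]$-module $M$ and any $h_1,\dots,h_n \in M$, there is a unique derivation $d:\Z[\Free(x)] \to M$ with $d(x_k)=h_k$, and it is given by $d(a)=\sum_{k=1}^n \frac{\partial a}{\partial x_k}\cdot h_k$. The proof is identical to the one given in the paper, since the word-length induction never uses that the target ring is $\Z[\Free(x)]$ itself. Applying this uniqueness statement to $T$ with $h_k := \widetilde{x}_k$ yields
\[
\widetilde{f} = T(f) = \sum_{i=1}^n \frac{\partial f}{\partial x_i}\cdot \widetilde{x}_i,
\]
which is (\ref{eq:topological_fundamental_formula}). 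The only point requiring any real care is the verification of the product rule for $T$, which is a matter of carefully tracking the concatenation of lifts and the equivariance of the $\Free(x)$-action on $\widetilde{X}_n$; everything else is formal.
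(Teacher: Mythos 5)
Your argument is correct, and it takes a genuinely different route from the paper's. You establish directly, via concatenation of path-lifts, that $f\mapsto\widetilde f$ extends to a derivation $T\colon\Z[\Free(x)]\to H_1(\widetilde X_n,\pi^{-1}(\star))$, and then you invoke a module-valued extension of the uniqueness statement in Theorem \ref{th:Fox} to force the expansion in the $\widetilde x_i$. The paper instead notes that $\widetilde X_n$ is a tree, so the connecting homomorphism $\partial_*\colon H_1(\widetilde X_n,\pi^{-1}(\star))\to H_0(\pi^{-1}(\star))$ in the long exact sequence of the pair is injective; applying $\partial_*$ converts the desired identity into the already-proved fundamental formula (\ref{eq:fundamental_formula}), since $\partial_*(\widetilde g)=(g-1)\cdot\widetilde\star$. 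Your route is more conceptual -- it exposes the fact that path-lifting is literally a derivation, which is the topological content behind (\ref{eq:fundamental_formula}) -- but it requires you to state and justify the module-valued generalization of Theorem \ref{th:Fox}, which the paper only proves for derivatives valued in $\Z[\Free(x)]$ itself (the generalization is indeed immediate, as you observe, since the uniqueness is a word-length induction and the existence formula $a\mapsto\sum_k\frac{\partial a}{\partial x_k}\cdot h_k$ is verified by the same Leibniz check with $h_k$ now in an arbitrary module). The paper's route is shorter because it re-uses (\ref{eq:fundamental_formula}) instead of re-proving its topological avatar, at the modest cost of the observation that $\partial_*$ is injective.
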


\begin{proof}
\noindent
The abelian group $H_1(\widetilde{X}_n, \pi^{-1}(\star))$ can be described 
by means of (relative) cellular homology. Indeed, there is a canonical cell decomposition
of $\widetilde{X}_n$ induced by its graph structure: $0$-cells correspond to vertices and $1$-cells to edges. 
Thus, the first statement is obvious.

The long exact sequence for the pair $(\widetilde{X}_n, \pi^{-1}(\star))$ shows that
the connecting homomorphism $\partial_*: H_1(\widetilde{X}_n, \pi^{-1}(\star)) \to H_0(\pi^{-1}(\star))$
is injective. Thus, (\ref{eq:topological_fundamental_formula}) is equivalent to the identity
\begin{equation}
\label{eq:topological_fundamental_formula_bis}
\partial_*\left(\widetilde{f}\right) \mathop{=}^{?} 
\partial_*\left(\sum_{i=1}^n \frac{\partial f}{\partial x_i}\cdot \widetilde{x}_i\right)
\ \in H_0(\pi^{-1}(\star)).
\end{equation}
The $\Z[\Free(x)]$-module $H_0(\pi^{-1}(\star))$ is freely generated by $\widetilde{\star}$ 
and, for all $g \in \Free(x)$, we have 
$\partial_*(\widetilde{g}) = g \cdot \widetilde{\star} -  \widetilde{\star} = (g-1) \cdot  \widetilde{\star}$.
Thus, (\ref{eq:topological_fundamental_formula_bis}) is exactly the fundamental formula (\ref{eq:fundamental_formula}).
\end{proof}

\begin{exercice}
\label{ex:Schumann-Blanchfield}
Let $R$ be a normal subgroup of $\Free(x)$ and let $R':=[R,R]$ be the commutator subgroup of $R$.
Using the topological interpretation of free derivatives, show the \emph{Schumann--Blanchfield criterion}
\cite{Schumann,Blanchfield_thesis}:
$$
\forall f \in \Free(x), \quad
\left(f \in R' \right) \Longleftrightarrow 
\left(\forall i=1,\dots,n, \ \left\{\frac{\partial f}{\partial x_i}\right\}  = 0 \in \Z\left[F(x)/R\right] \right).
$$ 
\end{exercice}

\subsection{Magnus representation}

\label{subsec:Magnus}

As an application of Fox's free differential calculus, 
we  define a representation of the group of automorphisms of a free group.
For its relevance to topology, we refer to Birman's book \cite[\S 3]{Birman} and to Remark \ref{rem:Magnus_Torelli}.

Let $\phi: \Free(x) \to \Free(x)$ be a group homomorphism. 
The \emph{Jacobian matrix} of $\phi$ is the $n \times n$ matrix with coefficients in $\Z[\Free(x)]$
$$
J(\phi) := \left(\begin{array}{ccc}
\frac{\partial \phi(x_1)}{\partial x_1} & \cdots &  \frac{\partial \phi(x_n)}{\partial x_1} \\
\vdots & \ddots & \vdots \\
\frac{\partial \phi(x_1)}{\partial x_n} & \cdots &  \frac{\partial \phi(x_n)}{\partial x_n}  
\end{array}\right).
$$
The fundamental formula (\ref{eq:fundamental_formula}) shows that  a group endomorphism of $\Free(x)$
is determined by its Jacobian matrix.
For any group endomorphisms $\phi$ and $\psi$ of $\Free(x)$, 
let us see how $J(\psi \phi)$ can be derived from $J(\psi)$ and $J(\phi)$.
Proposition \ref{prop:chain_rule} shows that
$$
\frac{\partial \psi \phi (x_j)}{\partial x_i}
= \sum_{k=1}^n \psi\left(\frac{\partial \phi(x_j)}{\partial x_k}\right) \cdot \frac{\partial \psi(x_k)}{\partial x_i}.
$$
By applying the map $\overline{-} :\Z[\Free(x)] \to \Z[\Free(x)]$ defined additively
by $\overline{f}:=f^{-1}$ for all $f\in \Free(x)$, we obtain
$$
\overline{\frac{\partial \psi \phi (x_j)}{\partial x_i}}
= \sum_{k=1}^n \overline{ \frac{\partial \psi(x_k)}{\partial x_i} \cdot}\psi\left(\overline{\frac{\partial \phi(x_j)}{\partial x_k}}\right) 
$$
so that 
$$
\overline{J(\psi \phi)} = \overline{J(\psi)} \cdot \psi\left(\overline{J(\phi)}\right).
$$
It follows from this identity that, if $\phi$ is an automorphism, then $\overline{J(\phi)}$ is an invertible matrix. 
Thus, we have proved the following.

\begin{proposition}
\label{prop:Magnus}
The map $\Magnus: \Aut(\Free(x)) \to \operatorname{GL}(n;\Z[\Free(x)])$ defined by
$\Magnus(\phi):= \overline{J(\phi)}$
is a ``crossed homomorphism'' in the sense that
$$
\forall \psi, \phi \in \Aut(\Free(x)), \quad
\Magnus(\psi \phi) = \Magnus(\psi) \cdot \psi\big(\Magnus(\phi)\big).
$$
Moreover, $\Magnus$ is injective.
\end{proposition}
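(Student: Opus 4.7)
The plan is to prove the three assertions of Proposition \ref{prop:Magnus} in turn, exploiting the computations that immediately precede its statement. The crossed homomorphism identity is essentially already in hand: applying Proposition \ref{prop:chain_rule} to $\psi \circ \phi$ and then the anti-multiplicative bar involution $\overline{\ \cdot\ }$ on $\Z[\Free(x)]$ yields
\[
\overline{J(\psi\phi)} \;=\; \overline{J(\psi)} \cdot \psi\!\left(\overline{J(\phi)}\right),
\]
which is precisely the relation $\Magnus(\psi\phi) = \Magnus(\psi) \cdot \psi\bigl(\Magnus(\phi)\bigr)$ after observing that $\psi$ acts on matrices entrywise and commutes with the bar.

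Next I would use this crossed homomorphism identity to verify that $\Magnus$ really takes values in $\operatorname{GL}(n;\Z[\Free(x)])$: for any automorphism $\phi$, setting $\psi = \phi^{-1}$ gives
\[
I_n \;=\; \Magnus(\mathrm{Id}) \;=\; \Magnus(\phi) \cdot \phi\!\left(\Magnus(\phi^{-1})\right),
\]
so $\Magnus(\phi)$ is right-invertible with explicit inverse $\phi(\Magnus(\phi^{-1}))$; the symmetric computation with $\phi$ and $\phi^{-1}$ interchanged yields the left inverse.

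For injectivity I would invoke the fundamental formula (\ref{eq:fundamental_formula}) of Fox's calculus. If $\Magnus(\phi) = \Magnus(\psi)$, then $J(\phi) = J(\psi)$ (the bar is a bijection on $\Z[\Free(x)]$), so $\frac{\partial \phi(x_j)}{\partial x_i} = \frac{\partial \psi(x_j)}{\partial x_i}$ for all $i,j$. Applying (\ref{eq:fundamental_formula}) to $a = \phi(x_j)$ and to $a = \psi(x_j)$ and noting that both have augmentation $1$ gives
\[
\phi(x_j) \;=\; 1 + \sum_{i=1}^n \frac{\partial \phi(x_j)}{\partial x_i}\cdot (x_i - 1) \;=\; 1 + \sum_{i=1}^n \frac{\partial \psi(x_j)}{\partial x_i}\cdot (x_i - 1) \;=\; \psi(x_j)
\]
for each $j$, so $\phi = \psi$ since both agree on the free generating set.

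None of the three steps looks genuinely hard: the crossed homomorphism identity is a bookkeeping exercise with the chain rule and the bar involution, the target sits inside $\operatorname{GL}_n$ by a one-line application of that identity to $\phi\circ\phi^{-1}$, and injectivity is a direct consequence of the fundamental formula. The one place where I would be careful is tracking the order-reversal of the bar and the equivariance of $\psi$ on matrices in the first step, since that is where a sign or a transposition of factors could easily slip in.
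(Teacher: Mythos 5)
Your proof follows the paper's own argument in every step: the crossed-homomorphism identity is the chain rule (Proposition \ref{prop:chain_rule}) followed by the anti-multiplicative bar involution, invertibility of $\Magnus(\phi)$ is a one-line consequence of that identity applied to $\phi$ and $\phi^{-1}$, and injectivity is an immediate application of the fundamental formula (\ref{eq:fundamental_formula}). The only blemish is a slip in the prose, not the mathematics: you announce the substitution $\psi = \phi^{-1}$, but the displayed equation $I_n = \Magnus(\phi)\cdot\phi\bigl(\Magnus(\phi^{-1})\bigr)$ is the expansion of $\Magnus(\phi\circ\phi^{-1})$, i.e.\ it takes the \emph{first} factor of the composite to be $\phi$ and the second to be $\phi^{-1}$ in the crossed-homomorphism formula.
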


\noindent
The map $\Magnus$ is usually refered to as the \emph{Magnus representation} of $\Aut(\Free(x))$.

We emphasize that the map $\Magnus$ is not a ``true'' homomorphism, and that 
it takes values in a group of matrices whose ground ring is not commutative.
Those two defects can be fixed by restricting its source and by reducing its target. 
For example, we  can consider the subgroup
$$
\IAut\left(\Free(x)\right) :=
\left\{\phi \in \Aut(\Free(x)): \forall f \in \Free(x), \ \phi(f) = f \! \! \! \mod \Free(x)' \right\}
$$
where $\Free(x)' := [\Free(x),\Free(x)]$ is the commutator subgroup of $\Free(x)$.

\begin{corollary}
\label{cor:Magnus_abelian}
Let $H:= \Free(x)/\Free(x)'$ denote the abelianization of $\Free(x)$.
The map $\Magnusab: \IAut(\Free(x)) \to \operatorname{GL}(n;\Z[H])$ defined by
$$
\Magnusab(\phi):=  \overline{J(\phi)} \hbox{ reduced mod } \Free(x)'
$$
is a group homomorphism, whose kernel 
consists of those $\phi \in \Aut(\Free(x))$ such that $\phi(f)= f$ mod $\Free(x)''$ for all $f\in \Free(x)$.
\end{corollary}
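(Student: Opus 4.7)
The plan has two parts: first verify the homomorphism property, then identify the kernel.

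\textbf{Homomorphism property.} I start from Proposition \ref{prop:Magnus}, which says that for any $\psi, \phi \in \Aut(\Free(x))$ we have $\Magnus(\psi\phi) = \Magnus(\psi) \cdot \psi(\Magnus(\phi))$, where $\psi$ acts entry-wise on the matrix $\Magnus(\phi)$ with coefficients in $\Z[\Free(x)]$. Reducing mod $\Free(x)'$, the action of $\psi$ on a coefficient $c \in \Z[\Free(x)]$ becomes, at the level of $\Z[H]$, the action of the induced automorphism $\psi_* : H \to H$. But when $\psi \in \IAut(\Free(x))$, by definition $\psi_* = \Id_H$, so $\psi(\Magnus(\phi))$ and $\Magnus(\phi)$ have the same image in $\operatorname{GL}(n; \Z[H])$. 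This gives $\Magnusab(\psi\phi) = \Magnusab(\psi) \cdot \Magnusab(\phi)$, as desired.

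\textbf{Kernel: reduction to the generators.} An element $\phi \in \IAut(\Free(x))$ lies in $\Ker \Magnusab$ if and only if the $n \times n$ matrix $J(\phi)$ reduces to the identity modulo $\Free(x)'$, i.e.
\[
\frac{\partial \phi(x_j)}{\partial x_i} \equiv \delta_{ij} \pmod{\Free(x)'}, \quad i,j = 1, \dots, n.
\]
For each fixed $j$, I compute the free derivatives of $g_j := \phi(x_j) \cdot x_j^{-1}$ using the derivation rule:
\[
\frac{\partial g_j}{\partial x_i} = \frac{\partial \phi(x_j)}{\partial x_i} - \phi(x_j) x_j^{-1} \cdot \delta_{ij}.
\]
Since $\phi \in \IAut(\Free(x))$, the element $g_j$ lies in $\Free(x)'$, so its image in $\Z[H]$ is $1$; hence $\phi(x_j) x_j^{-1} \equiv 1 \pmod{\Free(x)'}$ and the second term reduces to $\delta_{ij}$. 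Therefore the condition $J(\phi) \equiv I_n \pmod{\Free(x)'}$ is equivalent to $\partial g_j / \partial x_i \equiv 0 \pmod{\Free(x)'}$ for all $i,j$.

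\textbf{Kernel: applying Schumann--Blanchfield.} I now apply Exercise \ref{ex:Schumann-Blanchfield} with $R := \Free(x)'$, so that $R' = \Free(x)''$ and $\Free(x)/R = H$. The criterion gives $g_j \in \Free(x)''$ if and only if all free derivatives of $g_j$ vanish in $\Z[H]$. Combining this with the previous step, I conclude that $\phi \in \Ker \Magnusab$ if and only if $\phi(x_j) \equiv x_j \pmod{\Free(x)''}$ for every $j = 1, \dots, n$. Finally, since $\Free(x)''$ is a characteristic subgroup of $\Free(x)$, every automorphism $\phi$ descends to an automorphism $\bar\phi$ of the free metabelian quotient $\Free(x)/\Free(x)''$; this quotient being generated by the images of $x_1, \dots, x_n$, the automorphism $\bar\phi$ is trivial as soon as it fixes these generators. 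Hence the kernel condition is equivalent to $\phi(f) \equiv f \pmod{\Free(x)''}$ for all $f \in \Free(x)$.

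\textbf{Main obstacle.} All steps are essentially bookkeeping once the Schumann--Blanchfield criterion is in hand; the only substantive point is the identification of derivatives of $g_j$ with $J(\phi) - I_n$ modulo $\Free(x)'$, which is a short computation. The real ``hard part'' was proving Exercise \ref{ex:Schumann-Blanchfield}, which is assumed here.
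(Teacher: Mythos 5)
Your proof is correct and follows the same route the paper takes: the homomorphism property falls out of the crossed-homomorphism identity in Proposition \ref{prop:Magnus} once one observes that $\psi \in \IAut(\Free(x))$ acts trivially on $H$, and the kernel is identified via the Schumann--Blanchfield criterion. The paper's own ``proof'' is the single sentence ``The second statement is an application of Exercice \ref{ex:Schumann-Blanchfield},'' so what you have done is fill in the details that the author chose to leave implicit: in particular, the small computation showing that $J(\phi) \equiv I_n \pmod{\Free(x)'}$ is equivalent to the vanishing of the free derivatives of $g_j := \phi(x_j)x_j^{-1}$, which is exactly the form needed to invoke the criterion, and then the passage from the generators to arbitrary $f$ via the induced automorphism of the free metabelian quotient. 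Your write-up is a sound expansion of the paper's terse argument.
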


\begin{proof}The second statement is an application of Exercice \ref{ex:Schumann-Blanchfield}. 
\end{proof}

\bibliographystyle{plain}
\bibliography{Pau}

\def\cprime{$'$}
\begin{thebibliography}{10}

\bibitem{Alexander}
James~W. Alexander.
\newblock Topological invariants of knots and links.
\newblock {\em Trans. Amer. Math. Soc.}, 30(2):275--306, 1928.

\bibitem{Baer1}
Reinhold Baer.
\newblock {Kurventypen auf Fl{\"a}chen.}
\newblock {\em J. f. M.}, 156:231--246, 1927.

\bibitem{Baer2}
Reinhold Baer.
\newblock {Isotopie von Kurven auf orientierbaren, geschlossenen Fl{\"a}chen
  und ihr Zusamenhang mit der topologischen Deformation der Fl{\"a}chen.}
\newblock {\em J. f. M.}, 159:101--116, 1928.

\bibitem{Birman}
Joan~S. Birman.
\newblock {\em Braids, links, and mapping class groups}.
\newblock Princeton University Press, Princeton, N.J., 1974.
\newblock Annals of Mathematics Studies, No. 82.

\bibitem{BL}
Joan~S. Birman and Xiao-Song Lin.
\newblock Knot polynomials and {V}assiliev's invariants.
\newblock {\em Invent. Math.}, 111(2):225--270, 1993.

\bibitem{Blanchfield_thesis}
Richard~C. Blanchfield.
\newblock Applications of free differential calculus to the theory of groups.
\newblock Senior thesis, Princeton University, 1949.

\bibitem{Blanchfield_intersection}
Richard~C. Blanchfield.
\newblock Intersection theory of manifolds with operators with applications to
  knot theory.
\newblock {\em Ann. of Math. (2)}, 65:340--356, 1957.

\bibitem{Bonahon}
Francis Bonahon.
\newblock Diff\'eotopies des espaces lenticulaires.
\newblock {\em Topology}, 22(3):305--314, 1983.

\bibitem{Bredon}
Glen~E. Bredon.
\newblock {\em Topology and geometry}, volume 139 of {\em Graduate Texts in
  Mathematics}.
\newblock Springer-Verlag, New York, 1993.

\bibitem{Chapman}
Thomas~A. Chapman.
\newblock Topological invariance of {W}hitehead torsion.
\newblock {\em Amer. J. Math.}, 96:488--497, 1974.

\bibitem{Cochran}
Tim~D. Cochran.
\newblock Noncommutative knot theory.
\newblock {\em Algebr. Geom. Topol.}, 4:347--398, 2004.

\bibitem{CGO}
Tim~D. Cochran, Amir Gerges, and Kent Orr.
\newblock Dehn surgery equivalence relations on 3-manifolds.
\newblock {\em Math. Proc. Cambridge Philos. Soc.}, 131(1):97--127, 2001.

\bibitem{Cohen}
Marshall~M. Cohen.
\newblock {\em A course in simple-homotopy theory}.
\newblock Springer-Verlag, New York, 1973.
\newblock Graduate Texts in Mathematics, Vol. 10.

\bibitem{deRham}
George de~Rham, Serge Maumary, and Michel~A. Kervaire.
\newblock {\em Torsion et type simple d'homotopie}.
\newblock Expos\'es faits au s\'eminaire de Topologie de l'Universit\'e de
  Lausanne. Lecture Notes in Mathematics, No. 48. Springer-Verlag, Berlin,
  1967.

\bibitem{DM_torsion_mod_1}
Florian Deloup and Gw{\'e}na{\"e}l Massuyeau.
\newblock Reidemeister-{T}uraev torsion modulo one of rational homology
  three-spheres.
\newblock {\em Geom. Topol.}, 7:773--787 (electronic), 2003.

\bibitem{DM}
Florian Deloup and Gw{\'e}na{\"e}l Massuyeau.
\newblock Quadratic functions and complex spin structures on three-manifolds.
\newblock {\em Topology}, 44(3):509--555, 2005.

\bibitem{Fox_free}
Ralph~H. Fox.
\newblock Free differential calculus. {I}. {D}erivation in the free group ring.
\newblock {\em Ann. of Math. (2)}, 57:547--560, 1953.

\bibitem{Fox_Alexander}
Ralph~H. Fox.
\newblock Free differential calculus. {V}. {T}he {A}lexander matrices
  re-examined.
\newblock {\em Ann. of Math. (2)}, 71:408--422, 1960.

\bibitem{Franz}
Wolfgang Franz.
\newblock {{\"U}ber die Torsion einer {\"U}berdeckung.}
\newblock {\em J. Reine Angew. Math.}, 173:245--254, 1935.

\bibitem{Franz_duality}
Wolfgang Franz.
\newblock {Torsionsideale, Torsionsklassen und Torsion.}
\newblock {\em J. Reine Angew. Math.}, 176:113--124, 1936.

\bibitem{Friedl}
Stefan Friedl.
\newblock Reidemeister torsion, the {T}hurston norm and {H}arvey's invariants.
\newblock {\em Pacific J. Math.}, 230(2):271--296, 2007.

\bibitem{FK}
Stefan Friedl and Taehee Kim.
\newblock The {T}hurston norm, fibered manifolds and twisted {A}lexander
  polynomials.
\newblock {\em Topology}, 45(6):929--953, 2006.

\bibitem{FV}
Stefan Friedl and Stefano Vidussi.
\newblock A survey of twisted {A}lexander polynomials.
\newblock Preprint \texttt{arXiv:0905$.$0591}, 2009.

\bibitem{FH}
Charles Frohman and Joel Hass.
\newblock Unstable minimal surfaces and {H}eegaard splittings.
\newblock {\em Invent. Math.}, 95(3):529--540, 1989.

\bibitem{Harvey}
Shelly~L. Harvey.
\newblock Higher-order polynomial invariants of 3-manifolds giving lower bounds
  for the {T}hurston norm.
\newblock {\em Topology}, 44(5):895--945, 2005.

\bibitem{Hillman}
Jonathan Hillman.
\newblock {\em Algebraic invariants of links}, volume~32 of {\em Series on
  Knots and Everything}.
\newblock World Scientific Publishing Co. Inc., River Edge, NJ, 2002.

\bibitem{HL}
Michael Hutchings and Yi-Jen Lee.
\newblock Circle-valued {M}orse theory and {R}eidemeister torsion.
\newblock {\em Geom. Topol.}, 3:369--396 (electronic), 1999.

\bibitem{Jaco}
William Jaco.
\newblock {\em Lectures on three-manifold topology}, volume~43 of {\em CBMS
  Regional Conference Series in Mathematics}.
\newblock American Mathematical Society, Providence, R.I., 1980.

\bibitem{KK}
Akio Kawauchi and Sadayoshi Kojima.
\newblock Algebraic classification of linking pairings on {$3$}-manifolds.
\newblock {\em Math. Ann.}, 253(1):29--42, 1980.

\bibitem{KM}
Peter~B. Kronheimer and Tomasz~S. Mrowka.
\newblock Scalar curvature and the {T}hurston norm.
\newblock {\em Math. Res. Lett.}, 4(6):931--937, 1997.

\bibitem{LL}
Jean Lannes and Fran{\c{c}}ois Latour.
\newblock Signature modulo {$8$} des vari\'et\'es de dimension {$4k$} dont le
  bord est stablement parall\'elis\'e.
\newblock {\em C. R. Acad. Sci. Paris S\'er. A}, 279:705--707, 1974.

\bibitem{Le}
Thang~T. Le.
\newblock Finite-type invariants of $3$-manifolds.
\newblock {\em Encyclopedia of Mathematical Physics}, pages 348--356, 2006.

\bibitem{Lickorish}
William B.~R. Lickorish.
\newblock A representation of orientable combinatorial {$3$}-manifolds.
\newblock {\em Ann. of Math. (2)}, 76:531--540, 1962.

\bibitem{LW}
Eduard Looijenga and Jonathan Wahl.
\newblock Quadratic functions and smoothing surface singularities.
\newblock {\em Topology}, 25(3):261--291, 1986.

\bibitem{Massuyeau}
Gw{\'e}na{\"e}l Massuyeau.
\newblock Some finiteness properties for the {R}eidemeister-{T}uraev torsion of
  three-manifolds.
\newblock {\em J. Knot Theory Ramifications}, 19(3):405--447, 2010.

\bibitem{Matveev}
Sergei~V. Matveev.
\newblock Generalized surgeries of three-dimensional manifolds and
  representations of homology spheres.
\newblock {\em Mat. Zametki}, 42(2):268--278, 345, 1987 (in Russian). English
  translation: {\it Math. Notes}, 42(1-2):651--656, 1987.

\bibitem{McMullen}
Curtis~T. McMullen.
\newblock The {A}lexander polynomial of a 3-manifold and the {T}hurston norm on
  cohomology.
\newblock {\em Ann. Sci. \'Ecole Norm. Sup. (4)}, 35(2):153--171, 2002.

\bibitem{MT}
Guowu Meng and Clifford~H. Taubes.
\newblock {$\underline{\rm SW}=$} {M}ilnor torsion.
\newblock {\em Math. Res. Lett.}, 3(5):661--674, 1996.

\bibitem{Milnor_duality}
John Milnor.
\newblock A duality theorem for {R}eidemeister torsion.
\newblock {\em Ann. of Math. (2)}, 76:137--147, 1962.

\bibitem{Milnor_Whitehead}
John Milnor.
\newblock Whitehead torsion.
\newblock {\em Bull. Amer. Math. Soc.}, 72:358--426, 1966.

\bibitem{Milnor_cyclic}
John Milnor.
\newblock Infinite cyclic coverings.
\newblock In {\em Conference on the {T}opology of {M}anifolds ({M}ichigan
  {S}tate {U}niv., {E}. {L}ansing, {M}ich., 1967)}, pages 115--133. Prindle,
  Weber \& Schmidt, Boston, Mass., 1968.

\bibitem{Moise}
Edwin~E. Moise.
\newblock Affine structures in {$3$}-manifolds. {V}. {T}he triangulation
  theorem and {H}auptvermutung.
\newblock {\em Ann. of Math. (2)}, 56:96--114, 1952.

\bibitem{MS}
John~W. Morgan and Dennis~P. Sullivan.
\newblock The transversality characteristic class and linking cycles in surgery
  theory.
\newblock {\em Ann. of Math. (2)}, 99:463--544, 1974.

\bibitem{Munkres}
James Munkres.
\newblock Obstructions to the smoothing of piecewise-differentiable
  homeomorphisms.
\newblock {\em Ann. of Math. (2)}, 72:521--554, 1960.

\bibitem{Murakami}
Hitoshi Murakami.
\newblock A weight system derived from the multivariable {C}onway potential
  function.
\newblock {\em J. London Math. Soc. (2)}, 59(2):698--714, 1999.

\bibitem{Nicolaescu_book}
Liviu~I. Nicolaescu.
\newblock {\em The {R}eidemeister torsion of 3-manifolds}, volume~30 of {\em de
  Gruyter Studies in Mathematics}.
\newblock Walter de Gruyter \& Co., Berlin, 2003.

\bibitem{Nicolaescu}
Liviu~I. Nicolaescu.
\newblock Seiberg-{W}itten invariants of rational homology 3-spheres.
\newblock {\em Commun. Contemp. Math.}, 6(6):833--866, 2004.

\bibitem{Ohtsuki}
Tomotada Ohtsuki.
\newblock Finite type invariants of integral homology {$3$}-spheres.
\newblock {\em J. Knot Theory Ramifications}, 5(1):101--115, 1996.

\bibitem{OS_genus}
Peter Ozsv{\'a}th and Zolt{\'a}n Szab{\'o}.
\newblock Holomorphic disks and genus bounds.
\newblock {\em Geom. Topol.}, 8:311--334 (electronic), 2004.

\bibitem{OS}
Peter Ozsv{\'a}th and Zolt{\'a}n Szab{\'o}.
\newblock Holomorphic disks and three-manifold invariants: properties and
  applications.
\newblock {\em Ann. of Math. (2)}, 159(3):1159--1245, 2004.

\bibitem{OS_Thurston_norm}
Peter Ozsv{\'a}th and Zolt{\'a}n Szab{\'o}.
\newblock Link {F}loer homology and the {T}hurston norm.
\newblock {\em J. Amer. Math. Soc.}, 21(3):671--709, 2008.

\bibitem{Porti}
Joan Porti.
\newblock Torsion de {R}eidemeister pour les vari\'et\'es hyperboliques.
\newblock {\em Mem. Amer. Math. Soc.}, 128(612):x+139, 1997.

\bibitem{Postnikov}
Mikhail~M. Postnikov.
\newblock The structure of the ring of intersections of three-dimensional
  manifolds.
\newblock {\em Doklady Akad. Nauk. SSSR (N.S.)}, 61:795--797, 1948 (in
  Russian).

\bibitem{Reidemeister}
Kurt Reidemeister.
\newblock {Homotopieringe und Linsenr{\"a}ume.}
\newblock {\em Abh. Math. Semin. Hamb. Univ.}, 11:102--109, 1935.

\bibitem{Rolfsen}
Dale Rolfsen.
\newblock {\em Knots and links}.
\newblock Publish or Perish Inc., Berkeley, Calif., 1976.
\newblock Mathematics Lecture Series, No. 7.

\bibitem{RS}
Colin~P. Rourke and Brian~J. Sanderson.
\newblock {\em Introduction to piecewise-linear topology}.
\newblock Springer-Verlag, New York, 1972.
\newblock Ergebnisse der Mathematik und ihrer Grenzgebiete, Band 69.

\bibitem{Schumann}
Hans-Georg Schumann.
\newblock \"{U}ber {M}oduln und {G}ruppenbilder.
\newblock {\em Math. Ann.}, 114(1):385--413, 1937.

\bibitem{ST}
Herbert Seifert and William Threlfall.
\newblock {\em Seifert and {T}hrelfall: a textbook of topology}, volume~89 of
  {\em Pure and Applied Mathematics}.
\newblock Academic Press Inc. [Harcourt Brace Jovanovich Publishers], New York,
  1980.

\bibitem{Steenrod}
Norman Steenrod.
\newblock {\em The {T}opology of {F}ibre {B}undles}.
\newblock Princeton Mathematical Series, vol. 14. Princeton University Press,
  Princeton, N. J., 1951.

\bibitem{Sullivan}
Dennis Sullivan.
\newblock On the intersection ring of compact three manifolds.
\newblock {\em Topology}, 14(3):275--277, 1975.

\bibitem{Suzuki}
Masaaki Suzuki.
\newblock The {M}agnus representation of the {T}orelli group {$\mathcal{I}\sb
  {g,1}$} is not faithful for {$g\geq 2$}.
\newblock {\em Proc. Amer. Math. Soc.}, 130(3):909--914 (electronic), 2002.

\bibitem{Suzuki_bis}
Masaaki Suzuki.
\newblock On the kernel of the {M}agnus representation of the {T}orelli group.
\newblock {\em Proc. Amer. Math. Soc.}, 133(6):1865--1872 (electronic), 2005.

\bibitem{Swarup}
Gadde~A. Swarup.
\newblock On a theorem of {C}. {B}. {T}homas.
\newblock {\em J. London Math. Soc. (2)}, 8:13--21, 1974.

\bibitem{Thomas}
Charles~B. Thomas.
\newblock The oriented homotopy type of compact {$3$}-manifolds.
\newblock {\em Proc. London Math. Soc. (3)}, 19:31--44, 1969.

\bibitem{Thurston_norm}
William~P. Thurston.
\newblock A norm for the homology of {$3$}-manifolds.
\newblock {\em Mem. Amer. Math. Soc.}, 59(339):i--vi and 99--130, 1986.

\bibitem{Thurston_book}
William~P. Thurston.
\newblock {\em Three-dimensional geometry and topology. {V}ol. 1}, volume~35 of
  {\em Princeton Mathematical Series}.
\newblock Princeton University Press, Princeton, NJ, 1997.
\newblock Edited by Silvio Levy.

\bibitem{Traldi_Milnor}
Lorenzo Traldi.
\newblock Milnor's invariants and the completions of link modules.
\newblock {\em Trans. Amer. Math. Soc.}, 284(1):401--424, 1984.

\bibitem{Traldi_Conway}
Lorenzo Traldi.
\newblock Conway's potential function and its {T}aylor series.
\newblock {\em Kobe J. Math.}, 5(2):233--263, 1988.

\bibitem{Turaev_Alexander}
Vladimir Turaev.
\newblock The {A}lexander polynomial of a three-dimensional manifold.
\newblock {\em Mat. Sb. (N.S.)}, 97(139):341--359, 463, 1975 (in Russian).
  English translation: {\it Math. USSR Sb.}, 26:313--329, 1975.

\bibitem{Turaev_Alexander_torsion}
Vladimir Turaev.
\newblock Reidemeister torsion and the {A}lexander polynomial.
\newblock {\em Mat. Sb. (N.S.)}, 18(66):252--270, 1976 (in Russian). English
  translation: {\it Math. USSR Sb.}, 30:221--237, 1976.

\bibitem{Turaev_cohomology}
Vladimir Turaev.
\newblock Cohomology rings, linking coefficient forms and invariants of spin
  structures in three-dimensional manifolds.
\newblock {\em Mat. Sb. (N.S.)}, 120(162):68--83, 143, 1983 (in Russian).
  English translation: {\it Math. USSR Sb.}, 48:65--79, 1984.

\bibitem{Turaev_knot}
Vladimir Turaev.
\newblock Reidemeister torsion in knot theory.
\newblock {\em Uspekhi Mat. Nauk}, 41(1):97--147, 240, 1986 (in Russian).
  English translation: {\it Russian Math. Surveys}, 41:119--182, 1986.

\bibitem{Turaev_Euler}
Vladimir Turaev.
\newblock Euler structures, nonsingular vector fields, and {R}eidemeister-type
  torsions.
\newblock {\em Izv. Akad. Nauk SSSR Ser. Mat.}, 53(3):607--643, 672, 1989 (in
  Russian). English translation: {\it Math. USSR Izvestia}, 34:627--662, 1990.

\bibitem{Turaev_spin_c}
Vladimir Turaev.
\newblock Torsion invariants of {${\rm Spin}\sp c$}-structures on
  {$3$}-manifolds.
\newblock {\em Math. Res. Lett.}, 4(5):679--695, 1997.

\bibitem{Turaev_SW}
Vladimir Turaev.
\newblock A combinatorial formulation for the {S}eiberg-{W}itten invariants of
  {$3$}-manifolds.
\newblock {\em Math. Res. Lett.}, 5(5):583--598, 1998.

\bibitem{Turaev_book}
Vladimir Turaev.
\newblock {\em Introduction to combinatorial torsions}.
\newblock Lectures in Mathematics ETH Z{\"u}rich. Birkh{\"a}user Verlag, Basel,
  2001.
\newblock Notes taken by Felix Schlenk.

\bibitem{Turaev_norm}
Vladimir Turaev.
\newblock A homological estimate for the {T}hurston norm.
\newblock Preprint \texttt{arXiv:math/0207267}, 2002.

\bibitem{Turaev_book_dim_3}
Vladimir Turaev.
\newblock {\em Torsions of {$3$}-dimensional manifolds}, volume 208 of {\em
  Progress in Mathematics}.
\newblock Birkh{\"a}user Verlag, Basel, 2002.

\bibitem{Wall}
Charles T.~C. Wall.
\newblock Quadratic forms on finite groups, and related topics.
\newblock {\em Topology}, 2:281--298, 1963.

\bibitem{Wallace}
Andrew~H. Wallace.
\newblock Modifications and cobounding manifolds.
\newblock {\em Canad. J. Math.}, 12:503--528, 1960.

\bibitem{Whitehead_smooth}
J.~H.~C. Whitehead.
\newblock Manifolds with transverse fields in euclidean space.
\newblock {\em Ann. of Math. (2)}, 73:154--212, 1961.

\end{thebibliography}

\end{document}